\renewcommand{\tocsection}[3]{%
  \indentlabel{\@ifnotempty{#2}{\bfseries\ignorespaces#1 #2\quad}}\bfseries#3}
\renewcommand{\tocsubsection}[3]{%
  \indentlabel{\@ifnotempty{#2}{\ignorespaces#1 #2\quad}}#3}
\newcommand\@dotsep{4.5}
\def\@tocline#1#2#3#4#5#6#7{\relax
  \ifnum #1>\c@tocdepth 
  \else
    \par \addpenalty\@secpenalty\addvspace{#2}%
    \begingroup \hyphenpenalty\@M
    \@ifempty{#4}{%
      \@tempdima\csname r@tocindent\number#1\endcsname\relax
    }{%
      \@tempdima#4\relax
    }%
    \parindent\z@ \leftskip#3\relax \advance\leftskip\@tempdima\relax
    \rightskip\@pnumwidth plus1em \parfillskip-\@pnumwidth
    #5\leavevmode\hskip-\@tempdima{#6}\nobreak
    \leaders\hbox{$\m@th\mkern \@dotsep mu\hbox{.}\mkern \@dotsep mu$}\hfill
    \nobreak
    \hbox to\@pnumwidth{\@tocpagenum{\ifnum#1=1\bfseries\fi#7}}\par
    \nobreak
    \endgroup
  \fi}
\renewcommand\csname r@tocindent0\endcsname{0pt}
\def\l@subsection{\@tocline{2}{0pt}{2.5pc}{5pc}{}}
\numberwithin{equation}{section}
\DeclareMathOperator{\Lie}{Lie}
\DeclareMathOperator{\Ad}{Ad}
\DeclareMathOperator{\Cone}{Cone}
\DeclareMathOperator{\Cox}{Cox}
\DeclareMathOperator{\Hom}{Hom}
\DeclareMathOperator{\Int}{Int}
\DeclareMathOperator{\Conv}{Conv}
\newcommand{\defi}[1]{{\textit{#1}}}
\newcommand{\Xvw}[2]{{X^{#1}_{#2}}}
\newcommand{\C}{{\mathbb{C}}}
\newcommand{\R}{{\mathbb{R}}}
\newcommand{\Z}{{\mathbb{Z}}}
\newcommand{\Cstar}{{\C^{\ast}}}
\renewcommand{\ll}{\mathsf{L}}
\newcommand{\rr}{\mathsf{R}}
\newcommand{\head}{\mathsf{h}}
\newcommand{\tail}{\mathsf{t}}
\newcommand{\uh}[1]{{#1}_{\head}}
\newcommand{\ut}[1]{{#1}_{\tail}}
\DeclareMathOperator{\flag}{Fl}
\def\Sn{S_{n}}
\def\aret{\mathscr{R}^a}
\def\gret{\mathscr{R}^g}
\def\mret{\mathscr{R}^m}
\def\A{\mathscr{M}}
\def\asc{\mathrm{asc}}
\newcommand{\Poin}{{\mathscr{P}\textup{oin}}}
\newcommand{\T}{\mathscr{T}}
\newcommand{\lT}{\T^{\ll}}
\newcommand{\rT}{\T^{\rr}}
\newcommand{\B}{\mathcal{B}}
\newcommand{\Q}{{\mathsf{Q}}}
\newcommand{\polygon}[1]{\mathsf{P}_{#1}}
\newcommand{\Qvw}[2]{{\Q^{#1}_{#2}}}
\newcommand{\vertex}[1]{\mathsf{#1}}
\DeclareMathOperator{\Perm}{Perm}
\def\Xvw{X^v_w}
\def\GL{\rm GL}
\newcommand{\PP}{{\mathsf{P}}}
\def\Xw{X_w}
\def\Yw{Y_w}
\def\Cv{C(v)}
\def\G{\mathcal{G}}
\theoremstyle{plain}
\newtheorem{theorem}{Theorem}[section]
\newtheorem{lemma}[theorem]{Lemma}
\newtheorem{proposition}[theorem]{Proposition}
\newtheorem{corollary}[theorem]{Corollary}
\newtheorem{conjecture}[theorem]{Conjecture}
\theoremstyle{definition}
\newtheorem{example}[theorem]{Example}
\newtheorem{definition}[theorem]{Definition}
\newtheorem{Problem}[theorem]{Problem}
\theoremstyle{remark}
\newtheorem{remark}[theorem]{Remark}
\newcommand{\lee}[1]{\textcolor{black}{#1}}
\newcommand{\park}[1]{\textcolor{black}{#1}}
\newcommand{\masuda}[1]{\textcolor{black}{#1}}
\tikzset{root/.style = {circle, double, draw, inner sep = 1pt}}
\tikzset{vertex/.style = {circle, fill, inner sep = 1.5pt}}
\def\namedlabel#1#2{\begingroup
   \def\@currentlabel{#2}%
   \label{#1}\endgroup
}
\begin{document}

\author[Eunjeong Lee]{Eunjeong Lee}
\address[E. Lee]{Department of Mathematics,
	Chungbuk National University,
	Cheongju 28644, Republic of Korea}
\email{eunjeong.lee@chungbuk.ac.kr}

\author[Mikiya Masuda]{Mikiya Masuda}
\address[M. Masuda]{Osaka City University Advanced Mathematics Institute 
(OCAMI) \& Department of Mathematics, Graduate School of Science, Osaka City 
University, Sumiyoshi-ku, Sugimoto, 558-8585, Osaka, Japan}
\email{mikiyamsd@gmail.com}

\author[Seonjeong Park]{Seonjeong Park}
\address[S. Park]{Department of Mathematics Education, Jeonju University, Jeonju 55069, Republic of Korea}
\email{seonjeongpark@jj.ac.kr}

\thanks{Lee was supported by the National Research Foundation of Korea(NRF) grant funded by the Korea government(MSIT) (RS-2022-00165641 and RS-2023-00239947), and supported by POSCO Science Fellowship of POSCO TJ Park Foundation.
Masuda was supported in part by 
the HSE University Basic Research Program. Park was supported by the Basic Science Research Program through the National Research Foundation of Korea (NRF) funded by the Government of Korea (NRF-2020R1A2C1A01011045). This work was partly supported by Osaka City University Advanced Mathematical Institute (MEXT Joint Usage/Research Center on Mathematics and Theoretical Physics JPMXP0619217849).}

\title{Torus orbit closures in the flag variety}

\date{\today}

\subjclass[2020]{Primary: 14M25, 14M15, 57S12; Secondary: 05A05}

\keywords{torus action, flag variety, moment map, toric variety, Coxeter matroid, permutohedron, Schubert variety, Richardson variety, Hessenberg variety, Bruhat interval polytope, Bott manifold}

\begin{abstract} 
The study of torus orbit closures in the (complete) flag variety was initiated by Klyachko and Gelfand--Serganova in the mid-1980s. In this chapter, we present some of the work by Klyachko and Gelfand--Serganova and our recent work on the topology, geometry, and combinatorics of torus orbit closures in the flag variety. 
\end{abstract}

\maketitle

\setcounter{tocdepth}{2} \tableofcontents

\section{Introduction}

The study of torus orbit closures in the flag variety was initiated by Klyachko~\cite{Klyachko85Orbits} and Gelfand--Serganova ~\cite{GelfandSerganova87}. Since then, there are some works on the subject but those are mainly about generic orbits or the normality or smoothness of orbit closures (see \cite{Flaschka-Haine}, \cite{Dabrowski}, \cite{Morand}, \cite{ca-ku2000}, \cite{CarrellKuttler03}). 
There have also been combinatorial studies on polytopes related to torus orbit closure in the flag variety (see \cite{ABD_matroidpolytopes}, \cite{KodamaWilliams}, \cite{TsukermanWilliams}, \cite{BK_Lattice_path_matroids}).
Recently, the authors investigated the topology and geometry of (not necessarily generic) torus orbit closures in connection with combinatorics. In this chapter, we present some of the work by Klyachko and Gelfand--Serganova and our recent works (see \cite{LM2020}, \cite{LMP_Catalan}, \cite{LMP_retractions}, \cite{LMP_complexity_one}, \cite{LMPS_Poincare}) on the topology, geometry, and combinatorics of torus orbit closures in the flag variety. We mainly focus on type $A$ case although some of the arguments work for other Lie types. We give comments for other Lie types when necessary. 

Let $\flag(n)$ be the flag variety defined by 
\[
\flag(n) \colonequals \{ (\{0\} \subset V_1 \subset V_2 \subset \cdots \subset V_n = \C^n) \mid \dim_{\C} V_i = i \: \text{ for all }i = 1,\dots,n\},
\]
where each $V_i$ is a linear subspace of $\C^n$.
Let $\Sn$ be the symmetric group on $[n]=\{1,\dots,n\}$. An element of $\Sn$ determines a permutation flag. The natural action of the general linear group~${\rm GL}_n(\C)$ on $\C^n$ induces an action of ${\rm GL}_n(\C)$ on~$\flag(n)$. Let $T$ be the maximal torus of ${\rm GL}_n(\C)$ consisting of diagonal matrices, which is isomorphic to $(\C^*)^n$ where $\C^*=\C\backslash\{0\}$. Then the set of permutation flags is exactly the $T$-fixed point set $\flag(n)^T$ in $\flag(n)$ and we identify $\flag(n)^T$ with $\Sn$. A key tool to study the $T$-action on $\flag(n)$ is a moment map 
\[
\mu\colon \flag(n)\to \R^n
\]
which satisfies the following properties:
\begin{enumerate}
\item[$\bullet$] $\mu(w)=(w^{-1}(1),\dots,w^{-1}(n))$ for $w\in\flag(n)^T=\Sn$, 
\item[$\bullet$] $\mu(\flag(n))$ is the convex hull of $n!$ points $\mu(\flag(n)^T)$ in $\R^n$, that is,  the permutohedron $\Pi_n$. 
\end{enumerate}
We will explore $T$-varieties (i.e., an algebraic variety with an  algebraic $T$-action) in $\flag(n)$ with respect to the action of the maxima torus $T$ of $\lee{\GL_n(\C)}$. 
With this understanding, we will explain the content of each section.

\medskip
\noindent
{\bf The content of Section~\ref{sec:general}}. 
The closure of a $T$-orbit in $\flag(n)$, denoted by $Y$, is a $T$-variety and has an open dense $T$-orbit, so $Y$ is a toric variety.\footnote{$Y$ is normal in type $A$ but not necessarily normal for other Lie types (see~\cite{ca-ku2000}).} 
Since scalar matrices in $T$ act trivially on~$\flag(n)$, the (complex) dimension of $Y$ is at most $n-1$. By the convexity theorem due to Atiyah~\cite[Theorem~2]{Atiyah82} (or Guillemin--Sternberg~\cite{GuilleminSternberg82}), the image $\mu(Y)$ is the convex hull of the points $\mu(Y^T)$ in $\R^n$. The fan of $Y$ is the normal fan of the polytope $\mu(Y)$, so $\mu(Y)$ determines $Y$ as a variety, up to isomorphism. Then two fundamental questions arise: 
\begin{enumerate}
\item[(Q1)\namedlabel{questionQ1}{(Q1)}] Characterize the polytopes which arise as $\mu(Y)$ for some $T$-orbit closure $Y$.  
\item[(Q2)\namedlabel{questionQ2}{(Q2)}] Describe the fan of $Y$ explicitly. 
\end{enumerate}

Gelfand--Serganova \cite{GelfandSerganova87} show that any edge of $\mu(Y)$ must be parallel to an edge of the permutohedron $\Pi_n$. Since the edges of $\Pi_n$ are parallel to roots $\Phi=\{\mathbf{e}_i-\mathbf{e}_j\mid i\not=j\in [n]\}$, where $\mathbf{e}_1,\dots,\mathbf{e}_n$ denote the standard basis of $\R^n$, a polytope with edges parallel to roots is called a \emph{$\Phi$-polytope}. Therefore $\mu(Y)$ is a $\Phi$-polytope. Unfortunately, the converse is not true and the complete solution to~\ref{questionQ1} above is unknown. 
We note that the notion of a $\Phi$-polytope is known as generalized permutohedron~\cite{PostnikovReinerWilliams08}. They were originally defined by Edmonds~\cite{Edmonds70}, and they are also called $M$-convex sets (see~\cite{Murota03}), and polymatroids (see~\cite{Schrijver03}).

$\Phi$-polytopes are closely related to Coxeter matroids. In our context, a Coxeter matroid is a subset~$\A$ of $\Sn$ which satisfies the Maximality (or Minimality) Condition (see Definition~\ref{defi:Coxeter_matroid}). The fixed point set $Y^T$ regarded as a subset of $\Sn$ is a Coxeter matroid. One can define a retraction 
\[
\mret_\A\colon \Sn\to \A\subset \Sn
\] 
for a Coxeter matroid $\A$ and describe the fan of $Y$ explicitly using the retraction to $Y^T$ (see Corollary~\ref{coro:fan_of_Y}), which answers~\ref{questionQ2} above. 
Here, the suffix $m$ in $\mret_\A$ shows that this retraction is defined using the matroid structure on $\A$, so that one can distinguish it from the two more retractions introduced later (see Subsection~\ref{subsec:Retractions and metric on finite Coxeter groups}).

The retraction $\mret_\A$ has two other interpretations, one is geometric and the other is algebraic (see Subsection~\ref{subsec:Retractions and metric on finite Coxeter groups}). It also has the following interesting property. The $1$-skeleton of the permutohedron $\Pi_n$ is a graph with $\Sn$ as vertices. If $\A$ is a Coxeter matroid, then for each $u\in \Sn$, a vertex in $\A$ closest to $u$ (with respect to the graph metric) is unique and the closest vertex in $\A$ is given by $\mret_\A(u)$ (see Proposition~\ref{prop_mret_minimal_distance}).

\medskip
\noindent
{\bf The content of Section~\ref{sec_generic_orbit_in_flag}.} 
We discuss the topology of a \emph{generic} $T$-orbit closure $Y$ in $\flag(n)$, where $Y$ is called generic if 
\[
Y^T=\flag(n)^T=\Sn,
\] 
in other words, $\mu(Y)=\Pi_n$. Since $\Pi_n$ is simple, $Y$ is smooth and of complex dimension $n-1$. In fact, $Y$ is isomorphic to the permutohedral variety $\Perm_n$ which is a toric variety whose fan consists of Weyl chambers. So we may think of the generic $T$-orbit closure $Y$ as $\Perm_n$. The cohomology of $\Perm_n$ can be explicitly described and its Poincar\'e polynomial agrees with $A_n(t^2)$ where $A_n(t)$ denotes the $n$th Eulerian polynomial. 
The fan of $\Perm_n$ has the symmetry of $\Sn$ which induces an action of $\Sn$ on the cohomology of $\Perm_n$. Klyachko \cite{Klyachko85Orbits} shows that the image of the restriction map 
\[
\iota^*\colon H^*(\flag(n);\mathbb{Q})\to H^*(\Perm_n;\mathbb{Q}),
\]
where $\iota\colon \Perm_n\to \flag(n)$ is the inclusion map, is the ring of $\Sn$-invariants $H^*(\Perm_n;\mathbb{Q})^{\Sn}$. This fact is generalized to the setting of regular semisimple Hessenberg varieties. Here, regular semisimple Hessenberg varieties are subvarieties of $\flag(n)$ satisfying certain conditions (see Subsection~\ref{sec_Hess}) and $\Perm_n$ is the only $(n-1)$-dimensional regular semisimple Hessenberg variety that is toric with respect to the maximal torus of $\GL_n(\C)$ \lee{(see, for example,~\cite[Theorem~11]{DePS_Hess_1992})}.  

\medskip
\noindent
{\bf The content of Section~\ref{sec:Schubert}.} 
The Schubert variety $X_w$ $(w\in\Sn)$ is a subvariety of $\flag(n)$ invariant under the $T$-action on $\flag(n)$ and the moment map image of $X_w$ is the polytope
\[
\Q_w\colonequals \Conv\{(u^{-1}(1),\dots,u^{-1}(n))\in\R^n\mid u\le w\},
\]
where $\le$ denotes the Bruhat order\footnote{The \defi{Bruhat order} \lee{(or the \defi{Bruhat--Chevalley order})} on $\Sn$ is defined as follows. Let $s_i$ ($i=1,\dots,n-1$) denote the adjacent transposition interchanging $i$ and $i+1$. For $w \in \Sn$ with a reduced expression $w = s_{i_1} s_{i_2} \cdots s_{i_q}$ and for $u \in \Sn$, we denote $u \leq w$ if there is a reduced expression $u = s_{i_{j_1}} s_{i_{j_2}} \cdots s_{i_{j_k}}$ for $ 1 \leq j_1 < \cdots < j_k \leq q$.} on $\Sn$. 
We consider a \emph{generic} $T$-orbit closure, denoted by $Y_w$, in the Schubert variety $X_w$, where $Y_w$ is called generic in $X_w$ if 
\[
Y_w^T=X_w^T=\{u\in\Sn\mid u\le w\}.
\]

Since $X_{w_0}=\flag(n)$ for the longest element $w_0\in \Sn$, $Y_{w_0}$ is isomorphic to the permutohedral variety $\Perm_n$ as mentioned above. Although $Y_{w_0}$ is smooth, $Y_w$ is not necessarily smooth. For each $u\in \Sn$ with $u\le w$, we introduce a graph $\Gamma_w(u)$ such that $Y_w$ is smooth at $u\in Y_w^T$ if and only if $\Gamma_w(u)$ is a forest. It seems that the graph $\Gamma_w(w)$ is most complicated among graphs~$\Gamma_w(u)$ with $u\le w$. To be more precise, we posed the following as a conjecture in \cite{LM2020} and it was affirmatively answered in~\cite{Gaetz22_one-skeleton} while preparing this chapter. 

\medskip
\noindent
{\bf Theorem} (\cite{LM2020, Gaetz22_one-skeleton}). 
The graph $\Gamma_w(u)$ is a forest for any $u \leq w $ if $\Gamma_w(w)$ is a forest. \textup{(}This is equivalent to saying that $Y_w$ is smooth if it is smooth at $w$, in other words, $\Q_{w}$ is simple if $\Q_w$ is simple at the vertex $\mu(w)$.\textup{)} 
\medskip

Indeed, $Y_w$ is always smooth at the identity $e$ and the theorem above says that $Y_w$ is entirely smooth if it is smooth at $w$.
In contrast to this, the Schubert variety $X_w$ is smooth at $w$ and entirely smooth if it is smooth at the identity $e$ (see~\cite[p.208]{BL20Singular}). 
Interestingly, the graph $\Gamma_w(w)$ appears in a different context, i.e.,  it is shown in \cite{Woo-Yong} that $\Gamma_w(w)$ is a forest if and only if $X_w$ is locally factorial. Moreover, permutations $w$ for which $\Gamma_w(w)$ is forest are characterized in terms of pattern avoidance (see~\cite{bm-bu07}). 

We introduce a polynomial $A_w(t)$ for $w\in \Sn$ purely combinatorially by looking at ascents. The polynomial $A_w(t)$ is the Eulerian polynomial $A_n(t)$ when $w=w_0$. As is well-known, $A_{w_0}(t^2)$ agrees with the Poincar\'e polynomial of the permutohedral variety $\Perm_n$, which is isomorphic to~$Y_{w_0}$. This fact is generalized in such a way that $A_w(t^2)$ agrees with the Poincar\'e polynomial of~$Y_w$ for any~$w\in\Sn$ (see Theorem~\ref{Thm_Poincare}). 

We set
\[
c(w)\colonequals\dim_\C X_w-\dim_\C Y_w.
\]
When $c(w)=0$, we have $X_w=Y_w$ which means that $X_w$ is a toric variety with the $T$-action. There are several equivalent conditions for $X_w$ to be a toric variety (see Theorem~\ref{thm:toric}), e.g., a reduced decomposition of $w$ is a product of distinct adjacent transpositions. We describe the fan of a toric Schubert variety $X_w$ explicitly in terms of $w$ (see Theorem~\ref{thm_char_matrix_of_toric_Schubert}). This implies that $X_w$ is a weak Fano Bott manifold, see Appendix~\ref{sec_Fano_Bott} for details of Bott manifolds. We also present a classification result of toric Schubert varieties $X_w$ for Coxeter elements $w$ (see Theorem~\ref{thm:dynkin}). 

The case where $c(w)=1$ is also studied. In this case, $X_w$ is not necessarily smooth. We present several equivalent conditions for $c(w)=1$ depending on whether $X_w$ is smooth or singular (see Theorems~\ref{thm:smooth-one} and~\ref{thm:singular-one}). 

\medskip
\noindent
{\bf The content of Section~\ref{sec:Richardson}.}  
For a pair $(v,w)$ of permutations with $v\le w$, the intersection of a Schubert variety $X_w$ and an opposite Schubert variety $w_0X_{w_0v}$ is non-empty and irreducible. The intersection is denoted by $X^v_w$ and called a \emph{Richardson variety}. It is invariant under the $T$-action on $\flag(n)$. The moment map image of $X^v_w$ is the \emph{Bruhat interval polytope} 
\[
\Q^v_w\colonequals\Conv\{(u^{-1}(1),\dots,u^{-1}(n))\in\R^n\mid v\le u\le w\},
\]
introduced by Kodama--Williams \cite{KodamaWilliams}. 
Note that $\Q^e_w=\Q_w$. 
The vertex $\mu(v)$ of $\Q^v_w$ is not necessarily simple in $\Q^v_w$ while it is simple when $v=e$. A natural generalization of the conjecture mentioned above is that $\Q^v_w$ is simple if the vertices $\mu(v)$ and $\mu(w)$ are both simple in $\Q^v_w$. 

We study the case when $X^v_w$ is a toric variety with the $T$-action. A toric Richardson variety~$X^v_w$ is not necessarily smooth while every toric Schubert variety $X_w$ is smooth. It turns out that $X^v_w$ is a smooth toric variety if and only if $\Q^v_w$ is combinatorially equivalent to a cube (see Theorem~\ref{theo:3-6}). Although pairs $(v,w)$ for which $\Q^v_w$ is combinatorially equivalent to a cube are not completely understood, there are many such pairs. Especially, it is shown in \cite{HHMP19}
that this is the case when 
\begin{equation*} \label{eq:wv_pair}
w=s_{n-1}s_{n-1}\cdots s_1v\quad (\text{or } w=s_1s_2\cdots s_{n-1} v)\quad\text{and}\quad \ell(w)-\ell(v)=n-1.
\end{equation*}
Toric Richardson varieties $X^v_w$ for the pairs $(v,w)$ above also arise from polygon triangulations, so we call such toric Richardson varieties of \emph{Catalan type}. They can be classified up to isomorphism and the Wedderburn--Etherington numbers which count \emph{unordered} binary trees appear in enumerating the isomorphism classes (see Corollary~\ref{cor_enumeration_Xvw}). 

\medskip 

Finally, in Section~\ref{sec_problems} we pose several problems related to the discussion developed in this chapter. In Appendix~\ref{appendix}, we briefly review the theory of toric varieties and explain Bott manifolds in detail.


\section{Torus orbit closures (general)}\label{sec:general}
In this section, we review the torus actions on flag varieties and the closure of a torus orbit. Moreover, we consider properties of the moment map images of torus orbit closures. Indeed, the fixed point set of a torus orbit closure becomes a Coxeter matroid. 
Finally, we discuss how to describe the fan of a torus orbit closure.


\subsection{Torus actions on flag varieties}
Let $G$ be the general linear group ${\rm GL}_n(\mathbb{C})$ over $\C$, $B \subset G$ the set of upper triangular matrices, and $T\subset G$ the set of diagonal matrices. Let $B^- \subset G$ be the set of lower triangular matrices. 
We denote by $\Sn$ the symmetric group on $[n] \colonequals \{1,\dots,n\}$.
Then $T = B \cap B^- \cong (\Cstar)^n$ and $B^-=w_0Bw_0$, where $w_0$ is the longest element $[n,n-1,\dots,1] \in \Sn$.
The homogeneous space~$G/B$ can be identified with the \emph{flag variety}~$\flag(n)$ defined by 
\[
\flag(n) \colonequals \{ (\{0\} \subset V_1 \subset V_2 \subset \cdots \subset V_n = \C^n) \mid \dim_{\C} V_i = i \: \text{ for all }i = 1,\dots,n\},
\]
where $\C^n$ is considered as the complex vector space consisting of {\it column} vectors.
For $g \in G$, let $\mathbf{c}_1,\dots,\mathbf{c}_n$ be the column vectors of the matrix $g$, that is, $g =[\mathbf{c}_1 \ \cdots \ \mathbf{c}_n]$. Then $gB \in G/B$ corresponds to the flag whose $i$th vector space is spanned by the first $i$ columns $\mathbf{c}_1,\dots,\mathbf{c}_i$ of $g$.

For an element~$w \in \Sn$, we use the same letter $w$ for the permutation matrix $\begin{bmatrix}\mathbf{e}_{w(1)} & \cdots & \mathbf{e}_{w(n)}\end{bmatrix}$ in ${\rm GL}_n(\C)$ to simplify notation, where $\mathbf{e}_1,\dots,\mathbf{e}_n$ are the standard basis vectors in $\mathbb{C}^n$.
The left multiplication by~$T$ on $G$ induces the $T$-action on $G/B$.
\begin{lemma}[{\cite[\S 10.5]{Fulton97Young}}]\label{lemma_T_fixed_points}
The set of $T$-fixed points in $G/B$ bijectively corresponds to the symmetric group $\Sn$ such that each $u \in \Sn$ corresponds to $uB \in G/B$.
\end{lemma}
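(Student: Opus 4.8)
The plan is to identify the $T$-fixed points in $G/B$ directly by working with cosets. A coset $gB \in G/B$ is $T$-fixed if and only if $TgB = gB$, i.e.\ $g^{-1}Tg \subseteq B$. First I would observe that the permutation cosets $uB$ for $u \in \Sn$ are indeed fixed: since $u$ is a permutation matrix, conjugation $u^{-1}Tu$ simply permutes the diagonal entries, so $u^{-1}Tu = T \subseteq B$, hence $T(uB) = uB$. This gives the map $\Sn \to (G/B)^T$, and it is injective because distinct permutations lie in distinct $B$-cosets (the Bruhat decomposition $G = \bigsqcup_{u \in \Sn} BuB$, or equivalently $G/B = \bigsqcup_u BuB/B$, separates them).

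The substance is surjectivity: every $T$-fixed coset is of this form. I would argue via the Bruhat decomposition. Write an arbitrary element as $g \in BuB$ for a unique $u \in \Sn$, so $gB = buB$ for some $b \in B$, and we may normalize $b$ to lie in the unipotent radical $U$ (upper triangular with $1$'s on the diagonal), in fact in $U \cap uU^-u^{-1}$, by the standard sharpening of Bruhat decomposition which gives unique coset representatives. The fixed-point condition $T \cdot buB = buB$ then forces, for each $t \in T$, that $tbu \in buB$, i.e.\ $(bu)^{-1} t (bu) \in B$, i.e.\ $u^{-1}(b^{-1}tb)u \in B$. Since $u^{-1}Tu = T \subseteq B$ already, the obstruction is the commutator $b^{-1}tb t^{-1} \in U$; one checks this conjugated by $u^{-1}$ must land in $B \cap U^{u^{-1}} $, and combined with the normalization $b \in U \cap uU^-u^{-1}$ this can only happen when $b = e$. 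Concretely, if $b \ne e$ pick a root subgroup entry of $b$ that is nonzero; conjugating by a suitable $t$ scales that entry by a nontrivial character, and the resulting element cannot be brought back into the same $U$-representative, contradicting uniqueness. Hence $gB = uB$.

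An alternative, cleaner route I might take instead: use that $G/B$ is projective and $T$ acts algebraically, so $(G/B)^T$ is finite and each component is a point; then use the Białynicki-Birula decomposition or simply the fact that $T$-fixed points correspond to $T$-stable Borel subalgebras containing a fixed maximal torus, which in $\mathfrak{gl}_n$ are exactly the $n!$ flags obtained by ordering the coordinate lines. Either way, the key input is the Bruhat/Schubert cell decomposition together with the $T$-equivariance of the cells. The main obstacle is the bookkeeping in the surjectivity step — showing no nontrivial $b \in U$ survives the fixed-point condition — which is really a statement that $T$ acts on each Schubert cell $BuB/B \cong \mathbb{A}^{\ell(u)}$ with the single fixed point $uB$; this follows because the $T$-weights on the cell (the inversions of $u$) are all nonzero. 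Since the lemma is cited from \cite{Fulton97Young}, I would in practice simply invoke that reference, but the above is the proof I would reconstruct.
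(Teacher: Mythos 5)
Your argument is correct. The paper itself gives no proof of this lemma (it is cited to Fulton, \S 10.5), but the discussion immediately following it supplies the same mechanism you use: the $T$-invariant affine charts $U_u\cong\C^{n(n-1)/2}$ of \eqref{eq:Uu}, on which the torus acts linearly by the nonzero characters $t_it_{u(j)}^{-1}$ of \eqref{eq:tij}, so that $uB$ is the unique fixed point of each chart; since these charts cover $G/B$, surjectivity follows. Your route replaces the big cells by the Schubert cells $BuB/B\cong U\cap uU^-u^{-1}$ and runs the identical weight argument there (the weights are the inversion roots of $u$, all nontrivial, so $tbt^{-1}=b$ for all $t$ forces $b=e$); both are instances of the same principle, namely that a linear torus action with no zero weights fixes only the origin. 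The Schubert-cell version requires the sharpened Bruhat decomposition with unique representatives, while the chart version needs only the explicit matrix description, which is why the paper's presentation is marginally more elementary; but your version generalizes verbatim to $G/B$ for any reductive $G$. One small point worth making explicit in your surjectivity step: the cell $BuB/B$ is $T$-stable because $T\subset B$, so a $T$-fixed coset lying in $BuB$ must be a fixed point \emph{of the cell}, which is what licenses restricting the weight analysis to that cell. With that noted, the proof is complete.
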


For each $u\in \Sn$, there is a $T$-invariant local chart $U_u$ given by 
\begin{equation} \label{eq:Uu}
U_u\colonequals\left\{(x_{ij})B\in G/B \,\, \middle|\,\, x_{ij}=\begin{cases}1 \quad &\text{ if }j=u^{-1}(i),\\
0\quad &\text{ if }j>u^{-1}(i)\end{cases}\right\},
\end{equation}
which is isomorphic to $\C^{n(n-1)/2}$ since $x_{ij}$ with $j<u^{-1}(i)$ are arbitrary complex numbers. The $T$-action on $G/B$ restricted to $U_u$ is given by 
\begin{equation} \label{eq:tij}
(t_1,\dots,t_n)\cdot (x_{ij})B=(t_it_{u(j)}^{-1}x_{ij})B. 
\end{equation}
Here, for $(t_1,\dots,t_n) \in (\Cstar)^n$, we associate a diagonal matrix $\textup{diag}(t_1,\dots,t_n) \in T$. Since we have the $1$'s on the $(i,u^{-1}(i))$-entries of $(x_{ij}) \in U_u$ (which is equivalent to saying that we have the $1$'s on the $(u(j),j)$-entries), by considering the multiplication of an element $\textup{diag}(t_{u(1)}^{-1},\dots,t_{u(n)}^{-1})$ of $B$ from the right, we obtain the above computation. We demonstrate this action for $u = 312$ in Example~\ref{exam:312}.
Therefore, $uB$ is a unique $T$-fixed point in $U_u$, which corresponds to the origin in $\C^{n(n-1)/2}$ and a $1$-dimensional $T$-orbit in $U_u$ corresponds to a $1$-dimensional $T$-eigenspace in $\C^{n(n-1)/2}$. 

\begin{example} \label{exam:312}
When $u=312$ in one-line notation, 
\[
U_{312}=\left\{\begin{pmatrix} x_{11}&1&0\\
x_{21}&x_{22}&1\\
1&0&0\end{pmatrix}B \,\, \middle|\,\, x_{ij}\in \C\right\}\cong \C^3,
\]
and the $T$-action on $U_{312}$ is given by 
\[
\begin{split}
(t_1,t_2,t_3)\cdot\begin{pmatrix} x_{11}&1&0\\
x_{21}&x_{22}&1\\
1&0&0\end{pmatrix}B &= 
\begin{pmatrix}
t_1x_{11} & t_1 & 0 \\
t_2 x_{21} & t_2 x_{22} & t_2 \\
t_3 & 0 & 0 
\end{pmatrix}B \\
&= 
\begin{pmatrix}
t_1x_{11} & t_1 & 0 \\
t_2 x_{21} & t_2 x_{22} & t_2 \\
t_3 & 0 & 0 
\end{pmatrix}
\begin{pmatrix}
t_3^{-1} & 0 & 0 \\
0 & t_1^{-1} & 0  \\
0 & 0 & t_2^{-1} 
\end{pmatrix}B \\
&= 
\begin{pmatrix} t_1t_3^{-1}x_{11}&1&0\\
t_2t_3^{-1}x_{21}&t_2t_1^{-1}x_{22}&1\\
1&0&0\end{pmatrix}B. 
\end{split}
\]
\end{example}


\subsection{Moment map}
We describe a moment map $\mu\colon G/B\to \R^n$ explicitly using the Pl\"{u}cker coordinates with respect to the action of the (maximal) compact torus $T_{\R}$ in $T$. Note that $T \cong (S^1)^n$.
Define the set
\[
I_{d,n} \colonequals \{\underline{\mathbf{i}} = (i_1,\dots,i_d) \in \Z^d \mid 1 \leq i_1 < \cdots < i_d \leq n \}.
\]
For an element $x = (x_{ij}) \in G = {\rm GL}_n(\C)$, the $\underline{\mathbf{i}}$th Pl\"{u}cker coordinate $p_{\underline{\mathbf{i}}}(x)$ of~$x$ is given by the $d \times d$ minor of~$x$, with row indices $i_1,\dots,i_d$ and the column indices $1,\dots,d$ for $\underline{\mathbf{i}} = (i_1,\dots,i_d) \in I_{d,n}$. The Pl\"{u}cker embedding is defined to be
\begin{equation}\label{eq_Plucker_embedding}
\psi \colon G/B \to \prod_{d=1}^{n-1} \mathbb{C}P^{\binom{n}{d}-1},
\quad xB \mapsto \prod_{d=1}^{n-1} [p_{\underline{\mathbf{i}}}(x)]_{\underline{\mathbf{i}} \in I_{d,n}}.
\end{equation}
The map $\psi$ is well-defined because $p_{\underline{\mathbf i}}(xb)=(\prod_{j=1}^d b_{jj}) p_{\underline{\mathbf i}}(x)$ for $b=(b_{ij})\in B$ and $\underline{\mathbf i} \in I_{d,n}$.
The map $\psi$ is $T$-equivariant with respect to the action of~$T$ on $\prod_{d=1}^{n-1} \mathbb{C} P^{\binom{n}{d}-1}$ given by
\begin{equation*}\label{eq_action_on_CPn}
(t_1,\dots,t_n) \cdot [p_{\underline{\mathbf{i}}}]_{\underline{\mathbf{i}} \in I_{d,n}}
\colonequals [t_{i_1}\cdots t_{i_d} \cdot p_{\underline{\mathbf i}}]_{\underline{\mathbf{i}} \in I_{d,n}}
\end{equation*}
for $(t_1,\dots,t_n) \in T$ and $\underline{\mathbf{i}} = (i_1,\dots,i_d) \in I_{d,n}$. 
Here, with the abuse of notation, we are denoting by $p_{\underline{\mathbf i}}$ both a function $p_{\underline{\mathbf i}} \colon G \to \C$ and the $\underline{\mathbf i}$th coordinate of $\C P^{\binom{n}{d} -1}$.

Let $\omega_{FS}$ be the Fubini--Study form on a complex projective space $\C P^{m-1}$. With respect to the standard action of the compact torus $(S^1)^{m}$ on $\C P^{m-1}$ defined by 
\[
(t_1,\dots,t_m)\cdot [z_1,\dots,z_m]=[t_1z_1,\dots,t_mz_m],
\]
where each $t_i$ is a complex number with unit length, 
the moment map of $(\C P^{m-1},\omega_{FS}, (S^1)^{m})$ is given by
\begin{equation}\label{eq_moment_map_CPn}
[z_1,\dots,z_m] \mapsto 
\frac{-1}{2 \sum_{i=1}^m |z_i|^2} \left( |z_1|^2, \dots,|z_m|^2 \right) \quad {\text{up to translation}}.
\end{equation}
See, for example,~\cite[Example~IV.1.2]{AudinTorus}.\footnote{Here, we use a different sign convention from that in~\cite{AudinTorus}. That is,  our moment map $\mu \colon (M,\omega,(S^1)^n) \to \text{Lie}((S^1)^n)^{\ast}$ satisfies the following: For each $X \in \text{Lie}((S^1)^n)$, $d \mu^{X} = \iota_{X^{\#}}\omega$, where $\mu^X(p) = \langle \mu(p),X \rangle$ and $X^{\#}$ is the vector field on $M$ generated by the one-parameter subgroup $\{\exp tX \mid t \in \R\} \subset (S^1)^n$.} 
Because the action of $(S^1)^n$ on the factor $\C P^{\binom{n}{d}-1}$ in \eqref{eq_Plucker_embedding} is given through the homeomorphism $\phi_d \colon T_{\R}= (S^1)^n \to (S^1)^{\binom{n}{d}}$ sending $(t_1,\dots,t_n)$ to $(t_{i_1}\cdots t_{i_d})_{\underline{\mathbf{i}} \in I_{d,n}}$, the moment map of $(\C P^{\binom{n}{d}-1}, 2\omega_{FS}, T_{\R})$ is given by
\[
[p_{\underline{\mathbf{i}}}]_{\underline{\mathbf{i}}\in I_{d,n}} \mapsto \frac{-1}{\sum_{\underline{\mathbf{i}}\in I_{d,n}} | p_{\underline{\mathbf{i}}}|^2}\left(\sum_{1\in \underline{\mathbf{i}}\in I_{d,n}}|p_{\underline{\mathbf{i}}}|^2,\ldots,\sum_{n\in \underline{\mathbf{i}}\in I_{d,n}}|p_{\underline{\mathbf{i}}}|^2\right)\quad {\text{up to translation}}. 
\]

\lee{We note that for symplectic manifolds $M_1$ and $M_2$ having Hamiltonian actions of the same Lie group $G$ with moment maps $\mu_1 \colon M_1 \to \text{Lie}(G)^{\ast}$ and $\mu_2 \colon M_2 \to \text{Lie}(G)^{\ast}$, respectively, the diagonal action of $G$ on $M_1 \times M_2$ is Hamiltonian. Moreover, the moment map $\mu \colon M_1 \times M_2 \to \text{Lie}(G)^{\ast}$ is given by the sum of moment maps $\mu_1$ and $\mu_2$. Indeed, $\mu(x_1,x_2) = \mu_1(x_1) + \mu_2(x_2)$ (see, for instance,~\cite[Exercise III.3]{AudinTorus}).
}
By considering a symplectic form $\omega$ on $\prod_{d=1}^{n-1}\C P^{\binom{n}{d}-1}$ given by two times of the Fubini--Study form on each complex projective space \lee{and the diagonal action on the product}, 
the moment map $\tilde\mu\colon {\prod_{d=1}^{n-1}\C P^{\binom{n}{d}-1}} \to \R^n$ is given by
\begin{equation}\label{eq:moment-map}
\prod_{d=1}^{n-1}[p_{\underline{\mathbf{i}}}]_{\underline{\mathbf{i}}\in I_{d,n}} \mapsto -\sum_{d=1}^{n-1}\left\{\frac{1}{\sum_{\underline{\mathbf{i}}\in I_{d,n}} | p_{\underline{\mathbf{i}}}|^2}\left(\sum_{1\in \underline{\mathbf{i}}\in I_{d,n}}|p_{\underline{\mathbf{i}}}|^2,\ldots,\sum_{n\in \underline{\mathbf{i}}\in I_{d,n}}|p_{\underline{\mathbf{i}}}|^2\right)\right\} + \mathbf{c},
\end{equation} where $\mathbf{c}$ is a constant vector in $\R^n$, and the action of $(S^1)^n$ on each factor $\C P^{\binom{n}{d}-1}$ is given through the homomorphism $\phi_d$ above.
We take $\mathbf{c}=(n,\ldots,n)$ in~\eqref{eq:moment-map} and define 
\begin{equation}\label{eq_moment_map}
\mu \colonequals \tilde{\mu} \circ \psi,
\end{equation}
which is a moment map of $(G/B, \psi^{\ast}\omega, T_{\R})$.

Before computing the moment image of the fixed point $uB \in G/B$, we introduce one notation. 
For a subset $S\subset [n]$, we denote by $S\!\!\uparrow$ the ordered tuple obtained from $S$ by sorting its elements in ascending order. For instance, if $S = \{5,2,1,6\} \subset [8]$, then $S \!\!\uparrow = (1,2,5,6)$.
\begin{lemma}
\label{lem:moment_map}
The moment map $\mu$ sends the fixed point $uB \in G/B$ to $(u^{-1}(1),\dots,u^{-1}(n)) \in \mathbb{R}^n$.
\end{lemma}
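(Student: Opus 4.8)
The plan is to evaluate $\psi(uB)$ explicitly from the permutation matrix description of $u$, feed the result into the formula \eqref{eq:moment-map} for $\tilde\mu$, and check that the outcome is $(u^{-1}(1),\dots,u^{-1}(n))$.

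First I would compute the Pl\"ucker coordinates of $uB$, identified with the coset of the permutation matrix $u = \begin{bmatrix}\mathbf{e}_{u(1)} & \cdots & \mathbf{e}_{u(n)}\end{bmatrix}$. For $\underline{\mathbf{i}} = (i_1 < \cdots < i_d) \in I_{d,n}$, the coordinate $p_{\underline{\mathbf{i}}}(u)$ is the $d \times d$ minor of $u$ on rows $i_1,\dots,i_d$ and columns $1,\dots,d$. Since columns $1,\dots,d$ of $u$ are the distinct standard vectors $\mathbf{e}_{u(1)},\dots,\mathbf{e}_{u(d)}$, this minor is $0$ unless $\{i_1,\dots,i_d\} = \{u(1),\dots,u(d)\}$, in which case the submatrix is a permutation matrix and the minor is $\pm 1$. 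Hence, in each factor $\C P^{\binom{n}{d}-1}$ of \eqref{eq_Plucker_embedding}, exactly one homogeneous coordinate of $\psi(uB)$ is nonzero, namely the one indexed by the increasing rearrangement $\underline{\mathbf{i}}^{(d)}$ of $\{u(1),\dots,u(d)\}$.

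Next I would substitute this into \eqref{eq:moment-map}. Because each sum $\sum_{\underline{\mathbf{i}}\in I_{d,n}}|p_{\underline{\mathbf{i}}}|^2$ has a single nonzero term equal to $1$, the $d$-th summand collapses to the vector whose $k$-th coordinate is $-1$ if $k \in \{u(1),\dots,u(d)\}$ and $0$ otherwise. Summing over $d = 1,\dots,n-1$, the $k$-th coordinate of $-\sum_{d=1}^{n-1}(\cdots)$ equals $-\#\{d : 1 \le d \le n-1,\ k \in \{u(1),\dots,u(d)\}\}$. Now $k \in \{u(1),\dots,u(d)\}$ precisely when $u^{-1}(k) \le d$, so this cardinality is $\#\{d : u^{-1}(k) \le d \le n-1\} = n - u^{-1}(k)$ (valid also when $u^{-1}(k) = n$, both sides being $0$). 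Adding $\mathbf{c} = (n,\dots,n)$ yields $n - (n - u^{-1}(k)) = u^{-1}(k)$ in the $k$-th coordinate, so $\mu(uB) = (u^{-1}(1),\dots,u^{-1}(n))$.

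There is no genuine obstacle: the statement reduces to bookkeeping once the Pl\"ucker coordinates of a permutation matrix are pinned down. The only points needing care are the conventions already fixed in the text — the sign in \eqref{eq:moment-map}, the normalization $\mathbf{c} = (n,\dots,n)$, and the use of columns $1,\dots,d$ in the Pl\"ucker embedding — together with the elementary equivalence "$k$ appears among $u(1),\dots,u(d)$ iff $u^{-1}(k) \le d$", which is exactly what turns the count into $n - u^{-1}(k)$ and produces the desired formula.
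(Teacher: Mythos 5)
Your proposal is correct and follows essentially the same route as the paper's proof: identify the unique nonvanishing Pl\"ucker coordinate in each factor via \eqref{eq_pi_w}, collapse each summand of \eqref{eq:moment-map} to the indicator vector of $\{u(1),\dots,u(d)\}$, and count occurrences over $d$ (your count in the $k$th coordinate, $n-u^{-1}(k)$, is the same as the paper's statement that the $u(k)$-entry is $n-k$). Nothing is missing.
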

\begin{proof}
For a permutation $u \in \Sn$, the Pl\"{u}cker coordinates $(p_{\underline{\mathbf{i}}})_{\underline{\mathbf{i}} \in I_{d,n}}$ of $uB$ are given as follows:
\begin{equation}\label{eq_pi_w}
p_{\underline{\mathbf{i}}} = \begin{cases} \pm 1 & \text{ if }\underline{\mathbf{i}} = \{u(1),\dots,u(d)\}\!\!\uparrow,\\
0 & \text{ otherwise,}
\end{cases}
\end{equation}
for each $\underline{\mathbf{i}} \in I_{d,n}.$ 
Therefore, one can see that for a fixed $d \in [n-1]$, $\sum_{\underline{\mathbf{i}}\in I_{d,n}} |p_{\underline{\mathbf{i}}}|^2=1$ and the entries of the vector
\[
\left( \sum_{1 \in \underline{\mathbf{i}}\in I_{d,n}} |p_{\underline{\mathbf{i}}}|^2, \dots, \sum_{n \in \underline{\mathbf{i}}\in I_{d,n}} |p_{\underline{\mathbf{i}}}|^2 \right)
\]
are $1$ for coordinates in $\{u(1),\dots,u(d)\}$ and $0$ otherwise.
Hence the summation
\[
-\sum_{d=1}^{n-1} 
\left\{ \frac{1}{\sum_{\underline{\mathbf{i}}\in I_{d,n}} | p_{\underline{\mathbf{i}}}|^2}\left( \sum_{1 \in \underline{\mathbf{i}}\in I_{d,n}} |p_{\underline{\mathbf{i}}}|^2, \dots, \sum_{n \in \underline{\mathbf{i}}\in I_{d,n}} |p_{\underline{\mathbf{i}}}|^2 \right)
\right\}
\]
is an integer vector such that the $u(k)$-entry is $-(n-k)$. Therefore, $\mu(uB)$ is an integer vector whose $u(k)$-entry is $k$ since $\mathbf{c}=(n,\dots,n)$ in~\eqref{eq:moment-map}. This implies that $\mu(uB) = (u^{-1}(1),\dots,u^{-1}(n))$ since $u^{-1}(u(k)) = k$ for all $k$.
\end{proof}

\begin{example}
\label{example_Plucker_GL3}
When $G = {\rm GL}_3(\mathbb{C})$, for $x = (x_{ij}) \in {\rm GL}_3(\mathbb{C})$, the Pl\"{u}cker embedding
$\psi \colon {G/B} \to \C {P}^{\binom{3}{1}-1} \times \C {P}^{\binom{3}{2}-1}$ maps an element $xB \in G/B$
to
\begin{align*}
&([p_1(x),p_2(x),p_3(x)], [p_{1,2}(x), p_{1,3}(x), p_{2,3}(x)]) \\
&\qquad = ([x_{11}, x_{21}, x_{31}], [x_{11} x_{22} - x_{21}x_{12}, x_{11}x_{32} - x_{31} x_{12}, x_{21}x_{32} - x_{31}x_{22}]).
\end{align*}
Since the action of $T$ on ${\rm GL}_3(\C)$ is given by
\begin{align*}
(t_1,t_2,t_3) \cdot \begin{pmatrix}
x_{11} & x_{12} & x_{13} \\
x_{21} & x_{22} & x_{23} \\
x_{31} & x_{32} & x_{33}
\end{pmatrix}
= \begin{pmatrix}
t_1 x_{11} & t_1 x_{12} & t_1 x_{13} \\
t_2 x_{21} & t_2 x_{22} & t_2 x_{23} \\
t_3 x_{31} & t_3 x_{32} & t_3 x_{33}
\end{pmatrix}
\end{align*}
for $t = (t_1,t_2,t_3) \in T$, we have 
\[
\begin{split}
\psi(t \cdot xB) &= ([t_1x_{11}, t_2x_{21}, t_3x_{31}],
[t_1t_2(x_{11}x_{22}-x_{21}x_{12}),
t_1t_3(x_{11}x_{32} - x_{31}x_{12}),
t_2t_3(x_{21}x_{32} - x_{31}x_{22})]) \\
&= 
t \cdot \psi(xB)
\end{split}
\]
so the map $\psi$ is $T$-equivariant. The moment map 
\[
\tilde\mu\colon\C P^{\binom{3}{1}-1}\times \C P^{\binom{3}{2}-1}\to \R^3
\] 
is given by
\begin{equation*}
\begin{split}
&([p_1,p_2,p_3],[p_{1,2},p_{1,3},p_{2,3}])\\&\qquad\mapsto -\frac{1}{|p_1|^2+|p_2|^2+|p_3|^2}\left({|p_1|^2},{|p_2|^2},{|p_3|^2}\right)\\&\qquad\quad\quad -\frac{1}{|p_{1,2}|^2+|p_{1,3}|^2+|p_{2,3}|^2}\left({|p_{1,2}|^2+|p_{1,3}|^2},{|p_{1,2}|^2+|p_{2,3}|^2},{|p_{1,3}|^2+|p_{2,3}|^2}\right)\\
&\qquad\quad\quad\quad+(3,3,3).
\end{split}
\end{equation*} 
For $312 \in S_3$, we have 
\[\mu(312B)=\tilde\mu\circ\psi(312B)=\tilde\mu(([0,0,1],[0,-1,0]))=-(0,0,1)-(1,0,1)+(3,3,3)=(2,3,1).
\]
\end{example}

\begin{remark}
The Pl\"{u}cker embedding can be generalized to partial flag varieties and other Lie types (see, for example,~\cite{GelfandSerganova87} and \cite{BGW03_Coxeter}). 
\end{remark}


\subsection{Torus orbit closures and their moment polytopes}

For a point $x \in G/B$, the closure $\overline{T \cdot x}$ of the $T$-orbit $T\cdot x$ is a toric variety in~$G/B$.
The moment map image $\mu(\overline{T\cdot x})$ is a convex polytope in $\R^n$ with vertices $\mu((\overline{T\cdot x})^T)$ by the convexity theorem (Atiyah~\cite[Theorem~2]{Atiyah82}, Guillemin--Sternberg~\cite{GuilleminSternberg82} for symplectic case).

On the other hand, $(\overline{T\cdot x})^T$ can be found as follows. 
Choose a representative $\tilde{x}\in {\rm GL}_n(\C)$ of $x$. The non-vanishing of $p_{\underline{\mathbf i}}(\tilde{x})$ is independent of the choice of the representative $\tilde{x}$ of $x$ and we define 
\[
I_d(x) \colonequals \{ \underline{\mathbf i} = (i_1,\dots,i_d) \in I_{d,n} \mid p_{\underline{\mathbf i}}(\tilde{x}) \neq 0\}, \quad 1 \leq d \leq n.
\]
\begin{proposition}[{\cite[Proposition~1 in \S5.2]{GelfandSerganova87}}]\label{proposition_GS_fixed_points}
For an element $x \in {\rm GL}_n(\C)/B$, we have 
\[
(\overline{T\cdot x})^T = \{wB \mid w \in S_n,  \{w(1),\dots,w(d)\}\!\!\uparrow~ \in I_d(x)\: \text{ for all } 1\leq d \leq n\}.
\] 
\end{proposition}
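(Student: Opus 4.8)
The plan is to reduce everything to the Pl\"ucker embedding $\psi$ of~\eqref{eq_Plucker_embedding}. Since $\psi$ is a $T$-equivariant closed embedding and $G/B$ is a complete (projective) variety, $\psi$ restricts to a $T$-equivariant homeomorphism from $\overline{T\cdot x}$ onto $\overline{\psi(T\cdot x)}=\overline{T\cdot\psi(x)}$; in particular the two fixed-point sets correspond, and one-parameter limits may be computed inside the ambient product $\prod_{d=1}^{n-1}\C P^{\binom nd-1}$ and then pulled back through $\psi$. Note also that the condition for $d=n$ is automatic: $I_{n,n}=\{(1,\dots,n)\}$, $\{w(1),\dots,w(n)\}\!\!\uparrow=(1,\dots,n)$, and $p_{(1,\dots,n)}(x)=\det(x)\neq 0$; so only $1\le d\le n-1$ is at issue, matching the range of $\psi$.

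\emph{The inclusion $\subseteq$.} For $\underline{\mathbf i}\in I_{d,n}$ put $Z_{\underline{\mathbf i}}:=\{y\in G/B\mid p_{\underline{\mathbf i}}(y)=0\}$. First I would check that this is a well-defined, closed, $T$-invariant subset: for a representative $\tilde y\in G$, right multiplication by $B$ rescales $p_{\underline{\mathbf i}}(\tilde y)$ by a nonzero scalar (the leading principal $d\times d$ minor of the $B$-element), so the vanishing is independent of the representative; it is the zero locus of a polynomial, hence closed; and the left $T$-action rescales $p_{\underline{\mathbf i}}$ by $t_{i_1}\cdots t_{i_d}\neq 0$, hence preserves $Z_{\underline{\mathbf i}}$. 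Consequently, if $p_{\underline{\mathbf i}}(x)=0$ then $\overline{T\cdot x}\subseteq Z_{\underline{\mathbf i}}$. Now if $wB\in\overline{T\cdot x}$, then by~\eqref{eq_pi_w} we have $p_{\{w(1),\dots,w(d)\}\uparrow}(wB)\neq 0$ for every $d$, so $wB\notin Z_{\{w(1),\dots,w(d)\}\uparrow}$, forcing $p_{\{w(1),\dots,w(d)\}\uparrow}(x)\neq 0$, i.e. $\{w(1),\dots,w(d)\}\!\!\uparrow\in I_d(x)$. Since $(\overline{T\cdot x})^T\subseteq(G/B)^T=\{wB\mid w\in\Sn\}$ by Lemma~\ref{lemma_T_fixed_points}, this proves one inclusion.

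\emph{The inclusion $\supseteq$.} Suppose $w\in\Sn$ satisfies $\{w(1),\dots,w(d)\}\!\!\uparrow\in I_d(x)$ for all $d$. Choose a one-parameter subgroup $\lambda\colon\Cstar\to T$, $\lambda(s)=(s^{a_1},\dots,s^{a_n})$, with $a_{w(1)}<a_{w(2)}<\cdots<a_{w(n)}$. Then for each $d$, among all $d$-element subsets $S\subseteq[n]$ the sum $\sum_{j\in S}a_j$ attains its minimum uniquely at $S=\{w(1),\dots,w(d)\}$. Acting by $\lambda(s)$ on $\psi(x)$ and letting $s\to 0$, in the $d$-th factor $\C P^{\binom nd-1}$ only the coordinate indexed by the $S$ that minimizes $\sum_{j\in S}a_j$ among those with $p_{S\uparrow}(x)\neq 0$ survives; by the choice of $\lambda$ this is $\{w(1),\dots,w(d)\}\!\!\uparrow$, which is indeed among the nonzero Pl\"ucker coordinates of $x$ by hypothesis. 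Comparing with~\eqref{eq_pi_w}, $\lim_{s\to 0}\lambda(s)\cdot\psi(x)=\psi(wB)$. By completeness of $G/B$ the limit $y_0:=\lim_{s\to 0}\lambda(s)\cdot x$ exists in $G/B$, and continuity plus injectivity of $\psi$ give $\psi(y_0)=\psi(wB)$, hence $y_0=wB$. As $y_0\in\overline{T\cdot x}$ and is $T$-fixed, $wB\in(\overline{T\cdot x})^T$.

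\emph{Expected main obstacle.} I do not anticipate a deep difficulty; the content lies entirely in the two bookkeeping points: that vanishing of a Pl\"ucker coordinate descends to a well-defined, closed, $T$-invariant condition on $G/B$ (the crux of the $\subseteq$ direction), and that the one-parameter limit computed in the ambient product of projective spaces lands back in $\psi(G/B)$ — which is immediate once $\psi$ is recognized as a closed embedding of the complete variety $G/B$. The combinatorial core of the $\supseteq$ direction — choosing the exponents of $\lambda$ increasing along $w$ so that $\{w(1),\dots,w(d)\}$ is the unique minimizer of $\sum_{j\in S}a_j$, and observing that this very set is available in $I_d(x)$ precisely by the stated hypothesis — is elementary.
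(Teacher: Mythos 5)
Your proof is correct and follows essentially the same route as the paper: the forward inclusion via the observation that vanishing of a Pl\"ucker coordinate is a closed $T$-invariant condition and hence propagates to $\overline{T\cdot x}$, and the reverse inclusion via a one-parameter subgroup $\lambda$ with exponents increasing along $w$, so that $\lim_{t\to 0}\lambda(t)\cdot x=wB$. Your version merely spells out the limit computation inside the Pl\"ucker embedding (identifying $\{w(1),\dots,w(d)\}$ as the unique minimizer of the weight $\sum_{j\in S}a_j$), a step the paper asserts without detail.
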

\begin{proof} 
We note that for $\underline{\mathbf i} \in I_{d,n}$, if $p_{\underline{\mathbf i}}(\tilde{x}) = 0$, then $p_{\underline{\mathbf i}}(\tilde{y}) = 0$ for any $y \in \overline{T \cdot x}$. Here, we denote by $\tilde{y} \in \GL_n(\C)$ a representative of $y$. 
Therefore we have
\[
\psi((\overline{T\cdot x})^T) \subset 
\left\{
[p_{\underline{\mathbf i}}]_{\underline{\mathbf i} \in I_{d,n}} 
\in  \prod_{d=1}^{n-1} \mathbb{C}P^{\binom{n}{d}-1}
~~\middle|~~ p_{\underline{\mathbf i}} \neq 0 \: \text{ if }\underline{\mathbf i} \in I_d(x)
\right\}.
\]
This provides 
\[
(\overline{T\cdot x})^T \subset \{wB \mid w \in S_n,  \{w(1),\dots,w(d)\}\!\!\uparrow~ \in I_d(x)\quad \text{for all } 1\leq d \leq n\}
\]
since for $\underline{\mathbf i} \in I_{d,n}$, we have $p_{\underline{\mathbf i}}(wB) \neq 0$ if and only if $\{w(1),\dots,w(d)\}\!\!\uparrow \:= \underline{\mathbf i}$ (see~\eqref{eq_pi_w}).

Now we consider the opposite inclusion.
Recall that the group of homomorphisms from $T$ to~$\Cstar$ is naturally isomorphic to $\Z^n$. For each $u = (u_1,\dots,u_n) \in \Z^n$, we have the one-parameter subgroup~$\lambda_u(t)=(t^{u_1},\dots,t^{u_n})$ of $T$, where $t\in\C^\ast$. 
Suppose that $w \in \Sn$ satisfies $\{w(1),\dots,w(d)\}\!\!\uparrow~ \in I_d(x)$ for all $1\leq d \leq n$. 
Take $u = (u_1,u_2,\dots,u_n) \in \Z^n$ such that 
$u_{w(1)} < u_{w(2)} < \cdots < u_{w(n)}$. Then we have 
\[
\lim_{t \to 0} \lambda_u(t) \cdot x = wB. 
\]
This proves the desired inclusion so we are done.
\end{proof}

\begin{example}\label{example_fixed_points_3}
Let $x=\begin{pmatrix}\alpha&1&0\\ \beta&0&1\\1&0&0 
\end{pmatrix}B \in {\rm GL}_3(\C)/B$ with $\alpha,\beta\in \C^*$. Then we have
\[
I_1(x) = \{(1), (2), (3) \}, \quad I_2(x) = \{(1,2),(1,3) \}, \quad I_3(x) = \{(1,2,3)\}, \text{ and }
\]
\[
\begin{split}
(\overline{T \cdot x})^T
&= \{wB\mid w \in S_3, \{w(1),\dots,w(d)\}\!\!\uparrow~ \in I_d(x) \: \text{ for all }1 \leq d \leq 3\} \\ 
&= \{123B,132B, 213B, 312B\}.
\end{split}
\]
Since
\begin{equation} \label{eq:4points}
\mu(123B)=(1,2,3),\ \mu(132B)=(1,3,2),\ \mu(213B)=(2,1,3),\ \mu(312B)=(2,3,1)
\end{equation}
by Lemma~\ref{lem:moment_map}, $\mu(\overline{T\cdot x})$ is the convex hull of the four points in \eqref{eq:4points}, see Figure~\ref{fig:vertex-labelling}. Now we label each vertex $\mu(uB)$ with $\vertex{u}$ (using a different font) for simplicity.
\end{example}

\begin{figure}[hbt] 
\begin{subfigure}[b]{.4\textwidth}
\centering
\begin{tikzpicture}[x=1cm, y=1cm, z=-0.6cm,scale=.5]
\fill[yellow] (2,3,1)--(1,3,2)--(3,1,2)--(3,2,1)--cycle;
\draw [->] (0,0,0) -- (4,0,0) node [right] {$y$};
\draw [->] (0,0,0) -- (0,4,0) node [left] {$z$};
\draw [->] (0,0,0) -- (0,0,4) node [left] {$x$};
\draw[thick] (2,3,1)--(1,3,2)--(3,1,2)--(3,2,1)--cycle;

\draw [dotted]
(0,3,0) -- (3,3,0) -- (3,0,0)
(3,0,0) -- (3,0,3) -- (0,0,3)
(0,3,0) -- (0,3,3) -- (0,0,3)
(3,3,0) -- (3,0,3)
(3,3,0) -- (0,3,3)
(0,3,3) -- (3,0,3)
(1,3,2) -- (1,2,3)
(2,1,3) -- (3,1,2)
;
\draw (3,0,0) node[below]{$3$};
\draw (0,3,0) node[left]{$3$};
\draw (0,0,3) node[right]{$3$};
\draw (1,2,3) node[left]{\tiny{$(3,1,2)$}};
\draw (2,1,3) node[below]{\tiny{$(3,2,1)$}};
\draw (2,3,1) node[above]{\tiny{$(1,2,3)$}};
\draw (1,3,2) node[left]{\tiny{$(2,1,3)$}};
\draw (3,2,1) node[right]{\tiny{$(1,3,2)$}};
\draw (3,1,2) node[below]{\tiny{$(2,3,1)$}};

\end{tikzpicture}
\subcaption{Vertices are labeled by coordinates}

\end{subfigure}
\begin{subfigure}[b]{.4\textwidth}
\centering
\begin{tikzpicture}[x=1cm, y=1cm, z=-0.6cm,scale=.5]
\fill[yellow] (2,3,1)--(1,3,2)--(3,1,2)--(3,2,1)--cycle;

\draw [->] (0,0,0) -- (4,0,0) node [right] {$y$};
\draw [->] (0,0,0) -- (0,4,0) node [left] {$z$};
\draw [->] (0,0,0) -- (0,0,4) node [left] {$x$};
\draw[thick] (2,3,1)--(1,3,2)--(3,1,2)--(3,2,1)--cycle;

\draw [dotted]
(0,3,0) -- (3,3,0) -- (3,0,0)
(3,0,0) -- (3,0,3) -- (0,0,3)
(0,3,0) -- (0,3,3) -- (0,0,3)
(3,3,0) -- (3,0,3)
(3,3,0) -- (0,3,3)
(0,3,3) -- (3,0,3)
(1,3,2) -- (1,2,3)
(2,1,3) -- (3,1,2)
;
\draw (3,0,0) node[below]{$3$};
\draw (0,3,0) node[left]{$3$};
\draw (0,0,3) node[right]{$3$};
\draw (1,2,3) node[left]{\scriptsize{$\vertex{231}$}};
\draw (2,1,3) node[below]{\scriptsize{$\vertex{321}$}};
\draw (2,3,1) node[above]{\scriptsize{$\vertex{123}$}};
\draw (1,3,2) node[left]{\scriptsize{$\vertex{213}$}};
\draw (3,2,1) node[right]{\scriptsize{$\vertex{132}$}};
\draw (3,1,2) node[below]{\scriptsize{$\vertex{312}$}};
\end{tikzpicture}
\subcaption{Vertices are labeled by permutations}
\end{subfigure}
\caption{Moment polytope $\mu(\overline{T\cdot x})$}
\label{fig:vertex-labelling}
\end{figure}

In general, the moment polytope $\mu(\overline{T\cdot x})$ has the following property. 

\begin{proposition} \label{prop:edge_vector}
If two vertices $\mu(uB)$ and $\mu(vB)$ of $\mu(\overline{T\cdot x})$ are joined by an edge of $\mu(\overline{T\cdot x})$ for $u,v\in \Sn$, then $tu=v$ for some transposition $t$ of $\Sn$, in other words, an edge of $\mu(\overline{T\cdot x})$ is parallel to $\mathbf{e}_i-\mathbf{e}_j$ for some $1\le i<j\le n$. 
\end{proposition}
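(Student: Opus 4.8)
Write $Y\colonequals\overline{T\cdot x}$ and $P\colonequals\mu(Y)$. As recalled above, $Y$ is a complete toric variety with the $T$-action whose moment polytope is $P$, and the vertices of $P$ are the points $\mu(wB)$ for $wB\in Y^T$; in particular the fan of $Y$ is the normal fan of $P$. The plan is to reduce the statement to a local computation at the fixed point $uB$ inside the chart $U_u$, where the $T$-action is linear with weights among the roots $\mathbf{e}_p-\mathbf{e}_q$. By the standard dictionary between faces of $P$ and $T$-orbit closures in $Y$ (see Appendix~\ref{appendix}), the edge $E$ of $P$ joining the vertices $\mu(uB)$ and $\mu(vB)$ is the moment image of an irreducible $T$-invariant curve $C\subseteq Y$; this curve contains a dense one-dimensional $T$-orbit $O$, and $C^T$ corresponds to the set of vertices of $P$ lying on $E$, namely $\{uB,vB\}$, so $O=C\setminus\{uB,vB\}$.

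First I would check that $O\subseteq U_u$. Since $C$ is an irreducible curve and $uB\in C\cap U_u$, the open subset $C\cap U_u$ is dense in $C$ and therefore meets $O=C\setminus\{uB,vB\}$; because $O$ is a single $T$-orbit and $U_u$ is $T$-stable, this forces $O\subseteq U_u$. Under the identification $U_u\cong\C^{n(n-1)/2}$ of \eqref{eq:Uu}, formula \eqref{eq:tij} shows that $T$ acts linearly, the free coordinate $x_{u(i)j}$ (for $1\le j<i\le n$) carrying the weight $\mathbf{e}_{u(i)}-\mathbf{e}_{u(j)}$; in particular every $T$-weight occurring on $U_u$ is a root $\mathbf{e}_p-\mathbf{e}_q$ with $p\ne q$, hence a primitive vector of $\Z^n$.

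Now write $O=T\cdot z$ with $0\ne z=(z_{u(i)j})\in\C^{n(n-1)/2}$, and let $S$ be the nonempty set of pairs $(i,j)$ with $j<i$ and $z_{u(i)j}\ne 0$. The stabilizer $T_z$ of $z$ is the common kernel of the characters $t\mapsto t_{u(i)}t_{u(j)}^{-1}$, $(i,j)\in S$, so its Lie algebra is the intersection of the hyperplanes $(\mathbf{e}_{u(i)}-\mathbf{e}_{u(j)})^{\perp}$. Since $\dim_{\C}O=1$, the subgroup $T_z$ has codimension one, hence the roots $\mathbf{e}_{u(i)}-\mathbf{e}_{u(j)}$, $(i,j)\in S$, span a rank-one sublattice of $\Z^n$; fixing $(i_0,j_0)\in S$ and setting $\alpha\colonequals\mathbf{e}_{u(i_0)}-\mathbf{e}_{u(j_0)}$ (a root, as $u(i_0)\ne u(j_0)$), each of these roots is a rational multiple of $\alpha$, and consequently $\Lie(T_z)=\alpha^{\perp}$. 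Because $T_z$ acts trivially on the connected variety $O$, the vector field $X^{\#}$ vanishes along $O$ for every $X\in\alpha^{\perp}=\Lie(T_z)$, so the relation $d\mu^{X}=\iota_{X^{\#}}\omega$ shows that $\mu^{X}=\langle\mu,X\rangle$ is constant on $O$. Hence $\mu(O)$, together with its closure (which contains $\mu(uB)$ and $\mu(vB)$), lies on a single affine line parallel to $\alpha$. As $E$ is the segment joining $\mu(uB)$ and $\mu(vB)$, we conclude that $E$ is parallel to $\alpha=\mathbf{e}_{u(i_0)}-\mathbf{e}_{u(j_0)}=\pm(\mathbf{e}_i-\mathbf{e}_j)$, where $\{i,j\}=\{u(i_0),u(j_0)\}$ with $i<j$. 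Finally, by Lemma~\ref{lem:moment_map}, $\mu(vB)-\mu(uB)$ is a nonzero multiple of $\mathbf{e}_i-\mathbf{e}_j$, so the tuples $(u^{-1}(1),\dots,u^{-1}(n))$ and $(v^{-1}(1),\dots,v^{-1}(n))$ agree outside the two positions $i$ and $j$ while both being permutations of $[n]$; this is possible only if $v^{-1}=u^{-1}\cdot(i\,j)$, i.e. $v=(i\,j)u=tu$ with $t=(i\,j)$ a transposition.

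The one genuinely delicate point is that $Y$ may be singular, so one cannot simply quote the standard description of a moment polytope near a smooth fixed point; passing to the chart $U_u$, where the action is honestly linear with root (hence primitive) weights, is precisely what makes the argument go through. I would also take care to justify the facts used along the way: that an edge of $P$ corresponds to a one-dimensional $T$-orbit closure joining the two fixed points whose moment images are its endpoints, that $O\subseteq U_u$, and that a rank-one collection of roots consists of rational multiples of any one of its members.
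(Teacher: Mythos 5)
Your proof is correct, and it shares its skeleton with the paper's argument: both identify the edge as $\mu(\overline{O})$ for a one-dimensional $T$-orbit $O$, both place $O$ inside the affine chart $U_u$, and both read off from \eqref{eq:tij} that the relevant $T$-weights are roots $\mathbf{e}_{u(i)}-\mathbf{e}_{u(j)}$. Where you diverge is in how the second endpoint is identified. The paper pins down $O$ as $(u+cE_{u(i)j})B$ for a single pair $(i,j)$ and then performs explicit elementary column operations to compute $\lim_{c\to\infty}(u+cE_{u(i)j})B=t_{u(i),u(j)}uB$, which identifies $v=t_{u(i),u(j)}u$ directly, with no further appeal to the symplectic structure. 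You instead invoke the Hamiltonian identity $d\mu^{X}=\iota_{X^{\#}}\omega$ to show $\mu$ is constant along $O$ in all directions annihilated by the stabilizer, concluding that the edge is parallel to a root, and then recover the transposition combinatorially from Lemma~\ref{lem:moment_map}. Each route has its merits: the paper's limit computation is self-contained and makes the degeneration to $t_{u(i),u(j)}uB$ completely explicit (it is reused, e.g., in the proof of Proposition~\ref{proposition_GS_fixed_points}), while your stabilizer argument cleanly handles the a priori possibility that $z$ has several nonzero coordinates (the rank-one condition forces the weights to be proportional; in fact, since distinct free coordinates of $U_u$ carry non-proportional roots, this forces a single nonzero coordinate, which is the unstated justification for the paper's claim that $O$ has the form $(u+cE_{u(i)j})B$). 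You are also more careful than the paper in justifying $O\subseteq U_u$. The only caution is that your argument leans on the symplectic form $\psi^{\ast}\omega$ and the face--orbit dictionary on the possibly singular variety $Y$; since the paper itself invokes the same convexity-theorem facts without proof, this is an acceptable level of rigor here.
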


\begin{proof}
By the convexity theorem, an edge of $\mu(\overline{T\cdot x})$ is $\mu(\overline{O})$ for some $1$-dimensional $T$-orbit $O$ in $\overline{T\cdot x}$. Since $\overline{T\cdot x}$ is invariant under the $T$-action on $G/B$, $O$ is also a $1$-dimensional $T$-orbit in~$G/B$. Note that $O$ is isomorphic to $\C^*$, $\overline{O}$ is $\C P^1$, and $\overline{O}\backslash O$ consists of two $T$-fixed points, say $uB$ and~$vB$, which map to the two end points of the edge $\mu(\overline{O})$ by the moment map $\mu$. 

Let $U_u$ be the $T$-invariant chart of $G/B$ centered at $uB$ in \eqref{eq:Uu}. Then, as noted before, \eqref{eq:tij} implies that the $T$-orbit $O$ is of the form 
\[
(u+cE_{ij})B\quad \text{for some $i,j$ with $j<u^{-1}(i)$}, 
\]
where $c\in \C^*$ and $E_{ij}$ is the $n\times n$ matrix with $1$ at the $(i,j)$ entry and $0$ otherwise. Suppose that $i<u(j)$ (the essentially same argument works when $i>u(j)$). Then we apply the following elementary transformations to $(u+cE_{ij})B$:
\begin{enumerate}
\item multiply the $j$th column by $1/c$,
\item subtract the $j$th column from the $u^{-1}(i)$th column, 
\item multiply the $u^{-1}(i)$th column by $-c$, 
\end{enumerate}
and then approach $c$ to $\infty$: 
\[
\begin{pmatrix}
c&1\\
1&0\end{pmatrix}B\overset{(1)}{=}\begin{pmatrix} 1&1\\
1/c&0\end{pmatrix}B\overset{(2)}{=}\begin{pmatrix}1&0\\
1/c&-1/c\end{pmatrix}B\overset{(3)}{=}\begin{pmatrix}1&0\\
1/c&1\end{pmatrix}B\xrightarrow{c\to\infty} \begin{pmatrix}1&0\\
0&1\end{pmatrix}B,
\]
where the $2\times 2$ matrices above are intersections of the $j$th and $u^{-1}(i)$th columns $(j<u^{-1}(i))$ and the $i$th and $u(j)$th rows $(i<u(j))$. The other entries remain unchanged by the elementary transformations above. 
This shows that when $c$ approaches $\infty$, the point $(u+cE_{ij})B$ approaches the $T$-fixed point $t_{i,u(j)}uB$, where $t_{i,u(j)}$ is the transposition interchanging $i$ and $u(j)$. Since $vB=t_{i,u(j)}uB$, this implies the proposition. 
\end{proof}

Since the root system of type $A_{n-1}$ is $\Phi=\{\pm (\mathbf{e}_i-\mathbf{e}_j)\mid 1\le i<j\le n\}$, Proposition~\ref{prop:edge_vector} says that an edge vector of the moment polytope $\mu(\overline{T\cdot x})$ is parallel to a root in $\Phi$. This fact holds in any Lie type. 

\begin{theorem}[{\cite[Theorem on page xii]{BGW03_Coxeter}}]\label{thm_torus_orbit_closure_Phi_polytope}
Let $G$ be a semisimple algebraic group over $\C$, $B$ a Borel subgroup in $G$ and $T \subset B$ a maximal torus. Let $\Phi$ be the associated root system and $\mu\colon G/B\to \mathfrak{t}_{\R}^*$ a moment map, where $\mathfrak{t}_{\R}^*$ denotes the vector space dual to the Lie algebra $\mathfrak{t}_{\R}$ of the maximal compact torus of $T$. Then, for any point $x\in G/B$, an edge of the moment polytope $\mu(\overline{T\cdot x})$ is parallel to a root in $\Phi$ \textup{(}such a polytope is called a $\Phi$-polytope, or a generalized permutohedron, see Definition~\ref{def_Phi_polytope}\textup{)}. 
\end{theorem}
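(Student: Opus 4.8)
The plan is to run the argument of Proposition~\ref{prop:edge_vector} in the general setting, replacing the explicit matrix chart $U_u$ and the three elementary column operations used there by the structure theory of a rank-one subgroup. First I would recall that, by the convexity theorem, every edge of $\mu(\overline{T\cdot x})$ has the form $\mu(\overline{O})$ for a one-dimensional $T$-orbit $O\subseteq\overline{T\cdot x}$; since $\overline{T\cdot x}$ is $T$-stable in $G/B$, the orbit $O$ is also a one-dimensional $T$-orbit in $G/B$, its closure $\overline{O}$ is a $\C P^1$, and $\overline{O}\setminus O$ consists of two $T$-fixed points $wB$ and $w'B$ that the moment map carries to the two endpoints of the edge. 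So it suffices to show that $w'=s_\gamma w$ for some root $\gamma\in\Phi$ and that the edge vector is then parallel to $\gamma$.

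Next I would carry out the local analysis at $wB$. The affine chart $w\cdot(B^-B/B)$ around $wB$ (with $B^-$ the opposite Borel) is $T$-equivariantly isomorphic to $\mathfrak n^-=\bigoplus_{\alpha\in\Phi^-}\mathfrak g_\alpha$, where $T$ acts on the line $\mathfrak g_\alpha$ through the character $w\alpha$; as $\alpha$ runs over $\Phi^-$ these are exactly the $T$-weights of the tangent space $T_{wB}(G/B)$, and each weight space is one-dimensional because $\dim\mathfrak g_\beta=1$ for every root $\beta$. Hence a one-dimensional $T$-orbit with $wB$ in its closure is pinned to a single weight: it has the form $O=\{u_\gamma(c)\cdot wB\mid c\in\C^*\}$, where $\gamma=w\alpha\in\Phi$ for some $\alpha\in\Phi^-$ and $u_\gamma\colon\C\to U_\gamma\subset G$ is the root subgroup of $\gamma$ (and the map $c\mapsto u_\gamma(c)wB$ is injective because $\mathfrak g_\gamma\not\subseteq w\mathfrak b w^{-1}$). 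I would then pass to $G_\gamma=\langle U_\gamma,U_{-\gamma}\rangle$, a subgroup that is a quotient of $\mathrm{SL}_2$: since $\overline{O}\subseteq\overline{G_\gamma\cdot wB}$ and the left-hand side is one-dimensional, the right-hand side is the $\C P^1$ given by $G_\gamma/(G_\gamma\cap wBw^{-1})$, and a representative of $s_\gamma$ in $G_\gamma\cap N(T)$ interchanges its two $T$-fixed points; thus $\overline{O}^T=\{wB,\ s_\gamma wB\}$. This step is the conceptual replacement for the explicit limit $(u+cE_{u(i)j})B\to t_{u(i),u(j)}uB$ in the proof of Proposition~\ref{prop:edge_vector}.

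To conclude, I would use that the moment map attached to an invariant symplectic form on $G/B$ sends $uB$ to $u\xi$ for a fixed weight $\xi\in\mathfrak t^*$ and every Weyl group element $u$ (for $G=\mathrm{GL}_n$ with the form of Section~\ref{sec:general} one has $\xi=(1,\dots,n)$, so $u\xi=(u^{-1}(1),\dots,u^{-1}(n))$, recovering Lemma~\ref{lem:moment_map}). Then the edge is the segment from $\mu(wB)=w\xi$ to $\mu(s_\gamma wB)=s_\gamma(w\xi)$, and
\[
\mu(wB)-\mu(s_\gamma wB)=w\xi-s_\gamma(w\xi)=\langle w\xi,\gamma^\vee\rangle\,\gamma .
\]
Since $\mu(\overline O)$ is an edge and not a point, the scalar $\langle w\xi,\gamma^\vee\rangle$ is nonzero, so the edge is parallel to the root $\gamma\in\Phi$, as claimed.

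I expect the main obstacle to be the rank-one step: justifying that a one-dimensional $T$-orbit near $wB$ is governed by a single root subgroup $U_\gamma$, and that $\overline{G_\gamma\cdot wB}$ is a $T$-stable $\C P^1$ whose second fixed point is $s_\gamma wB$. In type $A$ this was precisely the matrix computation of Proposition~\ref{prop:edge_vector}; in general it is where the structure theory of semisimple groups (root subgroups, the $\mathrm{SL}_2$ attached to $\gamma$, and the action of $N(T)$ on $T$-fixed points) does the work. One should also check the harmless point that $G_\gamma\cdot wB$ is not a single point, which is automatic since the one-dimensional orbit $O$ sits inside it. Everything else is formal, and the argument works verbatim in every Lie type, recovering the statement of~\cite{BGW03_Coxeter}.
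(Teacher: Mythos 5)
Your argument is correct, but note that the paper does not actually prove Theorem~\ref{thm_torus_orbit_closure_Phi_polytope}: it only proves the type~$A$ case (Proposition~\ref{prop:edge_vector}) by an explicit matrix computation and cites \cite{BGW03_Coxeter} for the general statement. What you have written is the natural Lie-theoretic generalization of that computation, and it is sound. The two nontrivial inputs are exactly the ones you isolate: (i) every one-dimensional $T$-orbit in $G/B$ has closure a $T$-invariant $\C P^1$ joining $wB$ to $s_\gamma wB$ for a unique root $\gamma$ (the GKM property of $G/B$), and (ii) $W$-equivariance of the moment map on fixed points, $\mu(uB)=u\xi$, so that the edge vector is $w\xi-s_\gamma(w\xi)=\langle w\xi,\gamma^\vee\rangle\gamma$. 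For (i), the one point that deserves an explicit word is your claim that a one-dimensional orbit whose closure contains $wB$ is ``pinned to a single weight'': in the linear model $\mathfrak{n}^-$ this requires that the tangent weights $\{w\alpha\mid\alpha\in\Phi^-\}$ be pairwise non-proportional (a one-dimensional orbit can spread over several weight lines if the corresponding characters are positive rational multiples of one another). This holds because the root system of a semisimple group over $\C$ is reduced, so distinct roots are proportional only when opposite, and $w$ preserves this; it is worth saying. The replacement of the three elementary column operations of Proposition~\ref{prop:edge_vector} by the rank-one subgroup $G_\gamma$ and the completeness of $G_\gamma\cdot wB\cong G_\gamma/(G_\gamma\cap wBw^{-1})$ is exactly how one should see that computation invariantly, and your step (ii) recovers Lemma~\ref{lem:moment_map} when $G={\rm GL}_n$ and $\xi=(1,\dots,n)$. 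In short: a correct proof of a statement the paper leaves to the literature, consistent in spirit with the paper's type-$A$ argument.
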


\begin{remark}
Partial flag varieties also have torus actions and one may consider moment maps.
We refer the reader to~\cite{GGMS87} for the combinatorial properties of moment maps of torus orbit closures in Grassmannian. 
\end{remark}


\subsection{Coxeter matroids and Gelfand--Serganova Theorem}
\label{section:Coxeter matroids}
To study the combinatorial properties of torus orbit closures, Gelfand and Serganova introduced the notion of Coxeter matroids. 
In this subsection, we recall the definition of Coxeter matroids and the characterization of Coxeter matroids in terms of polytopes by Gelfand--Serganova. It turns out that the permutations indexing the $T$-fixed points of a $T$-orbit closure in the flag variety $G/B$ is a Coxeter matroid. 

Let $W$ be a finite Coxeter group, so generators of $W$ are prescribed. The symmetric group $\Sn$ on $[n]$ with adjacent transpositions as generators is a typical example of a Coxeter group. 
For $u\in W$, let $\le^u$ denote the \emph{$u$-shifted order}, that is,  $v\le^u w$ means $u^{-1}v\le u^{-1}w$ in the Bruhat order on $W$. Note that $\le^u$ is a partial order on $W$ with $u$ as the smallest element. 
\begin{definition} \label{defi:Coxeter_matroid}
A subset $\A$ of a finite Coxeter group $W$ is called a {\it Coxeter matroid} if it satisfies the \emph{Maximality Property}: 
for any $u\in W$, there is a unique element $v\in \A$ such that $w\le^u v$ for all $w\in \A$.
\end{definition}
We note that for a finite Coxeter group $W$ in type $A$, Coxeter matroids are also called \emph{flag matroids}. See~\cite{White_matroids}. 
For more details on flag matroids and flag matroid polytopes, we refer the reader to~\cite{ABD_matroidpolytopes, Cameron_Flag_matroids, BK_Lattice_path_matroids}.

\begin{remark} \label{rem:minimality_property}
\begin{enumerate}	
\item	
Since the multiplication by the longest element of $W$ reverses the Bruhat order on~$W$, 
the Maximality Property is equivalent to the \emph{Minimality Property}: for any $u\in W$, there exists a unique element $v\in\A$ such that $v\le^u w$ for all $w\in \A$. 
\item In fact, a Coxeter matroid is defined more generally in~\cite{BGW03_Coxeter}.  Let $W_P$ be a parabolic subgroup of a finite Coxeter group $W$.  Then the Bruhat order on $W$ induces a partial order on $W/W_P$ and a subset $\A$ of $W/W_P$ is called a Coxeter matroid if it satisfies the Maximality Property above.  A Coxeter matroid in Definition~\ref{defi:Coxeter_matroid} is the case where $W_P$ is the identity subgroup and an ordinary matroid can be regarded as the case where $W=\Sn$ and $W_P$ is a maximal parabolic subgroup (see \cite[Theorem~1.3.1]{BGW03_Coxeter}).  
\end{enumerate}
\end{remark}

\begin{example}[{\cite[Example~2.2]{LMP_retractions}}]\label{example_not_Coxeter}
Let $\A=\{213, 132\}$ be a subset of $S_3$. Since $213 \not\leq 132$ and $132 \not\leq 213$, there is no element $v \in \A$ such that $w \leq^{123} v$ for all $w \in \A$. Hence $\A$ is not a Coxeter matroid. However, one can check that $\{231,321\}$ is a Coxeter matroid of $S_3$. 
\end{example}

We recall the characterization of Coxeter matroids in terms of polytopes by Gelfand--Serganova. 
As is well-known, a finite Coxeter group $W$ can be regarded as a reflection group on a vector space~$V$, where the generators of $W$ act on $V$ as reflections (see \cite[Section 5.3]{Humphreys90}). 
Let $\Phi$ be the set of roots of $W$. 
\begin{definition}\label{def_Phi_polytope}
A convex polytope $\Delta$ in $V$ is called a \emph{$\Phi$-polytope}, or a \emph{generalized permutohedron}, if every edge of~$\Delta$ is parallel to a root in $\Phi$.
\end{definition}
We note that the notion of a $\Phi$-polytope is known as generalized permutohedron~\cite{PostnikovReinerWilliams08}. They were originally defined by Edmonds~\cite{Edmonds70}, and they are also called $M$-convex sets (see~\cite{Murota03}), and polymatroids (see~\cite{Schrijver03}).

Choose a point $\nu\in V$ which is not fixed by any reflection in $W$. 
For a subset $\A$ of~$W$, we define $\Delta_\A$ to be the convex hull of the $\A$-orbit $\{w\cdot \nu\mid w\in \A\}$ of the point $\nu$ in $V$. When $\A=W$, $\Delta_W$ is called the \textit{$W$-permutohedron}\label{W-permuto-def} (see~\cite[Section 2.4]{FominReading07_root},  \cite{HLT11_permutahedra}, and \cite{ArdilaCastilloEurPostnikov20}). Two vertices $v\cdot \nu$ and $w\cdot \nu$ of~$\Delta_W$ are joined by an edge of $\Delta_W$ if and only if $v^{-1}w$ is a generator of $W$ (see~\cite[Lemma~2.13]{FominReading07_root}). Thus 
any edge of $\Delta_W$ is parallel to a root and vice versa. Therefore, the $W$-permutohedron~$\Delta_W$ is a $\Phi$-polytope and we may say that a convex polytope $\Delta$ in $V$ is a $\Phi$-polytope if every edge of $\Delta$ is parallel to an edge of $\Delta_W$. 

However, $\Delta_\A$ is not necessarily a $\Phi$-polytope unless $\A=W$. The following is a part of the Gelfand--Serganova Theorem~\cite{GelfandSerganova87} (also see~\cite[Theorem~6.3.1]{BGW03_Coxeter}).

\begin{theorem}[{Gelfand--Serganova Theorem}]\label{thm_GS}
A subset $\A$ of a finite Coxeter group $W$ is a Coxeter matroid if and only if $\Delta_\A$ is a $\Phi$-polytope. Therefore, the permutations indexing the $T$-fixed points of a $T$-orbit closure in the flag variety $G/B$ is a Coxeter matroid by Theorem~\ref{thm_torus_orbit_closure_Phi_polytope}. 
\end{theorem}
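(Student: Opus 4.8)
The plan is to prove both implications of the equivalence ``$\A$ is a Coxeter matroid $\iff$ $\Delta_\A$ is a $\Phi$-polytope'' by exploiting the linear-functional description of faces of $\Delta_\A$, and then to deduce the last sentence by combining the equivalence with Theorem~\ref{thm_torus_orbit_closure_Phi_polytope}. The main tool is the following reformulation of the Maximality Property in terms of $V$: for a generic point $\nu$, the $u$-shifted order $\le^u$ on $W$ corresponds, after translating by $u$, to the Bruhat order, and the latter is detected by a suitable linear functional. More precisely, choosing a generic linear functional $\xi$ whose values separate the points of the $W$-orbit $W\cdot\nu$, and whose gradient lies in the dominant chamber, one has that $w \le^u w'$ in the $u$-shifted order if and only if $\langle \xi, u^{-1}w\cdot\nu\rangle \le \langle \xi, u^{-1}w'\cdot\nu\rangle$; equivalently, pulling $u$ back through, $w\le^u w'$ iff $\langle u\cdot\xi, w\cdot\nu\rangle\le\langle u\cdot\xi, w'\cdot\nu\rangle$, where $u\cdot\xi$ ranges over the (generic) points of a $W$-orbit in $V^*$ as $u$ ranges over $W$.

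For the forward direction, suppose $\A$ is a Coxeter matroid. I want to show every edge of $\Delta_\A$ is parallel to a root. Let $e$ be an edge of $\Delta_\A$ with endpoints $v\cdot\nu$ and $v'\cdot\nu$ ($v,v'\in\A$). Pick a linear functional $\eta$ that is maximized on $\Delta_\A$ exactly along $e$; perturbing $\eta$ slightly within the normal cone of $e$, I may assume $\eta$ is generic, i.e.\ of the form $u\cdot\xi$ for some $u\in W$ (here one uses that the generic functionals coming from the $W$-action, equivalently those lying in the union of the open Weyl chambers' images, are dense, and that $\Delta_W$ is a $\Phi$-polytope so its normal fan is the Coxeter fan). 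Then among the vertices of $\Delta_\A$, the functional $u\cdot\xi$ is maximized on $\{v\cdot\nu, v'\cdot\nu\}$. By the Maximality Property applied to $u$, there is a \emph{unique} $\le^u$-maximal element of $\A$; but both $v$ and $v'$ maximize $u\cdot\xi$ over $\A\cdot\nu$, hence both are $\le^u$-maximal --- contradiction, unless in fact the maximum of $u\cdot\xi$ on $\Delta_\A$ is attained along an edge of $\Delta_W$ (translated). This forces $e$ to be parallel to an edge of $\Delta_W$, i.e.\ parallel to a root. The bookkeeping to make ``perturb $\eta$ to a generic $u\cdot\xi$ while keeping it maximized on a set containing $e$'' precise is the technical heart of this direction, and this is the step I expect to be the main obstacle: one must ensure the perturbed functional still sees the edge $e$ (or a face containing it) as its maximal face, which requires choosing $\eta$ to lie in the relative interior of the normal cone of $e$ and then moving to a nearby generic point in that same cone.

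For the converse, suppose $\Delta_\A$ is a $\Phi$-polytope. Fix $u\in W$ and let $\xi_u := u\cdot\xi$ be the corresponding generic functional; let $F$ be the face of $\Delta_\A$ on which $\xi_u$ is maximized. Since $\Delta_\A$ is a $\Phi$-polytope, every edge of $F$ is parallel to a root, so the edges emanating from any vertex of $F$ point in root directions; because $\xi_u$ is generic (its value on no root is zero, as $\xi$ is generic and $W$ permutes roots), $\xi_u$ is strictly monotone along every edge of $\Delta_\A$, hence $F$ is a single vertex $v\cdot\nu$. This $v$ is then the unique maximizer of $\xi_u$ over $\A$, and by the correspondence above it is the unique $\le^u$-maximal element of $\A$; since $u$ was arbitrary, $\A$ satisfies the Maximality Property. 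Finally, the last assertion of the theorem is immediate: by Theorem~\ref{thm_torus_orbit_closure_Phi_polytope} the moment polytope $\mu(\overline{T\cdot x})$ of a torus orbit closure is a $\Phi$-polytope, and by the convexity theorem its vertex set is $\mu((\overline{T\cdot x})^T)$, with vertices indexed by $(\overline{T\cdot x})^T\subset W$; applying the equivalence just proved shows $(\overline{T\cdot x})^T$ is a Coxeter matroid.
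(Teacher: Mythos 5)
There is a genuine gap, and it sits in the very first paragraph: the claimed equivalence ``$w\le^u w'$ if and only if $\langle u\cdot\xi, w\cdot\nu\rangle\le\langle u\cdot\xi, w'\cdot\nu\rangle$'' is false. A generic linear functional induces a \emph{total} order on the orbit $W\cdot\nu$, whereas the ($u$-shifted) Bruhat order is only a partial order, so the two cannot coincide. What is true is one implication (the paper's Lemma~\ref{lemma_bruhat_order_and_prec}: $v\le^u w$ implies $v\cdot\nu\preceq^u w\cdot\nu$, hence the functional inequality); the converse holds \emph{only} when $w\cdot\nu-v\cdot\nu$ is parallel to a root (Lemma~\ref{lemm:converse}), and the paper exhibits an explicit counterexample ($v=1432$, $w=4123$) to the unrestricted converse. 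Both directions of your argument lean on the false converse. In the backward direction, knowing that $v\cdot\nu$ is the unique maximizer of $\xi_u$ on $\Delta_\A$ does not by itself give $w\le^u v$ for all $w\in\A$, because $w\cdot\nu-v\cdot\nu$ need not be root-parallel. This is repairable: walk from $w\cdot\nu$ to $v\cdot\nu$ along a $\xi_u$-increasing edge path (which exists since $\xi_u$ is generic), apply the root-parallel converse on each edge, and use transitivity; the paper instead argues by contradiction from two $\le^u$-maximal elements, examining the root-parallel edge vectors at each.

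The forward direction has a second, structural problem. A functional that is maximized along a whole edge $e$ lies in the normal cone of $e$, which has empty interior in the normal fan; a \emph{generic} functional $u\cdot\xi$ lies in an open maximal cone and is therefore maximized at a single vertex. So after your perturbation, it is no longer true that both $v$ and $v'$ maximize $u\cdot\xi$, and the intended contradiction with the uniqueness clause of the Maximality Property evaporates. The escape clause you offer (``unless the maximum is attained along an edge of $\Delta_W$'') is circular. The paper's resolution is different: it keeps $f$ constant on the edge $l$ and perturbs only within that constraint so that $f$ vanishes on no root --- possible precisely because $l$ is \emph{not} root-parallel --- then reads off the simple system $\{\tilde\alpha_i\}$ with $f(\tilde\alpha_i)>0$ and the corresponding $u$, and shows by a coefficient computation that $w\cdot\nu-v_1\cdot\nu$ has a negative coefficient for every $w\in\A\setminus\{v_1\}$, whence $v_1\not\le^u w$ by the \emph{valid} direction of Lemma~\ref{lemma_bruhat_order_and_prec}; this forces $v_1$ (and symmetrically $v_2$) to be the unique $\le^u$-maximal element, a contradiction. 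You would need to state and prove the two order-comparison lemmas and restructure both directions around them for the argument to stand. Your deduction of the final sentence from Theorem~\ref{thm_torus_orbit_closure_Phi_polytope} and the convexity theorem is fine.
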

We provide the proof for the case $W = \Sn$. We refer the reader to \cite{BGW03_Coxeter} for more details. 
Before giving it, we prepare two lemmas.
Suppose that 
\[
V = \{ (a_1,\dots,a_n) \in \R^n \mid a_1+\dots+a_n = 0\} 
\]
and $\Sn$ acts on $V$ as permuting coordinates, i.e.,  $u \cdot (a_1,\dots,a_n) = (a_{u^{-1}(1)},\dots,a_{u^{-1}(n)})$, in other words, 
\[
u \cdot \left(\sum_{i=1}^{n} a_i \mathbf{e}_i \right) = \sum_{i=1}^{n} a_i \mathbf{e}_{u(i)}.
\]
We define the ordering $\preceq^u$ on $V$ by putting $x \preceq^u y$ for $x,y \in V$ if there exists non-negative constants $c_i \geq 0$ such that
\[
y-x = \sum_{i=1}^{n-1} c_i (\mathbf{e}_{u(i)} - \mathbf{e}_{u(i+1)}),
\] 
which is equivalent to saying $u^{-1} \cdot y - u^{-1} \cdot x = \sum_{i=1}^{n-1} c_i (\mathbf{e}_i - \mathbf{e}_{i+1})$ with the same coefficients~$c_i$. This means
\begin{equation}\label{eq_x_precu_y}
x \preceq^u y \iff u^{-1}\cdot x \preceq^e u^{-1}\cdot y.
\end{equation}  
\begin{lemma}[{\cite[Lemma~6.2.3]{BGW03_Coxeter}}]\label{lemma_bruhat_order_and_prec}
Suppose that $\nu = (\nu_1,\nu_2,\dots,\nu_n) \in V$ satisfies $\nu_1 < \nu_2 < \dots < \nu_n$.
If $v \leq^u w$, then $v \cdot \nu \preceq^u w \cdot \nu$.
\end{lemma}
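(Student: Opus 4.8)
The plan is to reduce to the case $u = e$ using the equivalence \eqref{eq_x_precu_y}. Indeed, by that equivalence, $v \cdot \nu \preceq^u w \cdot \nu$ holds if and only if $u^{-1}v \cdot \nu \preceq^e u^{-1}w \cdot \nu$; and by definition of the $u$-shifted order, $v \leq^u w$ means $u^{-1}v \leq u^{-1}w$ in the Bruhat order. So setting $v' = u^{-1}v$ and $w' = u^{-1}w$, it suffices to prove: if $v' \leq w'$ in the Bruhat order on $\Sn$, then $v' \cdot \nu \preceq^e w' \cdot \nu$. From now on I write $v, w$ for $v', w'$.

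Next I would reduce to a single covering step in the Bruhat order. Since $\leq$ is the transitive closure of the covering relation, and since $\preceq^e$ is transitive (being defined by membership in the closed cone spanned by the simple roots $\mathbf{e}_i - \mathbf{e}_{i+1}$, which is closed under addition), it is enough to treat the case where $w$ covers $v$, i.e.\ $w = v \cdot t_{ab}$ for a transposition $t_{ab}$ with $a < b$ and $\ell(w) = \ell(v) + 1$. In one-line notation this means $w$ is obtained from $v$ by swapping the values in positions $a$ and $b$, with $v(a) < v(b)$ (so that length increases), and moreover no value strictly between $v(a)$ and $v(b)$ occurs in a position strictly between $a$ and $b$ (this is the standard description of Bruhat covers). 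Then I compute
\[
w \cdot \nu - v \cdot \nu = \nu_{v(a)} \mathbf{e}_b + \nu_{v(b)} \mathbf{e}_a - \nu_{v(a)}\mathbf{e}_a - \nu_{v(b)}\mathbf{e}_b = (\nu_{v(b)} - \nu_{v(a)})(\mathbf{e}_a - \mathbf{e}_b),
\]
where I used the action convention $u \cdot (\sum a_i \mathbf{e}_i) = \sum a_i \mathbf{e}_{u(i)}$ so that $v \cdot \nu = \sum_i \nu_i \mathbf{e}_{v(i)} = \sum_k \nu_{v^{-1}(k)} \mathbf{e}_k$; I will double-check this indexing carefully, since it is the one place a sign could slip. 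Since $v(a) < v(b)$ and $\nu_1 < \cdots < \nu_n$, the scalar $\nu_{v(b)} - \nu_{v(a)}$ is positive, so it remains to show that $\mathbf{e}_a - \mathbf{e}_b$ lies in the cone $\sum_{i} c_i(\mathbf{e}_i - \mathbf{e}_{i+1})$ with $c_i \geq 0$. But $\mathbf{e}_a - \mathbf{e}_b = (\mathbf{e}_a - \mathbf{e}_{a+1}) + (\mathbf{e}_{a+1} - \mathbf{e}_{a+2}) + \cdots + (\mathbf{e}_{b-1} - \mathbf{e}_b)$ since $a < b$, a telescoping sum with all coefficients equal to $1$. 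Hence $v \cdot \nu \preceq^e w \cdot \nu$, completing the covering case and therefore the lemma.

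The main obstacle, and the only subtle point, is getting the group action and the indexing conventions exactly right in the displayed computation of $w \cdot \nu - v \cdot \nu$: one must be careful whether positions or values are being permuted, and the telescoping argument crucially needs the coefficient of $\mathbf{e}_a - \mathbf{e}_b$ to come out positive, which in turn relies on the hypothesis $\nu_1 < \cdots < \nu_n$ together with the length-increasing condition $v(a) < v(b)$ in the Bruhat cover. Everything else — the reduction to $u = e$ via \eqref{eq_x_precu_y}, the reduction to covers via transitivity of $\preceq^e$, and the telescoping identity — is routine. Note that the full description of Bruhat covers (the "no intermediate value in an intermediate position" condition) is not actually needed here; only $v(a) < v(b)$ is used, which is automatic for any length-increasing transposition.
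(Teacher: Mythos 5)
Your overall strategy is sound and genuinely different from the paper's. The paper does not pass through covering relations at all: after the same reduction to $u=e$, it invokes the tableau criterion ($v\le w \Rightarrow v^{-1}\le w^{-1} \Rightarrow \{v^{-1}(1),\dots,v^{-1}(d)\}\!\!\uparrow\,\le\,\{w^{-1}(1),\dots,w^{-1}(d)\}\!\!\uparrow$ componentwise) to get $\sum_{i=1}^d\nu_{v^{-1}(i)}\le\sum_{i=1}^d\nu_{w^{-1}(i)}$ for all $d$, and then reads off all $n-1$ simple-root coefficients of $w\cdot\nu-v\cdot\nu$ at once from the telescoping identity. Your route (chain of length-increasing transpositions plus transitivity of $\preceq^e$, then telescoping a single root) trades the tableau criterion for the chain property of the Bruhat order; both are standard, and yours is arguably more elementary. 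You are also right that the full description of covers is not needed and that one may as well use any chain of length-increasing transpositions.

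However, the one computation you flagged as delicate is in fact wrong as displayed. With the paper's convention $v\cdot\nu=\sum_i\nu_i\mathbf{e}_{v(i)}=\sum_k\nu_{v^{-1}(k)}\mathbf{e}_k$ and $w=vt_{a,b}$ (so $w(a)=v(b)$, $w(b)=v(a)$, all other values unchanged), one gets
\[
w\cdot\nu-v\cdot\nu=\nu_a\bigl(\mathbf{e}_{w(a)}-\mathbf{e}_{v(a)}\bigr)+\nu_b\bigl(\mathbf{e}_{w(b)}-\mathbf{e}_{v(b)}\bigr)=(\nu_b-\nu_a)\bigl(\mathbf{e}_{v(a)}-\mathbf{e}_{v(b)}\bigr),
\]
not $(\nu_{v(b)}-\nu_{v(a)})(\mathbf{e}_a-\mathbf{e}_b)$; your formula has positions and values interchanged (it would be correct for the action $v\cdot\nu=\sum_k\nu_{v(k)}\mathbf{e}_k$). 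The argument survives only by the symmetry of the two cover conditions: $a<b$ gives $\nu_b-\nu_a>0$ and $v(a)<v(b)$ makes $\mathbf{e}_{v(a)}-\mathbf{e}_{v(b)}$ a nonnegative telescoping sum of simple roots, so the conclusion $v\cdot\nu\preceq^e w\cdot\nu$ still holds. Correct the displayed identity and the proof is complete.
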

\begin{proof}
Note that $v \leq^u w$ if and only if $u^{-1}v \leq u^{-1}w$, and $v \cdot \nu \preceq^u w \cdot \nu$ if and only if $u^{-1}v \cdot \nu \preceq^{e} u^{-1} w \cdot \nu$ (see~\eqref{eq_x_precu_y}). Hence it suffices to prove the statement in the case $u = e$. 
Suppose $v \leq w$. Then we have $v^{-1} \leq w^{-1}$ and hence 
$\{v^{-1}(1),\dots,v^{-1}(d)\}\!\!\uparrow\: \leq \{w^{-1}(1),\dots,w^{-1}(d)\}\!\!\uparrow$ for $1 \leq d \leq n$ (cf.~\cite[\S3.2]{BL20Singular}). 
This implies $\sum_{i=1}^d v^{-1}(i) \leq \sum_{i=1}^d w^{-1}(i)$ for $1 \leq d \leq n$,
so we get
\begin{equation}\label{eq_compare_Bruhat_order_and_sums}
\sum_{i=1}^d \nu_{v^{-1}(i)} \leq \sum_{i=1}^d \nu_{w^{-1}(i)} \quad \text{ for }1 \leq d \leq n.
\end{equation}

On the other hand, we have
\[
\begin{split}
w \cdot \nu - v \cdot \nu &= (\nu_{w^{-1}(1)},\dots,\nu_{w^{-1}(n)}) - (\nu_{v^{-1}(1)},\dots,\nu_{v^{-1}(n)}) \\
&=  \sum_{d=1}^{n-1}\left(\sum_{i=1}^d \nu_{w^{-1}(i)} - \sum_{i=1}^d \nu_{v^{-1}(i)} \right)(\mathbf{e}_d - \mathbf{e}_{d+1}).
\end{split}
\]
Accordingly, by~\eqref{eq_compare_Bruhat_order_and_sums}, we obtain $v \cdot \nu \preceq^e w \cdot \nu$ and we are done.
\end{proof}

The converse in Lemma~\ref{lemma_bruhat_order_and_prec} does not hold  in general.  For instance, if $v=1432$ and $w=4123$, then $v\not\le w$ but since
\[
\begin{split}
w\cdot\nu-v\cdot\nu&=(\nu_2,\nu_3,\nu_4,\nu_1)-(\nu_1,\nu_4,\nu_3,\nu_2)\\
&=(\nu_2-\nu_1)(\mathbf{e}_1-\mathbf{e}_2)+(\nu_2+\nu_3-\nu_1-\nu_4)(\mathbf{e}_2-\mathbf{e}_3)+(\nu_2-\nu_1)(\mathbf{e}_3-\mathbf{e}_4),
\end{split}
\]
we have $v\cdot\nu\preceq^e w\cdot\nu$ if $\nu_2+\nu_3\ge 0$ (note that $\nu_1+\nu_2+\nu_3+\nu_4=0$ because $\nu\in V$).  

However, the converse holds in the following special case.
\begin{lemma}[{\cite[Lemma~6.2.5]{BGW03_Coxeter}}] \label{lemm:converse}
Let $\nu$ be as in Lemma~\ref{lemma_bruhat_order_and_prec}.  
Suppose that $w\cdot \nu-v\cdot \nu$ is parellel to a root in $\Phi$.  Then $v\le^u w$ if $v\cdot\nu\preceq^u w\cdot\nu$.   
\end{lemma}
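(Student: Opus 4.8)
Here is how I would approach the proof.

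The plan is to reduce to the case $u=e$ and then exploit the rigidity forced by the hypothesis that $w\cdot\nu-v\cdot\nu$ is a scalar multiple of a \emph{single} root. For the reduction: by \eqref{eq_x_precu_y} the hypothesis $v\cdot\nu\preceq^u w\cdot\nu$ is equivalent to $(u^{-1}v)\cdot\nu\preceq^e(u^{-1}w)\cdot\nu$, and the conclusion $v\le^u w$ means by definition $u^{-1}v\le u^{-1}w$; moreover $u^{-1}$ acts linearly on $V$ sending $\mathbf{e}_i$ to $\mathbf{e}_{u^{-1}(i)}$, so $(u^{-1}w)\cdot\nu-(u^{-1}v)\cdot\nu=u^{-1}\cdot(w\cdot\nu-v\cdot\nu)$ is again parallel to a root. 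Replacing $(v,w)$ by $(u^{-1}v,u^{-1}w)$, I may therefore assume $u=e$.

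Now with $u=e$ I would write $w\cdot\nu-v\cdot\nu=c\,(\mathbf{e}_a-\mathbf{e}_b)$ for some $1\le a<b\le n$ and $c\in\R$ (the root being determined only up to sign, $c$ may have either sign). The $k$th coordinate of this vector equals $\nu_{w^{-1}(k)}-\nu_{v^{-1}(k)}$, which vanishes for $k\notin\{a,b\}$; since the entries of $\nu$ are distinct, this forces $w^{-1}(k)=v^{-1}(k)$ for all $k\ne a,b$. As $w^{-1}$ and $v^{-1}$ are permutations agreeing off $\{a,b\}$, either $w^{-1}=v^{-1}$, which gives $c=0$ and $v=w$ (nothing to prove), or $w^{-1}(a)=v^{-1}(b)$ and $w^{-1}(b)=v^{-1}(a)$, that is, $w=t_{ab}v$ where $t_{ab}$ is the transposition exchanging the values $a$ and $b$. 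In this latter case $c=\nu_{w^{-1}(a)}-\nu_{v^{-1}(a)}=\nu_{v^{-1}(b)}-\nu_{v^{-1}(a)}$; expanding $c(\mathbf{e}_a-\mathbf{e}_b)=\sum_{i=a}^{b-1}c(\mathbf{e}_i-\mathbf{e}_{i+1})$ and using that the vectors $\mathbf{e}_i-\mathbf{e}_{i+1}$ form a basis of $V$, the hypothesis $v\cdot\nu\preceq^e w\cdot\nu$ forces each of these coefficients to be nonnegative, i.e.\ $c\ge0$; since $c\ne0$ here, $c>0$, so $\nu_{v^{-1}(b)}>\nu_{v^{-1}(a)}$, and because $\nu_1<\cdots<\nu_n$ this yields $v^{-1}(a)<v^{-1}(b)$.

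Finally I would invoke the classical behaviour of the Bruhat order under multiplication by a reflection: $v$ and $t_{ab}v$ are always Bruhat-comparable, and for $a<b$ one has $v<t_{ab}v$ if and only if $v^{-1}(a)<v^{-1}(b)$ (cf.\ \cite[\S3.2]{BL20Singular} and \cite{BGW03_Coxeter}). Since the previous step produced $v^{-1}(a)<v^{-1}(b)$, this gives $v\le w$, which after undoing the reduction is exactly $v\le^u w$. I do not anticipate a substantive obstacle; the real content is the rigidity observation that "$w\cdot\nu-v\cdot\nu$ parallel to a single root" forces $w=t_{ab}v$, and the only care needed is bookkeeping — tracking whether $t_{ab}$ acts on positions or on values, and matching the sign of $c$ with the direction of the root used to define $\preceq^e$.
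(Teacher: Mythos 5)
Your proposal is correct and follows essentially the same route as the paper's proof: reduce to $u=e$, observe that the hypothesis forces $w^{-1}=v^{-1}t_{a,b}$ with $c>0$ and hence $v^{-1}(a)<v^{-1}(b)$, and conclude $v\le w$ from the standard reflection criterion for the Bruhat order. You merely spell out the coordinate-matching and sign arguments that the paper leaves implicit.
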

\begin{proof}
As noted in the proof of Lemma~\ref{lemma_bruhat_order_and_prec}, we may assume $u=e$.  If $v\cdot\nu\preceq^e w\cdot\nu$, then 
\[
w\cdot\nu-v\cdot\nu=c(\mathbf{e}_i-\mathbf{e}_j) 
\]
for some $c>0$ and $i<j$. This implies that $w^{-1}=v^{-1}t_{i,j}$ and $v^{-1}(i)<v^{-1}(j)$ since $c>0$, where $t_{i,j}$ denotes the transposition interchanging $i$ and $j$. Therefore $v^{-1}\le w^{-1}$ and hence $v\le w$.   
\end{proof}

\begin{proof}[Proof of Theorem~\ref{thm_GS} in type A]
We take  $\nu = (\nu_1,\nu_2,\dots,\nu_n) \in V$ which satisfies $\nu_1 < \nu_2 < \dots < \nu_n$.
We will prove that if $\A \subset \Sn$ is a Coxeter matroid, then $\Delta_{\A}$ is a $\Phi$-polytope.
Assume on the contrary that $\Delta_{\A}$ is not a $\Phi$-polytope. 
Then there exists an edge $l$ with vertices $v_1 \cdot \nu$ and $v_2 \cdot \nu$ that is not parallel to any root. Consider a linear function $f \colon V \to \R$ that is constant on $l$ and takes smaller values on the other points of~$\Delta_{\A}$. 

Since the edge $l$ is not parallel to any root, we may assume that $f$ is not vanishing on any root in~$\Phi$. Accordingly, there is a unique simple system of roots $\tilde{\alpha}_1,\dots,\tilde{\alpha}_{n-1}$ such that $f(\tilde{\alpha}_i) > 0$ for $i=1,\dots,n-1$. 
Moreover, since the group $\Sn$ acts transitively on the set of all simple root systems, there exists an element $u \in \Sn$ which sends $\{ \mathbf{e}_i - \mathbf{e}_{i+1} \mid i=1,\dots,n-1\}$ to $\{\tilde{\alpha}_1,\dots,\tilde{\alpha}_{n-1}\}$. 

For any $w \in \A \setminus \{v_1 \}$ we have $f(w \cdot \nu) \leq f(v_1 \cdot \nu)$ and the vector $w \cdot \nu - v_1 \cdot \nu$ has at least one negative coefficient with respect to $\{\tilde{\alpha}_1,\dots,\tilde{\alpha}_{n-1}\} = \{\mathbf{e}_{u(i)} - \mathbf{e}_{u(i+1)} \mid i =1,\dots,n-1\}$.
Therefore, we have $v_1 \not\preceq^u w$ and this implies $v_1 \not\leq^u w$ by Lemma~\ref{lemma_bruhat_order_and_prec}. 
Accordingly, by the Maximality Property, $v_1$ must be the maximal element with respect to the $u$-shifted order. However, by the similar argument, one can show that $v_2$ is also the maximal element with respect to the $u$-shifted order. This contradicts the Maximality Property.

Now we prove that if $\Delta_\A$ is a $\Phi$-polytope, then $\A\subset \Sn$ is a Coxeter matroid.   Assume on the contrary that $\A$ is not a Coxeter matroid.  Then there are (at least) two maximal elements $v_1$ and $v_2$ in $\A$ with respect to $\le^u$ for some $u\in\Sn$.  Let $x_1\cdot \nu,\dots,x_\ell\cdot \nu$ be the vertices of $\Delta_\A$ adjacent to the vertex $v_1\cdot\nu$ and $y_1\cdot \nu,\dots,y_m\cdot \nu$ the vertices adjacent to the vertex $v_2\cdot\nu$. Set 
\begin{equation} \label{eq:xp_yq}
\text{$\beta_p=x_p\cdot \nu-v_1\cdot\nu$ $(p=1,\dots,\ell)$\quad and\quad $\gamma_q=y_q\cdot \nu-v_2\cdot\nu$ $(q=1,\dots,m)$.}
\end{equation}
Since $\Delta_\A$ is a $\Phi$-polytope, each $\beta_p$ or $\gamma_q$ is parallel to a root in $\Phi$, and therefore either $\preceq^u  \boldsymbol{0}$ or $\succeq^u \boldsymbol{0}$.

Suppose that $\beta_p\preceq^u \boldsymbol{0}$ for all $p$.  Then $\Delta_\A$ is contained in the cone $\{\chi\in V\mid \chi\preceq^u v_1\cdot \nu\}$. 
Therefore $z\cdot \nu \preceq^u v_1\cdot\nu$ for all $z \in \A$, in particular, $v_2\cdot\nu\preceq^u v_1\cdot\nu$. 
Similarly, if $\gamma_q\preceq^u \boldsymbol{0}$ for all $q$, then 
$v_1\cdot\nu\preceq^u v_2\cdot\nu$. 
Hence, if $\beta_p\preceq^u  \boldsymbol{0}$ and $\gamma_q\preceq^u  \boldsymbol{0}$ for all $p$ and $q$, then $v_1\cdot\nu=v_2\cdot\nu$ and hence $v_1=v_2$, a contradiction.

Thus, some $\beta_p$ or $\gamma_q$ is $\succeq^u  \boldsymbol{0}$.  Suppose that $\beta_p\succeq^u  \boldsymbol{0}$ for some $p$, i.e., 
$$x_p\cdot\nu   \succeq^u v_1\cdot \nu.$$
Then $v_1\le^u x_p$ by Lemma~\ref{lemm:converse}.  This contradicts the maximality of $v_1$ because $v_1\neq x_p$ and $x_p\in \A$.  The same argument works when $\gamma_q\succeq^u \boldsymbol{0}$ for some $q$, so we deduce a contradiction in any case. Thus $\A$ is a Coxeter matroid. 
\end{proof}

\begin{example}
\label{example_polytope_and_matroid}
Using Theorem~\ref{thm_GS}, one can check that subsets $\{123,213,132,312\}$ and $\{231,321\}$ of $S_3$ are Coxeter matroids while a subset $\{213,132\}$ of $S_3$ is not a Coxeter matroid since the edge joining the vertices $s_1\cdot\nu$ and $s_2\cdot\nu$ is not parallel to any root in $\Phi=\{\pm\alpha_1,\pm\alpha_2,\pm(\alpha_1+\alpha_2)\}$ (see Figure~\ref{fig:M} and Example~\ref{example_not_Coxeter}). 
\end{example}

\begin{figure}[htb]
\centering
\begin{subfigure}[b]{.35\textwidth}
\centering
\begin{tikzpicture}[scale=.8]
\fill[fill=blue!30] (0,1)--(-{sqrt(3)}/2,1/2)--(-{sqrt(3)}/2,-1/2)--(0,-1)--cycle;
\draw[thick,gray,->] (0,0)--({sqrt(3)},-1);
\draw ({sqrt(3)},-1) node[right]{$\alpha_1$};
\draw[thick,gray,->] (0,0)--(0,2);
\draw (0,2) node[above]{$\alpha_2$};
\draw (2,0) node[right]{$s_2$};
\draw (2,0) node{$\updownarrow$};
\draw (1, {sqrt(3)+2/10}) node[right]{$s_1$};
\draw[<->] ({1+sqrt(3)/10},{sqrt(3)-1/10})--({1-sqrt(3)/10},{sqrt(3)+1/10});
\draw[thick] (1,{sqrt(3)})--(-1,-{sqrt(3)});
\draw[thick] (-1,{sqrt(3)})--(1,-{sqrt(3)});
\draw[thick] (2,0)--(-2,0);
\draw (0, 1.3) node{\footnotesize{$s_2s_1\cdot\nu$}};
\draw (0, -1.3) node{\footnotesize{$s_1\cdot\nu$}};
\draw (-1.2,-0.7) node{\footnotesize{$\nu$}};
\draw (-1.3,0.7) node{\footnotesize{$s_2\cdot\nu$}};
\filldraw[blue] (0,1) circle(2pt);
\filldraw[blue] (0,-1) circle(2pt);
\filldraw[blue] (-{sqrt(3)}/2,1/2) circle(2pt);
\filldraw[blue] (-{sqrt(3)}/2,-1/2) circle(2pt);
\draw[thick,blue] (0,1)--(-{sqrt(3)}/2,1/2)--(-{sqrt(3)}/2,-1/2)--(0,-1)--cycle;
\end{tikzpicture}
\subcaption{$\A=\{123,213,132,312\}$}\label{fig_M2}
\end{subfigure}~
\begin{subfigure}[b]{.35\textwidth}
\centering
\begin{tikzpicture}[scale=.8]
\draw[thick,gray,->] (0,0)--({sqrt(3)},-1);
\draw ({sqrt(3)},-1) node[right]{$\alpha_1$};
\draw[thick,gray,->] (0,0)--(0,2);
\draw (0,2) node[above]{$\alpha_2$};
\draw (2,0) node[right]{$s_2$};
\draw (2,0) node{$\updownarrow$};
\draw (1, {sqrt(3)+2/10}) node[right]{$s_1$};
\draw[<->] ({1+sqrt(3)/10},{sqrt(3)-1/10})--({1-sqrt(3)/10},{sqrt(3)+1/10});
\draw[thick] (1,{sqrt(3)})--(-1,-{sqrt(3)});
\draw[thick] (-1,{sqrt(3)})--(1,-{sqrt(3)});
\draw[thick] (2,0)--(-2,0);
\draw (1.2,-0.7) node{\footnotesize{$s_1s_2\cdot\nu$}};
\draw (1.3,0.7) node{\footnotesize{$s_1s_2s_1\cdot\nu$}};
\filldraw[blue] ({sqrt(3)/2},1/2) circle(2pt);
\filldraw[blue] ({sqrt(3)/2},-1/2) circle(2pt);
\draw[thick,blue] ({sqrt(3)/2},1/2)--({sqrt(3)/2},-1/2);
\end{tikzpicture}
\subcaption{$\A=\{231,321\}$}\label{fig_M3}
\end{subfigure}~
\begin{subfigure}[b]{.3\textwidth}
\centering
\begin{tikzpicture}[scale=.8] 
\draw[thick,gray,->] (0,0)--({sqrt(3)},-1);
\draw ({sqrt(3)},-1) node[right]{$\alpha_1$};
\draw[thick,gray,->] (0,0)--(0,2);
\draw (0,2) node[above]{$\alpha_2$};
\draw (2,0) node[right]{$s_2$};
\draw (2,0) node{$\updownarrow$};
\draw (1, {sqrt(3)+2/10}) node[right]{$s_1$};
\draw[<->] ({1+sqrt(3)/10},{sqrt(3)-1/10})--({1-sqrt(3)/10},{sqrt(3)+1/10});
\draw[thick] (1,{sqrt(3)})--(-1,-{sqrt(3)});
\draw[thick] (-1,{sqrt(3)})--(1,-{sqrt(3)});
\draw[thick] (2,0)--(-2,0);
\draw (0, -1.3) node{\footnotesize{$s_1\cdot\nu$}};
\draw (-1.3,0.7) node{\footnotesize{$s_2\cdot\nu$}};
\filldraw[red] (0,-1) circle(2pt);
\filldraw[red] (-{sqrt(3)}/2,1/2) circle(2pt);
\draw[thick,red] (-{sqrt(3)}/2,1/2)--(0,-1);
\end{tikzpicture}
\subcaption{$\A=\{213,132\}$}\label{fig_M1}
\end{subfigure}
\caption{Examples of $\Delta_{\A}$} \label{fig:M}
\end{figure}

By Theorem~\ref{thm_GS}, torus orbit closures provide Coxeter matroids. A Coxeter matroid~$\A$ of $W$ is said to be \emph{representable} (over $\C$) if $\A$ can be realized as the $T$-fixed point set of a $T$-orbit closure in the flag variety $G/B$, that is,  there exists a point $x\in G/B$ such that $\A=(\overline{T\cdot x})^T$, where $(G/B)^T$ is identified with $W$. See~\cite[\S 1.7.5, \S 3.6.2, \S 3.10.3]{BGW03_Coxeter}. 
A computer check shows that any Coxeter matroid of $\Sn$ is representable when $n\le 4$. On the other hand, there exists a non-representable Coxeter matroid of $\Sn$ when $n=7$, which we explain in the following.

Let 
\[
A = \{\{1,2,4\}, \{1,3,5\}, \{1,6,7\}, \{2,3,6\}, \{2,5,7\}, \{3,4,7\}, \{4,5,6\}\}
\] 
be a collection of subsets of $\{1,2,\ldots,7\}$. Each subset corresponds to one of the six lines or the circle in Figure~\ref{fig_Fano}. Define a subset $\A$ of $S_7$ by
\[
\A = \{w \in S_7 \mid \{w(1), w(2), w(3)\} \notin A\}.
\] 
Then $\A$ is a Coxeter matroid (see \cite{LMP_retractions}).

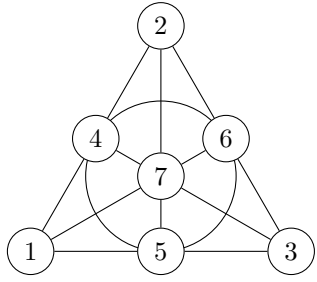
\begin{figure}[H]
\begin{tikzpicture}
\draw (0:0) circle (1cm);
\node[fill=white, draw, circle] at (210:2) (1) {$1$};
\node[fill=white, draw, circle] at (90:2) (2) {$2$};
\node[fill=white, draw, circle] at (330:2) (3) {$3$};
\node[fill=white, draw, circle] at (150:1) (4) {$4$};
\node[fill=white, draw, circle] at (270:1) (5) {$5$};
\node[fill=white, draw, circle] at (30:1) (6) {$6$} ;
\node[fill=white, draw, circle] at (0:0) (7) {$7$};
\draw (1)--(5);
\draw (5)--(3);
\draw (3)--(6);
\draw (6)--(2);
\draw (2)--(4);
\draw (4)--(1);
\draw(1)--(7);
\draw (7)--(6);
\draw(5)--(7);
\draw (7)--(2);
\draw (3)--(7);
\draw (7)--(4);
\end{tikzpicture}
\caption{The Fano plane}\label{fig_Fano}
\end{figure}

Suppose that $\A$ is the $T$-fixed point set of a $T$-orbit closure in ${\rm GL}_7(\C)/B$. Then, by Proposition~\ref{proposition_GS_fixed_points}, there is an element $x$ in ${\rm GL}_7(\C)/B$ such that 
\[
\A = \{w\in S_7\mid \{w(1),\ldots,w(d)\}\!\!\uparrow\: \in I_d(x)\text{ for all }1\leq d\leq 7\}. 
\]
However, it is shown in~\cite[\S16]{Whitney} and easy to check that there is no $7\times 3$ matrix of rank~$3$ whose three rows $v_{j_1}, v_{j_2}, v_{j_3}$ are linearly independent if and only if $\{j_1, j_2, j_3\} \notin A$. Therefore, $\A$ cannot be obtained as the $T$-fixed point set of any $T$-orbit closure in ${\rm GL}_7(\C)/B$.


\subsection{Description of the fan of a torus orbit closure}\label{subsec:Description of the fan of a torus orbit closure}

For a point $x\in G/B$, the closure $Y\colonequals \overline{T\cdot x}$ of its $T$-orbit $T\cdot x$ is a (possibly non-normal) toric variety in $G/B$. 
Although Proposition~\ref{proposition_GS_fixed_points} provides how to find the fixed point set $Y^T$ and the convexity theorem leads us to the moment polytope $\mu(Y)$, it is hard to describe the fan of $Y$ from $\mu(Y)$.
In this subsection and the next subsection, we identify $(G/B)^T=W$ for simplicity, so $Y^T\subset W$.
We define a retraction~$\gret_Y$ (called geometric retraction) of the Weyl group $W$ of $G$ onto the $T$-fixed point set $Y^T \subset W$ of~$Y$ by using the Orbit-Cone correspondence in toric variety and describe the fan of $Y$ using the retraction.

We think of the Lie algebra $\mathfrak{t}_{\R}$ of the maximal compact torus of $T$ as $\Hom(\C^{\ast},T)\otimes \R$ and the lattice $\mathfrak{t}_\Z$ of $\mathfrak{t}_{\R}$ as $\Hom(\C^{\ast},T)$, where $\Hom(\C^*,T)$ denotes the group of algebraic homomorphisms from $\C^*=\C\backslash\{0\}$ to $T$, i.e., the one parameter subgroup of $T$ (see~\cite[\S 1]{CLS11Toric}). The vector space $\mathfrak{t}_{\R}^{\ast}$ dual to $\mathfrak{t}_{\R}$ can be thought of as $\Hom(T,\C^{\ast})\otimes \R$ and the set $\Phi$ of roots of $G$ is a finite subset of $\Hom(T,\C^{\ast})=\mathfrak{t}_\Z^{\ast}$. The Weyl group $W$ of $G$ acts on $\mathfrak{t}_{\R}$ as the adjoint action and on its dual space $\mathfrak{t}_{\R}^{\ast}$ as the coadjoint action, i.e.,  
\[
\text{$(w\cdot f)(x)\colonequals f(\Ad_{w^{-1}}(x))$\quad for $w\in W$, $f\in \mathfrak{t}_{\R}^{\ast}$ and $x\in \mathfrak{t}_{\R}$.}
\]
The Borel subgroup $B$ determines the set $\Phi^+$ of positive roots and the tangent space $T_{uB}(G/B)$ of~$G/B$ at a $T$-fixed point $u\in W$ decomposes as follows: 
\begin{equation} \label{eq:tangent}
\quad T_{uB}(G/B)=\bigoplus_{\alpha\in \Phi^+}\mathfrak{g}_{-u(\alpha)}\quad \text{as $T$-modules for $u\in W$,}
\end{equation}
where $\mathfrak{g}_{\alpha}$ denotes the root space of $\alpha$, which is the eigenspace of $\mathfrak{g}$ for $\alpha\in \Phi$ (see, for example,~\cite[\S3]{GHZ06_GKM}). 
We recall that for $G = \GL_n(\C)$, the set $\Phi$ of roots is given by 
\[
\Phi = \{ \mathbf{e}_i - \mathbf{e}_j \mid 1 \leq i \neq j \leq n\}.
\]
Here, $\mathbf{e}_i \in \mathfrak{t}_{\R}^{\ast}$ is the linear functional sending $(a_1,
\dots,a_n) \in \mathfrak{t}_{\R}$ to $a_i$ for $i=1,\dots,n$. 
For each root $\mathbf{e}_i - \mathbf{e}_j$, the root space $\mathfrak{g}_{\mathbf{e}_i - \mathbf{e}_j}$ is spanned by the matrix $E_{ij}$, which is the $n\times n$ matrix with $1$ at the $(i,j)$ entry and $0$ otherwise. Moreover, the set $\Phi^+ = \{ \mathbf{e}_i - \mathbf{e}_j \mid 1 \leq i < j \leq n\}$ of positive roots corresponds to the Borel subgroup $B$ consisting of upper triangular matrices in $G$. The set~$\Delta$ of simple roots is $\Delta = \{\mathbf{e}_i - \mathbf{e}_{i+1} \mid 1 \leq i \leq n-1\}$. See, for example,~\cite{FultonHarris} for more details.

For each $u\in W$, define 
\begin{equation*} \label{eq:cone}
C(u) \colonequals \{ \lambda\in \mathfrak{t}_{\R}\mid \langle u(\alpha),\lambda\rangle \le 0 \quad\text{for all simple roots $\alpha$}\}
\end{equation*}
where $\langle \ ,\ \rangle$ denotes the natural pairing between $\mathfrak{t}_{\R}^{\ast}$ and $\mathfrak{t}_{\R}$.
The interiors $\Int(C(u))$ of the cones $C(u)$ above form the Weyl chambers. 
The identity in~\eqref{eq:tangent} implies that \lee{$T$-weights on the tangent spaces at fixed points are given by roots. This implies that }
if $\lambda \in \mathfrak{t}_\Z$ satisfies $\langle \alpha,\lambda\rangle\not=0$ for any $\alpha\in \Phi$, then 
\begin{equation} \label{eq:lambda}
(G/B)^{\lambda(\C^{\ast})}=(G/B)^T.
\end{equation}

\begin{example}
Let $G = \GL_3(\C)$ and $u = 231 \in S_3$. Following the convention that we adopt, we have two simple roots, $\alpha_1 = \mathbf{e}_1 - \mathbf{e}_2$ and $\alpha_2 = \mathbf{e}_2 - \mathbf{e}_3$. 
Then we have 
\[
u(\alpha_1) = \mathbf{e}_{u(1)} - \mathbf{e}_{u(2)} = \mathbf{e}_2 - \mathbf{e}_3,\quad 
u(\alpha_2) = \mathbf{e}_{u(2)} - \mathbf{e}_{u(3)} = \mathbf{e}_3 - \mathbf{e}_1.
\]
Accordingly, we obtain
\[
\begin{split}
C(u) &= \{\lambda = (\lambda_1, \lambda_2, \lambda_3) \mid \langle \mathbf{e}_2 - \mathbf{e}_3, \lambda \rangle \leq 0, \langle  \mathbf{e}_3 - \mathbf{e}_1, \lambda \rangle \leq 0 \} \\
&= \{ \lambda = (\lambda_1, \lambda_2, \lambda_3) \mid \lambda_2 \leq \lambda_3 \leq \lambda_1 \}.
\end{split}
\]
\end{example}

For each $w\in W=(G/B)^T$, we choose an element $\lambda_w\in \Int (C(w))\cap \mathfrak{t}_\Z$ and define 
\[
S_w\colonequals \left\{ x\in G/B \,\,\middle|\,\, \lim_{t\to 0} \lambda_w(t)\cdot x=w \right\},
\]
which is independent of the choice of $\lambda_w$. Then $S_w$ is a $T$-invariant affine open subset of $G/B$ and isomorphic to $T_{wB}(G/B)$ as a $T$-variety (see~\cite{BB73_some}). 

\begin{proposition}[{\cite[Proposition~3.1]{LMP_retractions}}]\label{lemm:limit_Y}
Let $x$ be a point of $G/B$ and $Y=\overline{T\cdot x}$. For any $u\in W$ and $\lambda_u \in \Int (C(u))\cap \mathfrak{t}_\Z$, the limit point ${\lim_{t\to 0}\lambda_u(t)\cdot x}$ is an element of~$Y^T$ depending only on $u$ and $Y$. Furthermore, if $u\in Y^T$, then $\lim_{t\to 0}\lambda_u(t)\cdot x=u$.
\end{proposition}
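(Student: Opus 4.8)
\emph{Overview.} I would prove the three claims separately. Write $z\colonequals\lim_{t\to 0}\lambda_u(t)\cdot x$ once it is known to exist. The first step is that $z$ exists and lies in $Y^T$; the second is that $z$ is unchanged when $\lambda_u$ is replaced by another integral point of $\Int(C(u))$, or $x$ by another point with $\overline{T\cdot x}=Y$; the third is the identity $z=u$ when $u\in Y^T$.

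\emph{Existence and membership in $Y^T$.} Since $G/B$ is projective, its closed subvariety $Y=\overline{T\cdot x}$ is complete; as $\lambda_u(\C^\ast)\cdot x\subseteq T\cdot x\subseteq Y$, the morphism $\C^\ast\to Y$, $t\mapsto\lambda_u(t)\cdot x$, extends to a morphism $\C\to Y$ whose value at $0$ is the desired limit $z\in Y$. The usual reparametrization $t\mapsto st$ gives $\lambda_u(s)\cdot z=z$ for all $s\in\C^\ast$, so $z\in(G/B)^{\lambda_u(\C^\ast)}$, and this set equals $(G/B)^T$ by~\eqref{eq:lambda} because $\lambda_u\in\Int(C(u))\cap\mathfrak{t}_\Z$. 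Hence $z\in Y\cap(G/B)^T=Y^T$.

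\emph{Dependence only on $u$ and $Y$.} If $\overline{T\cdot x'}=Y$ then $T\cdot x'=T\cdot x$ is the open dense orbit, so $x'=s\cdot x$ for some $s\in T$; since $T$ is abelian, $\lambda_u(t)\cdot x'=s\cdot(\lambda_u(t)\cdot x)$, and letting $t\to 0$ (using that $z$ is $T$-fixed) shows the limit for $x'$ equals $z$. To see that $z$ does not depend on the choice of $\lambda_u\in\Int(C(u))\cap\mathfrak{t}_\Z$: the point $z$ is the unique $v\in W$ for which $x$ lies in the Bia{\l}ynicki--Birula cell $\{y\in G/B\mid\lim_{t\to 0}\lambda_u(t)\cdot y=v\}$ of the $\lambda_u(\C^\ast)$-action, and by~\eqref{eq:tangent} the weights of that action on $T_v(G/B)$ are the integers $\langle-v(\alpha),\lambda_u\rangle$, $\alpha\in\Phi^+$. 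Because $\Int(C(u))$ is an open Weyl chamber, $\langle\beta,\lambda_u\rangle\neq 0$ for every root $\beta$ and every $\lambda_u\in\Int(C(u))$; hence the signs of all these weights, and therefore the whole Bia{\l}ynicki--Birula decomposition of $G/B$ (in particular the cell containing $x$), are the same for every integral $\lambda_u\in\Int(C(u))$, cf.~\cite{BB73_some} and the properties of the $S_w$ recalled above. This is the step I expect to be the main obstacle, since it is exactly here that one must pass from a single one-parameter subgroup to all of them, using the regularity of points of an open chamber; the other two steps are formal consequences of completeness of $Y$ and density of $T\cdot x$.

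\emph{The case $u\in Y^T$.} If $\alpha=\sum_i c_i\alpha_i\in\Phi^+$ (so $c_i\ge 0$, not all zero), then for $\lambda_u\in\Int(C(u))$ we have $\langle u(\alpha),\lambda_u\rangle=\sum_i c_i\langle u(\alpha_i),\lambda_u\rangle<0$; thus, by~\eqref{eq:tangent}, $\lambda_u(\C^\ast)$ acts on $T_u(G/B)=\bigoplus_{\alpha\in\Phi^+}\mathfrak{g}_{-u(\alpha)}$ with all weights $\langle-u(\alpha),\lambda_u\rangle>0$. Consequently the $T$-invariant affine open chart $S_u\cong T_u(G/B)$ around $u$ recalled above is precisely $\{y\in G/B\mid\lim_{t\to 0}\lambda_u(t)\cdot y=u\}$, and it is open in $G/B$. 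Now $S_u\cap Y$ is a nonempty (it contains $u\in Y^T$) $T$-invariant open subset of the irreducible variety $Y$; as $T\cdot x$ is dense in $Y$ it meets $S_u\cap Y$, and being $T$-invariant $S_u\cap Y$ then contains all of $T\cdot x$. In particular $x\in S_u$, so $\lim_{t\to 0}\lambda_u(t)\cdot x=u$, as required.
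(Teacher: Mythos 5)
Your argument is correct and follows essentially the same route as the paper: completeness of $Y$ plus regularity of $\lambda_u$ (via \eqref{eq:lambda}) for membership in $Y^T$, the sign pattern of the weights in \eqref{eq:tangent} for independence of the choice of $\lambda_u$, and openness and $T$-invariance of $S_u$ together with density of $T\cdot x$ for the final claim. The only differences are cosmetic: the paper localizes the independence argument at the limit point $w$ itself (first observing $x\in S_w\cong T_w(G/B)$ and then comparing weight signs there) rather than invoking constancy of the whole Bia{\l}ynicki--Birula decomposition over $\Int(C(u))$, and it does not record the easy independence of the choice of $x$ with $\overline{T\cdot x}=Y$ that you include.
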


\begin{proof}
Since $Y$ is closed, the limit point ${\lim_{t\to 0}\lambda_u(t)\cdot x}$ belongs to $Y$ and clearly remains fixed under the action of $\lambda_u(\C^{\ast})$. Therefore, the limit point is indeed in $Y^T$ by~\eqref{eq:lambda}. Denote the limit point by $w$. Since $\lambda_u(t)\cdot x \in S_{w}$ and $S_{w}$ is $T$-invariant, $x$ belongs to $S_{w}$. Moreover, because $S_w$ is isomorphic to $T_{wB}(G/B)$ as a $T$-variety, it follows from \eqref{eq:tangent} that ${\lim_{t\to 0}\lambda_u(t)\cdot x}$ is independent of the choice of $\lambda_u\in \Int (C(u))\cap \mathfrak{t}_\Z$. 

If $u\in Y^T$, then $x$ belongs to $S_u$ because otherwise $Y=\overline{T\cdot x}$ does not contain $u$ (note that $S_u$ is a $T$-invariant open subset of $G/B$). Therefore, we obtain $\lim_{t\to 0}\lambda_u(t)\cdot x=u$.
\end{proof}

By Proposition~\ref{lemm:limit_Y}, the map $\gret_{Y}\colon W\to Y^T\subset W$ defined by 
\begin{equation} \label{eq:limit_Y}
\gret_{Y}(u)\colonequals \lim_{t\to 0}\lambda_u(t)\cdot x
\end{equation}
is a retraction of $W$ onto $Y^T$, which we call a \emph{geometric retraction}. 

\begin{example}
\label{example_fan_Y_SL3}
Take a point $x= \begin{pmatrix}
\alpha & 1 & 0 \\ \beta & 0 & 1 \\ 1 & 0 & 0
\end{pmatrix}B\in {\rm GL}_3(\C)/B$, where $\alpha,\beta\in \C^*$. 
For $Y = \overline{T \cdot x}$, we have $Y^T=\{123,132,213,312\}$ as observed in Example~\ref{example_fixed_points_3}.
Choose an element $\lambda = (\lambda_1, \lambda_2, \lambda_3) \in \Int(C(231)) \cap \mathfrak{t}_{\Z}$, that is,  $\lambda_2 < \lambda_3 < \lambda_1$. 
Since 
\begin{equation}\label{eq_computation_x_in_Auw}
x= \begin{pmatrix}
\alpha\beta^{-1} & 1 & 0 \\ 1 & 0 & 1 \\ \beta^{-1} & 0 & 0
\end{pmatrix}B 
= \begin{pmatrix}
\alpha\beta^{-1} & 1 & -\alpha\beta^{-1} \\ 1 & 0 & 0 \\ \beta^{-1} & 0 & -\beta^{-1}
\end{pmatrix}B=\begin{pmatrix}\alpha\beta^{-1}&1&0\\
1&0&0\\
\beta^{-1}&0&1\end{pmatrix}B,
\end{equation}
we have 
\[
\begin{split}
\lambda(t) \cdot x
&= \begin{pmatrix}
t^{\lambda_1} & 0 & 0 \\
0 & t^{\lambda_2} & 0 \\
0 & 0 & t^{\lambda_3}
\end{pmatrix} \cdot
\begin{pmatrix}
\alpha\beta^{-1} & 1 & 0 \\ 1 & 0 & 0 \\ \beta^{-1} & 0 & 1
\end{pmatrix}B \\
&= \begin{pmatrix}
t^{\lambda_1}\alpha\beta^{-1} & t^{\lambda_1} & 0 \\ t^{\lambda_2} & 0 & 0\\ t^{\lambda_3}\beta^{-1} & 0 & t^{\lambda_3}
\end{pmatrix}B \\
&= \begin{pmatrix}
t^{\lambda_1 - \lambda_2}\alpha\beta^{-1} & 1 & 0 \\
1 & 0 & 0 \\
t^{\lambda_3 - \lambda_2}\beta^{-1} & 0 & 1
\end{pmatrix}B 
\stackrel{t \to 0}{\longrightarrow}
\begin{pmatrix}
0 & 1 & 0 \\ 1 & 0 & 0 \\ 0 & 0 & 1
\end{pmatrix}B\qquad(\because \lambda_2<\lambda_3<\lambda_1).
\end{split}
\]
Therefore, we get $\gret_Y(231)=213$. By a similar computation, we obtain Table~\ref{table_ug_1}.
\begin{table}[htb]
\centering
\begin{tabular}{c|cccccc}
\toprule 
$u$ & $123$ & $213$ & $231$ & $321$ & $312$ & $132$ \\
\midrule 
$\gret_Y(u)$ for $Y$ & $123$ & $213$ & $213$ & $312$ & $312$ & $132$ \\
\bottomrule
\end{tabular}

\smallskip
\caption{$\gret_Y(u)$ for $Y = \overline{T\cdot x}$ in Example~\ref{example_fan_Y_SL3}.}\label{table_ug_1}
\end{table}
\end{example}

\vspace{-.5cm}

The following corollary follows from Proposition~\ref{lemm:limit_Y} and the Orbit-Cone correspondence of toric varieties (see \cite[Proposition 3.2.2 \& Theorem 3.A.5]{CLS11Toric}).

\begin{corollary}[{\cite[Corollary 3.7]{LM2020}}] \label{coro:fan_of_Y}
The maximal cone $C_Y(y)$ corresponding to $y\in Y^T$ in the fan of \textup{(}the normalization of\textup{)} $Y$ is given by $\bigcup_{u\in(\gret_Y)^{-1}(y)}C(u)$. 
\end{corollary}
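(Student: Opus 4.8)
The plan is to deduce the corollary from Proposition~\ref{lemm:limit_Y} together with the Orbit--Cone correspondence for normal toric varieties. Since $Y=\overline{T\cdot x}$ is closed in the projective variety $G/B$, its normalization $\pi\colon\widetilde Y\to Y$ is a complete normal toric variety, and $\pi$ is $T$-equivariant, finite, and restricts to an isomorphism over the dense orbit $T\cdot x$. Let $N$ be the cocharacter lattice of the torus acting effectively on $\widetilde Y$, let $q\colon\mathfrak t\to N_\R$ be the canonical surjection (its kernel being the lineality space common to all the Weyl chambers $C(u)$, coming from the subtorus of $T$ acting trivially on $G/B$), and let $\Sigma\subset N_\R$ be the fan of $\widetilde Y$, that is, the normal fan of $\mu(Y)$. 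First I would record that $\pi$ restricts to a bijection between $\widetilde Y^T$ and $Y^T$ --- for instance because both $\mu\circ\pi$ and $\mu$ carry these fixed-point sets bijectively onto the vertex set of $\mu(Y)$, and the vertices of $\mu(Y)$ are in bijection with the maximal (top-dimensional) cones of $\Sigma$. Thus, for $y\in Y^T$, the maximal cone of $\Sigma$ attached to $y$ by the Orbit--Cone correspondence, say $\sigma_y$, is well defined, and the maximal cone of the fan of $Y$ corresponding to $y$ is $C_Y(y)=q^{-1}(\sigma_y)$.

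The core step is a local limit computation. Fix $u\in W$ and $\lambda_u\in\Int(C(u))\cap\mathfrak t_\Z$, let $x_0\in\widetilde Y$ be the point of the open orbit with $\pi(x_0)=x$, and set $\overline{\lambda}_u\colonequals q(\lambda_u)\in N$. By Proposition~\ref{lemm:limit_Y}, $\lim_{t\to 0}\lambda_u(t)\cdot x=\gret_Y(u)\in Y^T$. On the other hand, since $\widetilde Y$ is complete all the relevant limits exist, so by continuity of $\pi$ this limit equals $\pi\bigl(\lim_{t\to 0}\overline{\lambda}_u(t)\cdot x_0\bigr)$, while the Orbit--Cone correspondence (\cite[\S3.2]{CLS11Toric}) identifies $\lim_{t\to 0}\overline{\lambda}_u(t)\cdot x_0$ with a point of the orbit $O(\sigma)$ of $\widetilde Y$ attached to the unique cone $\sigma\in\Sigma$ whose relative interior contains $\overline{\lambda}_u$. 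Because $\gret_Y(u)\in Y^T$ and $\pi^{-1}(Y^T)=\widetilde Y^T$, this limit is $T$-fixed, so $O(\sigma)$ is a single point; hence $\sigma$ is top-dimensional and must equal $\sigma_{\gret_Y(u)}$. Therefore $\lambda_u\in q^{-1}\bigl(\Int\sigma_{\gret_Y(u)}\bigr)=\Int\bigl(C_Y(\gret_Y(u))\bigr)$. Since $\lambda_u$ was an arbitrary lattice point of the full-dimensional rational cone $\Int(C(u))$, such points are dense in $\Int(C(u))$, and $C_Y(\gret_Y(u))$ is closed, we conclude $\Int(C(u))\subseteq C_Y(\gret_Y(u))$, and hence $C(u)=\overline{\Int(C(u))}\subseteq C_Y(\gret_Y(u))$.

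Taking the union over $(\gret_Y)^{-1}(y)$ gives $\bigcup_{u\in(\gret_Y)^{-1}(y)}C(u)\subseteq C_Y(y)$. For the opposite inclusion I would use that $\{C(u)\}_{u\in W}$ and $\{C_Y(y)\}_{y\in Y^T}$ are the top-dimensional cones of two complete fans on $\mathfrak t$ (the Weyl fan and $q^{-1}(\Sigma)$), so each covers $\mathfrak t$ with pairwise non-overlapping interiors: if some $p\in C_Y(y)$ lay outside the closed set $\bigcup_{u\in(\gret_Y)^{-1}(y)}C(u)$, one could choose such a $p$ in $\Int(C_Y(y))$, but $p$ lies in $C(u')$ for some $u'$, necessarily with $\gret_Y(u')\neq y$; then $p\in C_Y(\gret_Y(u'))$ by the previous paragraph, so $p\in C_Y(\gret_Y(u'))\cap C_Y(y)$, a proper face of $C_Y(y)$ --- contradicting $p\in\Int(C_Y(y))$. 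This yields $C_Y(y)=\bigcup_{u\in(\gret_Y)^{-1}(y)}C(u)$. The step demanding the most care is not any one computation but the bookkeeping that bridges the concrete limit computations in $G/B$ and the abstract Orbit--Cone picture on $\widetilde Y$: pinning down the quotient $q$ and the scalar lineality, verifying that $\pi$ restricts to a bijection $\widetilde Y^T\to Y^T$ (so that ``$C_Y(y)$'' is meaningful), and checking that $\pi$ intertwines the two limits via continuity and properness. Once this framework is in place, the density passage from lattice points to the whole cone and the concluding covering argument are routine.
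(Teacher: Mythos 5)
Your proposal is correct and follows exactly the route the paper intends: the paper offers no separate argument for this corollary beyond saying it follows from Proposition~\ref{lemm:limit_Y} and the Orbit--Cone correspondence, and what you have written is a careful expansion of precisely that sketch, including the normalization and quotient-lattice bookkeeping that the paper relegates to Remark~\ref{rmk_normal}. One small inaccuracy: lattice points are \emph{not} dense in the open chamber $\Int(C(u))$ (no lattice point of the open positive orthant in $\R^2$ lies within $1/2$ of the point $(1/2,1/2)$, for instance); the correct one-line repair is that \emph{rational} points are dense in $\Int(C(u))$, every rational point there has a positive integer multiple which is a lattice point still lying in $\Int(C(u))$, and $C_Y(\gret_Y(u))$ is a closed \emph{cone}, hence contains a point whenever it contains a positive multiple of that point. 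With that fix the passage from lattice points to all of $C(u)$, and the concluding covering argument for the reverse inclusion, go through as you describe.
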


\begin{remark}[{\cite[Remark~3.4]{LMP_retractions}}]\label{rmk_normal}
\begin{enumerate}
\item Since the action of $T$ on $Y$ is not effective, the ambient space of the fan of $Y$ is the quotient of $\mathfrak{t}_{\R}$ by the subspace $\Hom(\C^{\ast}, T_Y) \otimes \R$, where $T_Y$ is the toral subgroup of $T$ which fixes $Y$ pointwise. Therefore, to be precise, we need to project the cones $C_Y(y)$ to this quotient space in the corollary above. 

\item When $G$ is of type $A_n$, $D_4$, or $B_2$, every $T$-orbit closure in $G/B$ is normal (\cite[Proposition~4.8]{ca-ku2000}) while when $G = G_2$, non-normal torus orbit closures exist (see~\cite[Example~6.1]{ca-ku2000} and references therein). 
\end{enumerate}
\end{remark}

\begin{example}\label{exam:w312}
Let $Y$ be the torus orbit closure in Example~\ref{example_fan_Y_SL3}. Corollary~\ref{coro:fan_of_Y} together with Table~\ref{table_ug_1} shows that the fan of $Y$ consists of four maximal cones:
\[
C({123}),\quad C({132}),\quad C({213}) \cup C({231}),\quad C({312}) \cup C({321}).
\]
See Figure~\ref{fig_fan_312}. 
{Here, the ambient space of the fan of $Y$ is the quotient space $\R^3/\R(1,1,1)$ and the identification $\R^3/\R(1,1,1)  \stackrel{\cong}{\longrightarrow} \R^2$ given by $[a_1,a_2,a_3] \mapsto (a_1-a_2, a_2-a_3)$ is used in Figures~\ref{fig_fan_Fl3} and~\ref{fig_fan_312}. For instance, the cone $C(213)$ consists of points $[a_1,a_2,a_3]$ satisfying $a_2 \leq a_1 \leq a_3$, so that it corresponds to the set of points $\{(b_1,b_2)\in\R^2 \mid b_1 \geq 0, b_1+b_2 \leq 0\}$ under the identification.
}

\end{example}
\begin{figure}[ht]
\begin{minipage}{0.55\textwidth}
\centering
\begin{tikzpicture}
\draw(-2,0)--(2,0);
\draw(0,-2)--(0,2);
\draw(-2,2)--(2,-2);
\node at (-1,-1) {\small $C(123)$};
\node at (0.6,-1.3) {\small $C(213)$};
\node at (1.3,-0.5) {\small $C(231)$};
\node at (1,1) {\small $C(321)$};
\node at (-0.6,1.3) {\small $C(312)$};
\node at (-1.3,0.5) {\small $C(132)$};

\end{tikzpicture}
\caption{Cones $\Cv$ for $v \in {S}_3$} 
\label{fig_fan_Fl3}
\end{minipage}~
\begin{minipage}{0.45\textwidth}
\centering
\begin{tikzpicture}

\filldraw[fill=yellow!40!white, draw opacity = 0] (-2,2)--(2,2)--(2,0)--(0,0)--cycle;
\filldraw[fill=green!10!white, draw opacity = 0] (-2,2)--(0,0)--(-2,0)--cycle;
\filldraw[fill=blue!10!white, draw opacity = 0] (-2,0)--(0,0)--(0,-2)--(-2,-2)--cycle;
\filldraw[fill=purple!10!white, draw opacity = 0] (0,0)--(2,0)--(2,-2)--(0,-2)--cycle;

\draw (-2,0)--(2,0);
\draw (0,0)--(0,-2);
\draw (-2,2)--(0,0);
\draw[dotted] (0,2)--(0,0);
\draw[dotted] (0,0)--(2,-2); 
\node at (-1,-1) {\small $C(123)$};
\node at (1.3,-1) {\small $C(213) \cup C(231)$};
\node at (0.5,1) {\small $C(312) \cup C(321)$};
\node at (-1.3,0.5) {\small $C(132)$};

\end{tikzpicture}
\caption{The fan of $Y$}
\label{fig_fan_312}
\end{minipage}
\end{figure}

Now we reformulate the geometric retraction $\gret_Y$ using a Bruhat decomposition of $G/B$. It makes the meaning of the geometric retraction more transparent. 

For $u \in W$, we set $B_u \colonequals u B^{-}u^{-1}$, where $B^-$ is the opposite Borel subgroup $w_0Bw_0$ and $w_0$ is the longest element of $W$. The Lie algebra of the Borel subgroup $B_u$ is given as follows:
\begin{equation*}\label{eq_Lie_Bu}
\Lie(B_u) = \mathfrak{t} \oplus \bigoplus_{\alpha \in \Phi^+} \mathfrak{g}_{-u(\alpha)}. 
\end{equation*}

With respect to $B_u$, we obtain the following Bruhat decomposition:
\begin{equation*} \label{eq:decomposition_of_G/B}
G/B = \bigsqcup_{w \in W} B_u \cdot w B/B,
\end{equation*}
which follows from the Bruhat decomposition $G/B = \bigsqcup_{w \in W} BwB/B$ with respect to $B$ (see~\cite[\S8.3]{Springer_linear_algebraic_groups} for Bruhat decompositions). 
Indeed, since $G=uw_0G$ and $G/B=\bigsqcup_{w\in W}Bw_0u^{-1}wB/B$, we have
\[
G/B=uw_0G/B=\bigsqcup_{w\in W}uw_0Bw_0u^{-1}wB/B=\bigsqcup_{w\in W}B_uwB/B.
\]
We set 
\begin{equation} \label{eq:Auw}
A^u_w \colonequals B_u \cdot w B/B=u\cdot B^{-}u^{-1}wB/B.
\end{equation}
Note that $A^{w_0}_w$ is the Schubert cell $BwB/B$ and $A^e_w$ is the opposite Schubert cell $B^- w B/B$, where $e$ denotes the identity element of $W$. Similarly to (opposite) Schubert cells, we have 
\begin{equation}\label{eq_closure_of_Auw}
\overline{A^u_w} = \bigsqcup_{ w \leq^u v \leq^u u w_0} A^u_v \quad\text{and hence $(\overline{A^u_w})^T = \{  v\in W \mid w \leq^u v \leq^u uw_0\}$. }
\end{equation}
Indeed, since $\overline{B^- wB/B}$ is the disjoint union of opposite Schubert cells indexed by elements $z\in W$ satisfying $w \leq z \leq w_0$, it follows from \eqref{eq:Auw} that 
\[
\begin{split}
\overline{A^u_w} &= u \cdot \overline{B^{-}u^{-1}wB/B} \\
&= u \cdot \bigsqcup_{ u^{-1}w \leq z \leq w_0} B^- z B/B \\
&= \bigsqcup_{u^{-1}w\le u^{-1}(uz) \le u^{-1}(uw_0)}u\cdot B^-u^{-1}(uz)B/B\\
&= \bigsqcup_{w \leq^u v \leq^u uw_0} A^u_v,
\end{split}
\]
which shows \eqref{eq_closure_of_Auw}.

\begin{proposition}[{\cite[Proposition~3.5]{LMP_retractions}}]\label{prop1}
Let $x$ be a point of $G/B$ and $Y=\overline{T\cdot x}$. Then $x \in A^u_w$ if and only if $\gret_Y(u) = w$.
\end{proposition}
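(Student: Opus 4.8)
The plan is to prove that, for a fixed $u\in W$ and any choice of $\lambda_u\in\Int(C(u))\cap\mathfrak{t}_{\Z}$, the Bruhat cell $A^u_w$ coincides with the attracting cell $Z_w\colonequals\{\,y\in G/B\mid \lim_{t\to0}\lambda_u(t)\cdot y=w\,\}$ of the $T$-fixed point $w$ for the $\C^{\ast}$-action on $G/B$ through $\lambda_u$. Granting this, the proposition follows immediately: by the definition \eqref{eq:limit_Y} of the geometric retraction, $\gret_Y(u)=\lim_{t\to0}\lambda_u(t)\cdot x$, so $x\in A^u_w=Z_w$ if and only if $\gret_Y(u)=w$.

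First I would show that $\lambda_u(t)$ contracts the unipotent radical $U_u$ of $B_u=uB^{-}u^{-1}$ as $t\to0$. Since $\lambda_u\in\Int(C(u))$, we have $\langle u(\alpha),\lambda_u\rangle<0$ for every simple root $\alpha$, hence $\langle u(\beta),\lambda_u\rangle<0$ for every $\beta\in\Phi^{+}$ by expanding $\beta$ as a non-negative combination of simple roots. By the description of $\Lie(B_u)$ recorded just before the statement, $\Lie(U_u)=\bigoplus_{\alpha\in\Phi^{+}}\mathfrak{g}_{-u(\alpha)}$, and $\Ad_{\lambda_u(t)}$ acts on the summand $\mathfrak{g}_{-u(\alpha)}$ by the scalar $t^{-\langle u(\alpha),\lambda_u\rangle}$, whose exponent is positive. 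Since $\exp\colon\Lie(U_u)\to U_u$ is an isomorphism of varieties ($U_u$ being unipotent), it follows that $\lim_{t\to0}\lambda_u(t)\,n\,\lambda_u(t)^{-1}=e$ for every $n\in U_u$.

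Next I would compute the limit along $A^u_w$. Using the Levi decomposition $B_u=TU_u$ together with $w^{-1}Tw=T\subseteq B$, one rewrites $A^u_w=B_u\,wB/B=U_u\,wB/B$ (cf.\ \eqref{eq:Auw}), so every point of $A^u_w$ has the form $n\,wB/B$ with $n\in U_u$. For such a point,
\[
\lambda_u(t)\cdot(n\,wB/B)=\bigl(\lambda_u(t)\,n\,\lambda_u(t)^{-1}\bigr)\,\lambda_u(t)\,wB/B=\bigl(\lambda_u(t)\,n\,\lambda_u(t)^{-1}\bigr)\,wB/B,
\]
the last equality because $w^{-1}\lambda_u(t)w\in T\subseteq B$; letting $t\to0$ and using the contraction from the previous paragraph, the limit is $wB/B=w$. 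Hence $A^u_w\subseteq Z_w$ for every $w$. Now $\{A^u_w\}_{w\in W}$ is the Bruhat decomposition of $G/B$ with respect to $B_u$, and $\{Z_w\}_{w\in W}$ is also a partition of $G/B$: each point of $G/B$ has a well-defined limit under $\lambda_u(t)$ as $t\to0$ since $G/B$ is complete, and that limit is fixed by $\lambda_u(\C^{\ast})$, hence lies in $(G/B)^{\lambda_u(\C^{\ast})}=(G/B)^T=W$ by \eqref{eq:lambda}. If $y\in Z_w$, then $y$ lies in a unique $A^u_v$, whence $y\in A^u_v\subseteq Z_v$ and so $v=w$; thus $Z_w\subseteq A^u_w$, and together with the reverse inclusion already proved, $A^u_w=Z_w$. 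This finishes the argument.

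The only step carrying genuine content is the contraction $\lim_{t\to0}\lambda_u(t)\,n\,\lambda_u(t)^{-1}=e$ on $U_u$, which is precisely where the hypothesis $\lambda_u\in\Int(C(u))$ is used, via the positivity of the exponents $-\langle u(\alpha),\lambda_u\rangle$ for $\alpha\in\Phi^{+}$. Everything else is bookkeeping: the Levi decomposition $B_u=TU_u$, the identity $w^{-1}Tw=T$, and the observation that the $B_u$-Bruhat stratification and the Bia{\l}ynicki--Birula stratification of the complete variety $G/B$ attached to $\lambda_u$ are two $W$-indexed partitions, one contained in the other cell by cell, hence equal.
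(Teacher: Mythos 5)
Your proof is correct, and it follows exactly the route the surrounding text sets up (the paper itself only cites the result): identifying each $A^u_w=U_u\,wB/B$ with the Bia{\l}ynicki--Birula attracting cell of $w$ for $\lambda_u\in\Int(C(u))$ via the contraction of $U_u$, and then matching the two $W$-indexed partitions of $G/B$. The key positivity of the exponents $-\langle u(\alpha),\lambda_u\rangle$ and the use of \eqref{eq:lambda} to see that the limit lands in $(G/B)^T$ are precisely the points the paper's definitions of $C(u)$, $S_w$, and $\Lie(B_u)$ are designed to deliver, so no gap remains.
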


\begin{example} 
Take $u = 231\in S_3$. Then the corresponding Borel subgroup $B_u$ is given by 
\[
B_u = u B^- u^{-1} 
= \begin{pmatrix}
0 & 0 & 1 \\1 & 0 & 0 \\ 0 & 1 & 0
\end{pmatrix}
\begin{pmatrix} \star & 0 & 0 \\ * & \star & 0 \\ * & * & \star \end{pmatrix}
\begin{pmatrix} 0 & 1 & 0 \\ 0 & 0 & 1 \\ 1 & 0 & 0 \end{pmatrix}
= \begin{pmatrix}
\star & * & * \\ 0 & \star & 0 \\ 0 & * & \star
\end{pmatrix}.
\]
Here, $* \in \C$ and $\star \in \C^{\ast}$.
For $w = 213\in S_3$, the cell $A^u_w=A^{231}_{213}$ is given by 
\[
B_u w B/B = \begin{pmatrix}
\star & * & * \\ 0 & \star & 0 \\ 0 & * & \star
\end{pmatrix} \begin{pmatrix}
0 & 1 & 0 \\ 1 & 0 & 0 \\ 0 & 0 & 1
\end{pmatrix} B 
= \begin{pmatrix}
* & \star & * \\ \star & 0 & 0 \\ * & 0 & \star 
\end{pmatrix}B
= \begin{pmatrix}
* & 1 & 0 \\ 1 & 0 & 0 \\ * & 0 & 1
\end{pmatrix} B.
\]
Therefore, 
$x = \begin{pmatrix} \alpha& 1 & 0 \\ \beta & 0 & 1 \\ 1 & 0 & 0 \end{pmatrix}B\in {\rm GL}_3(\C)/B$ 
in Example~\ref{example_fixed_points_3} is contained in $A^{231}_{213}$ by \eqref{eq_computation_x_in_Auw}. 
On the other hand, $\gret_Y(231) = 213$ for $Y = \overline{T\cdot x}$ as observed in Example~\ref{example_fixed_points_3}. 
\end{example}


\subsection{Retractions and metric on finite Coxeter groups}\label{subsec:Retractions and metric on finite Coxeter groups}

For a $T$-orbit closure $Y=\overline{T\cdot x}$ in the flag variety $G/B$, we defined the geometric retraction 
\[
\gret_Y\colon W\to Y^T\subset W
\]
in \eqref{eq:limit_Y} by looking at the limit points of the trajectory of $x$ by $1$-parameter subgroups. Note that $Y^T$ is a Coxeter matroid by Theorem~\ref{thm_GS}. 

In general, 
for a Coxeter matroid $\A$ of a Coxeter group $W$, there is a unique $\le^u$-minimal element in $\A$ for any element $u$ of $W$ (see Remark~\ref{rem:minimality_property}).
If we denote the minimal element by $\mret_{\A}(u)$, then we obtain a map
\[
\mret_{\A}\colon W\to \A\subset W.
\]
One can easily check that $\mret_{\A}$ is a retraction on $W$ and we call $\mret_{\A}$ a \emph{matroid retraction}.\footnote{In \cite{BGW03_Coxeter}, a map $\mu\colon W\to W$ satisfying the inequality $\mu(u)\leq^v\mu(v)$ for all $u,v\in \A$ is called a \emph{matroid map}. For a Coxeter matroid $\A$, the map $W\to W$ sending $u$ to the $\leq^u$-maximal element of $\A$ is a matroid map. Note that our matroid retraction satisfies the opposite inequality $\mret_\A(u)\geq^v \mret_\A(v)$ for all $u,v\in\A$.}

\begin{theorem}[{\cite[Theorem~3.7]{LMP_retractions}}]\label{theo:torus-coxeter}
Let $G$ be a semisimple algebraic group over $\C$, $B$ a Borel subgroup of $G$, and $T$ a maximal torus of $G$ contained in $B$. Then $\gret_Y=\mret_{Y^T}$ for any $T$-orbit closure $Y$ in $G/B$. 
\end{theorem}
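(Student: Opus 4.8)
The plan is to prove the two retractions agree pointwise, and the quickest route is through the Bruhat‑cell reformulation of $\gret_Y$ provided by Proposition~\ref{prop1} together with the description of $(\overline{A^u_w})^T$ as a $\le^u$‑interval in \eqref{eq_closure_of_Auw}. The idea is: locate the point $x$ in a single cell $A^u_w$ with $w=\gret_Y(u)$, take closures, pass to $T$‑fixed points, and observe that this forces $w$ to be the $\le^u$‑minimum of $Y^T$, which is exactly $\mret_{Y^T}(u)$.

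Concretely, I would proceed as follows. Fix $u\in W$ and set $w\colonequals\gret_Y(u)$; by Proposition~\ref{lemm:limit_Y} we have $w\in Y^T$. By Proposition~\ref{prop1}, $x\in A^u_w$. Since $T\subseteq uB^-u^{-1}=B_u$, the cell $A^u_w=B_u\cdot wB/B$ and hence its closure $\overline{A^u_w}$ are $T$‑invariant, so $Y=\overline{T\cdot x}\subseteq\overline{A^u_w}$. Taking $T$‑fixed points and invoking \eqref{eq_closure_of_Auw},
\[
Y^T\subseteq(\overline{A^u_w})^T=\{\,v\in W\mid w\le^u v\le^u uw_0\,\},
\]
so in particular $w\le^u v$ for every $v\in Y^T$. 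Together with $w\in Y^T$, this says that $w$ is a $\le^u$‑lower bound of $Y^T$ lying in $Y^T$, hence the (necessarily unique, by antisymmetry of $\le^u$) $\le^u$‑minimal element of $Y^T$. Since $Y^T$ is a Coxeter matroid by Theorem~\ref{thm_GS}, this minimal element is by definition $\mret_{Y^T}(u)$. Therefore $\gret_Y(u)=w=\mret_{Y^T}(u)$, and as $u\in W$ was arbitrary, $\gret_Y=\mret_{Y^T}$.

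Most of the substantive work is already packaged into earlier results: Proposition~\ref{prop1} (the equivalence $x\in A^u_w\iff\gret_Y(u)=w$) and the computation of $(\overline{A^u_w})^T$ in \eqref{eq_closure_of_Auw}. The only points demanding a little care are that $\gret_Y(u)$ genuinely lies in $Y^T$ — so that it is the minimum and not merely a lower bound — which is precisely the last assertion of Proposition~\ref{lemm:limit_Y}, and the $T$‑invariance of $\overline{A^u_w}$, which holds because $B_u\supseteq T$. I expect no further obstacle; if one wished to avoid the Bruhat reformulation entirely, an alternative is to argue directly from Proposition~\ref{proposition_GS_fixed_points}, identifying $\lim_{t\to 0}\lambda_u(t)\cdot x$ with the fixed point $w\in Y^T$ whose flag $(\{w(1),\dots,w(d)\}\!\!\uparrow)_d$ is coordinatewise $\le^u$‑minimal among those realized by $Y^T$, and then matching this with the matroid retraction — but the cell argument above is cleaner and self‑contained given the stated lemmas.
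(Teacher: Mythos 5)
Your argument is correct and is essentially the paper's own proof: both locate $x$ in the cell $A^u_w$ via Proposition~\ref{prop1}, conclude $Y\subseteq\overline{A^u_w}$, pass to fixed points using \eqref{eq_closure_of_Auw}, and identify $w=\gret_Y(u)$ as the $\le^u$-minimum of $Y^T$. Your explicit remarks that $w\in Y^T$ (from Proposition~\ref{lemm:limit_Y}) and that $\overline{A^u_w}$ is $T$-invariant are points the paper leaves implicit, but the route is the same.
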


\begin{proof}
Let $x$ be a point of $G/B$ such that $Y=\overline{T\cdot x}$. For $u\in W$, let $\gret_{Y}(u)=w$. By Proposition~\ref{prop1}, $T\cdot x \subseteq A^{u}_{w}$ and hence $Y=\overline{T\cdot x}\subseteq \overline{A^{u}_{w}}$. Therefore, we have 
\[
Y^T\subset \overline{A^u_w}^T=\{v\in W \mid w\le^u v \le^u uw_0\},
\] 
where the equation above follows from \eqref{eq_closure_of_Auw}. Hence, $w$ is the unique $\le^u$-minimal element in $Y^T$, proving $\gret_Y=\mret_{Y^{T}}$.
\end{proof}

A finite Coxeter group $W$ has a metric $d$ defined by 
\begin{equation*} \label{eq:metric}
d(v,w)\colonequals \ell(v^{-1}w)=\ell(w^{-1}v) \qquad\text{for $v, w\in W$}
\end{equation*}
where $\ell(\ )$ denotes the length function on $W$. Note that the metric $d$ is invariant under the left multiplication of $W$. For a subset $\A$ of $W$, we define 
\begin{equation*} 
d(v,\A)\colonequals \min\{d(v,w)\mid w\in \A\}.
\end{equation*}

The metric $d$ can be interpreted in terms of the $W$-permutohedron $\Delta_W$ as follows. 
As mentioned in Subsection~\ref{section:Coxeter matroids}, the $W$-permutohedron $\Delta_W$ is the convex hull of the $W$-orbit of a generic point $\nu$ in the vector space $V$, 
and two vertices $v\cdot \nu$ and $w\cdot \nu$ are joined by an edge in $\Delta_W$ if and only if 
$v^{-1}w$ is a generator of $W$. 
Therefore, if we identify $w\cdot \nu$ with $w\in W$, then the distance $d(v,w)$ can be thought of as the minimum length of the paths in $\Delta_W$ connecting $v$ and $w$ through edges of $\Delta_W$. In other words, the metric $d$ is the graph metric on the graph obtained as the $1$-skeleton of $\Delta_W$. 
For example, if $W=S_4$ and $(v,w)=(1243,3214)$, then $v^{-1}w=4213$ and hence $d(v,w)=\ell(4213)=4$. Figure~\ref{fig:length} shows a minimal-length path joining $v$ and $w$ in $\Delta_{S_4}$. In Figure~\ref{fig:length}, the vertex $v\cdot\nu$ is labeled by $\vertex{v}$ for each $v\in S_4$. 
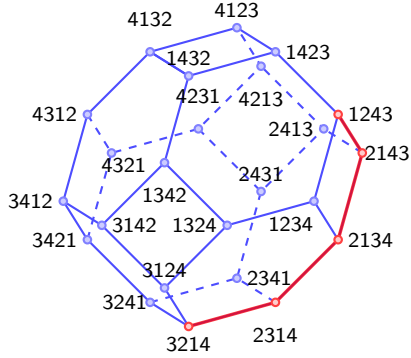
\begin{figure}[H]
\begin{tikzpicture}[scale=5]
\tikzset{every node/.style={draw=blue!50,fill=blue!20, circle, thick, inner sep=1pt,font=\footnotesize}}
\tikzset{red node/.style = {fill=red!20!white, draw=red!75!white}}
\tikzset{red line/.style = {line width=0.3ex, red, nearly opaque}}

\coordinate (4231) at (1/3, 1/2, 1/6); 
\coordinate (2413) at (2/3, 1/2, 1/6); 
\coordinate (1243) at (5/6, 2/3, 1/2); 
\coordinate (2143) at (5/6, 1/2, 1/3); 
\coordinate (2134) at (5/6, 1/3, 1/2); 
\coordinate (1423) at (2/3, 5/6, 1/2); 
\coordinate (3142) at (1/3, 1/2, 5/6); 
\coordinate (1324) at (2/3, 1/2, 5/6); 
\coordinate (1234) at (5/6, 1/2, 2/3); 
\coordinate (1342) at (1/2, 2/3, 5/6); 
\coordinate (4123) at (1/2, 5/6, 1/3); 
\coordinate (4213) at (1/2, 2/3, 1/6); 
\coordinate (1432) at (1/2, 5/6, 2/3); 
\coordinate (4132) at (1/3, 5/6, 1/2); 
\coordinate (2314) at (2/3, 1/6, 1/2); 
\coordinate (3214) at (1/2, 1/6, 2/3); 
\coordinate (3124) at (1/2, 1/3, 5/6); 
\coordinate (3241) at (1/3, 1/6, 1/2); 
\coordinate (2341) at (1/2, 1/6, 1/3); 
\coordinate (2431) at (1/2, 1/3, 1/6);
\coordinate (3421) at (1/6, 1/3, 1/2); 
\coordinate (4321) at (1/6, 1/2, 1/3); 
\coordinate (3412) at (1/6, 1/2, 2/3); 
\coordinate (4312) at (1/6, 2/3, 1/2);

\draw[thick, draw=blue!70] (1432)--(4132)--(4123)--(1423)--cycle;
\draw[thick, draw=blue!70] (4132)--(1432)--(1342)--(3142)--(3412)--(4312)--(4132);
\draw[dashed, thick, draw=blue!70] (4312)--(4321)--(4231)--(4213)--(4123);
\draw[dashed, thick, draw=blue!70] (3421)--(4321);
\draw[thick, draw=blue!70] (1342)--(1324)--(3124)--(3142);
\draw[dashed, thick, draw=blue!70] (4231)--(2431)--(2413)--(4213);
\draw[thick, draw=blue!70] (1423)--(1243)--(2143);
\draw[dashed, thick, draw=blue!70] (2143)--(2413);
\draw[thick, draw=blue!70] (1324)--(1234)--(1243);
\draw[thick, draw=blue!70] (1234)--(2134)--(2143);
\draw[thick, draw=blue!70] (2314)--(2134);
\draw[dashed, thick, draw=blue!70] (2314)--(2341)--(2431);
\draw[thick, draw=blue!70] (3412)--(3421)--(3241)--(3214)--(3124);
\draw[thick, draw=blue!70] (3214)--(2314);
\draw[dashed,thick, draw=blue!70] (3241)--(2341);

\draw[red line] (1243)--(2143)--(2134)--(2314)--(3214);

\node [label = {[label distance = 0cm] below left:$\vertex{1234}$}] at (1234) {};
\node[label = {[label distance = 0cm]right:$\vertex{1243}$}, red node] at (1243) {};
\node[label = {[label distance = 0cm]left:$\vertex{1324}$}] at (1324) {};
\node[label = {[label distance = 0cm]below:$\vertex{1342}$}] at (1342) {};
\node [label = {[label distance = 0cm]right:$\vertex{1423}$}] at (1423) {};
\node[label = {[label distance = -0.2cm]above:$\vertex{1432}$}] at (1432) {};
\node [label = {[label distance = 0cm]right:$\vertex{2134}$}, red node] at (2134) {};
\node[label = {[label distance = -0.1cm]right:$\vertex{2143}$}, red node] at (2143) {};
\node[label = {[label distance = 0cm]below:$\vertex{2314}$}, red node] at (2314) {};
\node[label = {[label distance = 0cm]right:$\vertex{2341}$}] at (2341) {};
\node[label = {[label distance = 0cm]left:$\vertex{2413}$}] at (2413) {};
\node[label = {[label distance = -0.2cm]above:$\vertex{2431}$}] at (2431) {};
\node[label = {[label distance = -0.2cm]above:$\vertex{3124}$}] at (3124) {};
\node[label = {[label distance = 0cm]right:$\vertex{3142}$}] at (3142) {};
\node[label = {[label distance = -0.2cm]below:$\vertex{3214}$}, red node] at (3214) {};
\node [label = {[label distance = -0.1cm]left:$\vertex{3241}$}] at (3241) {};
\node[label = {[label distance = 0cm] left:$\vertex{3412}$}] at (3412) {};
\node[label = {[label distance = 0cm]left:$\vertex{3421}$}] at (3421) {};
\node[label = {[label distance = -0.2cm]above:$\vertex{4123}$}] at (4123) {};
\node [label = {[label distance = 0cm]above:$\vertex{4132}$}] at (4132) {};
\node[label = {[label distance = 0cm]below:$\vertex{4213}$}] at (4213) {};
\node[label = {[label distance = 0cm]above:$\vertex{4231}$}] at (4231) {};
\node[label = {[label distance = 0cm]left:$\vertex{4312}$}] at (4312) {};
\node [label = {[label distance = -0.2cm]below right:$\vertex{4321}$}] at (4321) {};

\end{tikzpicture}
\caption{A minimal-length path between $\vertex{1243}$ and $\vertex{3214}$ in $\Delta_{S_4}$}\label{fig:length}
\end{figure}

For every subset $\A$ of $W$ and every $u\in W$, there exists an element $y\in \A$ such that $d(u,y)=d(u,\A)$ since $\A$ is a finite set, but such an element $y$ may not be unique. However, the following proposition says that such an element $y$ is unique if $\A$ is a Coxeter matroid. 

\begin{proposition}[{\cite[Proposition~2.6]{LMP_retractions}}] \label{prop_mret_minimal_distance}
If $\A$ is a Coxeter matroid of a finite Coxeter group~$W$, then $q=\mret_{\A}(u)$ is the unique element satisfying $d(u,q)=d(u,\A)$. 
\end{proposition}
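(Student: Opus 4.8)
The plan is to relate the matroid retraction $\mret_\A(u)$ to the graph metric on the $1$-skeleton of $\Delta_W$ by exploiting the fact, already recorded in the excerpt, that a path of length $d(u,w)$ from $u$ to $w$ corresponds to a chain in the $u$-shifted Bruhat order from $u$ to $w$. Concretely: for $v,w \in W$ one always has $d(u,w) \le d(u,v) + d(v,w)$, but a sharper statement holds when $v \le^u w$, namely that $\ell(u^{-1}w) = \ell(u^{-1}v) + \ell(v^{-1}w)$, equivalently $d(u,w) = d(u,v) + d(v,w)$; this is the standard additivity of length along Bruhat chains (every saturated chain in $[u^{-1}v, u^{-1}w]$ has length $\ell(u^{-1}w) - \ell(u^{-1}v)$), transported by left multiplication by $u$. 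I would state this as the first lemma. The second ingredient is the dual additivity: for any $w \in W$, $d(u,w) = \ell(u^{-1}w)$, and if we set $q := \mret_\A(u)$ so that $q \le^u z$ for all $z \in \A$, then for each $z \in \A$ we want $d(u,q) \le d(u,z)$ with equality only when $z=q$.

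First I would show $d(u,q) \le d(u,z)$ for all $z\in\A$, which gives $d(u,q) = d(u,\A)$. Since $q \le^u z$, the additivity lemma yields $d(u,z) = d(u,q) + d(q,z) \ge d(u,q)$, with equality iff $d(q,z)=0$, i.e. $z=q$. Wait — this already does everything at once: it shows both that $q$ realizes the minimal distance and that it is the unique such element. So the core of the argument is: (i) $q \le^u z$ for all $z \in \A$ by definition of the matroid retraction via the Minimality Property (Remark~\ref{rem:minimality_property}); (ii) $v \le^u w$ implies $d(u,w) = d(u,v) + d(v,w)$; (iii) therefore $d(u,z) = d(u,q) + d(q,z)$ for every $z \in \A$, so $d(u,q) = d(u,\A)$ and the minimizer is unique.

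The one subtlety worth spelling out is step (ii): why does $v \le^u w$ force $d(u,w) = d(u,v) + d(v,w)$? After left-translating by $u^{-1}$ (using left-invariance of $d$ and the definition $v \le^u w \iff u^{-1}v \le u^{-1}w$) this reduces to: $a \le b$ in Bruhat order implies $\ell(b) = \ell(a) + \ell(a^{-1}b)$. This is a classical fact — for instance it follows from the subword characterization of Bruhat order together with the exchange property, or from the lifting property — and I would cite it (e.g. from \cite{BL20Singular} or \cite{Humphreys90}) rather than reprove it. The geometric translation is the one already given before the statement: a minimal-length edge-path in $\Delta_W$ from $u$ to $w$ corresponds to a saturated chain $u = z_0 \lessdot^u z_1 \lessdot^u \cdots \lessdot^u z_k = w$ when $u \le^u w$, and concatenating the chain $u \to q$ with a minimal chain $q \to z$ produces a path from $u$ to $z$ of length $d(u,q) + d(q,z)$ which must be minimal since any path from $u$ to $z$ has length $\ge \ell(u^{-1}z) = \ell(u^{-1}q) + \ell(q^{-1}z)$.

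The main obstacle is essentially bookkeeping rather than conceptual: making sure the length-additivity is applied in the correctly shifted order, i.e. that the relevant inequality $q \le^u z$ (and not $z \le^u q$, or $q \le z$ in the unshifted order) is the one coming out of the Minimality Property, and that $d(q,z)$ — a quantity defined using the \emph{left-invariant} metric, hence insensitive to the shift — is what appears. There is also a mild point that $d(q,z) = \ell(q^{-1}z)$ need not equal $\ell(z^{-1}q)$ symbolically but does as numbers, which is already noted in the definition of $d$; and that $q \le^u z$ gives $\ell(q^{-1}z)$ as the unshifted length $\ell((u^{-1}q)^{-1}(u^{-1}z))$, matching. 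Once these identifications are in place the proof is a two-line application of additivity, so I would keep the exposition short and front-load the statement and citation of the length-additivity lemma.
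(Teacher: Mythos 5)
Your step (ii) is false, and it is the load-bearing step. You claim that $v\le^u w$ forces $d(u,w)=d(u,v)+d(v,w)$, equivalently that $a\le b$ in Bruhat order implies $\ell(b)=\ell(a)+\ell(a^{-1}b)$. That identity characterizes the \emph{weak} order, not the Bruhat order. Counterexample in $W=S_3$: take $a=s_2$ and $b=s_1s_2$. Then $a\le b$ in Bruhat order (as $s_2$ is a subword of the reduced word $s_1s_2$), but $a^{-1}b=s_2s_1s_2=w_0$ has length $3$, so $\ell(a)+\ell(a^{-1}b)=4\ne 2=\ell(b)$; in metric terms, $d(e,b)=2<d(e,a)+d(a,b)=1+3$. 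The fact you invoke to justify the step --- that every saturated chain in a Bruhat interval $[a,b]$ has length $\ell(b)-\ell(a)$ --- is true but irrelevant here: Bruhat covering relations are given by multiplication by arbitrary reflections, not by generators, so a saturated Bruhat chain is not an edge-path in the $1$-skeleton of $\Delta_W$ and does not compute $d$. (Such chains do correspond to edge-paths in $\Phi$-polytopes like $\Q_w$, which may be the source of the confusion.)

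The repair is to weaken your lemma to what is actually true and is all that is needed: $a<b$ in Bruhat order implies $\ell(a)<\ell(b)$ (strict monotonicity of length), hence $v<^u w$ implies $d(u,v)=\ell(u^{-1}v)<\ell(u^{-1}w)=d(u,w)$ by left-invariance of $d$. Combined with your step (i) --- $q=\mret_\A(u)$ satisfies $q\le^u z$ for all $z\in\A$, strictly for $z\ne q$ --- this gives $d(u,q)<d(u,z)$ for every $z\in\A\setminus\{q\}$, which is both the minimality and the uniqueness at once. This is exactly the paper's argument; no additivity is needed, and with that substitution your proof structure goes through unchanged.
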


\begin{proof}
First we remark that if $v<^u w$, then $d(u,v)<d(u,w)$ for $u,v,w\in W$. Indeed, this is shown by the following observation: 
\[
\begin{split}
v<^u w 
\Longleftrightarrow \ &u^{-1}v< u^{-1}w\\
\Longrightarrow \ & d(e,u^{-1}v)< d(e,u^{-1}w)\\
\Longleftrightarrow \ & d(u,v)< d(u,w),
\end{split}
\]
where $e$ denotes the identity element of $W$ and the last equivalence follows from the invariance of the metric $d$ under the left multiplication of $W$. 

Since $\mret_{\A}(u)$ is the unique $\le^u$-minimal element in $\A$, it follows from the above observation that $d(u,\mret_{\A}(u))\leq d(u,w)$ for every $w\in\A$ and the equality holds only when $w=\mret_{\A}(u)$. Hence $\mret_{\A}(u)$ is the unique element in $W$ closest to $u$. This proves the proposition.
\end{proof}

There is an algorithm to find $\mret_\A(u)$ when $W$ is a Weyl group of classical Lie type. 
The Weyl group $W$ of classical Lie type is of the following form: 
\[
W=\begin{cases} {S}_n \qquad &\text{if $W$ is of type $A_{n-1}$},\\
(\Z/2\Z)^n\rtimes {S}_n \qquad &\text{if $W$ is of type $B_n$ or $C_n$},\\
(\Z/2\Z)^{n-1}\rtimes {S}_n\qquad &\text{if $W$ is of type $D_n$}.
\end{cases}
\]
We denote the set $\{\bar{1},\dots,\bar{n}\}$ by $[\overline{n}]$ and regard $\bar{\bar{i}}=i$. In each type, we will use one-line notation for $u\in W$, i.e.,
\[
u=u(1)u(2)\cdots u(n)
\]
where $u(i)\in [n]\cup[\bar{n}]$ and $u(1)u(2)\cdots u(n)$ is a permutation on $[n]$ if we forget the bars. There is no bar in type $A$ and the number of bars in $u(1),\dots,u(n)$ is even (possibly zero) in type $D$. In types $B$, $C$ and $D$, we have $u(\bar{i})=\overline{u(i)}$.

For $u\in W$, we define a linear order $\prec^u$ on the set $[n]\cup[\overline{n}]$ by
\begin{equation*}
u(1)\prec^u \dots \prec^u u(n)\prec^u u(\overline{n})\prec^u\dots \prec^u u(\overline{1}).
\end{equation*}
This induces a $u$-lexicographic order $\prec^u_{\mathrm{lex}}$ on the set of words of length $n$ in the alphabet $[n]\cup [\overline{n}]$. Then we obtain a linear order $\prec^u$ on $W$, where $v\prec^u w$ if and only if $v(1)\cdots v(n)\prec^u_{\mathrm{lex}} w(1)\cdots w(n)$. Note that $u$ is the minimal element of $W$ with respect to $\prec^u$.

\begin{definition} \label{defi:algebraic_retraction}
Let $W$ be a Weyl group of classical Lie type and $\A$ an \emph{arbitrary} subset of $W$. For each $u\in W$, we define $\aret_\A(u)$ as the $u$-minimal element of $\A$ with respect to the order~$\prec^u$. Then the map
\[
\aret_\A\colon W\to \A\, (\subset W)
\]
is a retraction of $W$ onto $\A$, which we call an \emph{algebraic retraction}.
\end{definition}

\begin{example}
\begin{enumerate}
\item We take a subset $\A=\{123, 132, 213, 312\}$ of $S_3$. For $u=231$, we have a linear order
$2\prec^u 3\prec^u 1$. 
Since
\[
213 \prec^u_{\mathrm{lex}}312\prec^u_{\mathrm{lex}} 123\prec^u_{\mathrm{lex}} 132,
\]
we have $\aret_\A(231)=213$.

\item We take a subset $\A=\{1\bar{4}23, 14\bar{3}\bar{2}, 2413, \bar{3}\bar{4}1\bar{2}\}$ of $(\Z/2\Z)^4 \rtimes {S}_4$. For $u=\bar{2}3\bar{1}4$, we have a linear order 
\[
\bar{2} \prec^u 3 \prec^u \bar{1} \prec^u 4 \prec^u \bar{4} \prec^u 1 \prec^u \bar{3} \prec^u 2.
\]
Since
\[
14\bar{3}\bar{2} \prec^u_{\mathrm{lex}} 1\bar{4}23\prec^u_{\mathrm{lex}} \bar{3}\bar{4}1\bar{2}\prec^u_{\mathrm{lex}} 2413,
\]
we have $\aret_\A(\bar{2}3\bar{1}4)=14\bar{3}\bar{2}$. 
\end{enumerate}
\end{example}

The following theorem together with Proposition~\ref{prop_mret_minimal_distance} shows that 
if $\A$ is a Coxeter matroid of a Weyl group $W$ of classical Lie type, then $\aret_\A(u)$ for $u\in W$ provides the point in $\A$ closest to $u$. 

\begin{theorem}[{\cite[Theorem~4.7]{LMP_retractions}}]\label{thm:aret-mret}
If $\A$ is a Coxeter matroid of a Weyl group $W$ of classical Lie type, then $\aret_\A=\mret_\A.$ 
\end{theorem}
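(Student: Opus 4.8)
The plan is to prove that for every $u\in W$ the \emph{linear} order $\prec^u$ on $W$ is a linear extension of the \emph{partial} order $\le^u$; granting this, the theorem is immediate. Indeed, $\mret_\A(u)$ is by definition the unique $\le^u$-minimal element of $\A$ (such an element exists because $\A$ is a Coxeter matroid, by the Minimality Property in Remark~\ref{rem:minimality_property}), while $\aret_\A(u)$ is the unique $\prec^u$-minimal element of $\A$, a total order on a finite set having a unique minimum. So if $q\colonequals\mret_\A(u)$ satisfies $q\le^u w$ for every $w\in\A$, then $q\prec^u w$ or $q=w$ for every $w\in\A$, which forces $q$ to be the $\prec^u$-minimum of $\A$; that is, $q=\aret_\A(u)$.

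The first reduction is to the case $u=e$. By definition $v\le^u w$ iff $u^{-1}v\le u^{-1}w$. On the other hand, unwinding the definition of $\prec^u$ shows that the alphabet order satisfies $a\prec^u b$ iff $u^{-1}(a)\prec^e u^{-1}(b)$, and since $u^{-1}$ is a bijection it preserves the set of window positions at which two elements of $W$ agree, so $v\prec^u w$ iff $u^{-1}v\prec^e u^{-1}w$. Hence $\prec^u$ extends $\le^u$ for all $u$ as soon as $\prec^e$ extends the ordinary Bruhat order, and it remains to show: if $v\le w$ in $W$ and $v\ne w$, then $v\prec^e w$.

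This last step rests on the one-line (tableau) criterion for Bruhat order in the classical types. In type $A$ ($W=\Sn$), applying to $v^{-1}\le w^{-1}$ the prefix-set comparison already used in the proof of Lemma~\ref{lemma_bruhat_order_and_prec} gives $\{v(1),\dots,v(d)\}\!\!\uparrow\ \le\ \{w(1),\dots,w(d)\}\!\!\uparrow$ componentwise for every $d$. Taking $p$ to be the first position with $v(p)\ne w(p)$ and setting $P\colonequals\{v(1),\dots,v(p-1)\}=\{w(1),\dots,w(p-1)\}$, the inequality at $d=p$ compares the sorted tuples $(P\cup\{v(p)\})\!\!\uparrow$ and $(P\cup\{w(p)\})\!\!\uparrow$ and forces $v(p)<w(p)$, whence $v\prec^e w$. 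For types $B_n$ and $C_n$ I would use the Bruhat-order embedding of $W$ into the symmetric group on $\{1,\dots,n,\overline n,\dots,\overline 1\}$ (ordered as listed) consisting of the permutations commuting with $i\mapsto\overline i$; because a signed permutation is determined by its first $n$ window entries, $\prec^e$ on $W$ is exactly the restriction of the lexicographic order on this symmetric group for that value order, so the type-$A$ argument applies verbatim to the doubled windows. For type $D_n$ I would invoke the analogous one-line criterion of Bj\"orner--Brenti and run the same first-differing-position argument.

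The main obstacle is type $D_n$: its Bruhat order is a \emph{proper} refinement of the restriction of the $B_n$-Bruhat order, so one cannot simply reuse the $B_n$ criterion, and the correct combinatorial description of $\le$ on $W(D_n)$ — in particular the bookkeeping governing the sign-ambiguous ties in the prefix comparison — must be handled with care; the point to verify is that the extra condition appearing there only shrinks $\le$ and hence cannot break the conclusion $v\prec^e w$. By contrast, the two reductions above are formal, and the type-$A$ case (hence the essential content of the type-$B/C$ case) is close to what is already proved around Lemma~\ref{lemma_bruhat_order_and_prec}.
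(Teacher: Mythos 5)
The survey states this theorem only with a citation to \cite[Theorem~4.7]{LMP_retractions} and gives no proof, so I can only judge your argument on its own terms. Your architecture is the natural one: show that the total order $\prec^u$ refines the partial order $\le^u$, so that the $\le^u$-minimum of $\A$ (which exists by the Minimality Property) is forced to be the $\prec^u$-minimum, i.e.\ $\mret_\A(u)=\aret_\A(u)$. The reduction to $u=e$ is correct, and your type~$A$ argument (tableau criterion plus the first-differing-position comparison) is complete. But two things still need to be supplied. First, a convention issue in types $B$ and $C$ that you pass over: the standard embedding theorem (Bj\"orner--Brenti, Ch.~8) realizes $\le_{B_n}$ inside the symmetric group on $\{\bar n,\dots,\bar 1,1,\dots,n\}$ with \emph{that} value order and with the sign change on $1$ as the distinguished generator. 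Transporting it to your alphabet order $1\prec\cdots\prec n\prec\bar n\prec\cdots\prec\bar 1$ requires conjugating by the order-reversing signed permutation $i\mapsto\overline{n+1-i}$, which simultaneously moves the distinguished simple reflection to the sign change on $n$. So your argument is valid only under the Lie-theoretic convention (simple roots $\mathbf{e}_i-\mathbf{e}_{i+1}$ and $\mathbf{e}_n$, resp.\ $2\mathbf{e}_n$). Under the other convention the asserted linear extension is simply false: in $W(B_2)$ with generators $s_1=21$ and $s_0=\bar{1}2$ one has $\ell(21)=1<3=\ell(1\bar{2})$, hence $21<1\bar{2}$ in the (dihedral) Bruhat order, while $1\bar{2}$ precedes $21$ lexicographically. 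This is exactly where the proof could silently break, so the convention must be pinned down explicitly.

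Second, the type $D$ case, which you flag as ``the main obstacle'' and leave open, actually closes immediately and needs none of the delicate combinatorial description of $\le_{D_n}$. The reflections of $W(D_n)$ form a subset of those of $W(B_n)$, the positive systems are compatible, and for a common reflection $t_\alpha$ the condition $\ell(x)<\ell(xt_\alpha)$ is equivalent to $x(\alpha)>0$ in either group; hence every covering relation of $\le_{D_n}$ is a Bruhat relation of $\le_{B_n}$, so $\le_{D_n}$ is contained in the restriction of $\le_{B_n}$ to $W(D_n)$, and the linear extension already established for $B_n$ restricts to one for $D_n$. Note also that your parenthetical claim that the $D_n$-Bruhat order is a ``proper refinement'' of the restricted $B_n$-order has the containment backwards --- it is strictly coarser (e.g.\ $21$ and $\bar{2}\bar{1}$ both have length $1$ in $W(D_2)$ and are incomparable there, yet are comparable in $W(B_2)$) --- although your subsequent remark shows you intend the correct, and sufficient, direction. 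With these two points supplied, the proof is complete.
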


Unless $\A$ is a Coxeter matroid, $\aret_\A(u)$ does not necessarily provide a point in $\A$ closest to $u$ as shown in the following example. 

\begin{example}\label{example_not_unique_closest}
Let $W={S}_4$ and $\A=\{1423,2134\}$. Then $\A$ is not a Coxeter matroid because the convex hull $\Delta_\A$ of $\A$,  the dotted red line in Figure~\ref{fig:aret_not_closest}, is not a $\Phi$-polytope since the dotted red line is not parallel to any edge of the $S_4$-permutohedron. On the other hand, one can check that $\A$ has the following property: for each $u\in {S}_4$, there is a unique element in $\A$ closest to $u$. However, if we take $u=1324$ for instance, then $d(u,2134)=2$ and $d(u,1423)=3$, but $\aret_\A(u)=1423$, see Figure~\ref{fig:aret_not_closest}. Therefore, $\aret_\A(1324)$ is not an element of $\A$ closest to $1324$. 
\begin{figure}[ht]
\begin{tikzpicture}[scale=5]
\tikzset{every node/.style={draw=blue!50,fill=blue!20, circle, thick, inner sep=1pt,font=\footnotesize}}
\tikzset{red node/.style = {fill=red!20!white, draw=red!75!white}}
\tikzset{red line/.style = {line width=0.3ex, red, nearly opaque}}

\coordinate (4231) at (1/3, 1/2, 1/6); 
\coordinate (2413) at (2/3, 1/2, 1/6); 
\coordinate (1243) at (5/6, 2/3, 1/2); 
\coordinate (2143) at (5/6, 1/2, 1/3); 
\coordinate (2134) at (5/6, 1/3, 1/2); 
\coordinate (1423) at (2/3, 5/6, 1/2); 
\coordinate (3142) at (1/3, 1/2, 5/6); 
\coordinate (1324) at (2/3, 1/2, 5/6); 
\coordinate (1234) at (5/6, 1/2, 2/3); 
\coordinate (1342) at (1/2, 2/3, 5/6); 
\coordinate (4123) at (1/2, 5/6, 1/3); 
\coordinate (4213) at (1/2, 2/3, 1/6); 
\coordinate (1432) at (1/2, 5/6, 2/3); 
\coordinate (4132) at (1/3, 5/6, 1/2); 
\coordinate (2314) at (2/3, 1/6, 1/2); 
\coordinate (3214) at (1/2, 1/6, 2/3); 
\coordinate (3124) at (1/2, 1/3, 5/6); 
\coordinate (3241) at (1/3, 1/6, 1/2); 
\coordinate (2341) at (1/2, 1/6, 1/3); 
\coordinate (2431) at (1/2, 1/3, 1/6);
\coordinate (3421) at (1/6, 1/3, 1/2); 
\coordinate (4321) at (1/6, 1/2, 1/3); 
\coordinate (3412) at (1/6, 1/2, 2/3); 
\coordinate (4312) at (1/6, 2/3, 1/2); 

\draw[thick, draw=blue!70] (1432)--(4132)--(4123)--(1423)--cycle;
\draw[thick, draw=blue!70] (4132)--(1432)--(1342)--(3142)--(3412)--(4312)--(4132);
\draw[dashed, thick, draw=blue!70] (4312)--(4321)--(4231)--(4213)--(4123);
\draw[dashed, thick, draw=blue!70] (3421)--(4321);
\draw[thick, draw=blue!70] (1342)--(1324)--(3124)--(3142);
\draw[dashed, thick, draw=blue!70] (4231)--(2431)--(2413)--(4213);
\draw[thick, draw=blue!70] (1423)--(1243)--(2143);
\draw[dashed, thick, draw=blue!70] (2143)--(2413);
\draw[thick, draw=blue!70] (1324)--(1234)--(1243);
\draw[thick, draw=blue!70] (1234)--(2134)--(2143);
\draw[thick, draw=blue!70] (2314)--(2134);
\draw[dashed, thick, draw=blue!70] (2314)--(2341)--(2431);
\draw[thick, draw=blue!70] (3412)--(3421)--(3241)--(3214)--(3124);
\draw[thick, draw=blue!70] (3214)--(2314);
\draw[dashed,thick, draw=blue!70] (3241)--(2341);

\draw[red line, dotted] (1423)--(2134);

\node [label = {[label distance = 0cm] below left:$\vertex{1234}$}] at (1234) {};
\node[label = {[label distance = 0cm]right:$\vertex{1243}$}] at (1243) {};
\node[label = {[label distance = 0cm]left:$\vertex{1324}$}] at (1324) {};
\node[label = {[label distance = 0cm]below:$\vertex{1342}$}] at (1342) {};
\node [label = {[label distance = 0cm]right:$\fbox{$\vertex{1423}$}=\aret_{\A}(\textcolor{red}{1324})$}, red node] at (1423) {};
\node[label = {[label distance = -0.2cm]above:$\vertex{1432}$}] at (1432) {};
\node [label = {[label distance = 0cm]right:$\vertex{2134}$},red node] at (2134) {};
\node[label = {[label distance = -0.1cm]right:$\vertex{2143}$}] at (2143) {};
\node[label = {[label distance = 0cm]below:$\vertex{2314}$}] at (2314) {};
\node[label = {[label distance = 0cm]right:$\vertex{2341}$}] at (2341) {};
\node[label = {[label distance = 0cm]left:$\vertex{2413}$}] at (2413) {};
\node[label = {[label distance = -0.2cm]above:$\vertex{2431}$}] at (2431) {};
\node[label = {[label distance = -0.2cm]above:$\vertex{3124}$}] at (3124) {};
\node[label = {[label distance = 0cm]right:$\vertex{3142}$}] at (3142) {};
\node[label = {[label distance = -0.2cm]below:$\vertex{3214}$}] at (3214) {};
\node [label = {[label distance = -0.1cm]left:$\vertex{3241}$}] at (3241) {};
\node[label = {[label distance = 0cm] left:$\vertex{3412}$}] at (3412) {};
\node[label = {[label distance = 0cm]left:$\vertex{3421}$}] at (3421) {};
\node[label = {[label distance = -0.2cm]above:$\vertex{4123}$}] at (4123) {};
\node [label = {[label distance = 0cm]above:$\vertex{4132}$}] at (4132) {};
\node[label = {[label distance = 0cm]below:$\vertex{4213}$}] at (4213) {};
\node[label = {[label distance = 0cm]above:$\vertex{4231}$}] at (4231) {};
\node[label = {[label distance = 0cm]left:$\vertex{4312}$}] at (4312) {};
\node [label = {[label distance = -0.2cm]below right:$\vertex{4321}$}] at (4321) {};
\end{tikzpicture}
\caption{$\aret_{\A}(1324)$ is not the element of $\A$ closest to $1324$.}
\label{fig:aret_not_closest}
\end{figure}
\end{example}

As mentioned in Proposition~\ref{prop_mret_minimal_distance}, the following is a necessary condition for $\A\subset W$ to be a Coxeter matroid.
\begin{quote}
$(\ast)$\quad For each $u\in W$, there is a \emph{unique} $q\in \A$ such that $d(u,q) = d(u,\A)$.
\end{quote}
However, it is not a sufficient condition as is shown in Example~\ref{example_not_unique_closest}. On the other hand, Proposition~\ref{prop_mret_minimal_distance} and Theorem~\ref{thm:aret-mret} show that if $\A$ is a Coxeter matroid of $W$, then the unique element $q$ in $(\ast)$ above must be given by $\aret_\A(u)$. We ask whether these two necessary conditions are sufficient:

\begin{Problem}[{\cite{LMP_retractions}}]
Let $W$ be a Weyl group of classical Lie type. Suppose that a subset $\A$ of $W$ satisfies the following two conditions:
\begin{enumerate}
\item for each $u\in W$, there is a unique $q\in \A$ such that $d(u,q) = d(u,\A)$, and
\item $q=\aret_\A(u)$.
\end{enumerate}
Then, is $\A$ a Coxeter matroid?
\end{Problem}

For $\A\subset S_n$, the answer of the above problem is yes when $|\A|=2$ (see \cite[Proposition 4.9]{LMP_retractions}) or $n\leq 4$ (by a computer check), but we do not know the answer for an arbitrary subset $\A$ of $S_n$ with $n\geq 5$.


\section{Generic torus orbit closures in the flag variety}
\label{sec_generic_orbit_in_flag}

In this section, we observe that generic torus orbit closures in the flag variety $\flag(n)$ are in fact permutohedral varieties and then discuss their topology.  Their Poincar\'e polynomials turn out to be Eulerian polynomials.  We also discuss Klyachko's theorem which describes the restriction image of the cohomology of $\flag(n)$ to that of the generic torus orbit closure in $\flag(n)$.  Finally we discuss a  generalization of Klyachko's theorem to Hessenberg varieties.    

The image of the moment map 
\[
\mu\colon \flag(n)=G/B\to \R^{n}
\]
defined in \eqref{eq_moment_map} is the permutohedron
\[
\Pi_n=\mathrm{Conv}\{(w(1),\dots,w(n))\mid w\in\Sn\}.
\]
Note that $\Pi_n$ is the $S_n$-permutohedron in Subsection~\ref{section:Coxeter matroids}.
The permutohedron $\Pi_n$ lies on the hyperplane
\begin{equation} \label{eq:hyperplane}
\{(x_1,\dots,x_n)\in \R^n\mid x_1+\cdots+x_n=n(n+1)/2\}
\end{equation}
and one can easily see that $\Pi_n$ is of dimension $n-1$.

\begin{definition}\label{def_generic_point}
We say that a $T$-orbit $O$ in $\flag(n)$ is \emph{generic} if $\mu(\overline{O})=\Pi_n$, in other words if $\overline{O}^T=\Sn$. 
\end{definition}


\subsection{Faces of the permutohedron \texorpdfstring{$\Pi_n$}{Pin}}

We recall some facts on the faces of the permutohedron~$\Pi_n$. We refer the reader to~\cite[Theorem~6.1]{Zelevinsky06}, \cite{PostnikovReinerWilliams08}, \cite{Postnikov09}, or \cite[Section~5.A]{KLSS} for more detail.

The permutohedron $\Pi_n$ is contained in the half space of $\R^n$ defined by 
\[
\left\{(x_1,\dots,x_n)\in\R^n \,\,\middle|\,\, \sum_{i=1}^k x_i\ge \sum_{i=1}^ki \right\}\quad (k=1,\dots,n-1)
\]
and the intersection of its boundary with $\Pi_n$, that is, 
\begin{equation} \label{eq:facet_e}
\left\{(x_1,\dots,x_n)\in \Pi_n \,\,\middle|\,\, \sum_{i=1}^kx_i=\sum_{i=1}^ki \right\}\quad (k=1,\dots,n-1)
\end{equation}
is a facet of $\Pi_n$ that contains the vertex $(1,\dots,n)$. This provides all the facets of $\Pi_n$ which meet at the vertex $(1,\dots,n)$. Since $\Pi_n$ is invariant under permutations of the coordinates of $\R^n$, it follows that $\Pi_n$ is a simple polytope of dimension $n-1$ and the facets of $\Pi_n$ meeting at the vertex $(w(1),\dots,w(n))$ of $\Pi_n$ $(w\in \Sn)$ are written as 
\begin{equation} \label{eq:facet_w}
\left\{(x_1,\dots,x_n)\in \Pi_n \,\,\middle|\,\, \sum_{i=1}^kx_{w^{-1}(i)}=\sum_{i=1}^ki \right\}\quad (k=1,\dots,n-1).
\end{equation} 

The facet in \eqref{eq:facet_w} is determined by the subset $\{w^{-1}(1),\dots,w^{-1}(k)\}$ of $[n]$ and the observation above shows that there is a bijective correspondence between non-empty proper subsets of $[n]$ and facets of $\Pi_n$. Indeed, the facet of $\Pi_n$ associated to a non-empty proper subset $A$ of $[n]$ is defined by 
\begin{equation} \label{eq:normal_A}
F(A) \colonequals \left\{(x_1,\dots,x_n)\in \Pi_n \,\,\middle|\,\, \sum_{a\in A}x_a= \frac{|A|(|A|+1)}{2}\right\}.
\end{equation}
Therefore, there are $2^n-2$ facets in $\Pi_n$ and \eqref{eq:facet_w} shows that the $n-1$  facets of $\Pi_n$ meeting at the vertex $(w(1),\dots,w(n))$ are $F(A)$'s with $w(A)=\{1,\dots,|A|\}$. See Figure~\ref{fig:permuto-facet}. 

\begin{figure}[H]
\centering
\begin{tikzpicture}[x=1cm, y=1cm, z=-0.6cm,scale=.8]
\filldraw[fill=yellow] (1,2,3)--(2,1,3)--(3,1,2)--(3,2,1)--(2,3,1)--(1,3,2)--cycle;
\draw [->] (0,0,0) -- (4,0,0) node [right] {$y$};
\draw [->] (0,0,0) -- (0,4,0) node [left] {$z$};
\draw [->] (0,0,0) -- (0,0,4) node [left] {$x$};
\draw[thick] (1,2,3)--(2,1,3)--(3,1,2)--(3,2,1)--(2,3,1)--(1,3,2)--cycle;
\draw [dotted]
(0,3,0) -- (3,3,0) -- (3,0,0)
(3,0,0) -- (3,0,3) -- (0,0,3)
(0,3,0) -- (0,3,3) -- (0,0,3)
(3,3,0) -- (3,0,3)
(3,3,0) -- (0,3,3)
(0,3,3) -- (3,0,3)
(1,3,2) -- (1,2,3)
(2,1,3) -- (3,1,2)
;
\draw [ thick, red]
(2.6,0.4,3)--(0.4,2.6,3)
(2.6,3,0.4)--(0.4,3,2.6)
(3,2.7,0.3)--(3,0.3,2.7)
;
\draw [ thick,blue]
(2.6,0.4,-.65)--(0.4,2.6,-.65)
(2.7,.25,0.3)--(0.4,.25,2.6)
(.25,2.6,0.4)--(.25,0.4,2.6)
;

\draw (1.8,1.5,3.3) node[above]{\scriptsize$F(\{2,3\})$};
\draw (1.6,2.8,1.4) node[above]{\scriptsize$F(\{1,2\})$};
\draw (2.9,1.5,1.5) node[right]{\scriptsize$F(\{1,3\})$};
\draw (1.2,1.5,-0.9) node[right]{\scriptsize$F(\{1\})$};
\draw (1.5,.25,1.7) node[right]{\scriptsize$F(\{3\})$};
\draw (0,1.6,1.6) node[above]{\scriptsize$F(\{2\})$};

\draw (3,0,0) node[below]{$3$};
\draw (0,3,0) node[left]{$3$};
\draw (0,0,3) node[right]{$3$};
\draw (1,2,3) node[left]{\tiny{$(3,1,2)$}};
\draw (2,1,3) node[below]{\tiny{$(3,2,1)$}};
\draw (2.2,3,1) node[above]{\tiny{$(1,2,3)$}};
\draw (1,3,2) node[left]{\tiny{$(2,1,3)$}};
\draw (3,2,1) node[right]{\tiny{$(1,3,2)$}};
\draw (3,1,2) node[right]{\tiny{$(2,3,1)$}};
\end{tikzpicture}
\caption{Facets of the permutohedron $\Pi_3$}\label{fig:permuto-facet}
\end{figure}

The following lemma can easily be proved. 

\begin{lemma} \label{lemm:tauAtauB}
Let $A$ and $B$ be non-empty proper subsets of $[n]$. Then $F(A)\cap F(B)\not=\emptyset$ if and only if $A\subset B$ or $B\subset A$. Therefore, a codimension $k$ face of $\Pi_n$ is $\bigcap_{i=1}^d F(A_i)$ with $A_1\subsetneq\cdots \subsetneq A_k$. 
\end{lemma}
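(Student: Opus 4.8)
The plan is to reduce everything to the vertex description of the facets recorded just above, namely: for $w\in\Sn$, the vertex $(w(1),\dots,w(n))$ lies on $F(A)$ if and only if $w(A)=\{1,\dots,|A|\}$ (this is exactly the statement around \eqref{eq:facet_w}--\eqref{eq:normal_A}). For the ``only if'' direction, note that $F(A)$ and $F(B)$ are faces of the polytope $\Pi_n$, so $F(A)\cap F(B)$ is again a face of $\Pi_n$, and if it is non-empty it contains a vertex, say $(w(1),\dots,w(n))$ for some $w\in\Sn$. Then $w(A)=\{1,\dots,|A|\}$ and $w(B)=\{1,\dots,|B|\}$; assuming without loss of generality $|A|\le|B|$, we get $w(A)\subseteq w(B)$, and applying $w^{-1}$ gives $A\subseteq B$.

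For the ``if'' direction, suppose $A\subseteq B$ (the case $B\subseteq A$ being symmetric). I would simply produce a witnessing vertex: choose any $w\in\Sn$ that maps $A$ bijectively onto $\{1,\dots,|A|\}$, maps $B\setminus A$ bijectively onto $\{|A|+1,\dots,|B|\}$, and maps $[n]\setminus B$ bijectively onto $\{|B|+1,\dots,n\}$; such $w$ exists precisely because $A\subseteq B$. Then $w(A)=\{1,\dots,|A|\}$ and $w(B)=\{1,\dots,|B|\}$, so $(w(1),\dots,w(n))$ lies on both $F(A)$ and $F(B)$, hence $F(A)\cap F(B)\neq\emptyset$.

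For the statement on codimension $k$ faces: since $\Pi_n$ is a simple polytope of dimension $n-1$ whose facets are exactly the $F(A)$'s, every codimension $k$ face $\Gamma$ is the intersection of exactly $k$ distinct facets, $\Gamma=\bigcap_{i=1}^k F(A_i)$. As $\Gamma\neq\emptyset$ we have $F(A_i)\cap F(A_j)\supseteq\Gamma\neq\emptyset$ for all $i,j$, so by the first part the $A_i$ are pairwise comparable under inclusion; being pairwise comparable and distinct they have distinct cardinalities, so after reindexing $A_1\subsetneq\cdots\subsetneq A_k$. (Conversely, a simultaneous version of the construction in the previous paragraph --- send $A_1$ onto $\{1,\dots,|A_1|\}$, then $A_2\setminus A_1$ onto the next block of integers, and so on --- shows that for any such chain $\bigcap_{i=1}^k F(A_i)$ is a non-empty face, necessarily of codimension $k$ by simplicity.) The argument is entirely routine once the vertex description of $F(A)$ is in hand; the only mild point to watch is the passage from pairwise inclusion-comparability of $A_1,\dots,A_k$ to a single chain, together with the observation that one explicit permutation can realise a whole chain of facets at once.
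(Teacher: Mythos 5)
Your proof is correct; the paper itself omits the argument (it only remarks that the lemma ``can easily be proved''), and what you write is precisely the intended one, resting on the vertex description of $F(A)$ set up in \eqref{eq:facet_w}--\eqref{eq:normal_A}. All the steps check out, including the two points you flag: passing from pairwise comparability of distinct sets to a chain (via cardinalities), and realising a whole chain by one permutation for the converse inclusion.
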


Since $\Pi_n$ lies on the hyperplane in \eqref{eq:hyperplane} which is perpendicular to the vector $(1,\dots,1)$  in $\R^n$, the normal fan $\Sigma(\Pi_n)$ of $\Pi_n$ is defined in the quotient space $N_\R\colonequals\R^n/\R(1,\dots,1)$, where the lattice $N$ of $N_\R$ is the one induced from the standard lattice $\Z^n$ of $\R^n$. By \eqref{eq:normal_A}, the primitive (inward) normal vector $\delta_A$ to the hyperplane in \eqref{eq:normal_A} defining the facet $F(A)$ is the $(0,1)$-vector with $1$ in the $a$th coordinate for each $a\in A$ and $0$ otherwise. Therefore we have the following. 

\begin{lemma} \label{lemm:linear_relation}
The quotient image of the vector $\delta_A$ on $N_\R=\R^n/\R(1,\dots,1)$ is the primitive ray vector in the normal fan $\Sigma(\Pi_n)$ corresponding to the facet $F(A)$. 
\end{lemma}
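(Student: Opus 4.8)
The plan is to reduce the statement to two standard facts about normal fans together with a short lattice computation. Recall (see Appendix~\ref{appendix} or \cite{CLS11Toric}) that for a full‑dimensional lattice polytope $P$ in an affine space whose linear part has dual $N_\R$ with lattice $N$, if $P$ is presented by facet inequalities $\langle u_F,x\rangle\ge c_F$ in which each $u_F$ is the primitive inward‑pointing normal of the facet $F$ in $N$, then the ray of the normal fan $\Sigma_P$ associated to $F$ is exactly $\Cone(u_F)$. Thus it suffices to establish (i) that the class $\bar\delta_A$ of $\delta_A$ in $N_\R=\R^n/\R(1,\dots,1)$ is the inward normal direction of the facet $F(A)$ of $\Pi_n$, and (ii) that $\bar\delta_A$ is a \emph{primitive} vector of $N=\Z^n/\Z(1,\dots,1)$. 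The second point is the substantive one, since primitivity in $\Z^n$ is in general not inherited by images in a quotient lattice.

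For (i), I would first note that $\Pi_n$ spans the affine hyperplane in \eqref{eq:hyperplane}, which is a translate of $V=\{x\in\R^n\mid x_1+\dots+x_n=0\}$. Since $(1,\dots,1)$ spans the annihilator of $V$ inside $(\R^n)^\ast\cong\R^n$, the standard inner product identifies $N_\R=\R^n/\R(1,\dots,1)$ with the dual of $V$, compatibly with the lattices; under this identification the class of a vector $v\in\R^n$ represents the functional $x\mapsto\langle v,x\rangle$ on $V$. By \eqref{eq:normal_A}, $F(A)$ is the intersection of $\Pi_n$ with the hyperplane $\langle\delta_A,x\rangle=|A|(|A|+1)/2$; and because $\Pi_n$ is invariant under permutations of coordinates, the inequality $\sum_{i=1}^{k}x_i\ge\sum_{i=1}^{k}i$ valid on $\Pi_n$ for the consecutive set $\{1,\dots,k\}$ yields $\langle\delta_A,x\rangle\ge|A|(|A|+1)/2$ on all of $\Pi_n$ for every nonempty proper $A\subseteq[n]$. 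Hence $\bar\delta_A$ is a positive multiple of the primitive inward normal of $F(A)$, which is what (i) asks for.

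For (ii), I would argue directly. Suppose $\bar\delta_A=k\,\bar w$ in $N$ for some integer $k\ge1$ and some $\bar w\in N$; lifting to $\Z^n$ gives $\delta_A=kw+m(1,\dots,1)$ with $w\in\Z^n$ and $m\in\Z$. Comparing $a$‑th coordinates, for $a\in A$ (nonempty) we get $1=kw_a+m$, and for $b\notin A$ (nonempty, as $A$ is proper) we get $0=kw_b+m$; subtracting, $1=k(w_a-w_b)$, so $k=1$. Therefore $\bar\delta_A$ is primitive, and combining with (i) it is precisely the primitive ray generator of $\Sigma(\Pi_n)$ attached to $F(A)$.

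I do not anticipate a real obstacle: the argument is essentially bookkeeping. The one place demanding care is the identification of $N_\R$ with the space of linear functionals on the affine span of $\Pi_n$ — that is, verifying that quotienting $\R^n$ by $\R(1,\dots,1)$, and $\Z^n$ by $\Z(1,\dots,1)$, is exactly the operation implicit in "the normal fan $\Sigma(\Pi_n)$" and in the lattice $N$, so that "primitive in $N$" in the statement genuinely coincides with the primitive ray generator in the sense of the normal‑fan fact quoted above. Everything else, including the permutation‑invariance step in (i), is routine.
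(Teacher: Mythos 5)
Your proof is correct and follows the same route the paper takes implicitly: the paper offers no separate argument for this lemma, treating it as immediate from the presentation \eqref{eq:normal_A} of the facet and the identification of the ambient space of $\Sigma(\Pi_n)$ with $\R^n/\R(1,\dots,1)$. Your explicit verification that $\bar\delta_A$ remains primitive in the quotient lattice $\Z^n/\Z(1,\dots,1)$ (using one coordinate in $A$ and one outside $A$) is the only point the paper leaves unsaid, and it is carried out correctly.
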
 
 
The maximal cone in the fan $\Sigma(\Pi_n)$ corresponding to the vertex $(1,\dots,n)$ is spanned by the ray vectors corresponding to the $n-1$ facets in \eqref{eq:facet_e} and those ray vectors are the projection image of the following $n-1$ vectors on $N_\R$: 
\[
(1,0,\dots,0),\quad (1,1,0,\dots,0),\quad \dots,\quad (1,\dots,1,0). 
\]
The other maximal cones in $\Sigma(\Pi_n)$ are obtained by permuting the above cone. This shows that the collection of maximal cones coincides with the (closures of) Weyl chambers in type $A$. 
The maximal cones determine the fan $\Sigma(\Pi_n)$ and the permutohedral variety ${\Perm}_n$ is the toric variety whose fan has the (closure of) Weyl chambers as maximal cones. Therefore, we obtain the following. 

\begin{proposition}
The maximal cones in the fan of a generic torus orbit closure in the flag variety $\flag(n)$ are the \textup{(}closures of\textup{)} Weyl chambers. Therefore, the closure of a generic torus orbit in the flag variety $\flag(n)$ is isomorphic to the permutohedral variety ${\Perm}_n$. 
\end{proposition}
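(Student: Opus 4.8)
The plan is to identify the fan of a generic torus orbit closure with the normal fan $\Sigma(\Pi_n)$ computed in the preceding paragraphs, and then invoke the definition of $\Perm_n$; an alternative shortcut runs through Corollary~\ref{coro:fan_of_Y}.

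First I would fix a generic torus orbit $O$ in $\flag(n)$ and set $Y\colonequals \overline{O}$. By Definition~\ref{def_generic_point} we have $\mu(Y)=\Pi_n$, and since $\Pi_n$ is a simple polytope of dimension $n-1$ lying in the hyperplane~\eqref{eq:hyperplane}, $Y$ is a smooth projective toric variety of complex dimension $n-1$ (normality of $Y$ also follows from~\cite[Proposition~4.8]{ca-ku2000}; cf.\ Remark~\ref{rmk_normal}). Consequently the fan of $Y$ is exactly the normal fan of its moment polytope $\mu(Y)=\Pi_n$, which by Lemma~\ref{lemm:linear_relation} and the subsequent discussion is the fan $\Sigma(\Pi_n)$ in $N_\R=\R^n/\R(1,\dots,1)$ whose maximal cone at the vertex $(1,\dots,n)$ is spanned by the projections of $(1,0,\dots,0),(1,1,0,\dots,0),\dots,(1,\dots,1,0)$ and whose remaining maximal cones are the $\Sn$-translates of this one.

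Next I would observe that these maximal cones are precisely the closures of the Weyl chambers in type $A_{n-1}$, and that a complete fan is determined by its collection of maximal cones together with their faces; hence $\Sigma(\Pi_n)$ coincides with the fan whose maximal cones are the closed Weyl chambers, which is by definition the fan of the permutohedral variety $\Perm_n$. Therefore $Y\cong \Perm_n$, which establishes both assertions of the proposition. As an alternative to the fan computation, one can use Corollary~\ref{coro:fan_of_Y}: for a generic orbit $Y^T=\Sn$, so the geometric retraction $\gret_Y\colon \Sn\to Y^T=\Sn$ is a retraction of $\Sn$ onto itself and hence the identity map; thus $(\gret_Y)^{-1}(y)=\{y\}$ for every $y\in\Sn$, and the maximal cone of the fan of $Y$ attached to $y$ is $C(y)$, the closed Weyl chamber of $y$.

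The argument is essentially a bookkeeping of facts already assembled, so I do not expect a genuine obstacle; the only points deserving care are the normality of $Y$ (needed so that its fan is literally the normal fan of $\mu(Y)$ and not a proper subdivision) and the elementary observation that a complete fan is reconstructed from its top-dimensional cones together with the condition that they meet along common faces — so that matching maximal cones with those of $\Perm_n$ already matches the fans, hence the varieties.
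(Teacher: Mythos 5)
Your main argument is correct and is essentially the paper's own: the proposition is deduced from the immediately preceding computation that the normal fan $\Sigma(\Pi_n)$ of $\mu(Y)=\Pi_n$ has the closed Weyl chambers as its maximal cones, together with the definition of $\Perm_n$. Your alternative via Corollary~\ref{coro:fan_of_Y} (noting that $\gret_Y$ is the identity when $Y^T=\Sn$, so each maximal cone is a single $C(y)$) is also valid and is a nice cross-check, though it is not the route the paper takes here.
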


The cohomology ring of a compact smooth toric variety is explicitly described in terms of the associated fan. Applying the general result to our setting together with Lemmas~\ref{lemm:tauAtauB} and~\ref{lemm:linear_relation}, we obtain the following theorem. 

\begin{theorem} \label{theo:perm_cohomology}
The cohomology ring of the permutohedral variety ${\Perm}_n$ has the following presentation:
\[
H^*(\Perm_n;\Z)=\Z[\tau_A\mid \emptyset \subsetneq A\subsetneq [n]]/\mathcal{I}
\]
where $\deg\tau_A=2$ and $\mathcal{I}$ is the ideal generated by the following two types of elements:
\begin{enumerate}
\item $\tau_A\tau_B$ for $A\not\subset B$ or $B\not\subset A$,
\item $\sum_{p\in A}\tau_A-\sum_{q\in B}\tau_B$ for $p,q\in [n]$.
\end{enumerate}
\end{theorem}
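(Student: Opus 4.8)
The plan is to derive the presentation from the standard Jurkiewicz--Danilov description of the cohomology ring of a smooth complete toric variety, feeding in the combinatorics of the normal fan $\Sigma(\Pi_n)$ already recorded in Lemmas~\ref{lemm:tauAtauB} and~\ref{lemm:linear_relation}. Recall (see Appendix~\ref{appendix} or~\cite{CLS11Toric}) that for a smooth complete toric variety $X_\Sigma$ with lattice $N$, dual lattice $M=\Hom(N,\Z)$, ray set $\Sigma(1)$, and primitive ray generators $u_\rho\in N$, one has
\[
H^*(X_\Sigma;\Z)\cong\Z[x_\rho\mid\rho\in\Sigma(1)]\,/\,(\mathcal{SR}_\Sigma+\mathcal{L}_\Sigma),\qquad\deg x_\rho=2,
\]
where $\mathcal{SR}_\Sigma$ is the Stanley--Reisner ideal, generated by the monomials $x_{\rho_1}\cdots x_{\rho_k}$ for which $\rho_1,\dots,\rho_k$ fail to span a cone of $\Sigma$, and $\mathcal{L}_\Sigma$ is generated by the linear forms $\sum_{\rho\in\Sigma(1)}\langle m,u_\rho\rangle\,x_\rho$ as $m$ runs over a generating set of $M$.

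First I would confirm that the hypotheses hold for $\Sigma=\Sigma(\Pi_n)$. It is complete (in fact the fan of a projective variety) because it is the normal fan of the $(n-1)$-dimensional polytope $\Pi_n$ in $N_\R=\R^n/\R(1,\dots,1)$, and it is smooth because the maximal cone associated to the vertex $(1,\dots,n)$ is spanned by the images in $N$ of $\mathbf{e}_1,\ \mathbf{e}_1+\mathbf{e}_2,\ \dots,\ \mathbf{e}_1+\cdots+\mathbf{e}_{n-1}$, which form a $\Z$-basis of $N$, and every other maximal cone is a coordinate permutation of this one. Hence the displayed presentation applies with the generators $x_\rho$ being the classes $\tau_A$ indexed by the facets $F(A)$, $\emptyset\subsetneq A\subsetneq[n]$, and $u_\rho=\delta_A$ the primitive ray vector of Lemma~\ref{lemm:linear_relation}.

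It then remains to identify the two ideals with $\mathcal{I}$. For the Stanley--Reisner part: by Lemma~\ref{lemm:tauAtauB}, a collection of rays $F(A_1),\dots,F(A_k)$ spans a cone of $\Sigma(\Pi_n)$ precisely when $A_1,\dots,A_k$ form a chain under inclusion, so the minimal collections of rays failing to span a cone are exactly the pairs $\{F(A),F(B)\}$ with $A\not\subset B$ and $B\not\subset A$; thus $\mathcal{SR}_{\Sigma(\Pi_n)}$ is generated by the degree-$2$ monomials $\tau_A\tau_B$ over incomparable $A,B$, which is the family (1). For the linear relations: the lattice $M$ is $\{(m_1,\dots,m_n)\in\Z^n\mid\sum_i m_i=0\}$, generated by the vectors $\mathbf{e}_p-\mathbf{e}_q$ for $p,q\in[n]$; since $\delta_A$ is the $(0,1)$-vector supported on $A$, the pairing $\langle\mathbf{e}_p-\mathbf{e}_q,\delta_A\rangle$ equals $1$ if $p\in A$ and $q\notin A$, equals $-1$ if $q\in A$ and $p\notin A$, and equals $0$ otherwise, so, the contributions with $p,q\in A$ cancelling, the linear form attached to $\mathbf{e}_p-\mathbf{e}_q$ is $\sum_{A\ni p}\tau_A-\sum_{A\ni q}\tau_A$, which is the family (2). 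Hence $\mathcal{SR}_{\Sigma(\Pi_n)}+\mathcal{L}_{\Sigma(\Pi_n)}=\mathcal{I}$, completing the proof.

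I do not anticipate a genuine obstacle here: all of the nontrivial content is already packaged in the description of $\Sigma(\Pi_n)$ obtained in the preceding lemmas. The only places requiring a little care are the elementary combinatorial observation that every non-chain of subsets of $[n]$ contains an incomparable pair (so that the type-(1) monomials really do generate $\mathcal{SR}$), and the sign bookkeeping in evaluating $\langle\mathbf{e}_p-\mathbf{e}_q,\delta_A\rangle$ so as to recover exactly the type-(2) relations.
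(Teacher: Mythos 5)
Your proposal is correct and is exactly the route the paper takes: it invokes the Jurkiewicz--Danilov presentation for smooth complete toric varieties and reads off the Stanley--Reisner and linear relations from Lemmas~\ref{lemm:tauAtauB} and~\ref{lemm:linear_relation}, which is precisely what the text preceding the theorem indicates (the paper leaves the verification implicit, while you supply the smoothness check and the sign bookkeeping). Your reading of relation (1) as applying to \emph{incomparable} pairs $A,B$ is the intended one.
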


\begin{example}
Take $n=3$. Then $H^*({\Perm}_3;\Z)$ is generated by six elements 
\[
\tau_{\{1\}},\ \tau_{\{2\}},\ \tau_{\{3\}},\ \tau_{\{1,2\}},\ \tau_{\{1,3\}},\ \tau_{\{2,3\}}
\]
with relations
\begin{enumerate}
\item $\tau_{\{1\}}\tau_{\{2\}}=\tau_{\{1\}}\tau_{\{3\}}=\tau_{\{2\}}\tau_{\{3\}}=\tau_{\{1\}}\tau_{\{2,3\}}=\tau_{\{2\}}\tau_{\{1,3\}}=\tau_{\{3\}}\tau_{\{1,2\}}$\\
$\quad =\tau_{\{1,2\}} \tau_{\{1,3\}} =\tau_{\{1,3\}}\tau_{\{2,3\}} = \tau_{\{1,2\}} \tau_{\{2,3\}}
=0$,
\item $\tau_{\{1\}}+ \tau_{\{1,2\}}+\tau_{\{1,3\}}=\tau_{\{2\}}+ \tau_{\{1,2\}}+\tau_{\{2,3\}}=\tau_{\{3\}}+ \tau_{\{1,3\}}+\tau_{\{2,3\}}$.
\end{enumerate}
It follows that $H^2({\Perm}_3)$ is freely generated by four classes, say
$\tau_{\{1\}}, \ \tau_{\{2\}},\ \tau_{\{1,2\}},\ \tau_{\{2,3\}}$, 
and $H^4({\Perm_3})$ is freely generated by one class, say $\tau_{\{1\}}\tau_{\{1,2\}}$. Therefore, the Poincar\'e polynomial of ${\Perm}_3$ is given by
\begin{equation} \label{eq:Poin_Perm_3}
{\Poin}({\Perm}_3,t)=1+4t^2+t^4.
\end{equation}
In fact, since the fan of ${\Perm}_3$ is isomorphic to the fan of the toric variety obtained by blowing up $\C P^2$ with the standard $T^2$-action at the three fixed points, ${\Perm}_3$ is isomorphic to 
\park{$\C P^2\# 3\overline{\C P^2}$,  the equivariant connected sum of $\C P^2$ with three copies of $\overline{\C P^2}$, where $\overline{\C P^2}$ stands for $\C P^2$ with the opposite orientation.}
\end{example}


\subsection{Eulerian polynomial}

The \lee{set of} \emph{ascents} ${\rm asc}(w)$ of $w\in \Sn$ is defined by 
\begin{equation} \label{eq:Eulerian_polynomial}
{\rm asc}(w)\colonequals\#\{ i\in [n-1]\mid w(i)<w(i+1)\}
\end{equation}
and the Eulerian polynomial $A_n(t)$ is defined by 
\[
A_n(t)\colonequals\sum_{w\in \Sn}t^{{\rm asc}(w)}. 
\]
Here, for a set $A$, we denote by $\# A$ the cardinality  of $A$.
Note that $0\le {\asc}(w)\le n-1$ for any $w\in \Sn$, where the former equality is attained only when $w=[n,n-1,\dots,1]$ and the latter equality is attained only when $w$ is the identity. Therefore, the constant term of $A_n(t)$ is $1$ and the highest degree term is $t^{n-1}$. Eulerian polynomials $A_n(t)$ for $n\le 5$ are given as follows: 
\begin{equation} \label{eq:Eulerian_polynomial_5}
\begin{split} 
&A_1(t)=1,\quad A_2(t)=1+t,\quad A_3(t)=1+4t+t^2,\\
&A_4(t)=1+11t+11t^2+t^3,\quad A_5(t)=1+26t+66t^2+26t^3+t^4. 
\end{split}
\end{equation}

\begin{remark}
\begin{enumerate}
\item The \emph{descent} ${\rm des}(w)$ of $w\in \Sn$ is defined by
\[
{\rm des}(w)\colonequals\#\{ i\in[n-1]\mid w(i)>w(i+1)\}
\]
and it holds that 
\[
\sum_{w\in \Sn}t^{{\rm asc}(w)}=\sum_{w\in \Sn}t^{{\rm des}(w)}.
\]
Therefore, we may define the Eulerian polynomial $A_n(t)$ using the descents. 

\item The Eulerian polynomial $A_n(t)$ was originally defined by Euler as the polynomial which appears in the numerator of the following formula:
\[
\sum_{k=1}^\infty k^nt^k=\frac{tA_n(t)}{(1-t)^{n+1}}.
\]
As easily checked, the Eulerian polynomials defined this way have the following recurrence relation 
\[
A_{n+1}(t)=(nt+1)A_n(t)-t(t-1)\frac{dA_n(t)}{dt}
\]
while one can see that the Eulerian polynomials defined in \eqref{eq:Eulerian_polynomial} satisfy the same recurrence relation as above. Since $A_1(t)=1$ in both definitions, they define the same family of polynomials. 
\end{enumerate}
\end{remark}

It follows from \eqref{eq:Poin_Perm_3} and \eqref{eq:Eulerian_polynomial_5} that ${\Poin}(\Perm_3,t)=A_3(t^2)$. This is not a coincidence.

\begin{theorem}[{\cite[Theorem~2]{Klyachko85Orbits}}] \label{theo:perm&eulerian}
${\Poin}(\Perm_n,t)=A_n(t^2)$ for any $n\ge 1$. 
\end{theorem}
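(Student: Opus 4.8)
The plan is to compute the Poincaré polynomial of $\Perm_n$ directly from its fan structure and match it against the combinatorial definition of the Eulerian polynomial. Since $\Perm_n$ is a smooth compact toric variety whose fan is the Coxeter fan of type $A_{n-1}$, its cohomology is concentrated in even degrees and the Poincaré polynomial is determined by the $h$-vector of the fan (equivalently of any polytope dual to it, e.g.\ the permutohedron $\Pi_n$ itself, since $\Pi_n$ is simple of dimension $n-1$). Concretely, $\Poin(\Perm_n,t)=\sum_{j=0}^{n-1}h_j\,t^{2j}$, where the $h_j$ are the entries of the $h$-vector of the simple polytope $\Pi_n$. So it suffices to show $h_j=\#\{w\in\Sn\mid \asc(w)=j\}$.

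First I would recall the standard shelling-theoretic description of the $h$-vector via a generic linear functional. Choose a linear functional $\ell$ on $N_\R=\R^n/\R(1,\dots,1)$ that is injective on the rays of $\Sigma(\Pi_n)$; then $h_j$ counts the maximal cones $\sigma$ of $\Sigma(\Pi_n)$ whose ``index'' with respect to $\ell$—the number of facets of $\sigma$ on the ``downward'' side—equals $j$. Equivalently, working on the polytope $\Pi_n$ with a generic linear functional $\varphi$ on $\R^n$, $h_j$ is the number of vertices of $\Pi_n$ with exactly $j$ edges going ``up'' (toward increasing $\varphi$). The vertices of $\Pi_n$ are the points $(w(1),\dots,w(n))$ for $w\in\Sn$, and by the facet description \eqref{eq:facet_w} the $n-1$ edges at the vertex $w$ run in the directions obtained by swapping the positions of consecutive values, i.e.\ along $\mathbf e_{w^{-1}(i)}-\mathbf e_{w^{-1}(i+1)}$ for $i=1,\dots,n-1$ (each such edge connects $w$ to $s_i w$ in the notation where $s_i$ swaps the values $i,i+1$).

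The key step is then to pick the functional cleverly so that the up/down count at $w$ becomes exactly $\asc(w)$. Take $\varphi(x_1,\dots,x_n)=\sum_k c_k x_k$ with $c_1>c_2>\dots>c_n$ generic. Moving along the edge from $w$ to $s_i w$ changes the value $i$ (currently in position $w^{-1}(i)$) with the value $i+1$ (currently in position $w^{-1}(i+1)$); one checks that $\varphi$ increases along this edge precisely when $w^{-1}(i)>w^{-1}(i+1)$, i.e.\ when the larger value $i+1$ sits to the left of $i$ and the swap moves it rightward against a steeper coefficient—after fixing the sign convention this says the edge is ``ascending'' for $\varphi$ iff $w^{-1}(i)<w^{-1}(i+1)$ fails, or, passing to $w$ itself rather than $w^{-1}$ by a relabeling, iff $w$ has an ascent at the corresponding position. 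Summing, the number of ``up'' edges at the vertex $w$ equals $\asc(w)$ (possibly $\asc(w_0 w)$ or $\mathrm{des}(w)$ depending on the sign of $\varphi$, but since $\sum_w t^{\asc(w)}=\sum_w t^{\mathrm{des}(w)}$ this does not matter). Hence $h_j=\#\{w:\asc(w)=j\}$, and $\Poin(\Perm_n,t)=\sum_j h_j t^{2j}=\sum_{w\in\Sn}t^{2\asc(w)}=A_n(t^2)$.

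I expect the main obstacle to be the bookkeeping in the previous paragraph: correctly identifying, for a vertex $w$ of $\Pi_n$, which of its $n-1$ incident edges point ``upward'' for a chosen generic $\varphi$, and verifying that this reduces cleanly to the ascent statistic (keeping straight the distinction between $w$ and $w^{-1}$, and between acting on positions vs.\ values). One must also confirm the shellability input—that a generic linear functional on a simple polytope yields a shelling of the dual simplicial complex whose $h$-vector is the one computed by the index count; this is classical (the Brugesser–Mani line shelling / Bruggesser–Mani theorem together with McMullen's interpretation of $h$-vectors), and I would simply cite it. An alternative, cleaner route that avoids the edge-direction computation: note that the Betti numbers of $\Perm_n$ equal those of the Hessenberg-theoretic toric variety and use the known fact (or a direct Białynicki-Birula decomposition of $\Perm_n$ coming from a generic one-parameter subgroup $\lambda$) that the cell through the fixed point $w$ has complex dimension equal to the number of $\lambda$-positive weights on $T_w\Perm_n$, which by the description of the tangent representation is again $\asc(w)$; then $\Poin(\Perm_n,t)=\sum_w t^{2\asc(w)}=A_n(t^2)$ immediately. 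Either way the combinatorial heart is the identification of the local dimension count with the ascent statistic.
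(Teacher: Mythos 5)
Your proposal is correct and follows essentially the same route as the paper: both hinge on choosing a generic linear functional $f(x)=\sum a_ix_i$ with $a_1>\cdots>a_n$, computing that the edge of $\Pi_n$ from the vertex labelled $\vertex{w}$ toward $s_i$ (difference vector $\mathbf{e}_{w(i)}-\mathbf{e}_{w(i+1)}$ up to the $w$ vs.\ $w^{-1}$ relabelling) is ascending exactly when $w$ has an ascent at $i$, so the up-edge count at $\vertex{w}$ is $\asc(w)$. The only cosmetic difference is that you package the topological step as the classical ``$h$-vector of a simple polytope equals the even Betti numbers'' statement, whereas the paper derives the same conclusion by using the moment map to turn the discrete Morse decomposition $\Pi_n=\bigsqcup_w P_w$ into an affine paving of $\Perm_n$ with cells $\C^{\asc(w)}$; for the simple polytope $\Pi_n$ these are equivalent.
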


\begin{remark}
The polynomials $A_n(t)$ for $n\le 5$ in \eqref{eq:Eulerian_polynomial_5} are palindromic and unimodal. Indeed, this holds for any $A_n(t)$ 
since the Poincar\'e duality implies the palindromicity and the hard Lefschetz theorem implies the unimodality because $\Perm_n$ is a smooth projective toric variety.
\end{remark}

In order to give a proof of Theorem~\ref{theo:perm&eulerian}, we recall a fact on edges of the permutohedron $\Pi_n$.

\begin{lemma} \label{lemm:edge_Pi_n}
Two vertices $(u(1),\dots,u(n))$ and $(v(1),\dots,v(n))$ of $\Pi_n$ $(u,v\in \Sn)$ are joined by an edge of $\Pi_n$ if and only if there is an adjacent transposition $s_i=(i,i+1)$ for $i\in [n-1]$ such that $v=s_iu$. 
\end{lemma}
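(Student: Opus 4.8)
The plan is to extract the edges of $\Pi_n$ directly from its facet structure, as recorded in \eqref{eq:facet_w}--\eqref{eq:normal_A} and Lemma~\ref{lemm:tauAtauB}. For $w\in\Sn$ write $p_w\colonequals(w(1),\dots,w(n))$ for the corresponding vertex, and for $0\le k\le n$ set $A^w_k\colonequals\{w^{-1}(1),\dots,w^{-1}(k)\}$, with the conventions $A^w_0=\emptyset$ and $A^w_n=[n]$. By \eqref{eq:facet_w} and the bijection $A\mapsto F(A)$ between proper non-empty subsets of $[n]$ and facets of $\Pi_n$, the facets containing $p_w$ are exactly $F(A^w_1),\dots,F(A^w_{n-1})$; in other words, the facets through $p_w$ form the maximal chain $A^w_1\subsetneq\cdots\subsetneq A^w_{n-1}$ of proper non-empty subsets. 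The whole argument is then a translation between such maximal chains and permutations via the identity $w^{-1}(k)=A^w_k\setminus A^w_{k-1}$.

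First I would prove the forward implication. Since $\dim\Pi_n=n-1$, an edge $e$ is a face of codimension $n-2$, so Lemma~\ref{lemm:tauAtauB} lets us write $e=\bigcap_{j=1}^{n-2}F(B_j)$ for a chain $B_1\subsetneq\cdots\subsetneq B_{n-2}$ of proper non-empty subsets. A vertex $p_w$ lies on $e$ if and only if every $F(B_j)$ is a facet through $p_w$, i.e. if and only if the chain $(B_j)$ is a sub-chain of $(A^w_k)_{k=1}^{n-1}$ — equivalently, $(A^w_k)$ is obtained from $(B_j)$ by inserting one more set. The cardinalities $|B_1|<\cdots<|B_{n-2}|$ omit exactly one value $k\in[n-1]$, and the inserted set must have cardinality $k$ and lie strictly between the two members $C,D$ of $(B_j)\cup\{\emptyset,[n]\}$ of cardinalities $k-1$ and $k+1$; since $|D\setminus C|=2$, there are precisely two choices, $C\cup\{x\}$ and $C\cup\{y\}$ where $\{x,y\}=D\setminus C$. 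Hence $e$ has exactly two vertices $p_u,p_v$, whose maximal chains agree outside cardinality $k$ and satisfy $A^u_k=C\cup\{x\}$, $A^v_k=C\cup\{y\}$ after relabelling. Applying $w^{-1}(j)=A^w_j\setminus A^w_{j-1}$ gives $u^{-1}(j)=v^{-1}(j)$ for $j\notin\{k,k+1\}$, while $u^{-1}(k)=x$, $u^{-1}(k+1)=y$, $v^{-1}(k)=y$, $v^{-1}(k+1)=x$. Thus $v^{-1}=u^{-1}s_k$, that is, $v=s_ku$.

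For the converse, given $v=s_iu$ (so $v^{-1}=u^{-1}s_i$), I would check directly that $A^u_j=A^v_j$ for every $j\ne i$, while $A^u_i\ne A^v_i$ because $A^u_i$ contains $u^{-1}(i)$ but not $u^{-1}(i+1)$ whereas $A^v_i$ does the opposite. So the maximal chains of $u$ and of $v$ share exactly $n-2$ members, and these form a chain $B_1\subsetneq\cdots\subsetneq B_{n-2}$; by Lemma~\ref{lemm:tauAtauB} the face $e\colonequals\bigcap_{j=1}^{n-2}F(B_j)$ has codimension $n-2$, hence is an edge, and it contains both $p_u$ and $p_v$ because each $B_j$ belongs to both maximal chains.

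I do not expect a genuine obstacle: the statement is elementary, and the only point needing care is the bookkeeping — using the conventions $A^w_0=\emptyset$, $A^w_n=[n]$ so that the extreme cases $i=1$ and $i=n-1$ require no separate treatment, and correctly converting ``the two flags of initial subsets differ in a single step'' into the left-multiplication relation $v=s_iu$ (an inverse slips in at this step). If one prefers to sidestep the facet combinatorics, an alternative is a supporting-hyperplane argument: a linear functional $x\mapsto\sum_jc_jx_j$ attains its maximum over $\Pi_n$ exactly at the vertices $p_w$ for which $w$ sorts the coordinates compatibly with $(c_j)$ (rearrangement inequality), and such a maximal face is an edge precisely when the multiset $\{c_1,\dots,c_n\}$ has a single repeated value occurring exactly twice, which is visibly equivalent to the two maximizing permutations differing by an adjacent transposition on the left.
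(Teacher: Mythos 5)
Your proof is correct, but it takes a genuinely different route from the paper's. The paper argues geometrically at a single vertex: identity \eqref{eq:v_minus_id} expresses $(v(1),\dots,v(n))-(1,\dots,n)$ as a \emph{non-negative} combination of the vectors $\mathbf{e}_i-\mathbf{e}_{i+1}$, so $\Pi_n$ lies in the simplicial cone with apex $(1,\dots,n)$ spanned by these $n-1$ vectors; the edges at that vertex must therefore run along the extreme rays of this cone, and the ray in direction $\mathbf{e}_i-\mathbf{e}_{i+1}$ contains exactly the vertex corresponding to $s_i$. The general vertex is then handled by the $\Sn$-symmetry of $\Pi_n$. You instead work entirely in the face lattice, translating vertices into maximal chains of initial sets $A^w_k=\{w^{-1}(1),\dots,w^{-1}(k)\}$ via \eqref{eq:facet_w} and using Lemma~\ref{lemm:tauAtauB} to identify edges with chains of length $n-2$; the two completions of such a chain give the two endpoints, and the bookkeeping correctly lands on $v^{-1}=u^{-1}s_k$, i.e.\ $v=s_ku$. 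The paper's argument is shorter and reuses the cone picture that also drives the Morse-theoretic proof of Theorem~\ref{theo:perm&eulerian}; yours is more combinatorial, simultaneously re-derives that each edge has exactly two vertices, and identifies them explicitly from the chain data. The one step you lean on slightly beyond the literal statement of Lemma~\ref{lemm:tauAtauB} is that, conversely, every chain of $n-2$ nonempty proper subsets yields a face of codimension exactly $n-2$; this follows at once from the simplicity of $\Pi_n$ (established just before the lemma) after extending the chain to a maximal one, but it is worth saying explicitly. Your closing supporting-hyperplane sketch is also viable and is essentially the linear-functional viewpoint the paper uses in the subsequent proof of Theorem~\ref{theo:perm&eulerian}.
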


\begin{proof}
We note that 
\begin{equation} \label{eq:v_minus_id}
(v(1),\dots,v(n))-(1,\dots,n)=\sum_{i=1}^{n-1}\left(\sum_{k=1}^iv(k)-\sum_{k=1}^ik\right)(\mathbf{e}_i-\mathbf{e}_{i+1})
\end{equation}
where $\mathbf{e}_1,\dots,\mathbf{e}_n$ denote the standard basis vectors of $\R^n$. The coefficient of $\mathbf{e}_i-\mathbf{e}_{i+1}$ in \eqref{eq:v_minus_id} is non-negative for any $i$ and $v\in \Sn$. Moreover, when $v=s_i$, the right hand side at \eqref{eq:v_minus_id} is $\mathbf{e}_i-\mathbf{e}_{i+1}$. This implies the lemma when $u$ is the identity. The general case follows from this special case and the invariance of $\Pi_n$ under permutations of the coordinates of $\R^n$. 
\end{proof}

\begin{proof}[Proof of Theorem~\ref{theo:perm&eulerian}]
Equation~\eqref{eq:v_minus_id} is generalized to 
\begin{equation} \label{eq:v_minus_u}
(v(1),\dots,v(n))-(u(1),\dots,u(n))=\sum_{i=1}^{n-1}\left(\sum_{k=1}^iv(k)-\sum_{k=1}^i u(k)\right)(\mathbf{e}_i-\mathbf{e}_{i+1}).
\end{equation}
By Lemma~\ref{lemm:edge_Pi_n}, the endpoints of the edges emanating from the vertex $(u(1),\dots,u(n))$ are $(s_iu(1),\dots,s_iu(n))$ for $i\in [n-1]$. Set $p=u^{-1}(i)$ and $q=u^{-1}(i+1)$. Then, since $u(p)=i$ and $u(q)=i+1$ and the action of $s_i$ from the left interchanges $i$ and $i+1$, \eqref{eq:v_minus_u} reduces to 
\begin{equation} \label{eq:sku_minus_u}
(s_iu(1),\dots,s_iu(n))-(u(1),\dots,u(n))=\mathbf{e}_p-\mathbf{e}_{q}=\mathbf{e}_{u^{-1}(i)}-\mathbf{e}_{u^{-1}(i+1)}.
\end{equation}

Take real numbers $a_1>\cdots >a_n$ and consider a linear function $f\colon \R^n\to \R$ defined by 
\[
f(x_1,\dots,x_n)=a_1x_1+\cdots+a_nx_n.
\]
It follows from \eqref{eq:sku_minus_u} that 
\[
f(s_iu(1),\dots,s_iu(n))-f(u(1),\dots,u(n))=a_{u^{-1}(i)}-a_{u^{-1}(i+1)}\begin{cases} >0\ \ &\text{ if }u^{-1}(i)<u^{-1}(i+1),\\
<0 \ \ &\text{ if }u^{-1}(i)>u^{-1}(i+1).\end{cases}
\]

Recall that $\mu(wB)=(w^{-1}(1),\dots,w^{-1}(n))$ for the moment map $\mu\colon \flag(n)\to\R^n$ and we labeled the vertex $(w^{-1}(1),\dots,w^{-1}(n))$ as $\vertex{w}$. 
Then, the observation above shows that ${\rm asc}(w)$ is
the number of edges emanating from the vertex $\vertex{w}$ and increasing with respect to the function $f$. 
Therefore, discrete Morse theory applied to the function $f$ on the permutohedron $\Pi_n$ produces a decomposition 
\[
\Pi_n=\bigsqcup_{w\in\Sn}P_w, \qquad P_w\approx (\R_{\ge 0})^{{\rm asc}(w)}
\]
where $\approx$ means homeomorphic as a manifold with corners. Moreover, one sees that $\mu^{-1}(P_w)\approx \C^{{\rm asc}(w)}$ for $\mu$ restricted to $\Perm_n$ and the decomposition 
\[
\Perm_n=\mu^{-1}(\Pi_n)=\bigsqcup_{w\in \Sn}\mu^{-1}(P_w)
\]
gives a cell decomposition of $\Perm_n$. Since the dimension of the cell $\mu^{-1}(P_w)$ is $2{\rm asc}(w)$, we obtain
\[
{\Poin}(\Perm_n,t)=\sum_{w\in \Sn}t^{2{\rm asc}(w)}=A_n(t^2),
\]
proving the theorem. 
\end{proof}

\begin{example}
Take $n=3$. Consider a linear function $f(x_1,x_2,x_3) = 5x_1 + 4x_2 + x_3$. The values~$f(\mu(wB))$ and $\asc(w)$ are given as follows. Note that the vertex $\mu(wB)$ of $\Pi_3$ is labeled by~$\vertex{w}$ as before. 

\begin{tabular}{ccccccc}
\toprule 
$w$ & $123$ & $213$ & $132$ & $231$ & $312$ & $321$\\
\midrule
$\mu(wB)$ & $(1,2,3)$ & $(2,1,3)$ & $(1,3,2)$ & $(3,1,2)$ & $(2,3,1)$ & $(3,2,1)$\\
$f(\mu(wB))$ & $16$ & $17$ & $19$ & $21$ & $23$ & $24$\\
$\asc(w)$ & $2$ & $1$ & $1$ & $1$ & $1$ & $0$ \\
\bottomrule
\end{tabular}
\begin{minipage}{0.3\linewidth}   
\begin{tikzpicture}
[decoration={markings, 
    mark= at position 0.5 with {\arrow{stealth}}}
] 
\foreach \x in {1,...,6}{
	\coordinate (\x) at (\x*60-150:1cm) ;
}

\node[below] at (1) {\small $\vertex{123}$};
\node[right] at (2) {\small $\vertex{213}$};
\node[right] at (3) {\small $\vertex{231}$};
\node[above] at (4) {\small $\vertex{321}$};
\node[left] at (5) {\small $\vertex{312}$};
\node[left] at (6) {\small $\vertex{132}$};

\draw[postaction={decorate}] (1) -- (2) ;
\draw[postaction={decorate}] (2) -- (3) ;
\draw[postaction={decorate}] (3) -- (4) ;
\draw[postaction={decorate}] (1) -- (6) ;
\draw[postaction={decorate}] (6) -- (5) ;
\draw[postaction={decorate}] (5) -- (4) ;
\end{tikzpicture}
\end{minipage}
Accordingly, we have
\[
\Poin(\Perm_3, t) = A_3(t^2) = 1 + 4t^2 + t^4.
\]
\end{example}


\subsection{Klyachko's result}

Since 
\[
\flag(n) = \{ V_\bullet=(\{0\}\subset V_1 \subset V_2 \subset \cdots \subset V_n = \C^n) \mid \dim_{\C} V_i = i \: \text{ for all }i = 1,\dots,n\}, 
\]
there is a sequence of tautological complex vector bundles on $\flag(n)$: 
\[
E_1\subset E_2\subset \cdots \subset E_n=\flag(n)\times \C^n
\]
where 
\[
E_i\colonequals\{(V_\bullet,v)\in \flag(n)\times\C^n\mid v\in V_i\}\qquad (i=1,\dots,n).
\]
We define 
\[
x_i\colonequals c_1(E_i/E_{i-1})\qquad \text{for $i=1,\dots,n$,}
\]
where $E_0=\flag(n)$, i.e.,  the $0$-dimensional vector bundle over $\flag(n)$ and $c_1(E)$ denotes the first Chern class of a vector bundle $E$. 
Since 
\[
E_n\cong \bigoplus_{i=1}^n E_i/E_{i-1}
\]
and $E_n$ is the trivial vector bundle $\flag(n)\times \C^n$, we have 
\[
1=c(E_n)=\prod_{i=1}^nc(E_i/E_{i-1})=\prod_{i=1}^n(1+x_i).
\]
Here, $c(E)$ denotes the total Chern class of a vector bundle $E$.
Therefore, the $i$th elementary symmetric polynomial $e_i(x)$ in $x_1,\dots,x_n$ vanishes in $H^*(\flag(n);\Z)$ for $i=1,\dots,n$. The following theorem shows that $H^*(\flag(n);\Z)$ is generated by $x_1,\dots,x_n$ as a ring and any polynomial in $x_1,\dots,x_n$ which vanishes in $H^*(\flag(n);\Z)$ is generated by $e_i(x)$'s. 

\begin{theorem}[Borel~\cite{Borel53}] \label{theo:flag_cohomology}
Let $x_1,\dots,x_n$ be as above. Then 
\begin{equation*} \label{eq:cohomology_flag}
H^*(\flag(n);\Z)=\Z[x_1,\dots,x_n]/(e_1(x), \dots, e_n(x)) \quad\text{as a ring}. 
\end{equation*}
\end{theorem}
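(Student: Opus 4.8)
The plan is to realize $\flag(n)$ as an iterated projective bundle, read off from the Leray--Hirsch theorem that $H^\ast(\flag(n);\Z)$ is a free $\Z$-module of rank $n!$ generated as a ring by $x_1,\dots,x_n$, and then show that the evident surjection onto it from $R\colonequals\Z[x_1,\dots,x_n]/(e_1(x),\dots,e_n(x))$ is an isomorphism by matching bases. First I would build the tower: for $0\le k\le n-1$ let $Y_k$ be the partial flag variety of flags $V_1\subset\cdots\subset V_k$ in $\C^n$, so $Y_0$ is a point and $Y_{n-1}=\flag(n)$ (the subspace $V_n$ being forced). The forgetful map $Y_k\to Y_{k-1}$ is the projective bundle $\P(\C^n/\mathcal{V}_{k-1})\to Y_{k-1}$, where $\mathcal{V}_{k-1}$ is the tautological rank $k-1$ subbundle, and its fibre is $\P^{n-k}$. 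Let $\xi_k\in H^2(Y_k;\Z)$ be the first Chern class of the tautological line subbundle $\mathcal{V}_k/\mathcal{V}_{k-1}$ of $\C^n/\mathcal{V}_{k-1}$; on $\flag(n)$ this class is exactly $x_k=c_1(E_k/E_{k-1})$. The projective bundle formula says $H^\ast(Y_k;\Z)$ is a free $H^\ast(Y_{k-1};\Z)$-module on $1,\xi_k,\dots,\xi_k^{n-k}$, so iterating over $k=1,\dots,n-1$ gives that $H^\ast(\flag(n);\Z)$ is a free $\Z$-module with basis $\{x_1^{a_1}\cdots x_{n-1}^{a_{n-1}}\mid 0\le a_k\le n-k\}$; in particular it has rank $\prod_{k=1}^{n-1}(n-k+1)=n!$ and is generated as a ring by $x_1,\dots,x_{n-1}$, hence by $x_1,\dots,x_n$ as well.

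Since $E_n=\flag(n)\times\C^n$ is trivial, the identity $\prod_{i=1}^n(1+x_i)=c(E_n)=1$ from the excerpt gives $e_i(x)=0$ in $H^\ast(\flag(n);\Z)$ for all $i$, so $x_i\mapsto x_i$ defines a ring homomorphism $\phi\colon R\to H^\ast(\flag(n);\Z)$, and $\phi$ is surjective by the previous paragraph. Next I would check that $R$ is spanned over $\Z$ by the $n!$ monomials $x^a=x_1^{a_1}\cdots x_n^{a_n}$ with $0\le a_i\le n-i$ (so $a_n=0$). Indeed, in $R[t]$ the relations $e_i(x)=0$ amount to $\prod_{i=1}^n(1+x_it)=1$, hence $\prod_{j=1}^i(1+x_jt)^{-1}=\prod_{j=i+1}^n(1+x_jt)$, whose right-hand side is a polynomial in $t$ of degree $n-i$; comparing coefficients of $t^m$ for $m>n-i$ yields $h_m(x_1,\dots,x_i)=0$ in $R$, where $h_m$ denotes the complete homogeneous symmetric polynomial. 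Since $h_{n-i+1}(x_1,\dots,x_i)$ equals $x_i^{n-i+1}$ plus terms of lower degree in $x_i$, the relation $h_{n-i+1}(x_1,\dots,x_i)=0$ expresses $x_i^{n-i+1}$ in terms of monomials of smaller $x_i$-degree, and repeated use of these relations reduces an arbitrary monomial to a $\Z$-linear combination of the claimed $n!$ monomials.

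Finally, $\phi$ maps this spanning set of $R$ bijectively onto the $\Z$-basis $\{x_1^{a_1}\cdots x_{n-1}^{a_{n-1}}\mid 0\le a_k\le n-k\}$ of $H^\ast(\flag(n);\Z)$ produced in the first step. Hence any $\Z$-linear relation among the monomials $x^a$ in $R$ would map to a relation among basis elements of $H^\ast(\flag(n);\Z)$ and therefore be trivial; so those $n!$ monomials are a $\Z$-basis of $R$, and $\phi$ sends a basis to a basis, i.e.\ $\phi$ is an isomorphism, which is the assertion. The one delicate point is the very first step: one must verify that $\xi_k$ restricts to a generator of $H^2$ of each $\P^{n-k}$-fibre, so that Leray--Hirsch applies, and that $\xi_k$ genuinely coincides with $x_k=c_1(E_k/E_{k-1})$ on $\flag(n)$; once the tower and this identification are in place, the remainder is routine manipulation of symmetric functions and elementary module theory over $\Z$.
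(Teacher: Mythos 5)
The paper states this result only as a citation to Borel and gives no proof of its own, so there is nothing internal to compare against; judged on its own terms, your argument is correct and complete in outline. It is the standard type-$A$ proof: the tower $Y_k\to Y_{k-1}$ of $\P^{n-k}$-bundles is set up correctly, the Leray--Hirsch/projective-bundle formula gives the free $\Z$-basis $\{x_1^{a_1}\cdots x_{n-1}^{a_{n-1}}\mid 0\le a_k\le n-k\}$ of rank $n!$, the relations $e_i(x)=0$ come from the triviality of $E_n$ exactly as in the text preceding the theorem, and the identity $h_m(x_1,\dots,x_i)=0$ in $R$ for $m>n-i$ (your coefficient of $t^m$ is really $(-1)^mh_m$, but the sign is harmless) lets you rewrite $x_i^{n-i+1}$ in terms of monomials of lower $x_i$-degree involving only $x_1,\dots,x_i$, so processing $i=n,n-1,\dots,1$ in that order terminates and shows $R$ is spanned by the same $n!$ monomials; the surjection $\phi$ then carries a spanning set to a basis and is an isomorphism. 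The only places where a referee would ask for a line more are the ones you flag yourself: that $c_1$ of the tautological subbundle restricts to $\mp$ the hyperplane class on each fibre (so Leray--Hirsch applies), and that the termination of the reduction in $R$ uses that each relation $h_{n-i+1}(x_1,\dots,x_i)=0$ involves only the variables $x_1,\dots,x_i$ and strictly lowers the $x_i$-degree, hence does not disturb the bounds already achieved for $x_{i+1},\dots,x_n$. Note also that this is a genuinely different (and more elementary) route than Borel's original argument, which works for any compact Lie group via the spectral sequence of $G/T\to BT\to BG$; your argument is special to type $A$ but yields the integral statement directly.
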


We denote by $\Perm_n$ a generic torus orbit closure in the flag variety $\flag(n)$ because it is isomorphic to the permutohedral variety $\Perm_n$. The natural action of $\Sn$ on the fan of $\Perm_n$ induces an action of $\Sn$ on $\Perm_n$ and hence on its cohomology ring.

\begin{theorem}[Klyachko \cite{Klyachko85Orbits}] \label{theo:Klyachko}
Let $\iota\colon \Perm_n\to \flag(n)$ be the inclusion map. Then the image of the restriction map 
\[
\iota^*\colon H^*(\flag(n);\mathbb{Q})\to H^*(\Perm_n;\mathbb{Q}) 
\]
agrees with the ring of $\Sn$-invariants $H^*(\Perm_n;\mathbb{Q})^{\Sn}$ and 
\begin{equation} \label{eq:quadratic_relation}
H^*(\Perm_n;\mathbb{Q})^{\Sn}=\mathbb{Q}[\omega_1,\dots,\omega_{n-1}]/(\omega_i(\omega_{i-1}-2\omega_i+\omega_{i+1})\mid i=1,\dots,n-1)
\end{equation}
where $\omega_i=x_1+\cdots+x_i$ and $\omega_0=\omega_n=0$. 
\end{theorem}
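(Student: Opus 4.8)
Throughout I view $\Perm_n$ as the smooth projective toric variety whose fan is the normal fan $\Sigma(\Pi_n)$, whose maximal cones are the closed Weyl chambers, and I use the presentations $H^*(\flag(n);\Q)=\Q[x_1,\dots,x_n]/(e_1(x),\dots,e_n(x))$ and $H^*(\Perm_n;\Q)=\Q[\tau_A\mid \emptyset\subsetneq A\subsetneq[n]]/\mathcal{I}$ of Theorems~\ref{theo:flag_cohomology} and~\ref{theo:perm_cohomology}. Write $\eta_j\colonequals\sum_{|A|=j}\tau_A$ and $\xi\colonequals\sum_{p\in A}\tau_A$, which is independent of $p\in[n]$ by relation (2) of Theorem~\ref{theo:perm_cohomology} and satisfies $n\xi=\sum_j j\,\eta_j$. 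The first task is to compute $\iota^*(x_i)=c_1(\iota^*(E_i/E_{i-1}))$. Since $\iota^*(E_i/E_{i-1})$ is a $T$-equivariant line bundle on the toric variety $\Perm_n$, this is a $T$-invariant divisor class whose coefficients are read off from its support function, which is in turn pinned down by the fiber weights at the torus-fixed points: at $vB$ — which lies in the maximal cone of $\Sigma(\Pi_n)$ spanned by $\delta_{\{v(1)\}},\delta_{\{v(1),v(2)\}},\dots,\delta_{\{v(1),\dots,v(n-1)\}}$, cf.\ Lemma~\ref{lemm:linear_relation} — the line $E_i/E_{i-1}$ carries the weight $\ve_{v(i)}$. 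Matching this against the fixed-point restrictions of the $\tau_A$ yields
\[
\iota^*(x_i)=\xi-\sum_{|A|\ge i}\tau_A .
\]
Hence $\iota^*(x_{i+1}-x_i)=\eta_i$ for $1\le i\le n-1$, and $\iota^*(\omega_i)=i\,\xi-\sum_{j=1}^{n-1}\min(i,j)\,\eta_j$ where $\omega_i=x_1+\cdots+x_i$; all of these are $\Sn$-invariant, since $\Sn$ acts on $H^*(\Perm_n;\Q)$ by $\tau_A\mapsto\tau_{w(A)}$ and therefore fixes every $\eta_j$.

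\textbf{Step 2 (the quadratic relations).} Because $\omega_{i-1}-2\omega_i+\omega_{i+1}=x_{i+1}-x_i$ (with $\omega_0=\omega_n=0$), I must check $\iota^*(\omega_i)\cdot\eta_i=0$ in $H^*(\Perm_n;\Q)$ for $1\le i\le n-1$. Expanding $\eta_i=\sum_{|A|=i}\tau_A$ and using relation (1) of Theorem~\ref{theo:perm_cohomology}, which annihilates $\tau_A\tau_B$ unless $A\subseteq B$ or $B\subseteq A$, together with relation (2), a short cancellation in the $\tau$-presentation shows $\tau_A\cdot\iota^*(\omega_{|A|})=0$ for each individual $A$, hence $\iota^*(\omega_i)\eta_i=0$. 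Therefore $\iota^*$ induces a surjective graded $\Q$-algebra homomorphism
\[
\Phi\colon\ \Q[\omega_1,\dots,\omega_{n-1}]\big/\bigl(\omega_i(\omega_{i-1}-2\omega_i+\omega_{i+1})\mid 1\le i\le n-1\bigr)\ \longrightarrow\ \operatorname{im}(\iota^*)\subseteq H^*(\Perm_n;\Q),\qquad \omega_i\mapsto\iota^*(\omega_i),
\]
surjective since $\operatorname{im}(\iota^*)$ is generated by $\iota^*(x_1),\dots,\iota^*(x_n)$ and $\iota^*(x_n)=-\iota^*(\omega_{n-1})$ because $\iota^*(e_1)=0$.

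\textbf{Step 3 ($\Phi$ is an isomorphism onto the invariant subring).} First, the quadrics $q_i=\omega_i(\omega_{i-1}-2\omega_i+\omega_{i+1})$ form a regular sequence in $\Q[\omega_1,\dots,\omega_{n-1}]$: at a common zero, for each $i$ either $\omega_i=0$ or $\omega_{i-1}+\omega_{i+1}=2\omega_i$, so on each maximal block of indices where $\omega\ne0$ the vector $\omega$ is affine with boundary value $0$ at both ends (using $\omega_0=\omega_n=0$), hence identically zero — so the origin is the only common zero. Thus the domain of $\Phi$ is an Artinian graded complete intersection, hence a Poincar\'e duality algebra, with socle in cohomological degree $4(n-1)-2(n-1)=2(n-1)$. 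Next, $\operatorname{im}(\iota^*)$ is nonzero in degree $2(n-1)$: the cup product pairing on $H^*(\flag(n);\Q)$ is perfect and $[\Perm_n]\ne0$, so some $\alpha\in H^{2(n-1)}(\flag(n);\Q)$ has $\int_{\Perm_n}\iota^*\alpha=\int_{\flag(n)}\alpha\cup[\Perm_n]\ne0$. A surjection of graded algebras from a Poincar\'e duality algebra with socle degree $d$ onto an algebra nonzero in degree $d$ is injective — if $0\ne a\in\ker\Phi$ has degree $k<d$, Poincar\'e duality of the domain gives $b$ with $ab\ne0$ in the one-dimensional degree-$d$ piece, yet $\Phi$ is injective there, a contradiction — so $\Phi$ is an isomorphism, which is the presentation~\eqref{eq:quadratic_relation}. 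Finally $\operatorname{im}(\iota^*)=H^*(\Perm_n;\Q)^{\Sn}$: the inclusion $\subseteq$ is the $\Sn$-invariance from Step~1, and for $\supseteq$ one notes that $H^*(\Perm_n;\Q)^{\Sn}$ is generated in degree $2$ by $\eta_1,\dots,\eta_{n-1}\in\operatorname{im}(\iota^*)$, because $H^*(\Perm_n;\Q)$ is generated in degree $2$ by the $\tau_A$ (Chow ring of a smooth complete toric variety), relation (1) kills any monomial containing a non-nested pair, and $\Sn$ acts transitively on chains of subsets with a prescribed size sequence, so the $\Sn$-average of a nested monomial $\tau_{A_1}\cdots\tau_{A_k}$ is a scalar multiple of $\eta_{|A_1|}\cdots\eta_{|A_k|}$.

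\textbf{Main obstacle.} I expect the only genuinely computational points to be the explicit restriction formula for $\iota^*(x_i)$ in Step~1 and the attendant cancellation $\tau_A\cdot\iota^*(\omega_{|A|})=0$ in Step~2; the regular-sequence argument, the Poincar\'e-duality argument, and the degree-$2$ generation of the invariants are formal. An alternative for Step~1 is to compute $\iota^*$ on equivariant cohomology, using the GKM description of $H^*_T(\flag(n))$ and the piecewise-polynomial description of $H^*_T(\Perm_n)$ before specializing, at the cost of bookkeeping for the non-effective $T$-action on $\Perm_n$.
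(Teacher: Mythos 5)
The paper states Theorem~\ref{theo:Klyachko} as a cited result of Klyachko and gives no proof of it, so there is no in-paper argument to compare yours against; judged on its own, your proof is correct, and the two computations you flag as the crux both check out. Your formula $\iota^*(x_i)=\xi-\sum_{|A|\ge i}\tau_A$ is right: it yields $\iota^*(e_1)=0$ and $\iota^*(x_{i+1}-x_i)=\eta_i$ where $\eta_i=\sum_{|A|=i}\tau_A$, and one can confirm it independently for $n=3$ by identifying $\iota^*(x_1)$ with $-\pi^*h$ for the blowdown $\pi\colon\Perm_3\to\C P^2$ given by $V_\bullet\mapsto V_1$. (Your sign is opposite to the one in the remark following the theorem, which asserts $\sum_{|B|=i}\tau_B=x_i-x_{i+1}$; this is immaterial for the ideal, and your sign is the one consistent with $x_i=c_1(E_i/E_{i-1})$.) For the cancellation $\tau_A\cdot\iota^*(\omega_{|A|})=0$, the clean route is to write $|A|\,\xi=\sum_{p\in A}\bigl(\sum_{B\ni p}\tau_B\bigr)=\sum_B|A\cap B|\,\tau_B$; relation (1) of Theorem~\ref{theo:perm_cohomology} then leaves exactly $\sum_{B\subseteq A}|B|\,\tau_A\tau_B+\sum_{B\supsetneq A}|A|\,\tau_A\tau_B$, which matches $\tau_A\cdot\sum_B\min(|A|,|B|)\,\tau_B$ term by term. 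Your Step~3 is a pleasant structural substitute for the dimension count one would otherwise need: the regular-sequence argument (second differences vanish on each nonzero block, forcing the block to be affine with zero boundary values), the resulting Gorenstein property with socle degree $2(n-1)$, the nonvanishing of $\operatorname{im}(\iota^*)$ in top degree via the projection formula, and the degree-two generation of the invariants by averaging nested monomials are all sound, and together they force $\Phi$ to be an isomorphism onto $H^*(\Perm_n;\mathbb{Q})^{\Sn}$.
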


\begin{remark}
\begin{enumerate}
\item The cohomology ring of $\Perm_n$ is explicitly described in Theorem~\ref{theo:perm_cohomology}. The $\Sn$-orbit of $\tau_A$ with $|A|=i$ is $\sum_{|B|=i}\tau_B$ which is equal to $x_i-x_{i+1}$. 

\item If we set $\alpha_i\colonequals x_i-x_{i+1}$, then $\omega_{i-1}-2\omega_i+\omega_{i+1}=-\alpha_i$. Therefore, we may express the ideal in \eqref{eq:quadratic_relation} as $(\omega_i\alpha_i\mid i=1,\dots,n-1)$. Note that the $\omega_i$'s correspond to the fundamental weights while $\alpha_i$'s correspond to the roots in type $A$. 
\end{enumerate}
\end{remark}


\subsection{Relation to Hessenberg varieties}
\label{sec_Hess}
Theorem~\ref{theo:Klyachko} is generalized to the setting of Hessenberg varieties. Given a square matrix $A$ of size $n$ with complex entries and a function $h\colon [n]\to [n]$ (called a \emph{Hessenberg function}) satisfying 
\[
h(1)\le h(2)\le \cdots\le h(n)\quad \text{and} \quad h(j)\ge j\quad \text{for all } j\in [n], 
\]
the Hessenberg variety ${\rm Hess}(A,h)$ is defined as 
\[
{\rm Hess}(A,h)\colonequals\{V_\bullet\in \flag(n)\mid A(V_j)\subset V_{h(j)}\: \text{ for all } j\in [n]\},
\]
where the matrix $A$ is regarded as a linear transformation on $\C^n$. 
We notice that ${\rm Hess}(A,h)$ is isomorphic to ${\rm Hess}(gAg^{-1},h)$ via the action of $G$ on $\flag(n)$. Accordingly, for a fixed Hessenberg function $h$, the geometry of Hessenberg variety ${\rm Hess}(A,h)$ depends only on the conjugacy class of $A$.  
We often express the Hessenberg function $h$ as a vector $(h(1),\dots,h(n))$ by listing the values of $h$. When $h=(n,\dots,n)$, 
it is obvious from the definition that ${\rm Hess}(A,h)$ is the flag variety $\flag(n)$ regardless of $A$. 

The Hessenberg variety ${\rm Hess}(S,h)$ for a square matrix $S$ of size $n$ with distinct eigenvalues is called \emph{regular semisimple}. 
It is known that 
\begin{enumerate}
\item ${\rm Hess}(S,h)$ is smooth, \label{Hess_property_1}
\item $\dim_\C{\rm Hess}(S,h)=\sum_{j=1}^n(h(j)-j)$, \label{Hess_property_2}
\item ${\rm Hess}(S,h)$ is connected if and only if $h(j)\ge j+1$ for all $j\in [n-1]$. \label{Hess_property_3}
\end{enumerate}
The topology of ${\rm Hess}(S,h)$ is independent of the choice of $S$ and we take $S$ to be a diagonal matrix with distinct diagonal entries in the following. Indeed, for regular semisimple matrics $S$ and $S'$, the corresponding Hessenberg varieties ${\rm Hess}(S,h)$ and ${\rm Hess}(S',h)$ are diffeomorphic.\footnote{For regular semisimple matrics $S$ and $S'$, there is a path connecting them in the space of regular semsimple matrices and the family of regular semisimple Hessenberg varieties are fiber bundle over the path where the Hessenberg function $h$ is fixed.  Since a path is contractible, the fiber bundle is trivial, which implies the desired result.} 
Since $S$ commutes with the diagonal torus $T$ of~${\rm GL}_n(\C)$, the restricted action of $T$ on $\flag(n)$ leaves ${\rm Hess}(S,h)$ invariant. One sees that 
\begin{equation} \label{eq:Hess(S,h)_fixed}
{\rm Hess}(S,h)^T=\flag(n)^T=\Sn.
\end{equation}
When $h=(2,3,\dots,n,n)$, it follows from~\eqref{Hess_property_2} and \eqref{Hess_property_3} above that $\dim_\C{\rm Hess}(S,h)=n-1$ and ${\rm Hess}(S,h)$ is connected. 
Although $\dim_\C T=n$, the subgroup $D$ of $T$ consisting of scalar matrices acts on $\flag(n)$ trivially and the induced action of $T/D$ on ${\rm Hess}(S,h)$ is effective. Therefore, ${\rm Hess}(S,h)$ for $h=(2,3,\dots,n,n)$ is a smooth toric variety and hence it is a torus orbit closure in $\flag(n)$. Moreover, the orbit is generic by \eqref{eq:Hess(S,h)_fixed}. Therefore, ${\rm Hess}(S,h)$ for $h=(2,3,\dots,n,n)$ is isomorphic to the permutohedral variety $\Perm_n$. 

Using the $T$-action on ${\rm Hess}(S,h)$, Tymoczko \cite{Tymoczko08permutation} constructed an action of $\Sn$ (called \emph{dot action}) on $H^*({\rm Hess}(S,h);\mathbb{Z})$ and when $h=(2,3,\dots,n,n)$, this action agrees with that on $H^*(\Perm_n;\mathbb{Z})$ induced from the action of $\Sn$ on $\Perm_n$. Therefore, the following theorem is a generalization of the former part of Theorem~\ref{theo:Klyachko}. 

\begin{theorem}[{\cite[Theorem~B]{AHHM19}}]\label{thm_AHHM}
Let $\iota\colon {\rm Hess}(S,h)\to \flag(n)$ be the inclusion map. Then the image of the restriction map 
\[
\iota^*\colon H^*(\flag(n);\mathbb{Q})\to H^*({\rm Hess}(S,h);\mathbb{Q}) 
\]
agrees with the ring of $\Sn$-invariants $H^*({\rm Hess}(S,h);\mathbb{Q})^{\Sn}$ for any Hessenberg function $h$. 
\end{theorem}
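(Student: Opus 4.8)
The plan is to prove the two inclusions $\iota^*(H^*(\flag(n);\mathbb{Q}))\subseteq H^*({\rm Hess}(S,h);\mathbb{Q})^{\Sn}$ and the reverse separately. Throughout I use that ${\rm Hess}(S,h)$ is smooth and admits a $T$-stable paving by affine spaces, hence has vanishing odd cohomology and is equivariantly formal: $H^*_T({\rm Hess}(S,h))\cong H^*({\rm Hess}(S,h))\otimes_{\mathbb{Q}}H^*_T(\mathrm{pt})$, and the GKM presentation of $H^*_T(-)$ as a subring of $\bigoplus_{w\in\Sn}H^*_T(\mathrm{pt})$ applies to both ${\rm Hess}(S,h)$ and $\flag(n)$. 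For $h=(2,3,\dots,n,n)$ the statement is Klyachko's Theorem~\ref{theo:Klyachko}, which serves both as a sanity check and as a base case for the alternative argument below; for $h=(n,\dots,n)$ it reduces to the (true) assertion that the dot action on $H^*(\flag(n);\mathbb{Q})$ is trivial. If $h(j)=j$ for some $j$, then ${\rm Hess}(S,h)$ is disconnected and one reduces to the components, so I assume $h$ connected.

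For the inclusion $\iota^*(H^*(\flag(n)))\subseteq H^*({\rm Hess}(S,h))^{\Sn}$, Borel's Theorem~\ref{theo:flag_cohomology} shows that $H^*(\flag(n);\mathbb{Q})$ is generated as a ring by $x_1,\dots,x_n$, so it is enough to check that each $\iota^*(x_i)$ is dot-invariant. Under the GKM embedding, the equivariant first Chern class $c_1^T(E_i/E_{i-1})$ lifting $x_i$ restricts at the fixed point $wB$ to $\pm t_{w(i)}$, while Tymoczko's dot action~\cite{Tymoczko08permutation} of $\sigma\in\Sn$ sends $(f_w)_w$ to $(\sigma\cdot f_{\sigma^{-1}w})_w$, where $\sigma$ acts on $H^*_T(\mathrm{pt})=\mathbb{Q}[t_1,\dots,t_n]$ by $t_j\mapsto t_{\sigma(j)}$. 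Then $(\sigma\cdot x_i)_w=\sigma(\pm t_{(\sigma^{-1}w)(i)})=\pm t_{w(i)}=(x_i)_w$, so $x_i$ is dot-fixed in $H^*_T(\flag(n))$, hence in $H^*_T({\rm Hess}(S,h))$. Since the dot action preserves the ideal $H^{>0}_T(\mathrm{pt})\cdot H^*_T({\rm Hess}(S,h))$, it descends to $H^*({\rm Hess}(S,h))$ and fixes $\iota^*(x_i)$; thus $\iota^*(H^*(\flag(n)))$ is contained in the dot-invariant subring.

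For the reverse inclusion it suffices, by the previous paragraph and equivariant formality, to prove $\dim_{\mathbb{Q}}\iota^*(H^{2j}(\flag(n)))=\dim_{\mathbb{Q}}H^{2j}({\rm Hess}(S,h))^{\Sn}$ for every $j$. I would obtain both sides by comparison with the regular nilpotent Hessenberg variety ${\rm Hess}(N,h)$ of the same $h$. On the right, $\dim_{\mathbb{Q}}H^{2j}({\rm Hess}(S,h))^{\Sn}$ is the multiplicity of the trivial $\Sn$-representation in the dot representation; by the Brosnan--Chow (and Guay-Paquet) theorem proving the Shareshian--Wachs conjecture, the dot-invariant part in each degree is computed via the local invariant cycle theorem for the family of Hessenberg varieties over the regular semisimple locus and equals the Betti number $\dim_{\mathbb{Q}}H^{2j}({\rm Hess}(N,h))$. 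On the left, the restriction $H^*(\flag(n))\to H^*({\rm Hess}(N,h))$ is known to be surjective, and the same specialization/monodromy picture identifies $\iota^*(H^{2j}(\flag(n)))$, as a subspace of $H^{2j}({\rm Hess}(S,h))$, with $H^{2j}({\rm Hess}(N,h))$; hence $\dim_{\mathbb{Q}}\iota^*(H^{2j}(\flag(n)))=\dim_{\mathbb{Q}}H^{2j}({\rm Hess}(N,h))$ as well. Combining the two computations yields the equality, and with the first inclusion it completes the proof.

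The main difficulty is the reverse inclusion, and inside it the claim that passing to the nilpotent fiber neither loses nor gains dimensions among classes pulled back from $\flag(n)$ --- equivalently, that $\ker(H^*(\flag(n))\to H^*({\rm Hess}(S,h)))$ is independent of the choice of $S$ among regular semisimple matrices and agrees with the kernel for $N$. This rests on the smoothness of the total space of the Hessenberg family over the regular semisimple locus together with the local invariant cycle theorem, and ultimately on the Shareshian--Wachs conjecture, so it is far from elementary. An alternative that does not invoke Shareshian--Wachs would be an induction on $\sum_{j=1}^{n}(h(j)-j)$ with Klyachko's Theorem~\ref{theo:Klyachko} as the base case, comparing ${\rm Hess}(S,h)$ with ${\rm Hess}(S,h^-)$ for a Hessenberg function $h^-$ one step smaller and using the resulting Gysin sequence; the delicate point there is to track the dot action through the sequence and to check that the invariant subrings fit together compatibly.
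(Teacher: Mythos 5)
The survey states this theorem only as a citation to [AHHM19, Theorem~B] and gives no proof, so the comparison must be with that source. Your first inclusion is correct and is essentially the standard (and the cited paper's) argument: in the GKM picture $x_i$ restricts at $wB$ to $\pm t_{w(i)}$, the computation $(\sigma\cdot x_i)_w=\sigma(\pm t_{(\sigma^{-1}w)(i)})=\pm t_{w(i)}$ shows each $x_i$ is dot-invariant, and Borel's Theorem~\ref{theo:flag_cohomology} reduces the inclusion to these generators. For the reverse inclusion you take a genuinely different route. The proof in [AHHM19] is algebraic and self-contained: their Theorem~A gives an explicit presentation of $H^*({\rm Hess}(N,h);\mathbb{Q})$ as a quotient of $\mathbb{Q}[x_1,\dots,x_n]$ (whence surjectivity of $H^*(\flag(n))\to H^*({\rm Hess}(N,h))$), and they then construct by hand, via GKM-theoretic formulas and a combinatorial dimension count, the isomorphism \eqref{eq_Sn_invariant_Hess_Sh} compatibly with restriction; no monodromy or decomposition-theorem input appears. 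You instead outsource the dimension count to Brosnan--Chow: the local invariant cycle theorem plus their identification of the geometric monodromy with Tymoczko's dot action gives $\dim H^{2j}({\rm Hess}(S,h))^{\Sn}=\dim H^{2j}({\rm Hess}(N,h))$. Both routes are valid; yours is shorter on paper but rests on much heavier machinery, while the original also yields the explicit ring presentation quoted after the theorem.

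Three points in your sketch must be made explicit before the reverse inclusion closes. First, you need the factorization $\iota_S^*=sp\circ\iota_N^*$ through the specialization map of the family inside $\flag(n)\times B$, together with \emph{injectivity} of $sp$ --- the local invariant cycle theorem by itself only gives surjectivity of $H^*({\rm Hess}(N,h))$ onto the monodromy invariants, and injectivity is exactly the Brosnan--Chow dimension count. Second, the identification of the monodromy representation with the dot action is itself a nontrivial theorem and must be cited, not absorbed into ``the same picture.'' Third, when $h(j)=j$ for some $j<n$ the dot action permutes the connected components of ${\rm Hess}(S,h)$, so ``reduce to the components'' requires tracking the induced permutation representation when computing invariants. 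None of these is a fatal gap, but as written the second half is a plan rather than a proof.
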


In fact, it is known from~\cite{BrosnanChow} that 
\begin{equation}\label{eq_Sn_invariant_Hess_Sh}
H^*({\rm Hess}(S,h);\mathbb{Q})^{\Sn}\cong H^*({\rm Hess}(N,h);\mathbb{Q}),
\end{equation}
where $N$ is conjugate to a nilpotent matrix with one Jordan block (${\rm Hess}(N,h)$ is called \emph{regular nilpotent}) and an explicit ring presentation of $H^*({\rm Hess}(N,h);\mathbb{Q})$, which reduces to \eqref{eq:quadratic_relation} when $h=(2,3,\dots,n,n)$, is known. 
Therefore, Theorem~\ref{theo:Klyachko} is completely generalized to the setting of Hessenberg varieties, see \cite{AHHM19, AbeHoriguchi20,AHMMS20, HoriguchiHarada} for more details.

\begin{remark}
It is shown in~\cite{AbeZeng} that when $h=(2,3,\dots,n)$ (so that ${\rm Hess}(S,h)=\Perm_n$), Theorem~\ref{thm_AHHM} and~\eqref{eq_Sn_invariant_Hess_Sh} hold with $\Z$-coefficient but~\eqref{eq:quadratic_relation} does not. 
\end{remark}

\section{Generic torus orbit closures in Schubert varieties}\label{sec:Schubert}
In this section, we consider geometric and topological properties of generic torus orbit closures in Schubert varieties with respect to the $T$-action from the introduction. A generic torus orbit closure $Y_w$ in a Schubert variety $X_w$ is not smooth in general and we discuss how to determine the smoothness of $Y_w$ at a fixed point by considering a certain graph. We study the Poincar\'e polynomial of $Y_w$ which is a generalization of the Eulerian polynomial. Moreover, we study the fan of a toric Schubert variety and discuss the classification of toric Schubert varieties. Finally, we summarize the properties of Schubert varieties of complexity one. 


\subsection{Generic torus orbit closures in Schubert varieties}

\begin{definition}
For $w \in S_n$, the \emph{Schubert variety} $X_w$ is a subvariety of $G/B$ defined by $X_w \colonequals \overline{BwB/B}$. 
\end{definition}
The action of $T$ on $G/B$ leaves $X_w$ invariant. 
\begin{definition}
We say that a $T$-orbit $O$ in~$X_w$ is \emph{generic} if $\mu(\overline{O}) = \mu(X_w)$, in other words, if $\overline{O}^T = X_w^T$. 
We denote by $Y_w$ the closure of a generic torus orbit in the Schubert variety $X_w$.
\end{definition}
Since $X_{w_0} = G/B$, the generic torus orbit closure $Y_{w_0}$ in $X_{w_0}$ is the permutohedral variety~$\Perm_n$ considered in Section~\ref{sec_generic_orbit_in_flag} (cf. Definition~\ref{def_generic_point}), in other words, $Y_{w_0} = \Perm_n$.

We notice that a fixed point~$uB$ is contained in the Schubert variety $X_w$ if and only if $u \leq w$ in the Bruhat order; $uB$ is contained in the opposite Schubert variety $X^w$ if and only if $u \geq w$ in the Bruhat order. 
By Lemma~\ref{lem:moment_map}, we have
\[
\mu(X_w) = \text{Conv}\{ (u^{-1}(1),\dots,u^{-1}(n)) \mid u \leq w\} \subset \R^n.
\] 
Motivated by this fact, we provide the following definition. 
\begin{definition}
For $w \in S_n$, we define a polytope $\Q_w$ by
\[
\Q_w \colonequals \text{Conv} \{ (u^{-1}(1),\dots,u^{-1}(n)) \mid u \leq w\}
\]
and label the vertex $(u^{-1}(1),\dots,u^{-1}(n))$ as $\vertex{u}$ as before.  
\end{definition}
Therefore we have
\[
\mu(X_w) = \mu(Y_w) = \Q_{w}.
\]
We depict the Bruhat interval $[e, 3412] = \{ u \in S_4 \mid u \leq 3412 \}$ and the polytope $\Q_{3412}$ in Figure~\ref{fig:BIP-3412}.
\begin{figure}[hbt!]
\begin{subfigure}[c]{0.51\textwidth}
\begin{tikzpicture}
\tikzset{every node/.style={font=\footnotesize}}
\tikzset{red node/.style = {fill=red!20!white, draw=red!75!white}}
\tikzset{red line/.style = {line width=1ex, red,nearly transparent}} 
\matrix [matrix of math nodes,column sep={0.48cm,between origins},
row sep={1cm,between origins},
nodes={circle, draw=blue!50,fill=blue!20, thick, inner sep = 0pt , minimum size=1.2mm}]
{
& & & & & \node[label = {above:{4321}}] (4321) {} ; & & & & & \\
& & & 
\node[label = {above left:4312}] (4312) {} ; & & 
\node[label = {above left:4231}] (4231) {} ; & & 
\node[label = {above right:3421}] (3421) {} ; & & & \\
& \node[label = {above left:4132}] (4132) {} ; & & 
\node[label = {left:4213}] (4213) {} ; & & 
\node[label = {above:3412}, red node] (3412) {} ; & & 
\node[label = {[label distance = 0.1cm]0:2431}] (2431) {} ; & & 
\node[label = {above right:3241}] (3241) {} ; & \\
\node[label = {left:1432}, red node] (1432) {} ; & & 
\node[label = {left:4123}] (4123) {} ; & & 
\node[label = {[label distance = 0.01cm]180:2413}, red node] (2413) {} ; & & 
\node[label = {[label distance = 0.01cm]0:3142}, red node] (3142) {} ; & & 
\node[label = {right:2341}] (2341) {} ; & & 
\node[label = {right:3214}, red node] (3214) {} ; \\
& \node[label = {below left:1423}, red node] (1423) {} ; & & 
\node[label = {[label distance = 0.1cm]182:1342}, red node] (1342) {} ; & & 
\node[label = {below:2143}, red node] (2143) {} ; & & 
\node[label = {right:3124}, red node] (3124) {} ; & & 
\node[label = {below right:2314}, red node] (2314) {} ; & \\
& & & \node[label = {below left:1243}, red node] (1243) {} ; & & 
\node[label = {[label distance = 0.01cm]190:1324}, red node] (1324) {} ; & & 
\node[label = {below right:2134}, red node] (2134) {} ; & & & \\
& & & & & \node[label = {below:1234}, red node] (1234) {} ; & & & & & \\
};
\draw (4321)--(4312)--(4132)--(1432)--(1423)--(1243)--(1234)--(2134)--(2314)--(2341)--(3241)--(3421)--(4321);
\draw (4321)--(4231)--(4132);
\draw (4231)--(3241);
\draw (4231)--(2431);
\draw (4231)--(4213);
\draw (4312)--(4213)--(2413)--(2143)--(3142)--(3241);
\draw (4312)--(3412)--(2413)--(1423)--(1324)--(1234);
\draw (3421)--(3412)--(3214)--(3124)--(1324);
\draw (3421)--(2431)--(2341)--(2143)--(2134);
\draw (4132)--(4123)--(1423);
\draw (4132)--(3142)--(3124)--(2134);
\draw (4213)--(4123)--(2143)--(1243);
\draw (4213)--(3214);
\draw (3412)--(1432)--(1342)--(1243);
\draw (2431)--(1432);
\draw (2431)--(2413)--(2314);
\draw (3142)--(1342)--(1324);
\draw (4123)--(3124);
\draw (2341)--(1342);
\draw (2314)--(1324);
\draw (3412)--(3142);
\draw (3241)--(3214)--(2314);
\draw[red line] (1234)--(1243)--(2143)--(2413);
\draw[red line] (1234)--(2134)--(2143)--(3142);
\draw[red line] (1243)--(1423);
\draw[red line] (1243)--(1342);
\draw[red line] (1234)--(1324);
\draw[red line] (3124)--(2134)--(2314);
\draw[red line] (1324)--(1423)--(1432)--(3412);
\draw[red line] (1324)--(2314)--(3214)--(3412);
\draw[red line] (1324)--(1342)--(1432);
\draw[red line] (2314)--(2413)--(3412);
\draw[red line] (1423)--(2413);
\draw[red line] (1342)--(3142)--(3412);
\draw[red line] (1324)--(3124)--(3214);
\draw[red line] (3124)--(3142);
\end{tikzpicture}
\end{subfigure}
\begin{subfigure}[c]{0.45\textwidth}
\begin{tikzpicture}[scale=5.5]
\tikzset{every node/.style={draw=blue!50,fill=blue!20, circle, thick, inner sep=1pt,font=\footnotesize}}
\tikzset{red node/.style = {fill=red!20!white, draw=red!75!white}}
\tikzset{red line/.style = {line width=0.3ex, red, nearly opaque}}
\coordinate (3142) at (1/3, 1/2, 1/6); 
\coordinate (4231) at (2/3, 1/2, 1/6); 
\coordinate (4312) at (5/6, 2/3, 1/2); 
\coordinate (4321) at (5/6, 1/2, 1/3); 
\coordinate (3421) at (5/6, 1/3, 1/2); 
\coordinate (4213) at (2/3, 5/6, 1/2); 
\coordinate (1324) at (1/3, 1/2, 5/6); 
\coordinate (2413) at (2/3, 1/2, 5/6); 
\coordinate (3412) at (5/6, 1/2, 2/3); 
\coordinate (2314) at (1/2, 2/3, 5/6); 
\coordinate (4123) at (1/2, 5/6, 1/3); 
\coordinate (4132) at (1/2, 2/3, 1/6); 
\coordinate (3214) at (1/2, 5/6, 2/3); 
\coordinate (3124) at (1/3, 5/6, 1/2); 
\coordinate (2431) at (2/3, 1/6, 1/2); 
\coordinate (1432) at (1/2, 1/6, 2/3); 
\coordinate (1423) at (1/2, 1/3, 5/6); 
\coordinate (1342) at (1/3, 1/6, 1/2); 
\coordinate (2341) at (1/2, 1/6, 1/3); 
\coordinate (3241) at (1/2, 1/3, 1/6);
\coordinate (1243) at (1/6, 1/3, 1/2); 
\coordinate (2143) at (1/6, 1/2, 1/3); 
\coordinate (1234) at (1/6, 1/2, 2/3); 
\coordinate (2134) at (1/6, 2/3, 1/2); 
\draw[thick, draw=blue!70] (4213)--(4312)--(3412)--(2413)--(2314)--(3214)--cycle;
\draw[thick, draw=blue!70] (4312)--(4321)--(3421)--(3412);
\draw[thick, draw=blue!70] (3421)--(2431)--(1432)--(1423)--(2413);
\draw[thick, draw=blue!70] (1423)--(1324)--(2314);
\draw[thick, draw=blue!70] (1432)--(1342)--(1243)--(1234)--(1324);
\draw[thick, draw=blue!70] (1234)--(2134)--(3124)--(3214);
\draw[thick, draw=blue!70] (3124)--(4123)--(4213); 
\draw[thick, draw=blue!70, dashed] (2134)--(2143)--(3142)--(4132)--(4123);
\draw[thick, draw=blue!70, dashed] (2143)--(1243);
\draw[thick, draw=blue!70, dashed] (3142)--(3241)--(2341)--(1342);
\draw[thick, draw=blue!70, dashed] (2341)--(2431);
\draw[thick, draw=blue!70, dashed] (3241)--(4231)--(4132);
\draw[thick, draw=blue!70, dashed] (4231)--(4321);

\node [label = {[label distance = 0cm]left:$\vertex{1234}$}] at (1234) {};
\node[label = {[label distance = 0cm]left:$\vertex{1243}$}] at (1243) {};
\node[label = {[label distance = 0cm]right:$\vertex{1324}$}] at (1324) {};
\node[label = {[label distance = 0cm]left:$\vertex{1342}$}] at (1342) {};
\node [label = {[label distance = 0cm]above:$\vertex{1423}$},red node] at (1423) {};
\node[label = {[label distance = -0.2cm]below:$\vertex{1432}$}] at (1432) {};
\node [label = {[label distance = 0cm]left:$\vertex{2134}$}] at (2134) {};
\node[label = {[label distance = -0.1cm]below right:$\vertex{2143}$}] at (2143) {};
\node[label = {[label distance = 0cm]below:$\vertex{2314}$}] at (2314) {};
\node[label = {[label distance = 0cm]right:$\vertex{2341}$}] at (2341) {};
\node[label = {[label distance = 0cm]left:$\vertex{2413}$}] at (2413) {};
\node[label = {[label distance = -0.2cm]below:$\vertex{2431}$}] at (2431) {};
\node[label = {[label distance = -0.2cm]above:$\vertex{3124}$}] at (3124) {};
\node[label = {[label distance = -0.2cm]above:$\vertex{3142}$}] at (3142) {};
\node[label = {[label distance = -0.2cm]above:$\vertex{3214}$}] at (3214) {};
\node [label = {[label distance = -0.1cm]above:$\vertex{3241}$}] at (3241) {};
\node[label = {[label distance = 0cm]below left:$\vertex{3412}$}] at (3412) {};
\node[label = {[label distance = 0cm]right:$\vertex{3421}$}] at (3421) {};
\node[label = {[label distance = -0.2cm]above:$\vertex{4123}$}] at (4123) {};
\node [label = {[label distance = 0cm]below:$\vertex{4132}$}] at (4132) {};
\node[label = {[label distance = 0cm]right:$\vertex{4213}$}] at (4213) {};
\node[label = {[label distance = 0cm]left:$\vertex{4231}$}] at (4231) {};
\node[label = {[label distance = 0cm]right:$\vertex{4312}$}] at (4312) {};
\node [label = {[label distance = 0cm]right:$\vertex{4321}$}] at (4321) {};

\draw[red line] (3214)--(2314)--(2413)--(3412)--cycle;
\draw[red line] (2314)--(1324)--(1423)--(2413);
\draw[red line] (1234)--(1243)--(1342)--(1432)--(1423);
\draw[red line] (1432)--(3412);
\draw[red line] (1324)--(1234)--(2134)--(3124);
\draw[red line](3124)--(3214);
\draw[red line, dashed] (3124)--(3142)--(2143)--(1243);
\draw[red line, dashed] (3142)--(3412);
\draw[red line, dashed] (2134)--(2143);
\draw[red line, dashed] (3142)--(1342);

\foreach \x in {3124, 3214, 2134, 3142, 2143, 2314, 1234, 1243, 1324, 2413, 3412, 1342, 1423, 1432}{
\node[red node] at (\x) {};
}

\end{tikzpicture}
\end{subfigure}
\caption{The Bruhat interval $[e,3412]$ and the polytope $\Q_{3412}$}\label{fig:BIP-3412}
\end{figure}
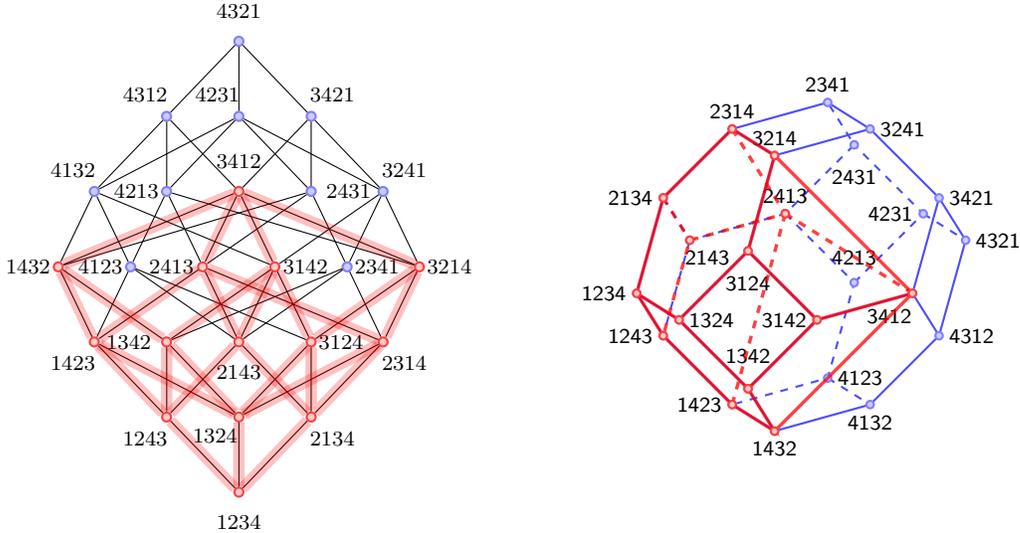

\begin{remark}
Note that the polytope $\Q_{w}$ is the $\Phi$-polytope $\Delta_{[e,w^{-1}]}$ in Subsection~\ref{section:Coxeter matroids}, where $W=S_n$ and $\nu=(1,2,\dots,n)$.
\end{remark}

By definition, $Y_w$ is the toric variety whose fan is the normal fan of the polytope $\Q_{w}$. 
The polytope $\Q_{w}$ is not simple in general. For example, the polytope $\Q_{3412}$ is $3$-dimensional, but there are four facets meeting at the vertex $\vertex{3412}$ (see Figure~\ref{fig:BIP-3412}).
At each vertex of $\Q_{w}$, the primitive direction vectors of the edges emanating from the vertex are root vectors, i.e.,  of the form $\mathbf{e}_i-\mathbf{e}_j$. Therefore, we obtain the following. 

\begin{proposition}[{\cite[\S8]{LM2020}}]
For $w \in S_n$, the toric variety $Y_w$ is smooth if and only if the polytope~$\Q_{w}$ is simple. 
\end{proposition}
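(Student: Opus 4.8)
The plan is to translate smoothness of $Y_w$ into unimodularity of the maximal cones of the normal fan $\Sigma(\Q_{w})$. Since $Y_w$ is normal in type $A$ (Remark~\ref{rmk_normal}) and $\mu(Y_w)=\Q_{w}$, the fan of $Y_w$ is exactly $\Sigma(\Q_{w})$; by the first part of Remark~\ref{rmk_normal} it lives in $(N_w)_\R$, where $N_w=\Hom(M_w,\Z)$ and $M_w\subseteq\Z^n=X^*(T)$ is the (saturated) character lattice of the subtorus of $T$ acting effectively on $Y_w$, and $\Q_{w}$ is a full-dimensional lattice polytope in $(M_w)_\R$, the linear subspace parallel to its affine hull. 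I would first recall the two standard dictionary entries: a complete polytope is simple exactly when its normal fan is simplicial, and the toric variety of a fan is smooth exactly when every cone is generated by part of a $\Z$-basis of the lattice. The direction ``$Y_w$ smooth $\Rightarrow$ $\Q_{w}$ simple'' is then immediate, since a smooth fan is in particular simplicial.

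For the converse I would argue one fixed point at a time. Assume $\Q_{w}$ is simple, fix a vertex $\vertex{u}$, and set $d=\dim\Q_{w}=\dim_\C Y_w$. Simplicity gives exactly $d$ edges of $\Q_{w}$ at $\vertex{u}$, and by the preceding Proposition their primitive direction vectors are root vectors, say $\mathbf{e}_{i_1}-\mathbf{e}_{j_1},\dots,\mathbf{e}_{i_d}-\mathbf{e}_{j_d}$ (each is $\Z^n$-primitive, hence also $M_w$-primitive, because $M_w$ is saturated in $\Z^n$). They span $(M_w)_\R$ and so are linearly independent, which is equivalent to the statement that the edges $\{i_1,j_1\},\dots,\{i_d,j_d\}$ form a forest $F$ on the vertex set $[n]$ --- a linear relation among the incidence vectors of a set of edges is exactly a cycle among them.

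The crux will be an elementary lemma, which I would state and prove separately: for any forest $F$ on $[n]$, the set $\{\mathbf{e}_i-\mathbf{e}_j\mid\{i,j\}\in F\}$ is a $\Z$-basis of the lattice $\Lambda_F\colonequals\Z^n\cap\operatorname{span}_\R\{\mathbf{e}_i-\mathbf{e}_j\mid\{i,j\}\in F\}$. The proof is induction on $|F|$: choose a leaf $v$ of $F$ with unique incident edge $\{v,v'\}$; given $x\in\Lambda_F$, the vector $x-x_v(\mathbf{e}_v-\mathbf{e}_{v'})$ has vanishing $v$-coordinate and again lies in the corresponding lattice for $F\setminus\{v\}$, so the inductive hypothesis expresses it, and hence $x$, as a $\Z$-combination of edge vectors. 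Applying this to our forest $F$ at $\vertex{u}$ and using $M_w=\Z^n\cap(M_w)_\R=\Lambda_F$ (saturation of $M_w$ together with $(M_w)_\R=\operatorname{span}_\R\{\mathbf{e}_{i_k}-\mathbf{e}_{j_k}\}$) shows that the $d$ primitive edge directions at $\vertex{u}$ form a $\Z$-basis of $M_w$. Dually, the maximal cone of $\Sigma(\Q_{w})$ at $\vertex{u}$ is unimodular, so $Y_w$ is smooth at the fixed point $\vertex{u}$. Since every $T$-fixed point of $Y_w$ is such a vertex, $Y_w$ is smooth.

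The main obstacle I anticipate is not depth but lattice bookkeeping: one must consistently work with the effective character lattice $M_w$ rather than $\Z^n$ or the root lattice of $A_{n-1}$, and the point making the forest lemma applicable there is precisely the saturation identity $M_w=\Z^n\cap(M_w)_\R$ combined with the input from the preceding Proposition that the ($M_w$-)primitive edge directions are differences $\mathbf{e}_i-\mathbf{e}_j$ of standard basis vectors. With those in hand, the forest lemma and the simple-polytope/simplicial-fan correspondence are routine.
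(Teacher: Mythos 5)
Your argument is correct and follows essentially the same route as the paper's (the one sketched just before the statement and carried out in \cite{LM2020}): edges of $\Q_w$ are root vectors, a linearly independent set of type-$A$ roots corresponds to a forest on $[n]$, and the edge vectors of a forest form a $\Z$-basis of the saturated sublattice they span, so simplicity at each vertex is equivalent to unimodularity of the corresponding maximal cone of the normal fan. The one point you assert parenthetically rather than prove --- that the character lattice $M_w$ of the effectively acting torus is saturated in $\Z^n$ --- does hold in type $A$ (the stabilizer of a point of the open cell $U_u$ is cut out by a set of roots $\mathbf{e}_i-\mathbf{e}_j$, whose span meets $\Z^n$ in the sublattice they generate, so the stabilizer is a subtorus), and it is used exactly where you invoke it.
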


To each vertex of $\Q_{w}$ we associate a graph which detects the simpleness of the vertex. 
For $u\le w$, we set
\[
\widetilde{E}_w(u) =\{ (u(i),u(j))\mid i<j,\ t_{u(i),u(j)}u\le w,\ |\ell(u)-\ell(t_{u(i),u(j)}u)|=1\},
\]
where $t_{a,b}$ denotes the transposition interchanging $a$ and $b$.\footnote{
There is a relation between this set and the $T$-weights of
$T_{uB}(G/B)$ which are described in~\cite{BL20Singular}.}
Then the digraph~$\widetilde{\Gamma}_w(u)$ defined by the vertex set $[n]$ and the edge set $\widetilde{E}_w(u)$ is acyclic by~\cite[Theorem~4.19]{TsukermanWilliams}, so it has a unique transitive reduction (see~\cite{a-ga-ul72}). Here, a \defi{transitive reduction} of a digraph is a digraph with the same vertices and as fewer edges as possible, such that if there is a directed path connecting two vertices, then there is also such a path in the reduction.  We denote the transitive reduction of $\widetilde{\Gamma}_w(u)$ by $\Gamma_w(u)$ and the edge set of $\Gamma_w(u)$ by $E_w(u)$. 

\begin{example}[{\cite[Example~3.2]{LMPS_Poincare}}]
Take $w = 3412$ and $u = 2143$. Then we have 
\[
\widetilde{E}_w(u) = \{(1,4), (2,3), (2,1), (4,3)\}.
\] 
The corresponding digraph $\widetilde{\Gamma}_w(u)$ and its transitive reduction $\Gamma_w(u)$ are presented as follows.
\begin{center}
$\widetilde{\Gamma}_w(u)=\quad $
\begin{tikzpicture}[node/.style={circle,draw, fill=white!20, inner sep = 0.25mm}]
\node[node] (1) at (1,1) {$1$};
\node[node] (2) at (2,1) {$2$};
\node[node] (3) at (3,1) {$3$};
\node[node] (4) at (4,1) {$4$};
\draw[->] (1) to [bend left] (4);
\draw[->] (4) to [bend left] (3);
\draw[->] (2) to [bend left] (1);
\draw[->] (2) to [bend left] (3);
\end{tikzpicture}
$\quad \stackrel{\text{transitive reduction}}{\longrightarrow} \quad $
\begin{tikzpicture}[node/.style={circle,draw, fill=white!20, inner sep = 0.25mm}]
\node[node] (1) at (1,1) {$1$};
\node[node] (2) at (2,1) {$2$};
\node[node] (3) at (3,1) {$3$};
\node[node] (4) at (4,1) {$4$};
\draw[->] (1) to [bend left] (4);
\draw[->] (4) to [bend left] (3);
\draw[->] (2) to [bend left] (1);
\end{tikzpicture} $\quad = \Gamma_w(u)$
\end{center}
\medskip
As a result, we have $E_w(u) = \{(1,4), (2,1), (4,3)\}$.
\end{example}

\begin{example}\label{example_Ew0u}
For the longest element $w_0$, the condition $t_{u(i),u(j)}u \leq w_0$ is always satisfied. Therefore, we have 
\[
\widetilde{E}_{w_0}(u) = \{(u(i), u(j)) \mid i<j,\ |\ell(u) - \ell(t_{u(i),u(j)}u)| = 1 \}.
\]
One can easily check that 
\[
\{(u(i),u(i+1)) \mid i=1,\dots,n-1\} \subset \widetilde{E}_{w_0}(u).
\]
Indeed, ${E}_{w_0}(u) = \{(u(i),u(i+1)) \mid i=1,\dots,n-1\}$.
\end{example}

\begin{proposition}[{\cite[Proposition 7.7]{LM2020}}]\label{prop_edges_of_Qw}
For $w\in S_n$, two vertices $\vertex{u}$ and $\vertex{v}$ of $\Q_{w}$ are joined by an edge if and only if 
\[
v=t_{u(i),u(j)}u(=ut_{i,j}) \quad \text{ for }(u(i),u(j))\in E_w(u).
\] 
\end{proposition}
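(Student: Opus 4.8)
The plan is to realize the edges of $\Q_{w}$ emanating from the vertex $\vertex{u}$ as the extremal rays of the tangent cone
\[
\sigma_u\colonequals\Cone\{\vertex{z}-\vertex{u}\mid z\le w\},
\]
and then to identify those extremal rays combinatorially. Recall from Proposition~\ref{prop:edge_vector} and \cite[\S8]{LM2020} that every primitive edge direction of $\Q_w$ at $\vertex{u}$ is a root $\mathbf{e}_a-\mathbf{e}_b$. Moreover, a short computation with $\vertex{z}=(z^{-1}(1),\dots,z^{-1}(n))$ shows that if $\vertex{z}-\vertex{u}$ is a \emph{positive} multiple of $\mathbf{e}_{u(i)}-\mathbf{e}_{u(j)}$ with $i<j$, then $z=t_{u(i),u(j)}u=ut_{i,j}$; indeed $\vertex{ut_{i,j}}-\vertex{u}=(j-i)(\mathbf{e}_{u(i)}-\mathbf{e}_{u(j)})$. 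This last fact will convert a statement about edge \emph{directions} into the statement about target vertices in the proposition.

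First I would prove that $\sigma_u=\Cone\{\mathbf{e}_{u(i)}-\mathbf{e}_{u(j)}\mid (u(i),u(j))\in\widetilde{E}_w(u)\}$. The inclusion $\supseteq$ is immediate: if $(u(i),u(j))\in\widetilde{E}_w(u)$ with $i<j$ then $ut_{i,j}\le w$, so $(j-i)(\mathbf{e}_{u(i)}-\mathbf{e}_{u(j)})=\vertex{ut_{i,j}}-\vertex{u}\in\sigma_u$. For $\subseteq$ I would invoke the theorem of Tsukerman--Williams \cite{TsukermanWilliams} that every face of a Bruhat interval polytope is again a Bruhat interval polytope. An edge of $\Q_w$ is then a Bruhat interval polytope with exactly two vertices $\vertex{x},\vertex{y}$, so the interval $[x,y]$ equals $\{x,y\}$, i.e. $x$ and $y$ form a covering pair in $[e,w]$. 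Applied to an edge at $\vertex{u}$, this shows the other endpoint is $\vertex{z}$ with $z$ a Hasse neighbour of $u$ in $[e,w]$; writing $z=ut_{i,j}$ with $i<j$ we get $t_{u(i),u(j)}u\le w$ and $|\ell(u)-\ell(ut_{i,j})|=1$, that is, $(u(i),u(j))\in\widetilde{E}_w(u)$. Since $\sigma_u$ is spanned by the edge directions of $\Q_w$ at $\vertex{u}$, this yields the remaining inclusion, hence the equality.

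Next I would compute the extremal rays of a cone $\Cone\{\mathbf{e}_a-\mathbf{e}_b\mid (a,b)\in D\}$ generated by root vectors indexed by an acyclic digraph $D$ on $[n]$: they are exactly the $\mathbf{e}_a-\mathbf{e}_b$ with $(a,b)$ in the transitive reduction of $D$. Indeed, such a cone is pointed (a nonnegative circulation on a DAG vanishes), and $\mathbf{e}_a-\mathbf{e}_b$ is a nonnegative combination of the other generators if and only if there is a directed $a$-to-$b$ path of length $\geq 2$ in $D$: the ``if'' is the sum along such a path, and the ``only if'' is a flow‑decomposition argument, since a unit $a$-to-$b$ flow avoiding the arc $(a,b)$ decomposes as a nonnegative combination of directed $a$-$b$ paths, each necessarily of length $\geq 2$. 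By definition the transitive reduction removes precisely these redundant arcs. Taking $D=\widetilde{\Gamma}_w(u)$ — acyclic by \cite[Theorem~4.19]{TsukermanWilliams}, with transitive reduction $\Gamma_w(u)$ — the extremal rays of $\sigma_u$ are exactly $\{\mathbf{e}_{u(i)}-\mathbf{e}_{u(j)}\mid (u(i),u(j))\in E_w(u)\}$.

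Finally, combining the steps: the edges of $\Q_w$ at $\vertex{u}$ are the extremal rays of $\sigma_u$, hence point in the directions $\mathbf{e}_{u(i)}-\mathbf{e}_{u(j)}$ with $(u(i),u(j))\in E_w(u)$, and by the direction‑to‑vertex computation of the first paragraph the edge in direction $\mathbf{e}_{u(i)}-\mathbf{e}_{u(j)}$ joins $\vertex{u}$ to $\vertex{t_{u(i),u(j)}u}$; conversely each such $\vertex{ut_{i,j}}$ is joined to $\vertex{u}$ since the corresponding ray is extremal. I expect the only genuine obstacle to be the inclusion $\sigma_u\subseteq\Cone\{\mathbf{e}_{u(i)}-\mathbf{e}_{u(j)}\mid (u(i),u(j))\in\widetilde{E}_w(u)\}$: everything there hinges on the fact that the two endpoints of an edge of $\Q_w$ are in a covering relation in $[e,w]$, which rests on the Tsukerman--Williams classification of faces of Bruhat interval polytopes. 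If one preferred to avoid that input, the alternative would be to prove directly that the Hasse neighbours of $u$ in $[e,w]$ already generate $\sigma_u$ — for instance by a monotone‑path argument in Bruhat order showing that for every $z\le w$ the vector $\vertex{z}-\vertex{u}$ lies in $\Cone\{\mathbf{e}_{u(i)}-\mathbf{e}_{u(j)}\mid (u(i),u(j))\in\widetilde{E}_w(u)\}$.
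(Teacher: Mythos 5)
Your proof is correct and follows the route taken in the cited reference \cite{LM2020}: the tangent cone of $\Q_w$ at $\vertex{u}$ is generated by the roots $\mathbf{e}_{u(i)}-\mathbf{e}_{u(j)}$ over $\widetilde{E}_w(u)$ (which record exactly the Hasse neighbours of $u$ in $[e,w]$, via the Tsukerman--Williams fact that faces of Bruhat interval polytopes are Bruhat interval polytopes), and the extremal rays of such a cone over an acyclic digraph are precisely the arcs of its transitive reduction. Your flow-decomposition argument for ``a generator is redundant iff there is a directed path of length at least two'' fully justifies that last step, and the computation $\vertex{ut_{i,j}}-\vertex{u}=(j-i)(\mathbf{e}_{u(i)}-\mathbf{e}_{u(j)})$ correctly converts extremal directions into the stated target vertices.
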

The proposition above says that the elements in $E_w(u)$ correspond to the edges of $\Q_{w}$ emanating from the vertex $\vertex{u}$. Therefore, one can read from the graph $\Gamma_w(u)$ the number of edges emanating from the vertex $\vertex{u}$ in the polytope $\Q_{w}$. 
\begin{theorem}[{\cite[Theorem~1.2]{LM2020}}]
The generic torus orbit closure $Y_w$ in a Schubert variety $X_w$ is smooth at a fixed point $uB$ in $Y_w$ if and only if the graph $\Gamma_w(u)$ is a forest. Therefore, $Y_w$ is smooth if and only if $\Gamma_w(u)$ is a forest for every $u \leq w$.
\end{theorem}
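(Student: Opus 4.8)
The plan is to translate smoothness of $Y_w$ at the fixed point $uB$ into the linear independence of a set of type-$A$ root vectors, and then to recognize that set as the edge set of $\Gamma_w(u)$. Since $Y_w$ is a normal toric variety in type $A$ (Remark~\ref{rmk_normal}) whose moment polytope is $\Q_w$, the Orbit--Cone correspondence together with the standard smoothness criterion for toric varieties (see \cite{CLS11Toric}, and cf.\ Corollary~\ref{coro:fan_of_Y}) shows that $Y_w$ is smooth at $uB$ if and only if the vertex $\vertex{u}$ of $\Q_w$ is a \emph{smooth vertex}: the primitive integral vectors along the edges of $\Q_w$ emanating from $\vertex{u}$ form a $\Z$-basis of the lattice $L_u\colonequals(\mathrm{aff}(\Q_w)-\mu(uB))\cap\Z^n$. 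Because the cone of feasible directions at a vertex of a polytope is spanned by the edge directions at that vertex, these primitive edge vectors always $\R$-span $L_u\otimes\R$; so the smooth-vertex condition amounts to saying that there are exactly $\dim\Q_w$ of them and that they form a $\Z$-basis of $L_u$.

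The next step is to identify these primitive edge vectors explicitly. By Proposition~\ref{prop_edges_of_Qw}, the edges of $\Q_w$ emanating from $\vertex{u}$ correspond bijectively to the pairs $(u(i),u(j))\in E_w(u)$ with $i<j$, the other endpoint being $\vertex{v}$ with $v=ut_{i,j}$. A short computation from $\mu(uB)=(u^{-1}(1),\dots,u^{-1}(n))$ gives $\mu(ut_{i,j}B)-\mu(uB)=(j-i)(\mathbf e_{u(i)}-\mathbf e_{u(j)})$, so the primitive edge vector along this edge is the root vector $\mathbf e_{u(i)}-\mathbf e_{u(j)}$. Hence the set of primitive edge vectors at $\vertex{u}$ is precisely $\{\mathbf e_a-\mathbf e_b\mid\{a,b\}\in E(\Gamma_w(u))\}$, the root vectors indexed by the (undirected) edges of the graph $\Gamma_w(u)$ on the vertex set $[n]$.

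I would then invoke two standard facts about the graphic matroid of the complete graph $K_n$, equivalently about the type-$A$ root lattice: for a graph $G$ on $[n]$ the vectors $\mathbf e_a-\mathbf e_b$ over the edges of $G$ have $\R$-rank $n-c(G)$, where $c(G)$ is the number of connected components; in particular they are $\R$-linearly independent if and only if $G$ is a forest, and when $G$ is a forest they form a $\Z$-basis of the lattice they generate, which is the saturated lattice $\bigoplus_k\{x\in\Z^{S_k}\mid\sum_{i\in S_k}x_i=0\}$ with $S_k$ the vertex sets of the components. Applying this with $G=\Gamma_w(u)$: the spanning property from the first paragraph gives $\dim\Q_w=n-c(\Gamma_w(u))$, so $\vertex{u}$ is a simple vertex (equivalently $|E_w(u)|=\dim\Q_w$) if and only if $\Gamma_w(u)$ has no cycle, i.e.\ is a forest. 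Moreover, when $\Gamma_w(u)$ is a forest the edge root vectors are linearly independent and $\R$-span $L_u\otimes\R$, hence form a $\Z$-basis of the saturated lattice they generate, which is then forced to equal $L_u$; so $\vertex{u}$ is a smooth vertex. Conversely a smooth vertex is in particular simple. This proves that $Y_w$ is smooth at $uB$ if and only if $\Gamma_w(u)$ is a forest, and the global statement follows because the singular locus of $Y_w$ is a closed $T$-invariant subset, hence non-empty if and only if it contains some fixed point $uB$ with $u\le w$.

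I expect the only real obstacle to be the lattice bookkeeping in the first step: checking that the lattice governing smoothness of $Y_w$ at $uB$ is exactly $L_u$ — which rests on $Y_w$ being normal in type $A$ and on $L_u$ being saturated in $\Z^n$ — and, correspondingly, that for a vertex cone spanned by root vectors simpleness of the vertex already forces smoothness. Everything after that is the explicit edge description of Proposition~\ref{prop_edges_of_Qw} together with a routine computation in the graphic matroid.
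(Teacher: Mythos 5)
Your proof is correct and takes essentially the same route the paper outlines: smoothness of the toric variety $Y_w$ at $uB$ is reduced to the vertex $\vertex{u}$ of $\Q_w$ being a smooth vertex, the primitive edge vectors there are identified via Proposition~\ref{prop_edges_of_Qw} as the roots $\mathbf{e}_{u(i)}-\mathbf{e}_{u(j)}$ indexed by $E_w(u)$, and the graphic-matroid argument converts simplicity/smoothness into $\Gamma_w(u)$ being a forest. The lattice bookkeeping you flag is handled exactly as you suggest, using normality of torus orbit closures in type $A$ and the unimodularity (saturation) of the lattice spanned by the edge roots of a forest.
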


The graph $\Gamma_w(w)$ appears in a slightly different viewpoint.  Indeed, it is shown in \cite{Woo-Yong} that the Schubert variety $X_w$ is locally factorial if and only if $\Gamma_w(w)$ is a forest.  This motivated the authors in~\cite{bm-bu07} to study the graph and they provided a necessary and sufficient condition for $\Gamma_w(w)$ to be a forest in terms of the pattern avoidance.\footnote{We say that a permutation $w \in S_n$ \emph{contains the pattern $q = q_1q_2 \cdots q_k$} if there is a $k$-element set of indices $1 \leq i_1 < i_2 < \cdots < i_k \leq n$ so that $w(i_r) < w(i_s)$ if and only if $q_r < q_s$ for $1 \leq r < s  \leq k$. If a permutation $w$ does not contain $q$, then we say that $w$ \emph{avoids the pattern $q$}. Similarly, $w$ avoids the pattern $45\bar{3}12$ if every occurrence of the pattern $4512$ is a subsequence of an occurrence of $45312$.}
Combining the theorem above with \cite[Theorem~1.1]{bm-bu07}, we obtain the following. 
\begin{proposition}
The following statements are equivalent.
\begin{enumerate}
\item $\Gamma_w(w)$ is a forest.
\item $w$ avoids the patterns $4231$ and $45\bar{3}12$.
\item $Y_w$ is smooth at $wB$. 
\item $\Q_w$ is simple at $\vertex{w}$.
\end{enumerate}
\end{proposition}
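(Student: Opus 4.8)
The plan is to prove the four‑way equivalence by splitting it into $(1)\Leftrightarrow(3)$, $(1)\Leftrightarrow(2)$, and $(3)\Leftrightarrow(4)$, two of which reduce directly to results already in hand.

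The equivalence $(1)\Leftrightarrow(3)$ requires no new work: it is precisely \cite[Theorem~1.2]{LM2020} (the theorem stated just above), applied at the fixed point $u=w$ — that theorem asserts $Y_w$ is smooth at $uB$ if and only if $\Gamma_w(u)$ is a forest, and specializing to $u=w$ gives exactly $(1)\Leftrightarrow(3)$. Likewise, $(1)\Leftrightarrow(2)$ is the content of \cite[Theorem~1.1]{bm-bu07}, which characterizes the permutations $w$ with $\Gamma_w(w)$ a forest as those avoiding the patterns $4231$ and $45\bar{3}12$; so here I would simply invoke that theorem.

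The remaining equivalence $(3)\Leftrightarrow(4)$ carries the geometric content, and I would argue it as the pointwise version of \cite[\S8]{LM2020}. Since $G$ is of type $A$, the orbit closure $Y_w$ is a \emph{normal} toric variety by \cite[Proposition~4.8]{ca-ku2000} (see Remark~\ref{rmk_normal}), and its fan is the normal fan of $\Q_w=\mu(Y_w)$; under the moment map the fixed point $wB$ corresponds to the vertex $\vertex{w}$ of $\Q_w$ and to the dual maximal cone $\sigma_w$. By the standard toric dictionary, $Y_w$ is smooth at $wB$ exactly when $\sigma_w$ is a smooth cone, so the point is to show that $\sigma_w$ is smooth precisely when $\vertex{w}$ is simple, i.e.\ when $\sigma_w$ is simplicial. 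The forward implication is immediate. For the converse I would use Proposition~\ref{prop_edges_of_Qw}: the primitive direction vectors of the edges of $\Q_w$ at $\vertex{w}$ are the root vectors $\mathbf{e}_{w(i)}-\mathbf{e}_{w(j)}$ indexed by $E_w(w)$; if $\vertex{w}$ is simple these vectors are linearly independent, which means $\Gamma_w(w)$ is a forest, and the signed incidence vectors of a forest form a $\Z$‑basis of the saturated sublattice of $\Z^n/\Z(1,\dots,1)$ they generate — so $\sigma_w$ is smooth, not merely simplicial.

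The step I expect to be the main obstacle is exactly this last point, the implication $(4)\Rightarrow(3)$: a priori a simple vertex only produces a simplicial cone, and one must exclude a quotient singularity of $Y_w$ at $wB$. The way out rests on the fact — available from Proposition~\ref{prop_edges_of_Qw} — that the edge directions at $\vertex{w}$ are \emph{roots} of type $A$: a linearly independent collection of such roots is the incidence system of a forest and hence unimodular, so simpleness of the vertex automatically upgrades to smoothness of $Y_w$. Everything else is, given the cited results, a matter of assembly.
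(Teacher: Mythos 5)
Your proposal is correct and follows essentially the same route as the paper: the proposition is obtained by combining Theorem~1.2 of \cite{LM2020} (giving $(1)\Leftrightarrow(3)$) with Theorem~1.1 of \cite{bm-bu07} (giving $(1)\Leftrightarrow(2)$), while $(3)\Leftrightarrow(4)$ is the pointwise form of the smoothness--simpleness equivalence from \cite[\S 8]{LM2020}, which rests on precisely the observation you spell out, namely that a linearly independent set of type~$A$ roots is the edge system of a forest and hence unimodular, so a simple vertex of $\Q_w$ yields a smooth (not merely simplicial) cone.
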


About the smoothness of $Y_w$, it is enough to check the smoothness at the fixed point $wB$ as is shown in the following theorem. It was originally conjectured in~\cite{LM2020}.
\begin{theorem}[{\cite[Theorem~B]{Gaetz22_one-skeleton}}] \label{conj:schubert}
The graph $\Gamma_w(u)$ is a forest for any $u \leq w $ if $\Gamma_w(w)$ is a forest. \textup{(}This is equivalent to saying that the generic torus orbit closure $Y_w$ in the Schubert variety $X_w$ is smooth if it is smooth at the fixed point $wB$, in other words, $\Q_{w}$ is simple if $\Q_w$ is simple at the vertex $\vertex{w}$.\textup{)}
\end{theorem}


\subsection{Generalized Eulerian polynomials}
In this subsection, we introduce a polynomial $A_w(t)$ for each $w\in \Sn$, which agrees with the Eulerian polynomial $A_n(t)$ when $w=w_0$, and explain that the Poincar\'e polynomial of $\Yw$ is given by $A_w(t^2)$. The tool we use to compute the Poincar\'e polynomial of $Y_w$ is discrete Morse theory.  We have the moment map 
\[
\mu\colonequals Y_w\to \Q_{w}\subset \R^n.
\]
The polytope $\Q_{w}$ is not necessarily simple, but we can find a linear function on $\R^n$ by which we decompose $\Q_{w}$ into pieces homeomorphic to orthants. 

Let $\mathsf{P}$ be a polytope in $\R^n$ and 
$h\colon \R^n\to \R$ a linear function satisfying that $h(\mathsf{u}) \neq h(\mathsf{v})$ if two vertices $\mathsf{u}$ and~$\mathsf{v}$ of $\mathsf{P}$ are joined by an edge in $\mathsf{P}$. Then the function $h$ gives an orientation on each edge of~$\mathsf{P}$, namely, we give an orientation of the edge connecting two vertices $\mathsf{u}$ and $\mathsf{v}$ of~$\mathsf{P}$ by $\mathsf{u} \to \mathsf{v}$ if $h(\mathsf{u}) < h(\mathsf{v})$. 
For each vertex $\mathsf{u}$ of~$\mathsf{P}$, we define 
\[
\asc(\mathsf{u})\colonequals \# \{ \mathsf{u} \to \mathsf{v} \}.
\]
The following is well-known for simple polytopes and is a consequence of discrete Morse theory. 

\begin{theorem}[see {\cite[Theorem~2.7]{LMPS_Poincare}} for example]\label{thm_poincare_polynomial_height_ftn}
Let $\mathsf{P} \subset \R^n$ be a lattice polytope.
Suppose that there exists a linear function $h \colon \R^n \to \R$ such that for each vertex $\mathsf{u}$ of $\mathsf{P}$, the direction vectors of ascending edges emanating from $\mathsf{u}$ are linearly independent and form a face $\mathsf{F}(\mathsf{u})$ of $\mathsf{P}$, that is, these edges form the set of edges emanating from $\mathsf{u}$ in $\mathsf{F}(\mathsf{u})$. Then the Poincar\'e polynomial $\Poin(X_{\mathsf P},t)$ of the toric variety $X_{\mathsf P}$ whose fan is the normal fan of $\mathsf P$ is given by 
\[
\Poin(X_{\mathsf P},t) =\sum_{\mathsf{u}\in V(\mathsf{P})}t^{2 \asc(\mathsf{u})},
\] 
where $V(\mathsf{P})$ is the set of all vertices of $\mathsf{P}$.
\end{theorem}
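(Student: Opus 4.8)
The plan is to turn the linear function $h$, via the moment map, into a Morse-type function on $X_{\mathsf{P}}$: cut $\mathsf{P}$ into pieces according to which vertex each face "descends to", pull these pieces back to a decomposition of $X_{\mathsf{P}}$ into affine cells indexed by $V(\mathsf{P})$, each of complex dimension $\asc(\mathsf{u})$, and then extract the Poincar\'e polynomial from a filtration argument. \emph{Setup.} For $\mathsf{u}\in V(\mathsf{P})$ set $C_{\mathsf{u}} := \bigcup\{\mathrm{relint}(\mathsf{G}) : \mathsf{G}\text{ a face of }\mathsf{P},\ h|_{\mathsf{G}}\text{ is minimized at }\mathsf{u}\}$. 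Since $h$ is non-constant on every edge, the minimum of $h$ on any face is attained at a unique vertex, so $\mathsf{P}=\bigsqcup_{\mathsf{u}\in V(\mathsf{P})}C_{\mathsf{u}}$. Let $\mu\colon X_{\mathsf{P}}\to\mathsf{P}$ be the moment map, and recall the orbit--face correspondence: $X_{\mathsf{P}}=\bigsqcup_{\mathsf{G}}O_{\mathsf{G}}$ with $O_{\mathsf{G}}\cong(\C^{\ast})^{\dim\mathsf{G}}$, $\mu(O_{\mathsf{G}})=\mathrm{relint}(\mathsf{G})$, and $\overline{O_{\mathsf{G}}}$ equal to the toric subvariety $X_{\mathsf{G}}$ attached to the face $\mathsf{G}$. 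Put $Z_{\mathsf{u}}:=\mu^{-1}(C_{\mathsf{u}})$, so $X_{\mathsf{P}}=\bigsqcup_{\mathsf{u}}Z_{\mathsf{u}}$, each $Z_{\mathsf{u}}$ a union of $T$-orbits.

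The combinatorial heart of the argument — and, I expect, the main obstacle — is to show, using the hypothesis on $h$, that a face $\mathsf{G}$ of $\mathsf{P}$ has its $h$-minimum at $\mathsf{u}$ \emph{if and only if} $\mathsf{G}$ is a face of $\mathsf{F}(\mathsf{u})$ containing $\mathsf{u}$. If $h|_{\mathsf{G}}$ is minimized at $\mathsf{u}$, then every edge of $\mathsf{G}$ at $\mathsf{u}$ is ascending, hence one of the edges spanning $\mathsf{F}(\mathsf{u})$; since those directions are linearly independent, $\dim\mathsf{G}$ equals the number of edges of $\mathsf{G}$ at $\mathsf{u}$, so $\mathsf{G}$ is simple at $\mathsf{u}$ and is precisely the face of $\mathsf{F}(\mathsf{u})$ they span. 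Conversely, any face $\mathsf{G}$ of $\mathsf{F}(\mathsf{u})$ through $\mathsf{u}$ is simple at $\mathsf{u}$ with only ascending edges there, so $\mathsf{u}$ is a local — hence global — minimum of $h|_{\mathsf{G}}$, and it is the unique minimizer. From this I read off: (i) $C_{\mathsf{u}}$ is the union of the relative interiors of the faces of $\mathsf{F}(\mathsf{u})$ through $\mathsf{u}$; since the $\asc(\mathsf{u})$ ascending edges are independent and span $\mathsf{F}(\mathsf{u})$, we have $\dim\mathsf{F}(\mathsf{u})=\asc(\mathsf{u})$ and $\mathsf{u}$ is a simple vertex of $\mathsf{F}(\mathsf{u})$, so $C_{\mathsf{u}}$ is identified with the orthant $(\R_{\ge 0})^{\asc(\mathsf{u})}$ as a manifold with corners and $\overline{C_{\mathsf{u}}}=\mathsf{F}(\mathsf{u})$; and (ii) if $C_{\mathsf{v}}\cap\mathsf{F}(\mathsf{u})\neq\emptyset$ then $\mathsf{v}$ is a vertex of $\mathsf{F}(\mathsf{u})$, whence $h(\mathsf{v})\ge h(\mathsf{u})$.

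Passing to $X_{\mathsf{P}}$: because $\mathsf{u}$ is a simple vertex of $\mathsf{F}(\mathsf{u})$, the toric variety $X_{\mathsf{F}(\mathsf{u})}$ is smooth at the fixed point $\mathsf{u}$, and its $T$-invariant affine chart there is isomorphic to $\C^{\asc(\mathsf{u})}$. By (i) and the orbit--face correspondence, that chart is exactly $\mu^{-1}(C_{\mathsf{u}})$ (all orbits involved already lie in $\overline{O_{\mathsf{F}(\mathsf{u})}}=X_{\mathsf{F}(\mathsf{u})}$), so $Z_{\mathsf{u}}\cong\C^{\asc(\mathsf{u})}$ and $\overline{Z_{\mathsf{u}}}=\mu^{-1}(\mathsf{F}(\mathsf{u}))=X_{\mathsf{F}(\mathsf{u})}$. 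Now order $V(\mathsf{P})=\{\mathsf{u}_1,\dots,\mathsf{u}_m\}$ with $h(\mathsf{u}_1)>\dots>h(\mathsf{u}_m)$ (note $\asc(\mathsf{u}_1)=0$). By (ii), $\overline{Z_{\mathsf{u}_k}}=X_{\mathsf{F}(\mathsf{u}_k)}$ meets only those $Z_{\mathsf{v}}$ with $h(\mathsf{v})\ge h(\mathsf{u}_k)$, so $X^{(k)}:=Z_{\mathsf{u}_1}\cup\dots\cup Z_{\mathsf{u}_k}$ is closed in $X_{\mathsf{P}}$ and $Z_{\mathsf{u}_k}$ is open in $X^{(k)}$. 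I then invoke the long exact sequence in compactly supported cohomology with $\mathbb{Q}$-coefficients for the open/closed decomposition $X^{(k)}=X^{(k-1)}\sqcup Z_{\mathsf{u}_k}$: as $H^{\ast}_c(Z_{\mathsf{u}_k})\cong H^{\ast}_c(\R^{2\asc(\mathsf{u}_k)})$ is one-dimensional in degree $2\asc(\mathsf{u}_k)$ and zero elsewhere, and $H^{\ast}_c(X^{(k-1)})$ is concentrated in even degrees by induction (base case $X^{(1)}=\{\mathsf{u}_1\}$), all connecting homomorphisms vanish for parity reasons and $\dim_{\mathbb{Q}}H^{2p}_c(X^{(k)})=\#\{i\le k : \asc(\mathsf{u}_i)=p\}$. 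Taking $k=m$, and using that $X_{\mathsf{P}}$ is compact so $H^{\ast}_c=H^{\ast}$, gives $\Poin(X_{\mathsf{P}},t)=\sum_{\mathsf{u}\in V(\mathsf{P})}t^{2\asc(\mathsf{u})}$.

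As indicated, the step I expect to fight with is the combinatorial equivalence above, i.e. excluding any "extra" face of $\mathsf{P}$ whose $h$-minimum sits at $\mathsf{u}$ yet which is not a face of $\mathsf{F}(\mathsf{u})$; this is exactly where the hypothesis that the ascending directions are linearly independent \emph{and} form a face is indispensable, since at a genuinely non-simple vertex $X_{\mathsf{P}}$ is singular, $Z_{\mathsf{u}}$ need not be an affine space, and the clean count would break down. Everything else is a bookkeeping assembly of standard toric facts (the orbit--face correspondence, smoothness of $X_{\mathsf{F}(\mathsf{u})}$ at a simple vertex) together with the elementary cohomology long exact sequence.
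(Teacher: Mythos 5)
Your proposal follows the same route as the paper's (sketched) argument: the discrete-Morse decomposition of $\mathsf{P}$ into orthants $C_{\mathsf{u}}\approx(\R_{\ge 0})^{\asc(\mathsf{u})}$ attached to the faces $\mathsf{F}(\mathsf{u})$, pulled back through the moment map to an affine paving of $X_{\mathsf{P}}$ (this is exactly Proposition~\ref{prop_affine_paving} and the computation carried out for $\Pi_n$ in the proof of Theorem~\ref{theo:perm&eulerian}), and your combinatorial identification of $C_{\mathsf{u}}$ with the faces of $\mathsf{F}(\mathsf{u})$ through $\mathsf{u}$ is the right key step and is argued correctly. One point is stated too strongly: simplicity of $\mathsf{u}$ in $\mathsf{F}(\mathsf{u})$ gives only a \emph{simplicial} cone, so under the stated hypothesis (linear independence, not unimodularity, of the ascending directions) the chart $Z_{\mathsf{u}}$ is a priori a finite abelian quotient $\C^{\asc(\mathsf{u})}/\Gamma$ rather than $\C^{\asc(\mathsf{u})}$; this is harmless for your argument, since $H^{\ast}_c(\C^{d}/\Gamma;\mathbb{Q})\cong H^{\ast}_c(\C^{d};\mathbb{Q})$ is still one-dimensional in degree $2d$, but it does mean the literal ``paved by affine spaces'' identification requires the edge directions to form part of a lattice basis (as they do in the paper's applications, where they are type $A$ roots). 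Also, the strict ordering $h(\mathsf{u}_1)>\dots>h(\mathsf{u}_m)$ should be relaxed to non-increasing with arbitrary tie-breaking, since $h$ is only assumed injective on adjacent vertices; the filtration argument is unaffected because the vertices of $\mathsf{F}(\mathsf{u}_k)$ other than $\mathsf{u}_k$ have strictly larger $h$-value.
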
 

\begin{remark}\label{rmk_existence_of_F}
The vertex $\mathsf{u}$ may not be simple in $\mathsf{P}$ but has to be simple in $\mathsf{F}(\mathsf{u})$.  Therefore the number of $k$-dimensional faces of $\mathsf{P}$ which are contained in $\mathsf{F}(\mathsf{u})$ and contain $\mathsf{u}$ is $\binom{\asc(\mathsf{u})}{k}$. This implies that 
\[
\sum_{\mathsf{u}\in V(\mathsf{P})}(1+t)^{2 \asc(\mathsf{u})}=\sum_{k=0}^{n}f_k(\mathsf{P})t^k \rotatebox[origin=c]{180}{~\colonequals~} f_{\mathsf{P}}(t),
\]
where $n=\dim\mathsf{P}$, $f_k(\mathsf{P})$ denotes the number of $k$-dimensional faces of $\mathsf{P}$, and $f_n(\mathsf{P})=1$. The polynomial $f_{\mathsf{P}}(t)$ is called the \emph{$f$-polynomial of $\mathsf{P}$} and the polynomial $h_{\mathsf{P}}(t)\colonequals f_{\mathsf{P}}(t-1)$ is called the \emph{$h$-polynomial of $\mathsf{P}$}. Theorem~\ref{thm_poincare_polynomial_height_ftn} can be restated as 
\begin{equation} \label{eq:h-polynomial_poincare}
\Poin(X_{\mathsf{P}},t)=h_{\mathsf{P}}(t^2)
\end{equation}
if there exists a linear function $h$ in Theorem~\ref{thm_poincare_polynomial_height_ftn}. When $\mathsf{P}$ is simple, such a linear function $h$ always exists and \eqref{eq:h-polynomial_poincare} is well-known in this case. However, it is not true that the formula \eqref{eq:h-polynomial_poincare} holds for every polytope $\mathsf{P}$.  For example, if $\mathsf{P}$ is an octahedron, then its $f$-polynomial is 
\[
f_{\mathsf{P}}(t)=6+12t+8t^2+t^3.
\] 
Therefore
\[
h_{\mathsf{P}}(t^2)=6+12(t^2-1)+8(t^2-1)^2+(t^2-1)^3=1-t^2+5t^4+t^6 
\]
which cannot be the Poincar\'e polynomial of $X_{\mathsf{P}}$ because it has a negative coefficient.  In fact, the polynomial $h_{\mathsf{P}}(t^2)$ is the {\it virtual Poincar\'e polynomial} of $X_{\mathsf{P}}$.  It is known that the virtual Poincar\'e polynomial agrees with the ordinary Poincar\'e polynomial for compact smooth toric varieties, see \cite[Section~4.5]{Fulton_toric}, but Theorem~\ref{Thm_Poincare} below implies that they agree for $Y_w$ although $Y_w$ is not necessarily smooth.  
\end{remark}
\begin{example}
Let $\mathsf{P}$ be the following pyramid with five vertices:
\[
\mathsf P = \Conv\{ (1,0,0), (0,1,0), (-1,0,0), (0,0,-1), (0,0,1) \}.
\] 
Choose a linear function $h \colon \R^3 \to \R$ defined by 
\[
h(x_1,x_2,x_3) = -2x_1 - x_2 + 3 x_3.
\] 
The function $h$ gives an orientation on edges of $\mathsf{P}$ as displayed in Figure~\ref{fig_pyramid}.
\begin{figure}[H]
\tikzset{ mid arrow/.style={postaction={decorate,decoration={
markings,
mark=at position .5 with {\arrow[#1]{stealth}}
}}}}
\begin{tikzpicture}[line join=bevel,z=-5.5, scale = 2]
\coordinate (A1) at (0,0,-1);
\coordinate (A2) at (-1,0,0);
\coordinate (A3) at (0,0,1);
\coordinate (A4) at (1,0,0);
\coordinate (B1) at (0,1,0);
\coordinate (C1) at (0,-1,0);
\node[above] at (A1) {\small $(-1,0,0)$};
\node[left] at (A2) {\small $(0,-1,0)$};
\node[below] at (A3) {\small $(1,0,0)$};
\node[right] at (A4) {\small $(0,1,0)$};
\node[above] at (B1) {\small$(0,0,1)$};
\draw[dashed, mid arrow=red] (A2)--(A1);
\draw[dashed, mid arrow=red] (A4)--(A1);
\draw[dashed, mid arrow=red] (A1)--(B1);
\draw [mid arrow=red] (A3)--(A4);
\draw [mid arrow=red] (A3)--(A2);
\draw [mid arrow = red] (A3)--(B1);
\draw [mid arrow = red] (A2)--(B1);
\draw [mid arrow = red] (A4)--(B1);
\end{tikzpicture}
\caption{The pyramid with an orientation}
\label{fig_pyramid}
\end{figure}
\noindent
Then, for each vertex~$\mathsf{u}$ of $\mathsf{P}$, the corresponding face $\mathsf{F}(\mathsf{u})$ and the number of ascending edges are as in Table~\ref{table_retraction_pyramid}. 
Therefore, the Poincar\'e polynomial $\Poin(X_{\mathsf P},t)$ of $X_{\mathsf P}$ is given by
\[
\Poin(X_{\mathsf P}, t) = 1 + t^2 + 2 t^4 + t^6. 
\]
\vspace{-0.5cm}
\tikzset{ mid arrow/.style={postaction={decorate,decoration={
markings,
mark=at position .5 with {\arrow[#1]{stealth}}
}}}}
\begin{table}[H]
\renewcommand{\arraystretch}{1.2}
\begin{tabular}{c|c|c|c|c|c}
\toprule
\raisebox{1em}{$\mathsf{F}(\mathsf{u})$} & \begin{tikzpicture}[line join=bevel,z=-5.5, scale = 0.9]
\coordinate (A1) at (0,0,-1);
\coordinate (A2) at (-1,0,0);
\coordinate (A3) at (0,0,1);
\coordinate (A4) at (1,0,0);
\coordinate (B1) at (0,1,0);
\coordinate (C1) at (0,-1,0);
\fill[fill=blue!95!black, fill opacity=0.100000] (A2)--(A3)--(A4)--(B1)--(A2);
\draw[dashed, mid arrow=red] (A2)--(A1);
\draw[dashed, mid arrow=red] (A4)--(A1);
\draw[dashed, mid arrow=red] (A1)--(B1);
\draw [mid arrow=red] (A3)--(A4);
\draw [mid arrow=red] (A3)--(A2);
\draw [mid arrow = red] (A3)--(B1);
\draw [mid arrow = red] (A2)--(B1);
\draw [mid arrow = red] (A4)--(B1);
\node[inner sep=1pt,circle,draw=green!25!black,fill=green!75!black,thick] at (A3) {};
\end{tikzpicture}
& \begin{tikzpicture}[line join=bevel,z=-5.5, scale = 0.9,
edge/.style={color=blue!95!black, thick},
facet/.style={fill=blue!95!black,fill opacity=0.100000}]
\coordinate (A1) at (0,0,-1);
\coordinate (A2) at (-1,0,0);
\coordinate (A3) at (0,0,1);
\coordinate (A4) at (1,0,0);
\coordinate (B1) at (0,1,0);
\coordinate (C1) at (0,-1,0);
\filldraw [fill=blue!95!black, fill opacity=0.100000, draw = blue!95!black, thick] (A1)--(A4)--(B1)--cycle;
\draw[dashed, mid arrow=red] (A2)--(A1);
\draw[dashed, mid arrow=red] (A4)--(A1);
\draw[dashed, mid arrow=red] (A1)--(B1);
\draw [mid arrow=red] (A3)--(A4);
\draw [mid arrow=red] (A3)--(A2);
\draw [mid arrow = red] (A3)--(B1);
\draw [mid arrow = red] (A2)--(B1);
\draw [mid arrow = red] (A4)--(B1);
\node[inner sep=1pt,circle,draw=green!25!black,fill=green!75!black,thick] at (A4) {};
\end{tikzpicture}
&
\begin{tikzpicture}[line join=bevel,z=-5.5, scale = 0.9]
\coordinate (A1) at (0,0,-1);
\coordinate (A2) at (-1,0,0);
\coordinate (A3) at (0,0,1);
\coordinate (A4) at (1,0,0);
\coordinate (B1) at (0,1,0);
\coordinate (C1) at (0,-1,0);
\filldraw [fill=blue!95!black, fill opacity=0.100000, draw = blue!95!black, thick](A1)--(A2)--(B1)--cycle;
\draw[dashed, mid arrow=red] (A2)--(A1);
\draw[dashed, mid arrow=red] (A4)--(A1);
\draw[dashed, mid arrow=red] (A1)--(B1);
\draw [mid arrow=red] (A3)--(A4);
\draw [mid arrow=red] (A3)--(A2);
\draw [mid arrow = red] (A3)--(B1);
\draw [mid arrow = red] (A2)--(B1);
\draw [mid arrow = red] (A4)--(B1);
\node[inner sep=1pt,circle,draw=green!25!black,fill=green!75!black,thick] at (A2) {};
\end{tikzpicture}
&
\begin{tikzpicture}[line join=bevel,z=-5.5, scale = 0.9]
\coordinate (A1) at (0,0,-1);
\coordinate (A2) at (-1,0,0);
\coordinate (A3) at (0,0,1);
\coordinate (A4) at (1,0,0);
\coordinate (B1) at (0,1,0);
\coordinate (C1) at (0,-1,0);
\draw[fill=blue!95!black, fill opacity=0.100000, draw = blue!95!black, thick] (A1)--(B1);
\draw[dashed, mid arrow=red] (A2)--(A1);
\draw[dashed, mid arrow=red] (A4)--(A1);
\draw[dashed, mid arrow=red] (A1)--(B1);
\draw [mid arrow=red] (A3)--(A4);
\draw [mid arrow=red] (A3)--(A2);
\draw [mid arrow = red] (A3)--(B1);
\draw [mid arrow = red] (A2)--(B1);
\draw [mid arrow = red] (A4)--(B1);
\node[inner sep=1pt,circle,draw=green!25!black,fill=green!75!black,thick] at (A1) {};
\end{tikzpicture}
& \begin{tikzpicture}[line join=bevel,z=-5.5, scale = 0.9]
\coordinate (A1) at (0,0,-1);
\coordinate (A2) at (-1,0,0);
\coordinate (A3) at (0,0,1);
\coordinate (A4) at (1,0,0);
\coordinate (B1) at (0,1,0);
\coordinate (C1) at (0,-1,0);
\draw[dashed, mid arrow=red] (A2)--(A1);
\draw[dashed, mid arrow=red] (A4)--(A1);
\draw[dashed, mid arrow=red] (A1)--(B1);
\draw [mid arrow=red] (A3)--(A4);
\draw [mid arrow=red] (A3)--(A2);
\draw [mid arrow = red] (A3)--(B1);
\draw [mid arrow = red] (A2)--(B1);
\draw [mid arrow = red] (A4)--(B1);
\node[inner sep=1pt,circle,draw=green!25!black,fill=green!75!black,thick] at (B1) {};
\end{tikzpicture}
\\
\midrule
$h(\mathsf{u})$ & $-2$ & $-1$ & $1$ & $2$ & $3$ \\ 
\hline
$\asc(\mathsf{u})$ & $3$ & $2$ & $2$ & $1$ & $0$\\
\bottomrule
\end{tabular}
\caption{Faces $\mathsf{F}(\mathsf{u})$ and $\asc(\mathsf{u})$ for the pyramid}
\label{table_retraction_pyramid}
\end{table}
\vspace{-0.3cm}
On the other hand, since $(f_0(\mathsf{P}),f_1(\mathsf{P}),f_2(\mathsf{P}),f_3(\mathsf{P}))=(5,8,5,1)$ for the pyramid $\mathsf{P}$, we have 
\[
h_{\mathsf{P}}(t^2)=5+8(t^2-1)+5(t^2-1)^2+(t^2-1)^3=1+t^2+2t^4+t^6.
\]
Therefore the formula \eqref{eq:h-polynomial_poincare} certainly holds in this case.  
\end{example}

We shall apply Theorem~\ref{thm_poincare_polynomial_height_ftn} to $\Q_{w}$ to find the Poincar\'e polynomial of $\Yw$. What we have to do is to find a linear function $h$ on $\R^n$ which satisfies the conditions in Theorem~\ref{thm_poincare_polynomial_height_ftn}.

\begin{lemma}[{\cite[Lemma~3.4]{LMPS_Poincare}}] \label{lemm:2-1}
Let $h\colon \R^n\to\R$ be a linear function defined by the inner product with a vector $(a_1,\dots,a_n) \in \R^n$ with $a_1>a_2>\dots>a_n$. Then, for $(u(i),u(j))\in E_w(u)$, the edge emanating from the vertex $\vertex{u}$ to the vertex $\vertex{v}$ in $\Q_{w}$, where $v=t_{u(i),u(j)}u$, is ascending with respect to the function~$h$ if and only if $u(i)<u(j)$.
Here, we consider orientations on edges of $\PP$ according to the function $h$.
\end{lemma}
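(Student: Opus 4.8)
The plan is to reduce everything to the sign of a single scalar by computing the edge vector $\mu(vB)-\mu(uB)$ explicitly. First I would invoke Proposition~\ref{prop_edges_of_Qw}: the edge of $\Q_{w}$ joining $\vertex{u}$ to $\vertex{v}$ corresponds to a pair $(u(i),u(j))\in E_w(u)$ with $v=t_{u(i),u(j)}u=ut_{i,j}$, and I would record the bookkeeping convention built into the definition of $\widetilde{E}_w(u)$ (and inherited by $E_w(u)\subseteq\widetilde{E}_w(u)$), namely that the label is always written with $i<j$. This convention is exactly what will make the final answer depend only on the comparison between $u(i)$ and $u(j)$, not on that between $i$ and $j$.

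Next I would compute $\mu(vB)-\mu(uB)$. Using $\mu(uB)=(u^{-1}(1),\dots,u^{-1}(n))$ together with $v^{-1}=t_{i,j}u^{-1}$, one checks that $\mu$ changes only in the $u(i)$th and $u(j)$th coordinates — the $u(i)$th coordinate goes from $i$ to $j$ and the $u(j)$th from $j$ to $i$ — so that
\[
\mu(vB)-\mu(uB)=(j-i)\bigl(\mathbf{e}_{u(i)}-\mathbf{e}_{u(j)}\bigr),
\]
which is consistent with Proposition~\ref{prop:edge_vector} (this difference must be a root). Pairing with the defining vector $(a_1,\dots,a_n)$ of $h$ then gives $h(\mu(vB))-h(\mu(uB))=(j-i)\,(a_{u(i)}-a_{u(j)})$.

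Finally, since $i<j$ the factor $j-i$ is positive, so this difference is positive precisely when $a_{u(i)}>a_{u(j)}$, which by $a_1>a_2>\dots>a_n$ happens precisely when $u(i)<u(j)$; unwinding the orientation convention (the edge points $\vertex{u}\to\vertex{v}$ when $h(\vertex{u})<h(\vertex{v})$) yields the claim. I do not expect a genuine obstacle here; the only thing to be careful about is tracking the inverses correctly in the coordinate computation and not losing the sign of $j-i$, and it is precisely the $i<j$ convention from the definition of $\widetilde{E}_w(u)$ that keeps that sign under control.
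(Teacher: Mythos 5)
Your proof is correct, and it is the natural direct computation: the identity $\mu(vB)-\mu(uB)=(j-i)(\mathbf{e}_{u(i)}-\mathbf{e}_{u(j)})$ is exactly the general form of the computation the paper carries out in \eqref{eq:sku_minus_u} for the permutohedron, and the sign analysis using the $i<j$ convention from the definition of $\widetilde{E}_w(u)$ and the monotonicity $a_1>\dots>a_n$ is exactly what is needed. No gaps.
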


Motivated by Lemma~\ref{lemm:2-1}, we define 
\begin{equation*}
\begin{split}
E_w(u)^+&=\{ (u(i),u(j))\in E_w(u)\mid u(i)<u(j)\},\\
E_w(u)^-&=\{ (u(i),u(j))\in E_w(u)\mid u(i)>u(j)\}.
\end{split}
\end{equation*}
Then 
\[
E_w(u)=E_w(u)^+\sqcup E_w(u)^-.
\]
Note that $E_w(u)^+$ (resp. $E_w(u)^-$) corresponds to the ascending (resp. descending) edges emanating from $\vertex{u}$ by Lemma~\ref{lemm:2-1} with respect to the function $h$. 
Now we set 
\begin{equation*}\label{eq_def_of_awu}
a_w(u) \colonequals |E_w(u)^+|
\end{equation*}
and define
\begin{equation*}\label{eq_def_of_Aw}
A_w(t) \colonequals \sum_{u \leq w} t^{a_w(u)}.
\end{equation*}
\begin{example}\label{example_aw0u_and_ascents}
By Example~\ref{example_Ew0u}, we have
\[
\begin{split}
E_{w_0}(u)^+ &= \{(u(i),u(j)) \in E_{w_0}(u) \mid u(i) < u(j)\} \\
&= \{(u(i),u(i+1)) \mid u(i) < u(i+1) \text{ for } i = 1,\dots,n-1\}.
\end{split}
\]
Accordingly, $a_{w_0}(u)$ is the number of ascents in $u$ and 
\[
A_{w_0}(t) = A_n(t).
\]
On the other hand, $X_{w_0}=\flag(n)$ and $Y_{w_0}$ is the permutohedral variety $\Perm_n$. Therefore, we have 
\[
\Poin(Y_{w_0},t)=A_{w_0}(t^2)
\]
by Theorem~\ref{theo:perm&eulerian}. 
\end{example}

One can check that the linear function $h$ in Lemma~\ref{lemm:2-1} satisfies the conditions in Theorem~\ref{thm_poincare_polynomial_height_ftn} so that we obtain the following theorem extending Theorem~\ref{theo:perm&eulerian}. 

\begin{theorem}[{\cite[Theorem~3.6]{LMPS_Poincare}}]\label{Thm_Poincare}
Let $\Yw$ be the generic torus orbit closure in the Schubert variety~$\Xw$ for $w \in \Sn$. Then the Poincar\'e polynomial of $\Yw$ is given by 
\[
\Poin(\Yw,t) =A_w(t^2).
\] 
\end{theorem}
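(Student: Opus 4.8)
The plan is to feed the polytope $\mathsf P=\Q_{w}$ and the linear function $h$ of Lemma~\ref{lemm:2-1} into Theorem~\ref{thm_poincare_polynomial_height_ftn}. First I would translate the data appearing there. By Proposition~\ref{prop_edges_of_Qw} the edges of $\Q_{w}$ at the vertex $\vertex{u}$ are indexed by $E_w(u)$, the edge for $(u(i),u(j))$ having primitive direction vector $\mathbf e_{u(i)}-\mathbf e_{u(j)}$, and by Lemma~\ref{lemm:2-1} such an edge is $h$-ascending precisely when $u(i)<u(j)$, i.e. when $(u(i),u(j))\in E_w(u)^{+}$. Thus $\asc(\vertex u)=|E_w(u)^{+}|=a_w(u)$ for every $u\le w$, so once the hypotheses of Theorem~\ref{thm_poincare_polynomial_height_ftn} are in place its conclusion becomes
\[
\Poin(X_{\Q_w},t)=\sum_{u\le w}t^{2\asc(\vertex u)}=\sum_{u\le w}t^{2a_w(u)}=A_w(t^2).
\]
Since $w\in S_n$ and $T$-orbit closures in type $A$ are normal (Remark~\ref{rmk_normal}), $Y_w$ is the toric variety of the normal fan of $\Q_w$, so $Y_w=X_{\Q_w}$ and the theorem follows; alternatively one may run the discrete Morse argument of the proof of Theorem~\ref{theo:perm&eulerian} directly on $\mu\colon Y_w\to\Q_w$, which avoids invoking normality.

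What remains, and is the real content, is verifying the hypothesis of Theorem~\ref{thm_poincare_polynomial_height_ftn}: for each $u\le w$ the ascending directions $\{\mathbf e_{u(i)}-\mathbf e_{u(j)}\mid(u(i),u(j))\in E_w(u)^{+}\}$ are linearly independent and are exactly the edge directions at $\vertex u$ of some face $\mathsf F(\vertex u)$ of $\Q_w$ in which $\vertex u$ is a simple vertex. I would deduce this from the face structure of Bruhat interval polytopes. Concretely: first observe that for $i<j$ with $u(i)<u(j)$ the condition $|\ell(u)-\ell(ut_{i,j})|=1$ forces $\ell(ut_{i,j})=\ell(u)+1$, so each edge of $E_w(u)^{+}$ is recorded by a cover $u\lessdot ut_{i,j}\le w$; then identify $\mathsf F(\vertex u)$ as the convex hull of the vertices of $\Q_w$ reachable from $\vertex u$ along $h$-ascending edges, show it is a face of $\Q_w$ (a face of a Bruhat interval polytope is again one), and check that $\vertex u$ is its $h$-minimal and simple vertex. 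Linear independence of the ascending directions is then automatic, being the edge directions at a simple vertex of a polytope; the input on the acyclicity of $\widetilde{\Gamma}_w(u)$ and its transitive reduction (\cite[Theorem~4.19]{TsukermanWilliams}) is what pins down $E_w(u)$, and hence $E_w(u)^{+}$, in this picture.

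The step I expect to be the main obstacle is showing that the ascending cone at every vertex $\vertex u$ is cut out by a genuine face $\mathsf F(\vertex u)$ with $\vertex u$ simple in it. When $w=w_0$ this is free: $\Q_{w_0}=\Pi_n$ is a simple polytope, at a simple vertex every subset of incident edges spans a face, and this is exactly the mechanism of Theorem~\ref{theo:perm&eulerian}. For general $w$ the polytope $\Q_w$ is genuinely non-simple (already $\Q_{3412}$ has a non-simple vertex, Figure~\ref{fig:BIP-3412}), so one must argue that even at a non-simple vertex the \emph{ascending} directions behave as if they came from a simple polytope; this is where the combinatorics of which subintervals of $[e,w]$ give faces of $\Q_w$ has to be used carefully. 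Once that is settled, the bookkeeping --- matching $\asc(\vertex u)$ with $a_w(u)$ and summing --- is routine, and the equalities displayed above finish the proof.
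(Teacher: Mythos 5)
Your proposal follows exactly the route the paper takes: apply Theorem~\ref{thm_poincare_polynomial_height_ftn} to $\Q_w$ with the linear function $h$ of Lemma~\ref{lemm:2-1}, identify $\asc(\vertex{u})=|E_w(u)^{+}|=a_w(u)$ via Proposition~\ref{prop_edges_of_Qw}, and sum to get $A_w(t^2)$. You also correctly isolate the one nontrivial hypothesis --- that at each (possibly non-simple) vertex the ascending edge directions are linearly independent and span a face --- which is precisely the verification the paper defers to \cite[Theorem~3.6]{LMPS_Poincare} with the phrase ``one can check.''
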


We close this subsection by presenting the Poincar\'{e} polynomials of singular toric varieties $Y_{4231}$ and $Y_{3412}$ using Theorem~\ref{Thm_Poincare}. The moment polytopes of $Y_{4231}$ and $Y_{3412}$ are non-simple polytopes~$\Q_{4231}$ and $\Q_{3412}$, respectively, see Figure~\ref{figure_BIP}.
In both polytopes, we take $h \colon \R^4 \to \R$ by the inner product with a vector $(12,2,-1,-2)$. For instance, the image of the vertex $\vertex{4132} = (2,4,3,1)$ under $h$ is given by
\[
h(4132) = \langle (2,4,3,1), (12,2,-1,-2) \rangle = 2\times 12 + 4 \times 2 - 3 - 2 = 27.
\]
We number each vertex $\vertex{u}$ as $h(\mu(uB))$. 
Therefore, the vertex $\vertex{4132}=(2,4,3,1)$ is numbered as $27$ (see Figure~\ref{figure_Bruhat_polytope_4231}).
\begin{figure}[H]
\begin{subfigure}[b]{0.5\textwidth}
\centering
\tikzset{ mid arrow/.style={postaction={decorate,decoration={
markings,
mark=at position .5 with {\arrow[#1]{stealth}}
}}}} \begin{tikzpicture}%
[x={(-0.939161cm, 0.244762cm)},
y={(0.097442cm, -0.482887cm)},
z={(0.329367cm, 0.840780cm)},
scale=1.20000,
back/.style={dashed, thin},
edge/.style={color=black},
facet/.style={fill=none},
vertex/.style={inner sep=0.3pt,circle, draw=black, fill=blue!10,anchor=base, font=\scriptsize},
vertex2/.style={inner sep=1.5pt,circle,draw=green!25!black,fill=red,thick,anchor=base},
every label/.append style={text=black, font=\scriptsize}]
%
%
\coordinate (1.00000, 2.00000, 3.00000) at (1.00000, 2.00000, 3.00000);
\coordinate (1.00000, 2.00000, 4.00000) at (1.00000, 2.00000, 4.00000);
\coordinate (1.00000, 3.00000, 2.00000) at (1.00000, 3.00000, 2.00000);
\coordinate (1.00000, 3.00000, 4.00000) at (1.00000, 3.00000, 4.00000);
\coordinate (1.00000, 4.00000, 2.00000) at (1.00000, 4.00000, 2.00000);
\coordinate (1.00000, 4.00000, 3.00000) at (1.00000, 4.00000, 3.00000);
\coordinate (2.00000, 1.00000, 3.00000) at (2.00000, 1.00000, 3.00000);
\coordinate (2.00000, 1.00000, 4.00000) at (2.00000, 1.00000, 4.00000);
\coordinate (2.00000, 3.00000, 1.00000) at (2.00000, 3.00000, 1.00000);
\coordinate (2.00000, 3.00000, 4.00000) at (2.00000, 3.00000, 4.00000);
\coordinate (2.00000, 4.00000, 1.00000) at (2.00000, 4.00000, 1.00000);
\coordinate (2.00000, 4.00000, 3.00000) at (2.00000, 4.00000, 3.00000);
\coordinate (3.00000, 1.00000, 2.00000) at (3.00000, 1.00000, 2.00000);
\coordinate (3.00000, 1.00000, 4.00000) at (3.00000, 1.00000, 4.00000);
\coordinate (3.00000, 2.00000, 1.00000) at (3.00000, 2.00000, 1.00000);
\coordinate (3.00000, 2.00000, 4.00000) at (3.00000, 2.00000, 4.00000);
\coordinate (4.00000, 1.00000, 2.00000) at (4.00000, 1.00000, 2.00000);
\coordinate (4.00000, 1.00000, 3.00000) at (4.00000, 1.00000, 3.00000);
\coordinate (4.00000, 2.00000, 1.00000) at (4.00000, 2.00000, 1.00000);
\coordinate (4.00000, 2.00000, 3.00000) at (4.00000, 2.00000, 3.00000);
\draw[edge,back, mid arrow = red] (1.00000, 2.00000, 3.00000) -- (1.00000, 2.00000, 4.00000);
\draw[edge,back, mid arrow = red] (1.00000, 2.00000, 3.00000) -- (1.00000, 3.00000, 2.00000);
\draw[edge,back, mid arrow = red] (1.00000, 2.00000, 3.00000) -- (2.00000, 1.00000, 3.00000);
\draw[edge,back, mid arrow = red] (1.00000, 3.00000, 2.00000) -- (1.00000, 4.00000, 2.00000);
\draw[edge,back, mid arrow = red] (1.00000, 3.00000, 2.00000) -- (2.00000, 3.00000, 1.00000);
\draw[edge,back, mid arrow = red] (2.00000, 1.00000, 3.00000) -- (2.00000, 1.00000, 4.00000);
\draw[edge,back, mid arrow = red] (2.00000, 1.00000, 3.00000) -- (3.00000, 1.00000, 2.00000);
\draw[edge,back, mid arrow = red] (2.00000, 3.00000, 1.00000) -- (2.00000, 4.00000, 1.00000);
\draw[edge,back, mid arrow = red] (2.00000, 3.00000, 1.00000) -- (3.00000, 2.00000, 1.00000);
\draw[edge,back, mid arrow = red] (3.00000, 1.00000, 2.00000) -- (3.00000, 2.00000, 1.00000);
\draw[edge,back, mid arrow = red] (3.00000, 1.00000, 2.00000) -- (4.00000, 1.00000, 2.00000);
\draw[edge,back, mid arrow = red] (3.00000, 2.00000, 1.00000) -- (4.00000, 2.00000, 1.00000);
\node[vertex, label={[label distance = -3mu] left:{$(\mspace{-2mu}1,\!3,\!2,\!4\mspace{-2mu})$}}] at (1.00000, 3.00000, 2.00000) {$8$};
\node[vertex, label={above:{$(\mspace{-2mu}2,\!3,\!1,\!4\mspace{-2mu})$}}] at (2.00000, 3.00000, 1.00000) {$21$};
\node[vertex,label={[label distance = -3mu] below left:{$(\mspace{-2mu}1,\!2,\!3,\!4\mspace{-2mu})$}}] at (1.00000, 2.00000, 3.00000) {$5$};
\node[vertex,label={below:{$(2,\!1,\!3,\!4\mspace{-2mu})$}}] at (2.00000, 1.00000, 3.00000) {$15$};
\node[vertex,label={above:{$(\mspace{-2mu}3,\!2,\!1,\!4\mspace{-2mu})$}}] at (3.00000, 2.00000, 1.00000) {$31$};
\node[vertex,label={below:{$(\mspace{-2mu}3,\!1,\!2,\!4\mspace{-2mu})$}}] at (3.00000, 1.00000, 2.00000) {$28$};
\fill[facet] (4.00000, 2.00000, 3.00000) -- (2.00000, 4.00000, 3.00000) -- (2.00000, 4.00000, 1.00000) -- (4.00000, 2.00000, 1.00000) -- cycle {};
\fill[facet] (4.00000, 2.00000, 3.00000) -- (2.00000, 4.00000, 3.00000) -- (2.00000, 3.00000, 4.00000) -- (3.00000, 2.00000, 4.00000) -- cycle {};
\fill[facet] (4.00000, 2.00000, 3.00000) -- (3.00000, 2.00000, 4.00000) -- (3.00000, 1.00000, 4.00000) -- (4.00000, 1.00000, 3.00000) -- cycle {};
\fill[facet] (4.00000, 2.00000, 3.00000) -- (4.00000, 1.00000, 3.00000) -- (4.00000, 1.00000, 2.00000) -- (4.00000, 2.00000, 1.00000) -- cycle {};
\fill[facet] (2.00000, 4.00000, 3.00000) -- (1.00000, 4.00000, 3.00000) -- (1.00000, 3.00000, 4.00000) -- (2.00000, 3.00000, 4.00000) -- cycle {};
\fill[facet] (2.00000, 4.00000, 3.00000) -- (1.00000, 4.00000, 3.00000) -- (1.00000, 4.00000, 2.00000) -- (2.00000, 4.00000, 1.00000) -- cycle {};
\fill[facet] (3.00000, 2.00000, 4.00000) -- (2.00000, 3.00000, 4.00000) -- (1.00000, 3.00000, 4.00000) -- (1.00000, 2.00000, 4.00000) -- (2.00000, 1.00000, 4.00000) -- (3.00000, 1.00000, 4.00000) -- cycle {};
\draw[edge, mid arrow = red] (1.00000, 2.00000, 4.00000) -- (1.00000, 3.00000, 4.00000);
\draw[edge, mid arrow = red] (1.00000, 2.00000, 4.00000) -- (2.00000, 1.00000, 4.00000);
\draw[edge, mid arrow = red] (1.00000, 3.00000, 4.00000) -- (1.00000, 4.00000, 3.00000);
\draw[edge, mid arrow = red] (1.00000, 3.00000, 4.00000) -- (2.00000, 3.00000, 4.00000);
\draw[edge, mid arrow = red] (1.00000, 4.00000, 2.00000) -- (1.00000, 4.00000, 3.00000);
\draw[edge, mid arrow = red] (1.00000, 4.00000, 2.00000) -- (2.00000, 4.00000, 1.00000);
\draw[edge, mid arrow = red] (1.00000, 4.00000, 3.00000) -- (2.00000, 4.00000, 3.00000);
\draw[edge, mid arrow = red] (2.00000, 1.00000, 4.00000) -- (3.00000, 1.00000, 4.00000);
\draw[edge, mid arrow = red] (2.00000, 3.00000, 4.00000) -- (2.00000, 4.00000, 3.00000);
\draw[edge, mid arrow = red] (2.00000, 3.00000, 4.00000) -- (3.00000, 2.00000, 4.00000);
\draw[edge, mid arrow = red] (2.00000, 4.00000, 1.00000) -- (2.00000, 4.00000, 3.00000);
\draw[edge, mid arrow = red] (2.00000, 4.00000, 1.00000) -- (4.00000, 2.00000, 1.00000);
\draw[edge, mid arrow = red] (2.00000, 4.00000, 3.00000) -- (4.00000, 2.00000, 3.00000);
\draw[edge, mid arrow = red] (3.00000, 1.00000, 4.00000) -- (3.00000, 2.00000, 4.00000);
\draw[edge, mid arrow = red] (3.00000, 1.00000, 4.00000) -- (4.00000, 1.00000, 3.00000);
\draw[edge, mid arrow = red] (3.00000, 2.00000, 4.00000) -- (4.00000, 2.00000, 3.00000);
\draw[edge, mid arrow = red] (4.00000, 1.00000, 2.00000) -- (4.00000, 1.00000, 3.00000);
\draw[edge, mid arrow = red] (4.00000, 1.00000, 2.00000) -- (4.00000, 2.00000, 1.00000);
\draw[edge, mid arrow = red] (4.00000, 1.00000, 3.00000) -- (4.00000, 2.00000, 3.00000);
\draw[edge, mid arrow = red] (4.00000, 2.00000, 1.00000) -- (4.00000, 2.00000, 3.00000);
\node[vertex,label={right:{$(\mspace{-2mu}1,\!2,\!4,\!3\mspace{-2mu})$}}] at (1.00000, 2.00000, 4.00000) {$6$};
\node[vertex,label={right:{$(\mspace{-2mu}1,\!3,\!4,\!2\mspace{-2mu})$}}] at (1.00000, 3.00000, 4.00000) {$10$};
\node[vertex,label={right:{$(\mspace{-2mu}1,\!4,\!2,\!3\mspace{-2mu})$}}] at (1.00000, 4.00000, 2.00000) {$12$};
\node[vertex,label={right:{$(\mspace{-2mu}1,\!4,\!3,\!2\mspace{-2mu})$}}] at (1.00000, 4.00000, 3.00000) {$13$};
\node[vertex,label={above:{$(\mspace{-2mu}2,\!1,\!4,\!3\mspace{-2mu})$}}] at (2.00000, 1.00000, 4.00000) {$16$};
\node[vertex,label={ [label distance = -3mu] above :{$(\mspace{-2mu}2,\!3,\!4,\!1\mspace{-2mu})$}}] at (2.00000, 3.00000, 4.00000) {$24$};
\node[vertex,label={below:{$(\mspace{-2mu}2,\!4,\!1,\!3\mspace{-2mu})$}}] at (2.00000, 4.00000, 1.00000) {$25$};
\node[vertex,label={ left:{$(\mspace{-2mu}2,\!4,\!3,\!1\mspace{-2mu})$}}] at (2.00000, 4.00000, 3.00000) {$27$};
\node[vertex,label={above:{$(\mspace{-2mu}3,\!1,\!4,\!2\mspace{-2mu})$}}] at (3.00000, 1.00000, 4.00000) {$30$};
\node[vertex,label={below:{$(\mspace{-2mu}3,\!2,\!4,\!1\mspace{-2mu})$}}] at (3.00000, 2.00000, 4.00000) {$34$};
\node[vertex,label={left:{$(\mspace{-2mu}4,\!1,\!2,\!3\mspace{-2mu})$}}]at (4.00000, 1.00000, 2.00000) {$42$};
\node[vertex,label={above:{$(\mspace{-2mu}4,\!1,\!3,\!2\mspace{-2mu})$}}] at (4.00000, 1.00000, 3.00000) {$43$};
\node[vertex,label={left:{$(\mspace{-2mu}4,\!2,\!1,\!3\mspace{-2mu})$}}] at (4.00000, 2.00000, 1.00000) {$45$};
\node[vertex,label={ [label distance = -3mu] 0:{$(\mspace{-2mu}4,\!2,\!3,\!1\mspace{-2mu})$}}] at (4.00000, 2.00000, 3.00000) {$47$};
\end{tikzpicture}%
\caption{$\Q_{4231}$}
\label{figure_Bruhat_polytope_4231}
\end{subfigure}%
\hspace{1.5em}
\begin{subfigure}[b]{0.4\textwidth}
\centering
\tikzset{ mid arrow/.style={postaction={decorate,decoration={
markings,
mark=at position .5 with {\arrow[#1]{stealth}}
}}}}
\begin{tikzpicture}%
[x={(-0.939161cm, 0.244762cm)},
y={(0.097442cm, -0.482887cm)},
z={(0.329367cm, 0.840780cm)},
scale=1.00000,
back/.style={dashed, thin},
edge/.style={color=black},
facet/.style={fill=none},
vertex/.style={inner sep=0.3pt,circle, draw=black, fill=blue!10,anchor=base, font=\scriptsize},
vertex2/.style={inner sep=1.5pt,circle,draw=green!25!black,fill=red,thick,anchor=base},
every label/.append style={text=black, font=\scriptsize}]
%
%
\coordinate (1.00000, 2.00000, 3.00000) at (1.00000, 2.00000, 3.00000);
\coordinate (1.00000, 2.00000, 4.00000) at (1.00000, 2.00000, 4.00000);
\coordinate (1.00000, 3.00000, 2.00000) at (1.00000, 3.00000, 2.00000);
\coordinate (1.00000, 3.00000, 4.00000) at (1.00000, 3.00000, 4.00000);
\coordinate (1.00000, 4.00000, 2.00000) at (1.00000, 4.00000, 2.00000);
\coordinate (1.00000, 4.00000, 3.00000) at (1.00000, 4.00000, 3.00000);
\coordinate (2.00000, 1.00000, 3.00000) at (2.00000, 1.00000, 3.00000);
\coordinate (2.00000, 1.00000, 4.00000) at (2.00000, 1.00000, 4.00000);
\coordinate (2.00000, 3.00000, 1.00000) at (2.00000, 3.00000, 1.00000);
\coordinate (2.00000, 4.00000, 1.00000) at (2.00000, 4.00000, 1.00000);
\coordinate (3.00000, 1.00000, 2.00000) at (3.00000, 1.00000, 2.00000);
\coordinate (3.00000, 1.00000, 4.00000) at (3.00000, 1.00000, 4.00000);
\coordinate (3.00000, 2.00000, 1.00000) at (3.00000, 2.00000, 1.00000);
\coordinate (3.00000, 4.00000, 1.00000) at (3.00000, 4.00000, 1.00000);
\draw[edge,back, mid arrow = red] (1.00000, 2.00000, 3.00000) -- (1.00000, 2.00000, 4.00000);
\draw[edge,back, mid arrow = red] (1.00000, 2.00000, 3.00000) -- (1.00000, 3.00000, 2.00000);
\draw[edge,back, mid arrow = red] (1.00000, 2.00000, 3.00000) -- (2.00000, 1.00000, 3.00000);
\draw[edge,back, mid arrow = red] (1.00000, 3.00000, 2.00000) -- (1.00000, 4.00000, 2.00000);
\draw[edge,back, mid arrow = red] (1.00000, 3.00000, 2.00000) -- (2.00000, 3.00000, 1.00000);
\draw[edge,back, mid arrow = red] (2.00000, 1.00000, 3.00000) -- (2.00000, 1.00000, 4.00000);
\draw[edge,back, mid arrow = red] (2.00000, 1.00000, 3.00000) -- (3.00000, 1.00000, 2.00000);
\draw[edge,back, mid arrow = red] (2.00000, 3.00000, 1.00000) -- (2.00000, 4.00000, 1.00000);
\draw[edge,back, mid arrow = red] (2.00000, 3.00000, 1.00000) -- (3.00000, 2.00000, 1.00000);
\node[vertex, label={[label distance = -3mu] above left:{$(\mspace{-2mu}1,\!3,\!2,\!4\mspace{-2mu})$}}] at (1.00000, 3.00000, 2.00000) {$8$};
\node[vertex, label={above:{$(\mspace{-2mu}2,\!3,\!1,\!4\mspace{-2mu})$}}] at (2.00000, 3.00000, 1.00000) {$21$};
\node[vertex,label={[label distance = -3mu] below left:{$(\mspace{-2mu}1,\!2,\!3,\!4\mspace{-2mu})$}}] at (1.00000, 2.00000, 3.00000) {$5$};
\node[vertex,label={below:{$(2,\!1,\!3,\!4\mspace{-2mu})$}}] at (2.00000, 1.00000, 3.00000) {$15$};
\fill[facet] (3.00000, 4.00000, 1.00000) -- (1.00000, 4.00000, 3.00000) -- (1.00000, 3.00000, 4.00000) -- (3.00000, 1.00000, 4.00000) -- cycle {};
\fill[facet] (3.00000, 4.00000, 1.00000) -- (1.00000, 4.00000, 3.00000) -- (1.00000, 4.00000, 2.00000) -- (2.00000, 4.00000, 1.00000) -- cycle {};
\fill[facet] (3.00000, 4.00000, 1.00000) -- (3.00000, 1.00000, 4.00000) -- (3.00000, 1.00000, 2.00000) -- (3.00000, 2.00000, 1.00000) -- cycle {};
\fill[facet] (3.00000, 1.00000, 4.00000) -- (1.00000, 3.00000, 4.00000) -- (1.00000, 2.00000, 4.00000) -- (2.00000, 1.00000, 4.00000) -- cycle {};
\draw[edge, mid arrow = red] (1.00000, 2.00000, 4.00000) -- (1.00000, 3.00000, 4.00000);
\draw[edge, mid arrow = red] (1.00000, 2.00000, 4.00000) -- (2.00000, 1.00000, 4.00000);
\draw[edge, mid arrow = red] (1.00000, 3.00000, 4.00000) -- (1.00000, 4.00000, 3.00000);
\draw[edge, mid arrow = red] (1.00000, 3.00000, 4.00000) -- (3.00000, 1.00000, 4.00000);
\draw[edge, mid arrow = red] (1.00000, 4.00000, 2.00000) -- (1.00000, 4.00000, 3.00000);
\draw[edge, mid arrow = red] (1.00000, 4.00000, 2.00000) -- (2.00000, 4.00000, 1.00000);
\draw[edge, mid arrow = red] (1.00000, 4.00000, 3.00000) -- (3.00000, 4.00000, 1.00000);
\draw[edge, mid arrow = red] (2.00000, 1.00000, 4.00000) -- (3.00000, 1.00000, 4.00000);
\draw[edge, mid arrow = red] (2.00000, 4.00000, 1.00000) -- (3.00000, 4.00000, 1.00000);
\draw[edge, mid arrow = red] (3.00000, 1.00000, 2.00000) -- (3.00000, 1.00000, 4.00000);
\draw[edge, mid arrow = red] (3.00000, 1.00000, 2.00000) -- (3.00000, 2.00000, 1.00000);
\draw[edge, mid arrow = red] (3.00000, 1.00000, 4.00000) -- (3.00000, 4.00000, 1.00000);
\draw[edge, mid arrow = red] (3.00000, 2.00000, 1.00000) -- (3.00000, 4.00000, 1.00000);
\node[vertex,label={left:{$(\mspace{-2mu}3,\!2,\!1,\!4\mspace{-2mu})$}}] at (3.00000, 2.00000, 1.00000) {$31$};
\node[vertex,label={left:{$(\mspace{-2mu}3,\!1,\!2,\!4\mspace{-2mu})$}}] at (3.00000, 1.00000, 2.00000) {$28$}; 
\node[vertex,label={above right:{$(\mspace{-2mu}1,\!2,\!4,\!3\mspace{-2mu})$}}] at (1.00000, 2.00000, 4.00000) {$6$};
\node[vertex,label={ right:{$(\mspace{-2mu}1,\!3,\!4,\!2\mspace{-2mu})$}}] at (1.00000, 3.00000, 4.00000) {$10$};
\node[vertex,label={below:{$(\mspace{-2mu}1,\!4,\!2,\!3\mspace{-2mu})$}}] at (1.00000, 4.00000, 2.00000) {$12$};
\node[vertex,label={right:{$(\mspace{-2mu}1,\!4,\!3,\!2\mspace{-2mu})$}}] at (1.00000, 4.00000, 3.00000) {$13$};
\node[vertex,label={above:{$(\mspace{-2mu}2,\!1,\!4,\!3\mspace{-2mu})$}}] at (2.00000, 1.00000, 4.00000) {$16$};
\node[vertex,label={below:{$(\mspace{-2mu}2,\!4,\!1,\!3\mspace{-2mu})$}}] at (2.00000, 4.00000, 1.00000) {$25$};
\node[vertex,label={above:{$(\mspace{-2mu}3,\!1,\!4,\!2\mspace{-2mu})$}}] at (3.00000, 1.00000, 4.00000) {$30$};
\node[vertex,label={left:{$(\mspace{-2mu}3,\!4,\!1,\!2\mspace{-2mu})$}}] at (3.00000, 4.00000, 1.00000) {$39$};
\end{tikzpicture}
\caption{$\Q_{3412}$}
\label{figure_Bruhat_polytope_3412}
\end{subfigure}
\caption{Polytopes $\Q_{4231}$ and $\Q_{3412}$}
\label{figure_BIP}
\end{figure}
Using this function $h$, we get the Poincar\'e polynomials of these singular generic torus orbit closures as follows:
\[
\begin{split}
\Poin(Y_{4231},t) &= 1 + 7 t^2 + 11t^4 + t^6, \\
\Poin(Y_{3412},t) &= 1 + 5 t^2 + 7 t^4 + t^6.
\end{split}
\]
One can easily confirm the formula \eqref{eq:h-polynomial_poincare} in these two cases.  Indeed, the formula \eqref{eq:h-polynomial_poincare} holds for every $Y_w$, which implies  
\[
A_w(t)=h_{\mathsf{Q}_{w}}(t) \quad \text{for every $w\in \Sn$}.
\]

\begin{remark}
\begin{enumerate}
\item 
For a rational polytope $\mathsf{P}$ allowing a \emph{retraction sequence}, the Poincar\'e polynomial of the toric variety $X_\mathsf{P}$ is the polynomial $h_{\mathsf{P}}(t^2)$. We refer the reader to~\cite{BSS} for a precise definition of retraction sequences.
It is shown in~\cite{PS_conic} that a linear function $h$ as in Lemma~\ref{lemm:2-1} defines a retraction sequence on the polytope~$\Q_w$. However, it is not known whether all retraction sequences come from linear functions.
\item In~\cite[Section~4.1]{PostnikovReinerWilliams08}, a generalized Eulerian polynomial is defined for a {\it simple} generalized permutohedron $\mathsf{P}$ and it agrees with the $h$-polynomial of $\mathsf{P}$ (see \cite[Theorem~4.2]{PostnikovReinerWilliams08}).  On the other hand, our generalized Eulerian polynomial $A_w(t)$ for $w\in S_n$  agrees with the $h$-polynomial of a Burhat interval polytope $\mathsf{Q}_w$ as mentioned above, where $\mathsf{Q}_w$ is a generalized permutohedron but not necessarily simple.   
\end{enumerate}
\end{remark}

\subsection{Toric Schubert varieties}\label{section_toric_Schubert}
The complexity of a $T$-variety $X$ is the codimension of a maximal dimensional $T$-orbit in $X$. A toric variety is of complexity $0$ and in this case there is a nice relation between geometry and combinatorics (see Appendix~\ref{sec_toric_varieties}). Indeed, a toric variety has a fruitful symmetry given by the $T$-action and we can extract the essential geometric information (e.g., smoothness) from the corresponding combinatorial object. One can expect a similar nice relation when the complexity is small enough. In this section, we present such kind of results for Schubert varieties of complexity $0$ or $1$.

\begin{definition}\label{def:c(w)}
For $w \in \Sn$, we define 
\[
c(w) \colonequals \dim_\C X_w-\dim_\C Y_w=\dim_{\C} X_w - \dim_{\R} \Q_{w}=\ell(w)- \dim_{\R} \Q_{w},
\]
and call $c(w)$ the \textit{complexity} of the Schubert variety $X_w$ (or the \emph{complexity} of $w$). 
\end{definition}
When $c(w) =0$, we have $X_w=Y_w$; so the Schubert variety $X_w$ is a toric variety. 
Let $w = s_{i_1} \cdots s_{i_m}$ be a reduced decomposition of $w$. 
Then it is known from~\cite{Fan98, Karu13Schubert} that the Schubert variety $X_w$ is a toric variety if and only if $i_1,\dots,i_m$ are distinct. Moreover, all toric Schubert varieties are Bott manifolds, which are smooth projective toric varieties whose moment map images are combinatorially equivalent to cubes.
We refer the reader to Appendix~\ref{appendix} for more details on Bott manifolds. Furthermore, all toric Schubert varieties are Bott--Samelson varieties which are smooth projective varieties. We will explain them in detail in Section~\ref{subsection_complexity_one}.

There are several equivalent conditions characterizing toric Schubert varieties as follows.
\begin{theorem} \cite{Fan98,Karu13Schubert,tenn07,Tenner2012,LMP2021} \label{thm:toric}The following statements are equivalent:
\begin{enumerate}
\item[$(0)$] $X_w$ is a toric variety \textup{(}i.e.,  of complexity zero\textup{)}.
\item[$(1)$] $X_{w}$ is a smooth toric variety.
\item[$(2)$] $w$ avoids the patterns $321$ and $3412$.
\item[$(3)$] A reduced decomposition of $w$ consists of distinct letters.
\item[$(4)$] $X_w$ is isomorphic to a Bott--Samelson variety.
\item[$(5)$] The Bruhat interval $[e,w]$ is isomorphic to the Boolean algebra $\mathfrak{B}_{\ell(w)}$ of rank $\ell(w)$ as posets.
\item[$(6)$] $\Q_{w}$ is combinatorially equivalent to the cube of dimension~$\ell(w)$.
\end{enumerate}
\end{theorem}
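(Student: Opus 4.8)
The plan is to split the list into a purely combinatorial block, $(2)\Leftrightarrow(3)\Leftrightarrow(5)$, and a geometric block tied to it through the complexity $c(w)$ of Definition~\ref{def:c(w)}, recalling that ``$X_w$ is a toric variety'' means exactly $X_w=Y_w$, i.e.\ $c(w)=\ell(w)-\dim_\R\Q_w=0$. For the combinatorial block I would invoke Tenner's theory of \emph{boolean} permutations (those $w$ whose principal order ideal $[e,w]$ in Bruhat order is a Boolean lattice): by \cite{tenn07,Tenner2012}, $[e,w]\cong\mathfrak{B}_{\ell(w)}$ iff $w$ has a reduced word using each generator at most once iff $w$ avoids $321$ and $3412$, which is precisely $(5)\Leftrightarrow(3)\Leftrightarrow(2)$. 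The one step worth spelling out is $(3)\Rightarrow(5)$: from a reduced word $s_{i_1}\cdots s_{i_m}$ with distinct letters, the map $S\mapsto\prod_{k\in S}s_{i_k}$ (product in the given order) is well defined on subsets $S\subseteq\{1,\dots,m\}$, lands in $[e,w]$, and is an order isomorphism onto $\mathfrak{B}_m$ because each $u\le w$ has a \emph{unique} distinguished subword when the letters are distinct. For $(5)\Rightarrow(2)$ one uses that neither $321$ nor $3412$ is itself boolean and that pattern containment destroys booleanness, which is Tenner's argument.

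\textbf{Passing to geometry and smoothness.} The equivalence $(0)\Leftrightarrow(2)$ is the theorem of Fan and Karu \cite{Fan98,Karu13Schubert}: $c(w)=0$ if and only if $w$ avoids $321$ and $3412$; for the direction $(0)\Rightarrow(2)$ one may instead argue directly that a $321$ or $3412$ pattern in $w$ forces, via Proposition~\ref{prop_edges_of_Qw}, the root directions of the edges of $\Q_w$ at the vertex $\vertex{w}$ to span a space of dimension $<\ell(w)$, so $c(w)\ge1$. For $(2)\Rightarrow(1)$ I would use the elementary observation that the pattern $4231$ contains $321$, so a $321$-avoiding permutation is automatically $4231$-avoiding; combined with $3412$-avoidance, the Lakshmibai--Sandhya criterion (see \cite{BL20Singular}) makes $X_w$ smooth, and by $(0)$ it is toric, hence a smooth toric variety. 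Since $(1)\Rightarrow(0)$ is trivial, this closes the cycle $(0)\Leftrightarrow(1)\Leftrightarrow(2)\Leftrightarrow(3)\Leftrightarrow(5)$.

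\textbf{Bott--Samelson and the cube.} For $(3)\Rightarrow(4)$ I would use that a reduced word with distinct letters makes $X_w$ ``defined by inclusions'' in Fan's sense, so the Bott--Samelson resolution $Z_{(i_1,\dots,i_m)}\to X_w$ is an isomorphism \cite{Fan98,Karu13Schubert}; conversely $(4)\Rightarrow(0)$ since an isomorphism $X_w\cong Z_{\underline v}$ forces $\underline v$ to be a reduced word of $w$ (by dimension), and then the Bott--Samelson map of $X_w$ is an isomorphism, which by \cite{Fan98} returns $(2)$, hence $(0)$. For $(1)\Leftrightarrow(6)$: as torus orbit closures in type $A$ are normal, $X_w=Y_w$ is the normal toric variety of the lattice polytope $\Q_w=\mu(X_w)$, so $X_w$ is smooth exactly when the normal fan of $\Q_w$ is smooth, and the explicit inductive $\mathbb{P}^1$-bundle description of the fan of a toric Schubert variety in \cite{LMP2021} (see Appendix~\ref{appendix} and Theorem~\ref{thm_char_matrix_of_toric_Schubert}) identifies it as the fan of a Bott manifold, whose moment polytope is combinatorially an $\ell(w)$-cube; conversely a polytope combinatorially equivalent to the $\ell(w)$-cube has dimension $\ell(w)$, so $c(w)=0$, i.e.\ $(0)$.

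\textbf{Main obstacle.} The trivial implications and the pattern-avoidance bookkeeping are routine; the real work sits in two places. First, $(0)\Rightarrow(2)$ and $(3)\Rightarrow(4)$: converting ``$c(w)=0$'' into pattern avoidance, and recognizing when the Bott--Samelson resolution collapses to an isomorphism, both need Fan's ``defined by inclusions'' machinery (or a delicate analysis of the edges of $\Q_w$ at $\vertex{w}$). Second, the cube statement $(6)$: certifying that the fan of $X_w$ is honestly that of a Bott manifold --- not merely some complete smooth fan --- requires the explicit iterated $\mathbb{P}^1$-bundle computation behind Theorem~\ref{thm_char_matrix_of_toric_Schubert}. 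I would treat those as the crux and keep the remaining implications as cross-references to \cite{Fan98,Karu13Schubert,tenn07,Tenner2012,LMP2021}.
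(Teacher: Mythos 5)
The paper itself offers no proof of this theorem --- it is stated purely as a compilation of results from \cite{Fan98,Karu13Schubert,tenn07,Tenner2012,LMP2021} --- so your sketch is being compared against the standard literature route, which it follows faithfully: Tenner for $(2)\Leftrightarrow(3)\Leftrightarrow(5)$, Fan--Karu for $(0)\Leftrightarrow(2)\Leftrightarrow(3)$ and the Bott--Samelson statement, Lakshmibai--Sandhya plus the observation that $4231$ contains $321$ for smoothness, and the toric-plus-Boolean characterization of cubes for $(6)$. Most of this is correct and well organized.

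There is, however, one step that fails as written: your implication $(4)\Rightarrow(0)$. A Bott--Samelson variety $Z_{(i_1,\dots,i_m)}$ has dimension $m$ for \emph{any} word, reduced or not, so ``by dimension'' you only learn $m=\ell(w)$; you do not learn that $(i_1,\dots,i_m)$ is a reduced word, let alone a reduced word of $w$. Moreover, even if it were, an abstract isomorphism $X_w\cong Z_{\underline v}$ says nothing about the canonical multiplication map $Z_{\underline v}\to X_w$ being an isomorphism, so the appeal to \cite{Fan98} at that point is a non sequitur. The standard repair is cohomological: $Z_{\underline v}$ is an iterated $\C P^1$-bundle, so $\Poin(Z_{\underline v},t)=(1+t^2)^m$, while $\Poin(X_w,t)=\sum_{u\le w}t^{2\ell(u)}$; equating the coefficients of $t^2$ gives $\#\{u\le w\mid \ell(u)=1\}=\ell(w)$, i.e.\ the support of $w$ has $\ell(w)$ elements, which is exactly statement $(3)$. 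A second, milder issue: you route $(1)\Rightarrow(6)$ through Theorem~\ref{thm_char_matrix_of_toric_Schubert}, but the proof of that theorem in this chapter explicitly \emph{assumes} $(6)$ of the present theorem, so this is circular as a from-scratch argument. The non-circular route is Theorem~\ref{theo:3-6} with $v=e$: $\Q_w$ is a cube if and only if it is toric and $[e,w]$ is Boolean, which is $(0)\wedge(5)$, and these you have already tied to $(1)$. Relatedly, for $(0)\Rightarrow(2)$ the cleanest direct argument counts edges at $\vertex{e}$ rather than $\vertex{w}$: the vertex $\vertex{e}$ has exactly $|\supp(w)|$ edges, so $c(w)=0$ forces $|\supp(w)|\ge\dim\Q_w=\ell(w)$, giving $(3)$ immediately.
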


\lee{Here, we say that a poset $P$ is a \defi{Boolean algebra} if there is a set $X$ such that $P$ is isomorphic to the set of all subsets of $X$, partially ordered by inclusion.}

The fan of a toric Schubert variety $X_w$ is the normal fan of the polytope $\Q_{w}$. Since $\Q_{w}$ is combinatorially equivalent to a cube as stated in Theorem~\ref{thm:toric}, we shall find its normal fan by investigating the primitive direction vectors of the edges emanating from the vertices $\vertex{e}$ and $\vertex{w}$ of~$\Q_{w}$. 
Let $w = s_{i_1} \cdots s_{i_m}$ be a reduced decomposition of $w \in \Sn$ and we assume that $i_1,\dots,i_m$ are distinct. Then $w^{-1}=s_{i_m}\cdots s_{i_1}$ and the Bruhat interval $[e, w^{-1}]$ has $m$ many atoms and coatoms: 
\begin{equation} \label{eq:atom_coatom}
\begin{split}
\text{atoms: } & s_{i_k}\quad (k=1,\dots,m), \\
\text{coatoms: } & s_{i_m} \cdots \hat{s}_{i_k} \cdots s_{i_1}\quad (k=1,\dots,m).
\end{split}
\end{equation}

\begin{lemma}\label{lemma_atoms_and_coatoms_Schubert}
Let $w = s_{i_1} \cdots s_{i_m}$. For $1 \leq k \leq m$, we set 
\[
a_k = s_{i_1}s_{i_2} \cdots s_{i_{k-1}}(i_k), \quad b_k = s_{i_1} s_{i_2} \cdots s_{i_{k-1}} (i_k+1)
\]
where we understand $a_1=i_1$ and $b_1=i_1+1$. Then we have 
\[
s_{i_m} \cdots \hat{s}_{i_k} \cdots s_{i_1} = w^{-1} t_{a_k,b_k}.
\]
\end{lemma}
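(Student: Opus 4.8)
The strategy is to reduce the identity to a single elementary cancellation among products of generators and then to recognize the resulting reflection as a transposition via conjugation. Reducedness of the word plays no role in the bare equation, so I will not use it.

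First I would reformulate the left-hand side in terms of the reduced word for $w^{-1}$. Since $w=s_{i_1}\cdots s_{i_m}$, we have $w^{-1}=s_{i_m}\cdots s_{i_1}$; writing $j_\ell:=i_{m+1-\ell}$ we view $w^{-1}=s_{j_1}\cdots s_{j_m}$. Under this relabelling the factor $s_{i_k}$ occupies position $p:=m+1-k$, that is $s_{j_p}=s_{i_k}$, and deleting it yields exactly $s_{i_m}\cdots\hat{s}_{i_k}\cdots s_{i_1}=s_{j_1}\cdots\hat{s}_{j_p}\cdots s_{j_m}$. The core step is then the general cancellation identity: for any word $s_{j_1}\cdots s_{j_m}$ and any position $p$,
\[
s_{j_1}\cdots \hat{s}_{j_p}\cdots s_{j_m}
=(s_{j_1}\cdots s_{j_m})\cdot\bigl(s_{j_m}\cdots s_{j_{p+1}}\,s_{j_p}\,s_{j_{p+1}}\cdots s_{j_m}\bigr),
\]
which follows immediately by grouping the right-hand side as $s_{j_1}\cdots s_{j_p}\,[s_{j_{p+1}}\cdots s_{j_m}][s_{j_m}\cdots s_{j_{p+1}}]\,s_{j_p}\,[s_{j_{p+1}}\cdots s_{j_m}]$, cancelling the middle bracket to $e$, and then using $s_{j_p}s_{j_p}=e$.

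Next I would specialize to $p=m+1-k$. Reading the indices $j_m=i_1,\ j_{m-1}=i_2,\ \dots,\ j_{p+1}=i_{k-1}$, the surviving prefix is $s_{j_m}\cdots s_{j_{p+1}}=s_{i_1}s_{i_2}\cdots s_{i_{k-1}}$, so the identity becomes
\[
s_{i_m}\cdots\hat{s}_{i_k}\cdots s_{i_1}=w^{-1}\cdot r,\qquad r:=u\,s_{i_k}\,u^{-1},\quad u:=s_{i_1}s_{i_2}\cdots s_{i_{k-1}}.
\]
Finally I would identify $r$ as a transposition in $S_n$: since $s_{i_k}=t_{i_k,i_k+1}$ and conjugation by a permutation $u$ satisfies $u\,t_{a,b}\,u^{-1}=t_{u(a),u(b)}$, we get $r=t_{u(i_k),\,u(i_k+1)}=t_{a_k,b_k}$ directly from the definitions $a_k=s_{i_1}\cdots s_{i_{k-1}}(i_k)$ and $b_k=s_{i_1}\cdots s_{i_{k-1}}(i_k+1)$ (with the convention $u=e$, $a_1=i_1$, $b_1=i_1+1$ when $k=1$). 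Combining gives $s_{i_m}\cdots\hat{s}_{i_k}\cdots s_{i_1}=w^{-1}t_{a_k,b_k}$, as claimed.

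\textbf{Main obstacle.} There is no real difficulty of substance; the only delicate point is index bookkeeping — correctly translating ``delete $s_{i_k}$ from $s_{i_m}\cdots s_{i_1}$'' into ``delete the position-$p$ letter of $s_{j_1}\cdots s_{j_m}$'', and confirming that the surviving prefix $s_{j_m}\cdots s_{j_{p+1}}$ equals $s_{i_1}\cdots s_{i_{k-1}}$ in that order (not its reverse). I would sanity-check the endpoints $k=1$ and $k=m$ to make sure the relabelling is consistent.
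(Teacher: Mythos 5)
Your proof is correct and follows essentially the same route as the paper's: both rest on the cancellation identity $s_{i_m}\cdots\hat{s}_{i_k}\cdots s_{i_1}=w^{-1}\,s_{i_1}\cdots s_{i_{k-1}}s_{i_k}s_{i_{k-1}}\cdots s_{i_1}$ followed by the conjugation formula $u\,t_{p,q}\,u^{-1}=t_{u(p),u(q)}$. You simply spell out the telescoping cancellation that the paper leaves implicit.
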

\begin{proof}
For any $v \in \Sn$ and transposition $t_{p,q}$, we have 
\begin{equation}\label{eq_conjugation}
v t_{p,q} v^{-1} = t_{v(p), v(q)}.
\end{equation}
Noting that $s_{i_k}=t_{i_k,i_k+1}$ and $s_j^{-1}=s_j$, we apply~\eqref{eq_conjugation} to the right hand side of the equation
\[
s_{i_m} \cdots \hat{s}_{i_k} \cdots s_{i_1} = w^{-1} s_{i_1} s_{i_2} \cdots s_{i_{k-1}} s_{i_k} s_{i_{k-1}} \cdots s_{i_2} s_{i_1}.
\]
Then the lemma immediately follows. 
\end{proof}

The \defi{Cartan integers} $c_{i,j}$, which are entries of the Cartan matrix, are given by
\begin{equation}\label{eq_Cartan_integers}
c_{i,j} = \begin{cases}
2 & \text{ if } i = j, \\
-1 & \text{ if } |i-j| = 1,\\
0 & \text{ otherwise}.
\end{cases}
\end{equation}

\begin{theorem}\label{thm_char_matrix_of_toric_Schubert}
Let $w = s_{i_1} \cdots s_{i_m}$ be a reduced decomposition of $w \in \Sn$. 
Assume that $i_1,\dots,i_m$ are distinct. Then the fan of the toric Schubert variety $X_w$ is isomorphic to the fan in $\R^m$ such that the primitive ray vectors are the $2m$ column vectors of the following matrix and a subset of the column vectors forms a cone if and only if it does not contain both the $i$th column vectors in the left submatrix and the right submatrix for each $i=1,\dots,m$: 
\[
\left[ 
\begin{array}{cccc|cccc}
1 & 0 & \cdots & 0 & -1 & 0&\cdots &0 \\
0 & 1 & \cdots & 0 & & -1 & \cdots & 0\\
\vdots & & \ddots & & &a_{j,k} & \ddots & \\
0 & 0 & \cdots & 1 & & & & -1
\end{array}
\right],
\]
where $a_{j,k} = -c_{i_j,i_k}$ for $1 \leq k<j \leq m $. Here, $c_{i,j}$ are Cartan integers \textup{(}see~\eqref{eq_Cartan_integers}\textup{)}.
\end{theorem}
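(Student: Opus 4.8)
The plan is to reconstruct the fan of $X_w=Y_w$ directly from its moment polytope $\Q_{w}$. By Theorem~\ref{thm:toric}(6), $\Q_{w}$ is combinatorially an $m$-cube, where $m=\ell(w)$, and its two vertices $\vertex{e}$ and $\vertex{w}$ are antipodal. For a combinatorial $m$-cube the normal fan has exactly $2m$ rays; since every facet of a cube contains exactly one of a fixed antipodal pair of vertices, these split as the $m$ primitive inward facet normals at $\vertex{e}$ together with the $m$ facet normals at $\vertex{w}$, and a subcollection of rays spans a cone exactly when the corresponding facets meet, i.e. exactly when no two facets of an antipodal pair are chosen. So the statement reduces to three tasks: (i) compute the primitive edge vectors of $\Q_{w}$ at $\vertex{e}$ and at $\vertex{w}$; (ii) dualize to obtain the inward facet normals at these two vertices; (iii) match up antipodal facet pairs via the isomorphism $[e,w]\cong\mathfrak{B}_{m}$.

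For (i) I would use Proposition~\ref{prop_edges_of_Qw}. The neighbours of $\vertex{e}$ are exactly the $\vertex{s_i}$ with $s_i\le w$, i.e. $i\in\{i_1,\dots,i_m\}$, and $\mu(s_iB)-\mu(eB)=\mathbf e_i-\mathbf e_{i+1}=:\alpha_i$; hence the edge vectors at $\vertex{e}$ are the simple roots $\alpha_{i_1},\dots,\alpha_{i_m}$, which (being a subset of a lattice basis of the root lattice) form a $\Z$-basis of the character lattice of the effective quotient torus of $X_w$, and this is why the fan lives in $\R^{m}$. By Lemma~\ref{lemma_atoms_and_coatoms_Schubert} (applied to $w$ and then inverting) the neighbours of $\vertex{w}$ are the coatoms $v_k=t_{a_k,b_k}w$ of $[e,w]$; a direct computation gives
\[
\mu(v_kB)-\mu(wB)=\bigl(w^{-1}(b_k)-w^{-1}(a_k)\bigr)\,\bigl(\mathbf e_{a_k}-\mathbf e_{b_k}\bigr),\qquad \mathbf e_{a_k}-\mathbf e_{b_k}=s_{i_1}\cdots s_{i_{k-1}}(\alpha_{i_k})=:\beta_k .
\]
By the standard description of the inversion set of a reduced word, each $\beta_k$ is a positive root with $w^{-1}(a_k)>w^{-1}(b_k)$, so the primitive edge vectors at $\vertex{w}$ are $-\beta_1,\dots,-\beta_m$.

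For (ii) and (iii), choose coordinates on $\R^{m}$ so that the primitive inward normals at $\vertex{e}$ are the standard basis $\mathbf e_1,\dots,\mathbf e_m$ (these are, up to the chosen identification, the basis dual to $\alpha_{i_1},\dots,\alpha_{i_m}$); this is the left block. The key algebraic input is the telescoping identity
\[
\beta_j+\sum_{k=1}^{j-1}c_{i_j,i_k}\,\beta_k=\alpha_{i_j}\qquad(1\le j\le m),
\]
which follows from $s_{i_1}\cdots s_{i_{j-1}}\alpha_{i_j}-\alpha_{i_j}=\sum_{l=1}^{j-1}s_{i_1}\cdots s_{i_{l-1}}\bigl(s_{i_l}\alpha_{i_j}-\alpha_{i_j}\bigr)$ together with $s_{i_l}\alpha_{i_j}=\alpha_{i_j}-c_{i_j,i_l}\alpha_{i_l}$, where $c_{i,j}$ are the Cartan integers of~\eqref{eq_Cartan_integers}. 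In matrix form this says that the lower unitriangular matrix $L$ with $L_{jj}=1$ and $L_{jk}=c_{i_j,i_k}$ for $k<j$ expresses $\alpha_{i_1},\dots,\alpha_{i_m}$ in terms of $\beta_1,\dots,\beta_m$; dualizing, the inward normal at $\vertex{w}$ opposite the edge $-\beta_k$ has coordinate vector $-(k\text{th column of }L)$, i.e. it is the $k$th column of the right block $-L$ of the asserted matrix. Finally, under $[e,w]\cong\mathfrak{B}_{m}$ the atom $s_{i_k}$ corresponds to $\{k\}$ and the coatom $v_k$ to $[m]\setminus\{k\}$; since $\{k\}$ and $[m]\setminus\{k\}$ are complementary, the facet of $\Q_{w}$ through $\vertex{e}$ not containing the edge to $\vertex{s_{i_k}}$ and the facet through $\vertex{w}$ not containing the edge to $\vertex{v_k}$ form one antipodal pair, which is exactly the cone condition in the statement (a subset of the $2m$ columns forms a cone iff, for each $i$, it does not contain both the $i$th left column and the $i$th right column).

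I expect the main obstacle to be the sign-and-duality bookkeeping in step (ii): the expansion of $\beta_j$ in the simple root basis contains genuinely ``mixed'' products of Cartan integers, and one must verify that it is precisely the passage to the dual basis of inward facet normals that collapses these into the clean unitriangular matrix $L$. The displayed telescoping identity is the crux, as it pins down the change-of-basis matrix (rather than its inverse) as the one with pure Cartan-integer entries. The remaining ingredients — reading off the edges of $\Q_{w}$ from Proposition~\ref{prop_edges_of_Qw} and Lemma~\ref{lemma_atoms_and_coatoms_Schubert}, and converting the combinatorial cube structure of Theorem~\ref{thm:toric} into the incidence rule for the fan — are routine, if somewhat lengthy.
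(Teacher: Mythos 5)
Your proposal is correct and follows essentially the same route as the paper's proof: both compute the primitive edge vectors of $\Q_w$ at the antipodal vertices $\vertex{e}$ and $\vertex{w}$ via the atoms and coatoms from Lemma~\ref{lemma_atoms_and_coatoms_Schubert}, establish the identity $\beta_k+\sum_{l<k}c_{i_k,i_l}\beta_l=\alpha_{i_k}$ (your telescoping sum is the paper's recursion for $\mathbf{w}_k=-s_{i_1}\cdots s_{i_{k-1}}(\mathbf{v}_{i_k})$ written out), and pass to dual bases to read off the facet normals as the columns of $[\,I\mid -L\,]$. The only difference is cosmetic: you make explicit the antipodal pairing of facets via the Boolean-lattice isomorphism, which the paper leaves implicit in its appeal to the cube structure.
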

\begin{proof}
Since the fan of $X_w$ is the normal fan of $\Q_{w}$ and $\Q_{w}$ is combinatorially equivalent to an $m$-dimensional cube by Theorem~\ref{thm:toric}, it is enough to consider the edge vectors emanating from the vertices $\vertex{e}$ and $\vertex{w}$ to find the fan of $X_w$ as below. 

Let $\mathbf{e}_1,\dots,\mathbf{e}_n$ be the standard basis vectors of $\R^n$ and we set
\begin{equation} \label{eq:vi}
\mathbf v_i \colonequals \mathbf{e}_i-\mathbf{e}_{i+1} \quad \text{ for }1\leq i \leq n-1.
\end{equation}
The dual vector space to the subspace of $\R^n$ spanned by $\mathbf{v}_1,\dots,\mathbf{v}_{n-1}$ is the quotient space $\R^n/{\R (1,\dots,1)}$ and 
\begin{equation*} \label{eq:vi*}
\mathbf v_i^{\ast} \colonequals \mathbf{e}_1 + \cdots + \mathbf{e}_i\quad \text{ for }1\leq i \leq n-1
\end{equation*}
form the basis of $\R^n/{\R (1,\dots,1)}$ dual to \eqref{eq:vi} through the standard scalar product on $\R^n$. 

At the vertex $\vertex{e}$, the primitive vectors of outgoing edges are $\mathbf{v}_{i_1},\dots,\mathbf{v}_{i_m}$ because the atoms of the Bruhat interval $[e,w^{-1}]$ are $s_{i_k}$'s by \eqref{eq:atom_coatom}. Hence the primitive facet normal vectors at $\vertex{e}$ are $\mathbf{v}_{i_1}^{\ast},\dots,\mathbf{v}_{i_m}^{\ast}$.
On the other hand, in order to find the facet normal vectors at the vertex $\vertex{w}$, we consider the following two elements by \eqref{eq:atom_coatom} and Lemma~\ref{lemma_atoms_and_coatoms_Schubert}: 
\[
\arraycolsep=1.4pt
\begin{array}{rcccccccc}
w^{-1} t_{a_k,b_k} = & w^{-1}(1) & w^{-1}(2) & \cdots & w^{-1}(b_k) & \cdots & w^{-1}(a_k) & \cdots & w^{-1}(n), \\
w^{-1} = & w^{-1}(1) & w^{-1}(2) & \cdots & w^{-1}(a_k) & \cdots & w^{-1}(b_k) & \cdots & w^{-1}(n).
\end{array}
\]
Since $w^{-1} > w^{-1} t_{a_k,b_k}$, we have $w^{-1}(a_k) > w^{-1}(b_k)$.
Moreover, we get $a_k < b_k$ because $a_k \le i_k$ and $b_k \ge i_k+1$ by the definition of $a_k$ and $b_k$ in Lemma~\ref{lemma_atoms_and_coatoms_Schubert}. 
Hence the primitive vector of the outgoing edge at $\vertex{w}$ to the vertex corresponding to $w^{-1} t_{a_k,b_k}$ is $- \mathbf{e}_{a_k} + \mathbf{e}_{b_k}$, which is the same as 
\[
- s_{i_1} s_{i_2} \cdots s_{i_{k-1}} (\mathbf{e}_{i_k} - \mathbf{e}_{i_{k}+1})=- s_{i_1} s_{i_2} \cdots s_{i_{k-1}} (\mathbf{v}_{i_k}).
\]
Here, we regard $s_i$ as the reflection in $\mathbb{R}^n$ which interchanges the $i$th and the $(i+1)$st coordinates, namely $s_i(\mathbf{v})=\mathbf{v}-(\mathbf{v},\mathbf{e}_i-\mathbf{e}_{i+1})(\mathbf{e}_i-\mathbf{e}_{i+1})$ for $\mathbf{v}\in \R^n$, where $(\: ,\: )$ denotes the standard scalar product on $\R^n$.  
Now we set 
\[
\mathbf{w}_k \colonequals - s_{i_1} s_{i_2} \cdots s_{i_{k-1}} (\mathbf{v}_{i_k})
\]
for $1 \leq k \leq m$.
Then,  since $c_{i,j}=(\mathbf{e}_i-\mathbf{e}_{i+1},\mathbf{e}_j-\mathbf{e}_{j+1})$, we have 
\[
\begin{split}
\mathbf{w}_k &= - s_{i_1} s_{i_2} \cdots s_{i_{k-1}} (\mathbf{v}_{i_k})\\
&= -s_{i_1} s_{i_2} \cdots s_{i_{k-2}}(\mathbf{v}_{i_k} - c_{i_k, i_{k-1}} \mathbf{v}_{i_{k-1}}) \\
&= -s_{i_1} s_{i_2} \cdots s_{i_{k-3}}(\mathbf{v}_{i_k} - c_{i_k, i_{k-2}} \mathbf{v}_{i_{k-2}}) - c_{i_k, i_{k-1}} \mathbf{w}_{k-1} \\
&\quad\vdots\\
&= -\mathbf{v}_{i_k} - c_{i_k, i_1} \mathbf{w}_1 - c_{i_k, i_2} \mathbf{w}_{2} - \cdots - c_{i_k, i_{k-2}} \mathbf{w}_{k-2} - c_{i_k, i_{k-1}} \mathbf{w}_{k-1}. 
\end{split}
\]
The facet normal vectors at $\vertex{w}$ are the dual basis $\mathbf{w}_1^*,\dots, \mathbf{w}_m^*$ to $\mathbf{w}_1,\dots, \mathbf{w}_m$, and the above computation implies that 
\[
\mathbf{w}_k^*=-\mathbf{v}_{i_k}^*-c_{i_{k+1},i_k}\mathbf{v}^*_{i_{k+1}}-\cdots-c_{i_m,i_k}\mathbf{v}^*_{i_m}.
\]
This proves the theorem.
\end{proof}

We call the right half of the matrix in Theorem~\ref{thm_char_matrix_of_toric_Schubert} the \emph{reduced characteristic matrix} of $X_w$. 

\begin{example}\label{exam:1234}
For $w = s_1 s_2 s_3s_4$, the reduced characteristic matrix of $X_{w}$ is 
\[
\begin{bmatrix}
-1 & 0 & 0 &0 \\
1 & -1 & 0 & 0 \\
0 & 1 & -1 & 0 \\
0 & 0 & 1 & -1
\end{bmatrix}
\]
In general, the reduced characteristic matrix of $X_w$ for $w = s_1 s_2 \cdots s_{n-1}$ (or $w=s_{n-1}s_{n-2}\cdots s_1$) has $1$'s just below the diagonal and $0$ at the other off diagonal entries.
The transpose inverse of this matrix is the upper triangular matrix with $-1$ on the diagonal and above the diagonal, which is the reduced characteristic of the bounded flag manifold $BF_{n-1}$ mentioned in~\cite[Proposition~7.7.3]{BP15toric}.  Since the isomorphism class of the variety associated to a reduced characteristic matrix does not change by taking inverse and transpose of the  reduced characteristic matrix, $X_w$ for the $w$ above  
is isomorphic to the bounded flag manifold $BF_{n-1}$. 
\end{example}

A reduced characteristic matrix of a toric Schubert variety $X_w$ depends on the choice of a reduced decomposition of $w$ as is seen in Example~\ref{exam:1324}, but it determines $X_w$ up to isomorphism. 

\begin{example} \label{exam:1324}
For $w = s_1 s_3 s_2 s_4$, the reduced characteristic matrix of $X_w$ is 
\begin{equation*}\label{eq_char_mat_1324_BIP}
\begin{bmatrix}
-1 & 0 & 0 &0 \\
0 & -1 & 0 & 0 \\
1 & 1 & -1 & 0 \\
0 & 1 & 0 & -1
\end{bmatrix}
\end{equation*}
There are four more reduced decompositions: $w = s_3 s_1 s_2 s_4 = s_1 s_3 s_4 s_2 = s_3 s_1 s_4 s_2 = s_3s_4s_1s_2$. For each case, we have the following reduced characteristic matrices.
\begin{center}
\begin{tabular}{cccc}
$s_3s_1s_2s_4 $ & $s_1s_3s_4s_2$ & $s_3s_1s_4s_2$ & $s_3s_4s_1s_2$ \\
$\scriptstyle\begin{bmatrix}
-1 & 0 & 0 & 0 \\
0 & -1 & 0 & 0 \\
1 & 1 & -1 & 0 \\
1 & 0 & 0 & -1
\end{bmatrix}$
& $
\scriptstyle\begin{bmatrix}
-1 & 0 & 0 & 0 \\
0 & -1 & 0 & 0 \\
0 & 1 & -1 & 0 \\
1 & 1 & 0 & -1
\end{bmatrix}
$ 
&
$\scriptstyle
\begin{bmatrix}
-1 & 0 & 0 & 0 \\
0 & -1 & 0 & 0 \\
1 & 0 & -1 & 0 \\
1 & 1 & 0 & -1
\end{bmatrix}
$
& $
\scriptstyle
\begin{bmatrix}
-1 & 0 & 0 & 0 \\
1 & -1 & 0 & 0 \\
0 & 0 & -1 & 0\\
1 & 0 & 1 & -1
\end{bmatrix}
$
\end{tabular}
\end{center}
Applying Proposition~\ref{prop:batyrev-iso} in Appendix~\ref{appendix} to the above five matrices, one can easily see that all of them determine the same toric variety up to isomorphism.
\end{example}

A reduced decomposition $w=s_{i_1}\cdots s_{i_m}$ defines a sequence $\mathbf i=(i_1,\dots,i_m)$ and vice versa. So we also call the sequence a reduced decomposition of $w$. We introduce a digraph $\G_{\mathbf i}$ associated with a sequence $\mathbf i$, which does not depend on the choice of a reduced decomposition of $w$, i.e.,  depends only on $w$. 

\begin{definition}
For $\mathbf i = (i_1,\dots,i_m)$, the vertex set and the edge set of a digraph $\G_{\mathbf i}$ are defined by
\begin{itemize}
\item $V(\G_{\mathbf i}) = [m]$;
\item $(k,j) \in E(\G_{\mathbf i})$ if and only if $|i_k - i_j| = 1$ for $1 \leq k < j \leq m$.
\end{itemize}
\end{definition}

\begin{example} \label{exam:digraph_Gi}
For $\mathbf i = (1,2,3,4)$ and $\mathbf i' = (1,3,2,4)$ corresponding to Examples~\ref{exam:1234} and \ref{exam:1324}, we have the following digraphs:
\begin{center}
$\mathcal G_{\mathbf i} = $
\begin{tikzpicture}[node/.style={circle,draw, fill=white!20, inner sep = 0.25mm}, baseline = -0.5ex]
\node[node] (1) at (1,0) {$1$};
\node[node] (2) at (2,0) {$2$};
\node[node] (3) at (3,0) {$3$};
\node[node] (4) at (4,0) {$4$};
\draw[->] (1) to (2);
\draw[->] (2) to (3);
\draw[->] (3) to (4);
\end{tikzpicture}
$\quad \quad$
$\mathcal G_{\mathbf i'} = $
\begin{tikzpicture}[node/.style={circle,draw, fill=white!20, inner sep = 0.25mm}, baseline = -0.5ex]
\node[node] (1) at (1,0) {$1$};
\node[node] (2) at (2,0) {$2$};
\node[node] (3) at (3,0) {$3$};
\node[node] (4) at (4,0) {$4$};
\draw[->] (1) to [bend left] (3);
\draw[->] (2) to (3);
\draw[->] (2) to [bend right] (4);
\end{tikzpicture}
\end{center}
For $\mathbf i=(1,2,4,5)$, the digraph $\mathcal G_{\mathbf i}$ is not connected as follows: 
\begin{center} 
$\mathcal G_{\mathbf i} = $
\begin{tikzpicture}[node/.style={circle,draw, fill=white!20, inner sep = 0.25mm}, baseline = -0.5ex]
\node[node] (1) at (1,0) {$1$};
\node[node] (2) at (2,0) {$2$};
\node[node] (3) at (3,0) {$3$};
\node[node] (4) at (4,0) {$4$};
\draw[->] (1) to (2);
\draw[->] (3) to (4);
\end{tikzpicture}
\end{center}
\end{example}

A smooth projective variety $X$ is called \emph{Fano} (resp. \emph{weak Fano}) if the anti-canonical divisor~$-K_X$ is ample (resp. nef and big). 
\park{Fano varieties play an important role as building blocks in the classification problem in algebraic geometry:}
\lee{the minimal model program. We refer the readers to~\cite{Kollar1996, Kollar_Rational92}. }

\lee{We consider the Fano or weak Fano characterization of toric Schubert varieties in terms of the digraphs.}
The digraphs in Example~\ref{exam:digraph_Gi} are unions of directed path graphs so that each vertex has at most two outgoing edges. This always holds when $\mathbf{i}$ has distinct components and it implies the following.     
\begin{theorem}
Let $\mathbf i = (i_1,\dots,i_m)$ be a reduced decomposition of $w \in \Sn$.
Suppose that $i_1,\dots,i_m$ are distinct. Then the toric Schubert variety $X_w$ is weak Fano.  Moreover, it is Fano if and only if each vertex of the graph $\G_{\mathbf i}$ has at most one outgoing edge.
\end{theorem}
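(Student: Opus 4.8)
The plan is to read off the fan of $X_w$ from Theorem~\ref{thm_char_matrix_of_toric_Schubert} and then decide nefness (resp.\ ampleness) of the anticanonical divisor by intersecting it with the torus-invariant curves. First I would set up notation. By Theorem~\ref{thm:toric} the distinctness of $i_1,\dots,i_m$ makes $X_w$ a smooth projective toric variety (a Bott manifold), and Theorem~\ref{thm_char_matrix_of_toric_Schubert} presents its fan $\Sigma$: in the basis of that theorem the $2m$ rays are $\mathbf e_1,\dots,\mathbf e_m$ and $\mathbf e_k':=-\mathbf e_k+\sum_{j>k}a_{j,k}\mathbf e_j$ for $k=1,\dots,m$, with $a_{j,k}=-c_{i_j,i_k}$. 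As the $i_j$ are distinct, each $a_{j,k}\in\{0,1\}$ and $a_{j,k}=1$ exactly when $(k,j)\in E(\G_{\mathbf i})$, so $d_k:=\sum_{j>k}a_{j,k}$ is the outdegree of the vertex $k$ in $\G_{\mathbf i}$; moreover $d_k\le2$, since the only $i_j$ with $|i_j-i_k|=1$ are the two values $i_k\pm1$. The maximal cones are the $\sigma_\varepsilon$, $\varepsilon\colon[m]\to\{L,R\}$ (take $\mathbf e_k$ when $\varepsilon(k)=L$, $\mathbf e_k'$ when $\varepsilon(k)=R$), and two of them share a wall iff they differ in a single coordinate. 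I would then recall that for a smooth complete toric variety $-K=\sum_\rho D_\rho$ is automatically big (the origin is interior to its polytope), so $X_w$ is weak Fano iff $-K$ is nef and Fano iff $-K$ is ample; by the toric Kleiman criterion this means $-K\cdot C_\tau\ge0$ (resp.\ $>0$) for the invariant curve $C_\tau$ of every wall $\tau$. (Toric background is recalled in Appendix~\ref{appendix}.)

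The heart of the proof is one numerical estimate. For a wall $\tau$ lying in coordinate direction $k$, with $c_i\in\{\mathbf e_i,\mathbf e_i'\}$ the generator chosen in each coordinate $i\ne k$, the wall relation has the form $\mathbf e_k+\mathbf e_k'=\sum_{i\ne k}\gamma_ic_i$, and the standard intersection formula gives $-K\cdot C_\tau=2-\sum_{i\ne k}\gamma_i$. I would prove $\sum_{i\ne k}\gamma_i\le d_k$, with equality for the wall separating the all-$L$ cone from its $k$-th neighbour. Starting from $\mathbf e_k+\mathbf e_k'=\sum_{j>k}a_{j,k}\mathbf e_j$, a non-negative combination with coefficient sum $d_k$, I would rewrite it into the generators of $\tau$ by substituting $\mathbf e_j\mapsto-\mathbf e_j'+\sum_{l>j}a_{l,j}\mathbf e_l$ for each $j$ with $c_j=\mathbf e_j'$, treating the indices $j$ in increasing order so that every substitution only affects not-yet-treated coordinates $l>j$. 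If $\beta_j\ge0$ is the coefficient of $\mathbf e_j$ when $j$ is processed, that step changes the coefficient sum by $\beta_j(-1-1+\sum_{l>j}a_{l,j})=\beta_j(d_j-2)\le0$, using $d_j\le2$; and the running $\mathbf e$-coefficients stay non-negative because the $a$'s are, which keeps $\beta_j\ge0$ at each stage. Hence the coefficient sum never increases, so $\sum_i\gamma_i\le d_k$; for the all-$L$ wall no substitution occurs and equality holds.

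With this in hand the two assertions follow at once. For every wall, $-K\cdot C_\tau=2-\sum_i\gamma_i\ge2-d_k\ge0$, so $-K$ is nef and $X_w$ is weak Fano. If every vertex of $\G_{\mathbf i}$ has outdegree at most $1$, then $d_k\le1$ for all $k$, hence $-K\cdot C_\tau\ge1>0$ on every invariant curve, so $-K$ is ample and $X_w$ is Fano. Conversely, if $d_k=2$ for some $k$, the all-$L$ wall in direction $k$ gives $-K\cdot C_\tau=2-d_k=0$, so $-K$ is not ample and $X_w$ is not Fano.

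The main obstacle is the bookkeeping behind the inequality $\sum_i\gamma_i\le d_k$: one must pick a substitution order making every step a legitimate rewriting and verify that the running coefficients never go negative, so that $d_j\le2$ forces each step to be coefficient-non-increasing. A secondary point is to keep the sign conventions consistent, so that the intersection formula $-K\cdot C_\tau=2-\sum_i\gamma_i$ matches the orientation of the rays $\mathbf e_k'$ as normalised in Theorem~\ref{thm_char_matrix_of_toric_Schubert}. An alternative route, sidestepping the wall computation, is to quote a (weak) Fano criterion for Bott manifolds in terms of the characteristic matrix from Appendix~\ref{sec_Fano_Bott} and apply it directly to $\mathbf e_k'=-\mathbf e_k+\sum_{j>k}a_{j,k}\mathbf e_j$.
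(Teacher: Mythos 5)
Your main argument is correct, but it takes a genuinely different route from the paper. The paper's proof is a three-line application of Batyrev's criterion (Proposition~\ref{prop:batyrev}): for a Bott manifold the primitive collections are exactly the pairs $\{\mathbf e_k,\mathbf e_k'\}$ of opposite columns in Theorem~\ref{thm_char_matrix_of_toric_Schubert}, the primitive relation is $\mathbf e_k+\mathbf e_k'=\sum_{j>k}a_{j,k}\mathbf e_j$, so the degree is $2-d_k$ with $d_k$ the outdegree of $k$ in $\G_{\mathbf i}$, and $d_k\le 2$ because the $i_j$ are distinct; weak Fanoness and the Fano characterization then drop out. Your ``alternative route'' mentioned in the last sentence is precisely this. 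Your primary route instead checks $-K\cdot C_\tau$ on every wall via the toric Kleiman/Nakai criterion, which requires the extra bookkeeping lemma that the coefficient sum $\sum_i\gamma_i$ in the wall relation is maximized (and equal to $d_k$) at the all-$L$ wall. I checked the bookkeeping: the increasing substitution order does keep the running $\mathbf e$-coefficients non-negative, each substitution changes the coefficient sum by $\beta_j(d_j-2)\le 0$, and the sign convention $-K\cdot C_\tau=2-\sum_i\gamma_i$ is consistent with the relation $\mathbf e_k+\mathbf e_k'=\sum_i\gamma_ic_i$, so the estimate is sound. What Batyrev's criterion buys is exactly the content of your lemma packaged as a theorem (one only needs the degree of each primitive collection, not all walls); what your computation buys is a self-contained proof that does not invoke Batyrev, at the cost of verifying the wall-by-wall monotonicity by hand.
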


\begin{proof}
We apply the criterion of toric Fano or toric weak Fano by Batyrev to our case, see Proposition~\ref{prop:batyrev} for the criterion.  It follows from Theorem~\ref{thm_char_matrix_of_toric_Schubert} that a primitive collection of the fan of $X_w$ consists of the $k$th column in the left matrix and the $k$th column in the right matrix in Theorem~\ref{thm_char_matrix_of_toric_Schubert} for each $k=1,\dots,m$. Then the criterion by Batyrev says that $X_w$ is Fano if and only if $1$ appears in the $k$th column at most once for each $k$ in the reduced characteristic matrix of $X_w$. Moreover, $X_w$ is weak Fano if and only if $1$ appears in the $k$th column at most twice for each $k$ in the reduced characteristic matrix of $X_w$.
This implies the theorem. 
\end{proof}

We assume $n\ge 3$ in the following. 
An element of $\Sn$ is called a \emph{Coxeter element} if it can be written as a product of all adjacent transpositions $s_1,\dots,s_{n-1}$. Let $\Cox_n$ denote the set of all Coxeter elements in $\Sn$. 

\begin{theorem}[\cite{LMP_directed_Dynkin}]\label{thm:dynkin}
Let $w, w' \in \Cox_n$ and let $\mathbf i, \mathbf i'$ be reduced decompositions of $w,w'$, respectively.
The following statements are equivalent:
\begin{enumerate}
\item $X_w$ and $X_{w'}$ are isomorphic as toric varieties \textup{(}with respect to the $T$-action on $G/B$\textup{)}.
\item $H^{\ast}(X_w;\Z) \cong H^{\ast}(X_{w'};\Z)$ as graded rings.
\item $\mathcal G_{\mathbf i} \cong \mathcal G_{\mathbf i'}$ as digraphs.
\item $w' = w$ or $w' = w_0 w w_0$.
\end{enumerate}
\end{theorem}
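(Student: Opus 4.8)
The plan is to prove the cycle of implications $(1)\Rightarrow(2)\Rightarrow(3)\Rightarrow(4)\Rightarrow(1)$. The implication $(1)\Rightarrow(2)$ is a tautology, since an isomorphism of toric varieties is in particular a homeomorphism. For $(4)\Rightarrow(1)$, the case $w'=w$ is trivial, so suppose $w'=w_0ww_0$. If $\mathbf{i}=(i_1,\dots,i_{n-1})$ is a reduced decomposition of $w$, then $\mathbf{i}''=(n-i_1,\dots,n-i_{n-1})$ is a reduced decomposition of $w'$, because conjugation by $w_0$ sends $s_i$ to $s_{n-i}$ and preserves length. The Cartan integers $c_{i,j}$ of \eqref{eq_Cartan_integers} depend only on $|i-j|$, so $a_{j,k}=-c_{i_j,i_k}=-c_{n-i_j,\,n-i_k}$; hence the reduced characteristic matrices of $X_w$ and $X_{w'}$ obtained from $\mathbf{i}$ and $\mathbf{i}''$ coincide, and Theorem~\ref{thm_char_matrix_of_toric_Schubert} gives $X_w\cong X_{w'}$, with identical fans.

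The equivalence $(3)\Leftrightarrow(4)$ is purely combinatorial. The bijection ``position $k\mapsto$ value $i_k$'' is a digraph isomorphism from $\mathcal{G}_{\mathbf{i}}$ onto the acyclic orientation $\vec{D}(w)$ of the fixed $A_{n-1}$ Dynkin diagram in which the edge $\{t,t+1\}$ is directed $t\to t+1$ exactly when $s_t$ precedes $s_{t+1}$ in a reduced word of $w$; this orientation is well defined because two reduced words of a Coxeter element differ only by commutation moves (no braid move being available, as no letter repeats), and it is classical that $w\mapsto\vec{D}(w)$ is a bijection from Coxeter elements onto acyclic orientations of the diagram. Hence $\mathcal{G}_{\mathbf{i}}\cong\mathcal{G}_{\mathbf{i}'}$ if and only if $\vec{D}(w)\cong\vec{D}(w')$, and since these are orientations of the single fixed path $A_{n-1}$, a digraph isomorphism between them is a graph automorphism of that path: either the identity, which forces $\vec{D}(w)=\vec{D}(w')$ and so $w'=w$, or the reversal $t\mapsto n-t$, which on orientations is induced by the Dynkin diagram automorphism, i.e.\ by conjugation by $w_0$, and forces $w'=w_0ww_0$. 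This proves $(3)\Leftrightarrow(4)$.

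The core of the theorem is $(2)\Rightarrow(3)$, a cohomological rigidity statement. Eliminating the redundant generators from the presentation of Theorem~\ref{thm_char_matrix_of_toric_Schubert} (using the linear relations among the toric divisors) gives the Bott-type presentation
\[
H^*(X_w;\Z)\;\cong\;\Z[z_1,\dots,z_{n-1}]\Big/\Big(z_k^2-z_k\!\!\sum_{\substack{j<k\\ (j,k)\in E(\mathcal{G}_{\mathbf{i}})}}\!\! z_j\ :\ 1\le k\le n-1\Big),
\]
in which the quadratic relations encode the edges of $\mathcal{G}_{\mathbf{i}}$. A graded ring isomorphism $\varphi\colon H^*(X_w;\Z)\to H^*(X_{w'};\Z)$ restricts to a lattice isomorphism of second cohomology, and the aim is to recover $\mathcal{G}_{\mathbf{i}}$ up to digraph isomorphism---equivalently, by $(3)\Leftrightarrow(4)$, the decomposition of the orientation of $A_{n-1}$ into maximal monotone runs up to reversal---from intrinsic ring data. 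As a first invariant I would compute the set of primitive classes $x\in H^2(X_w;\Z)$ with $x^2=0$: a direct calculation shows that these are, up to sign, the classes $z_v$ for the sources $v$ of $\mathcal{G}_{\mathbf{i}}$ together with the classes $z_t-2z_v$ for certain edges $t\to v$ incident to a source. The isomorphism $\varphi$ carries this set, the sublattice of $H^2$ it spans, and the subring it generates onto the analogous data for $X_{w'}$; the remaining point is that this package determines the orientation type of $\mathcal{G}_{\mathbf{i}}$. For $n=4$ this already separates the three orientations of $A_3$: the ``monotone'' orientation and the ``sink in the middle'' orientation each have exactly two squarezero lines, but for the former the span of the squarezero primitives has index $2$ in its saturation inside $H^2$, whereas for the latter it is saturated.

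The main obstacle is carrying out this reconstruction uniformly in $n$, i.e.\ proving that the squarezero data (with its lattice and ring structure) determines the run-decomposition of an arbitrary acyclic orientation of $A_{n-1}$. The most promising route is an induction in which one detects, from the ring alone, the first (or last) maximal monotone run and ``splits it off''---exhibiting $X_w$ as an iterated $\mathbb{P}^1$-bundle over a bounded flag manifold in a way visible on cohomology---and then applies the inductive hypothesis to the complementary orientation; alternatively one classifies all graded ring isomorphisms between two such algebras and shows that each forces a digraph isomorphism. Once $(2)\Rightarrow(3)$ is in hand the cycle closes and all four statements are equivalent.
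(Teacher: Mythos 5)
The paper itself gives no proof of this theorem (it is quoted from \cite{LMP_directed_Dynkin}, listed as in preparation), so I can only assess your argument on its own terms. Three of your four implications are sound. $(1)\Rightarrow(2)$ is indeed immediate. Your $(4)\Rightarrow(1)$ is correct: conjugation by $w_0$ sends $s_i$ to $s_{n-i}$ and preserves length, the Cartan integers satisfy $c_{n-i,n-j}=c_{i,j}$, so the reduced characteristic matrices of Theorem~\ref{thm_char_matrix_of_toric_Schubert} built from $\mathbf{i}$ and from $(n-i_1,\dots,n-i_{n-1})$ coincide and the fans agree. Your $(3)\Leftrightarrow(4)$ via the transfer of $\mathcal{G}_{\mathbf i}$ to an acyclic orientation of the path $A_{n-1}$, the classical bijection between Coxeter elements and such orientations, and the fact that the only graph automorphisms of a path are the identity and the reversal (the latter realized by conjugation by $w_0$) is also correct. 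Your derivation of the presentation $z_k^2=z_k\sum_{(j,k)\in E(\mathcal{G}_{\mathbf i})}z_j$ from the characteristic matrix checks out, as does your $n=4$ computation of square-zero classes.

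The genuine gap is $(2)\Rightarrow(3)$, which you yourself flag as ``the main obstacle'' and leave as a program rather than a proof. This is not a peripheral step: it is the cohomological rigidity statement that constitutes the actual content of the theorem, since without it your established implications only yield $(1)\Leftrightarrow(3)\Leftrightarrow(4)\Rightarrow(2)$, not the four-way equivalence. Your proposed invariant --- the set of primitive square-zero classes in $H^2$, together with the lattice they span and its saturation --- does separate the three classes of orientations of $A_3$, but you give no argument that this package (or any refinement of it) reconstructs the run-decomposition of an arbitrary acyclic orientation of $A_{n-1}$ for general $n$, nor that an abstract graded ring isomorphism must respect whatever additional structure your inductive splitting would require. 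Until one of the two routes you sketch (detecting and splitting off an extremal monotone run on the level of cohomology, or classifying all graded isomorphisms between two such quotient algebras) is actually carried out, the equivalence with statement $(2)$ remains unproven.
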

\begin{remark}
We notice that the isomorphism classes of Schubert varieties are studied in~\cite{richmond2021isomorphism}. \end{remark}

\begin{remark}
The digraph $\G_{\mathbf i}$ in the theorem above is a directed path graph with $n-1$ vertices. This can be thought of as a directed Dynkin diagram of type $A_{n-1}$. It turns out that directed Dynkin diagrams appear in the classification of toric Schubert varieties for other Lie types (see~\cite{LMP_directed_Dynkin}). 
\end{remark}


\subsection{Schubert varieties of complexity one}
\label{subsection_complexity_one}

In this subsection, we consider Schubert varieties of complexity one. All toric Schubert varieties are smooth while Schubert varieties of complexity one are not necessarily smooth. We provide similar statements for Schubert varieties of complexity one to Theorem~\ref{thm:toric}. 

We recall generalized Bott--Samelson varieties from~\cite{Jant}.
Let $G={\rm GL}_n(\C)$ as before. For a permutation $w \in \Sn$, the subvariety $P_w$ of $G$ corresponding to $w$ is defined by
\begin{equation*}
P_w =\overline{B w B} \subseteq G.
\end{equation*}

\begin{definition}[{cf. \cite[\S13.4]{Jant}, \cite[Definition~2.1]{FLS}, and \cite{GK94Bott}}]\label{def_g_Bott_Samelson}
Let $(w_1,\dots,w_r)$ be a sequence of elements in $\Sn$. The \emph{generalized Bott--Samelson variety} $Z_{(w_1,\dots,w_r)}$ is defined by the orbit space
\[
Z_{(w_1,\dots,w_r)} \colonequals (P_{w_1} \times \cdots \times P_{w_r})/\Theta,
\]
where the right action $\Theta$ of $B^r \colonequals \underbrace{B \times \cdots \times B}_{r}$ on $\prod_{k=1}^r P_{w_k}$ is defined by
\begin{equation*}\label{eq:action}
\Theta((p_1,\dots,p_r), (b_1,\dots,b_r))
= (p_1b_1, b_1^{-1}p_2b_2,\dots,b_{r-1}^{-1}p_rb_r)
\end{equation*}
for $(p_1,\dots,p_r) \in \prod_{k=1}^r P_{w_k}$ and $(b_1,\dots,b_r)\in B^r$.
When $\ell(w_1)=\dots=\ell(w_r)=1$, $Z_{(w_1,\dots,w_r)}$ is called a \emph{Bott--Samelson variety}. 
\end{definition}
We notice that there are simpler descriptions of (generalized) Bott--Samelson varieties in~\cite{Magyar98, EPTY18} in type $A$. 
\lee{We recall from~\cite{EPTY18} the description of Bott--Samelson varieties in type $A$ for readers' convenience. Suppose that a sequence $(w_1,\dots,w_r)$ of elements in $S_n$ consists of permutations having length one, that is, $w_k = s_{i_k}$ the simple reflection for some $i_k \in [n-1]$. Moreover, suppose that it represents a reduced word of a permutation. In other words, $s_{i_1} \cdots s_{i_r}$ is a reduced decomposition of a permutation. In this case, we simply denote the sequence by $\mathbf i  =(i_1,\dots,i_r)$. For a permutation $w$, the \emph{Elnitsky $2n$-gon} $E(w)$ has sides of length one, and these are labeled, in order, by $1,2,\dots,n,w(n),w(n-1),\dots,w(1)$, in which the first $n$ labels form half of a regular $2n$-gon, and sides with the same label are parallel. In Figure~\ref{Fig_Elnitsky_3241}, we provide the Elnitsky $8$-gon for the permutation $3241 \in S_4$. We denote by $v_0$ the lowest vertex $v_0$ of  $E(w)$. 
} 
	
\begin{figure}
\begin{subfigure}[t]{0.45\textwidth}
	\centering
	\begin{tikzpicture}[scale = 0.8]
			\foreach \x in {1,...,8}{ 
				\coordinate (\x) at (-\x*45-90:2cm) ;
				}
				
				\coordinate (A) at ($(1)+(67.5:1.531cm)$);
				\coordinate (B) at ($(A)+(-22.5:1.531cm)$);
				\coordinate (C) at ($(3) +(-22.5:1.531cm)$);

				\draw 
				(8)--(1)--(2)--(3)--(4)--(5)--(C)--(B)--cycle;

				\node[below] at (8) {$v_0$};

	\end{tikzpicture}
	\caption{$E(3241)$}\label{Fig_Elnitsky_3241}
\end{subfigure}%
\begin{subfigure}[t]{0.45\textwidth}
	\centering
	\begin{tikzpicture}[scale = 0.8]
	\foreach \x in {1,...,8}{ 
		\coordinate (\x) at (-\x*45-90:2cm) ;
	}
	
	\coordinate (A) at ($(1)+(67.5:1.531cm)$);
	\coordinate (B) at ($(A)+(-22.5:1.531cm)$);
	\coordinate (C) at ($(3) +(-22.5:1.531cm)$);

	\draw 
	(8)--(1)--(2)--(3)--(4)--(5)--(C)--(B)--cycle
	(1)--(A)--(B)
	(A)--(3)--(C);

	\node[below] at (8) {$v_0$};
					\node (n2) at ($(1)!0.5!(B)$) {$s_1$};
	
					\node (n1) at ($(2)!0.5!(A)$) {$s_2$};
					\node (n3) at ($(B)!0.5!(3)$) {$s_2$};
	
					\node (n4) at ($(5)!0.5!(3)$) {$s_3$};

\end{tikzpicture}
\caption{A tiling for $E(3241)$}\label{Fig_Elnitsky_tiling_3241}
\end{subfigure}
\caption{Elnitsky $8$-gon for the permutation $3241 \in S_4$ and a tiling}
\end{figure}

\lee{Elnitsky~\cite[Theorem~2.2]{Elnitsky97} proved that there is a bijective correspondence between the set of rhombic tilings of $E(w)$ and the commutation classes of reudced words of $w$. More precisely, for a given rhombic tiling $\mathcal{T}$, one can associate a commutation class in an inductive way as follows. We first label each tile $\gamma$ by a simple reflection $s_i$ if 
\[
i = \min_{v \in V(\gamma)} \{d(v,v_0)\} +1 
\]
where $V(\gamma)$ is the set of vertices in $\gamma$ and $d$ denotes the graph-theoretic distance between two vertices. Let $B_1$ be the border consisting of edges $b_1,\dots,b_{n+1}$ and $\mathcal{T}_1$ the set of tiles in $\mathcal{T}$ intersecting  $B_1$  in two edges. We cahgne the border into the new border $B_2$ by replacing the two edges of each $\gamma \in \mathcal{T}_1$ intersecting $B_1$ by the other two edges of $\gamma$. Then define $\mathcal{T}_2$ by the set of all tiles $\mathcal T \setminus \mathcal{T}_1$ intersecting $B_2$ with two edges. Continuing this process inductively until $B_k$ consists only of edges lying on the boundary of $E(w)$, we get a partition $\mathcal{T} = \bigsqcup_k \mathcal{T}_k$. Then we assign a reduced word $\mathbf i(\mathcal{T}) = (i_1,\dots,i_r)$ to the partition by reading the labeling of each tile in $\mathcal{T}_k$ in ascending order in $k$. Note that some partition $\mathcal{T}_k$ may have more than one tile, however, the associated commutation class is well-defined. For instance, the tiling $\mathcal{T}$ in Figure~\ref{Fig_Elnitsky_tiling_3241} corresponds to a reduced word $\mathbf i = \mathbf i (\mathcal T)=(2,1,2,3)$.  
}
	
\lee{
To define a Bott--Samelson variety, we associate a vector space to each vertex of a tiling $\mathcal T$. Starting with the vertex between the edges labeled $1$ and $w(1)$, label the vertices of $E(w)$ in clockwise order by 
\[
\C^0, \C^1,\dots,\C^n, G_{n-1},G_{n-2},\dots,G_1. 
\]
Moreover, associate vector space $V_x$ to a vertex $x$ in the tiling. The dimension of $V_x$ is $d(x,v_0)$. For a given tiling $\mathcal{T}$, define 
\[
Z_{\mathcal{T}} \colonequals 
\{(V_x)_{x \in V(\mathcal{T})} \mid V_y \subseteq V_z \text{ if }\dim(V_y)+1 = \dim(V_z) \text{ and }y,z \text{ are adjacent}\}.
\]
Here, $V(\mathcal{T})$ is the set of vertices in $\mathcal{T}$. It is proved in~\cite[Theorem~1.1]{EPTY18} that 
\[
Z_{\mathcal{T}} \cong Z_{\mathbf i(\mathcal{T})}. 
\]
For instance, the Bott--Samelson variety $Z_{(s_2,s_1,s_2,s_3)} = Z_{(2,1,2,3)}$ is isomorphic to $Z_{\mathcal T}$
\[
\begin{split}
Z_{\mathcal T} = \{(V_1,V_2,V_3,V_4) \mid & \dim(V_k) = i_k, \\
& \C^1 \subseteq V_1 \subseteq \C^3, \C^0 \subseteq V_2 \subseteq V_1, \\
& V_2 \subseteq V_3 \subseteq \C^3, V_3 \subseteq V_4 \subseteq \C^4 
\}.
\end{split}
\]
}

For any $w \in \Sn$, the generalized Bott--Samelson variety $Z_{(w)}$ is the Schubert variety $X_w$. Indeed, not every generalized Bott--Samelson varieties are smooth. If all Schubert varieties $X_{w_1},\dots,X_{w_r}$ corresponding to $w_1,\dots,w_r \in \Sn$ are smooth, then the generalized Bott--Samelson variety $Z_{(w_1,\dots,w_r)}$ is smooth (see~\cite[(6) in~\S13.4]{Jant}).

\begin{theorem}[{\cite{Daly, GreenLosonczy02,L-S1990,LMP_complexity_one,Tenner2012}}] \label{thm:smooth-one}
\label{thm:smooth} For a permutation~$w$ in $\Sn$, the following statements are equivalent:
\begin{enumerate}
\item[$(1^{\prime})$] $X_w$ is smooth and of complexity one.
\item[$(2^{\prime})$] $w$ contains the pattern $321$ exactly once and avoids the pattern $3412$.
\item[$(3^{\prime})$] There exists a reduced decomposition of $w$ containing $s_{i}s_{i+1}s_{i}$ as a factor and no other repetitions.
\item[$(4^{\prime})$] $X_w$ is isomorphic to a generalized Bott--Samelson variety $Z_{(w_{1},\dots,w_{r})}$ such that $r=\ell(w)-2$, $w_{k}=s_{j}s_{j+1}s_{j}$ for some $1\leq k\leq r$, $w_{i}=s_{j_{i}}$ for $i\neq k$, and $j_{1},\dots,j_{k-1},j_{k+1},\dots,j_{r},j,j+1$ are pairwise distinct.
\item[$(5^{\prime})$] The Bruhat interval $[e,w]$ is isomorphic to $S_3\times \mathfrak{B}_{\ell(w)-3}$ as posets. 
\item[$(6^{\prime})$] The polytope $\Q_{w}$ is combinatorially equivalent to the product of the hexagon and the cube of dimension $\ell(w)-3$. 
\end{enumerate}
\end{theorem}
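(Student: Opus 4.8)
The plan is to prove the six statements equivalent by running a cycle of implications, in the spirit of Theorem~\ref{thm:toric} for the complexity-zero case, but with the building block ``cube / Boolean interval / tower of $\P^1$'s'' replaced by the complexity-one building block: the parabolic quotient $P_{s_js_{j+1}s_j}/B\cong\flag(3)$, whose $T$-moment polytope is the hexagon $\Pi_3$ and which contributes a single unit of complexity. Concretely I would establish $(1')\Rightarrow(5')\Rightarrow(3')\Rightarrow(4')\Rightarrow(1')$ as the core cycle, and fold in the remaining conditions through $(3')\Leftrightarrow(2')$, $(4')\Rightarrow(6')$, and $(6')\Rightarrow(2')$.

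The equivalence $(2')\Leftrightarrow(3')$ is the complexity-one analogue of $(2)\Leftrightarrow(3)$ in Theorem~\ref{thm:toric}: a permutation avoiding $3412$ and containing the pattern $321$ exactly once is precisely one admitting a reduced word that, up to commutations, consists of a single braid factor $s_js_{j+1}s_j$ together with distinct letters elsewhere, which I would deduce from the analysis of repetitions in reduced decompositions in \cite{Tenner2012} and the sources cited in the theorem. Given such a word, factor $w=w_1\cdots w_r$ with one $w_k=s_js_{j+1}s_j$ and all other $w_\ell=s_{j_\ell}$, the indices $j_1,\dots,j_{k-1},j_{k+1},\dots,j_r,j,j+1$ being pairwise distinct; since $\ell(w)=\sum_\ell\ell(w_\ell)=r+2$ this is a reduced factorization, so the multiplication map $Z_{(w_1,\dots,w_r)}\to X_w$ is birational, and distinctness of the indices forces it to be an isomorphism, exactly as distinct simple reflections force a Bott--Samelson resolution to be an isomorphism onto a smooth Schubert variety (cf.\ Theorem~\ref{thm:toric}$(4)$). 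This is $(3')\Rightarrow(4')$.

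Next, $Z_{(w_1,\dots,w_r)}$ is an iterated fibre bundle with fibres $\flag(3)$ (once) and $\P^1$ (otherwise), hence smooth, and pushing the moment map through this tower exhibits $\Q_w$ as combinatorially an iterated polytope bundle with hexagonal fibre $\Pi_3$ over a cube of dimension $r-1$, hence combinatorially $\Pi_3\times[0,1]^{r-1}$, of dimension $r+1=\ell(w)-1$. This gives $(4')\Rightarrow(1')$ together with $c(w)=\dim_\C X_w-\dim_\R\Q_w=1$, and also $(4')\Rightarrow(6')$; and the same factorization yields a poset isomorphism $[e,w]\cong[e,s_js_{j+1}s_j]\times\mathfrak B_{r-1}\cong S_3\times\mathfrak B_{\ell(w)-3}$, i.e.\ $(4')\Rightarrow(5')$. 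The residual closings $(5')\Rightarrow(3')$ and $(6')\Rightarrow(2')$ are comparatively formal: a poset isomorphism $[e,w]\cong S_3\times\mathfrak B_{\ell(w)-3}$ reconstructs a reduced word of the shape in $(3')$ (the $S_3$-factor is a rank-three subinterval isomorphic to the Bruhat order on $S_3$, forcing one braid; the Boolean factor forces the remaining letters to be distinct), and a combinatorial equivalence $\Q_w\approx\Pi_3\times[0,1]^{\ell(w)-3}$ forces $\dim_\R\Q_w=\ell(w)-1$ and, using that every face of $\Q_w$ is again a Bruhat interval polytope \cite{TsukermanWilliams}, locates exactly one hexagonal $2$-face, hence exactly one occurrence of $321$, and excludes $3412$.

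The implication that I expect to carry the real weight is $(1')\Rightarrow(5')$ (equivalently $(1')\Rightarrow(2')$). Smoothness of $X_w$ gives, by the Lakshmibai--Sandhya criterion (see \cite{BL20Singular}), that $w$ avoids $3412$ and $4231$; if it also avoided $321$, Theorem~\ref{thm:toric} would give $c(w)=0$, so $w$ has at least one occurrence of $321$, and the content is to see that complexity exactly one pins this down to one occurrence, equivalently that $\sum_{u\le w}q^{\ell(u)}=[2]_q^{\ell(w)-2}[3]_q$. My approach would be to use that for smooth $X_w$ this Poincar\'e polynomial factors as a product $[b_1+1]_q\cdots[b_s+1]_q$ of $q$-integers with $\sum_i b_i=\ell(w)$, to identify $\dim_\R\Q_w$ with the number $s$ of factors (as it is both in the complexity-zero case and in the case $w=w_0$), so that $c(w)=\sum_i(b_i-1)$, and then to show that each ``independent'' occurrence of $321$ in a $\{3412,4231\}$-avoiding permutation forces a factor with $b_i\ge2$; thus $c(w)=1$ leaves exactly one such factor and no nested configuration, giving $[2]_q^{\ell(w)-2}[3]_q$ and hence $(5')$. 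Pinning $\dim_\R\Q_w$ to the factor count and matching factors to $321$-patterns is the technical heart; a more hands-on alternative, carried out in \cite{LMP_complexity_one}, is to produce from a second independent $321$-pattern an explicit codimension-two constraint on $\Q_w$, but either way the dimension accounting for the Bruhat interval polytope $\Q_w$ is the crux of the theorem.
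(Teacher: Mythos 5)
The chapter itself states Theorem~\ref{thm:smooth-one} without proof, deferring entirely to the cited references, so your proposal can only be compared with the strategy of those works as it surfaces elsewhere in the text. Your overall architecture (a cycle through $(1'),(5'),(3'),(4')$, with $(2')$ attached via the analysis of repetitions in reduced words and $(6')$ deduced from the factorization) is reasonable, and the legs $(3')\Rightarrow(4')$ and $(2')\Leftrightarrow(3')$ are in line with the literature. The genuine gap is the leg you yourself flag as load-bearing, $(1')\Rightarrow(5')$: it rests on the unestablished claim that for smooth $X_w$ the number of $q$-integer factors of $\sum_{u\le w}q^{\ell(u)}$ equals $\dim_\R\Q_w$, extrapolated from the complexity-zero case and from $w=w_0$, followed by an unsubstantiated matching of ``independent $321$ occurrences'' with factors $[b_i+1]_q$ having $b_i\ge2$. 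The clean route, which is what \cite{LMP_complexity_one} uses, is the elementary identity $\dim\Q_w=|\supp(w)|$ (the edge directions at $\vertex{e}$ give the lower bound; each simple reflection missing from the support imposes an independent affine constraint, giving the upper bound), whence $c(w)=\ell(w)-|\supp(w)|$ counts the repetitions in any reduced word. Then $c(w)=1$ forces exactly one letter to repeat exactly twice, and the classification of such reduced words in \cite{Tenner2012,Daly} yields the dichotomy between the braid type $s_is_{i+1}s_i$ (one $321$, no $3412$) and the type $s_{i+1}s_is_{i+2}s_{i+1}$ (one $3412$, no $321$), with the Lakshmibai--Sandhya criterion separating the smooth from the singular case. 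Without this identity or an actual proof of your Poincar\'e-polynomial dimension count, nothing follows from $(1')$.

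A second, smaller gap concerns $(4')\Rightarrow(6')$ and $(4')\Rightarrow(5')$: you infer that $\Q_w$ is combinatorially $\Pi_3\times[0,1]^{r-1}$ because $Z_{(w_1,\dots,w_r)}$ is an iterated bundle, but the moment polytope of an iterated $T$-equivariant fibration is not a product in general --- $\flag(3)$ is itself an iterated $\P^1$-bundle and its moment polytope is the hexagon $\Pi_3$, not a prism. The rigorous substitute is the peeling lemma quoted in Section~\ref{sec_problems} of this chapter (\cite[Proposition~5.7]{LMP_complexity_one}): if $s_r$ does not occur in a reduced decomposition of $w$, then $\Q_{s_rw}$ and $\Q_{ws_r}$ are combinatorially equivalent to $\Q_w\times\Q_{s_r}$; iterating over the pairwise distinct letters $s_{j_i}$ reduces $\Q_w$ to $\Q_{s_js_{j+1}s_j}=\Pi_3$ times a cube, and the parallel statement for Bruhat intervals gives $(5')$. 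Finally, in $(6')\Rightarrow(2')$ the assertion that $\Pi_3\times[0,1]^{\ell(w)-3}$ has exactly one hexagonal $2$-face is false (it has $2^{\ell(w)-3}$ of them); the intended conclusion should instead be extracted from the correspondence between faces of $\Q_w$ and Bruhat subintervals from \cite{TsukermanWilliams}, counting occurrences more carefully.
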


\begin{theorem}[{\cite{Daly, GreenLosonczy02,L-S1990,LMP_complexity_one,Tenner2012}}]\label{thm:singular-one}
\label{thm:singular} For a permutation~$w$ in $\Sn$, the following statements are equivalent:
\begin{enumerate}
\item[$(1^{\prime\prime})$] $X_w$ is singular and of complexity one.
\item[$(2^{\prime\prime})$] $w$ contains the pattern $3412$ exactly once and avoids the pattern $321$.
\item[$(3^{\prime\prime})$] There exists a reduced decomposition of $w$ containing $s_{i+1}s_{i}s_{i+2}s_{i+1}$ as a factor and no other repetitions.
\item[$(4^{\prime\prime})$] $X_w$ is isomorphic to a generalized Bott--Samelson variety $Z_{(w_{1},\dots,w_{r})}$ such that $r=\ell(w)-3$, $w_{k}=s_{j+1}s_{j}s_{j+2}s_{j+1}$ for some $1\leq k \leq r$, $w_{i}=s_{j_{i}}$ for $i\neq k$, and $j_{1},\dots,j_{k-1},j_{k+1},\dots,j_{r},j,j+1,j+2$ are pairwise distinct.
\item[$(5^{\prime\prime})$] The Bruhat interval $[e,w]$ is isomorphic to $[e,3412]\times \mathfrak{B}_{\ell(w)-4}$ as posets. 
\item[$(6^{\prime\prime})$] The polytope $\Q_{w}$ is combinatorially equivalent to the product of $\Q_{3412}$ and the cube of dimension $\ell(w)-4$. 
\end{enumerate}
\end{theorem}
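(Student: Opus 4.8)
The plan is to prove the cyclic chain of implications $(1^{\prime\prime})\Rightarrow(2^{\prime\prime})\Rightarrow(3^{\prime\prime})\Rightarrow(4^{\prime\prime})\Rightarrow(6^{\prime\prime})\Rightarrow(5^{\prime\prime})\Rightarrow(1^{\prime\prime})$, largely paralleling the treatment of Theorem~\ref{thm:smooth-one} but with the singular pattern $3412$ playing the role that $321$ played there. The starting observation is that, by Definition~\ref{def:c(w)}, complexity one means $\ell(w)-\dim\Q_w=1$, and by Theorem~\ref{thm:toric} the toric (complexity zero) case is precisely avoidance of both $321$ and $3412$. So a complexity one permutation must contain at least one of these two patterns; the singular subcase is, by combining with the smooth classification in Theorem~\ref{thm:smooth-one}, exactly the one where $w$ avoids $321$ but not $3412$. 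The first real step $(1^{\prime\prime})\Rightarrow(2^{\prime\prime})$ is therefore to show that a single occurrence of $3412$ (and avoidance of $321$) raises complexity by exactly one, while two or more occurrences (or an occurrence together with some other obstruction) raise it by more; this is a pattern-counting/dimension argument of the type in~\cite{LMP_complexity_one, Tenner2012}, using that $\dim\Q_w$ equals the rank of $[e,w]$ which in turn is governed by the poset structure.

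The middle implications are essentially structural. For $(2^{\prime\prime})\Rightarrow(3^{\prime\prime})$ I would argue that avoiding $321$ forces $w$ to be \emph{fully commutative} except across the one $3412$ occurrence, so that up to commutation moves a reduced word for $w$ can be brought into the form in which the $3412$-block contributes the factor $s_{i+1}s_is_{i+2}s_{i+1}$ (the unique-up-to-commutation reduced word of the $3412$ pattern embedded on consecutive values $i,i+1,i+2$) and every other letter appears once; this uses the standard theory of reduced words and heaps of fully commutative elements. The implication $(3^{\prime\prime})\Rightarrow(4^{\prime\prime})$ is then the generalized Bott--Samelson construction: given such a reduced decomposition, $X_w$ fibers iteratively as in~\cite{Jant, FLS}, with the block $s_{j+1}s_js_{j+2}s_{j+1}$ producing the factor $Z_{(s_{j+1}s_js_{j+2}s_{j+1})}\cong X_{3412}$ and each isolated $s_{j_i}$ producing a $\P^1$; distinctness of the remaining indices guarantees the fibration is a product at the level of fans. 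For $(4^{\prime\prime})\Rightarrow(6^{\prime\prime})$, note that $X_{3412}$ has moment polytope $\Q_{3412}$ by definition, each $\P^1$-factor contributes an interval, and a product of generalized Bott--Samelson pieces has moment polytope the product of the pieces' polytopes, which is exactly $\Q_{3412}\times(\text{cube})^{\ell(w)-4}$.

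The last leg $(6^{\prime\prime})\Rightarrow(5^{\prime\prime})\Rightarrow(1^{\prime\prime})$ is the cleanest: $(6^{\prime\prime})\Rightarrow(5^{\prime\prime})$ because the face lattice of $\Q_w=\Delta_{[e,w^{-1}]}$ records the Bruhat interval combinatorics (the vertices of $\Q_w$ are exactly $\{u : u\le w\}$ and, by Proposition~\ref{prop_edges_of_Qw}, edges encode covering data), so a combinatorial product decomposition of $\Q_w$ yields the poset isomorphism $[e,w]\cong[e,3412]\times\mathfrak{B}_{\ell(w)-4}$; and $(5^{\prime\prime})\Rightarrow(1^{\prime\prime})$ because $[e,3412]\not\cong\mathfrak B_4$ (it has the non-Boolean vertex figure visible in Figure~\ref{fig:BIP-3412}) forces $X_w$ singular, while the rank of the interval being $\ell(w)-1$ over $\dim\Q_w$ pins the complexity at one. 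The main obstacle I anticipate is the first implication $(1^{\prime\prime})\Rightarrow(2^{\prime\prime})$: one must rule out \emph{all} other ways the complexity could equal one — in particular that no permutation avoiding $3412$ but containing $321$ more than once, or containing a single $321$ together with some interaction, can have complexity exactly one and be singular — which requires a careful case analysis of how overlapping or disjoint pattern occurrences contribute to $\ell(w)-\dim\Q_w$. Everything downstream of $(3^{\prime\prime})$ is bookkeeping with fibrations and products of polytopes; the arithmetic tying complexity, interval rank, and polytope dimension together is where the care is needed.
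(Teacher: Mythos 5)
The chapter states this theorem without proof, quoting it from \cite{Daly,GreenLosonczy02,L-S1990,Tenner2012,LMP_complexity_one}, so your proposal has to stand on its own; its overall shape (a cyclic chain parallel to Theorem~\ref{thm:smooth-one}) is reasonable, but several links have genuine gaps. For $(1'')\Rightarrow(2'')$ you flag the difficulty but do not supply the tool that resolves it: the identity $c(w)=\ell(w)-\dim\Q_w=\ell(w)-|\supp(w)|$, which translates ``complexity one'' into ``every reduced word of $w$ has exactly one repeated letter,'' after which Tenner's classification of such permutations yields the dichotomy that either $w$ contains $321$ exactly once and avoids $3412$, or contains $3412$ exactly once and avoids $321$. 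Only with this dichotomy in hand does excluding the smooth case via Theorem~\ref{thm:smooth} (together with the Lakshmibai--Sandhya criterion, noting that $321$-avoidance forces $4231$-avoidance) land you in $(2'')$; knowing merely that $(2')$ fails does not. Relatedly, a $321$-avoiding permutation is fully commutative outright, so ``fully commutative except across the $3412$ occurrence'' is not the right statement; the content of $(3'')$ is that the unique occurrence forces the repeated letter into a block $s_{i+1}s_is_{i+2}s_{i+1}$ on three \emph{consecutive} indices, which still needs an argument.

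The tail of the chain is where the proposal would actually fail. A generalized Bott--Samelson variety is an iterated fibration, not a product, so $(4'')\Rightarrow(6'')$ should go through the combinatorial product decomposition of the polytope ($\Q^v_{s_rw}$ and $\Q^v_{ws_r}$ are combinatorially $\Q^v_w\times\Q_{s_r}$ when $s_r$ is absent from a reduced word of $w$; see \cite[Proposition~5.7]{LMP_complexity_one}, quoted in Section~\ref{sec_problems}), not through a product of varieties. More seriously, $(6'')\Rightarrow(5'')$ does not follow from reading the Bruhat order off the face poset of $\Q_w$: the edges of $\Q_w$ are indexed by the transitive reduction $E_w(u)$ (Proposition~\ref{prop_edges_of_Qw}), not by the covering relations of $[e,w]$, and the chapter itself exhibits the mismatch between interval and polytope combinatorics ($[1324,4231]$ is Boolean of rank $4$ while $\Q^{1324}_{4231}$ is only $3$-dimensional). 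It is safer to derive $(5'')$ and $(6'')$ independently from $(3'')$. Finally, in $(5'')\Rightarrow(1'')$, non-Booleanness of $[e,3412]$ does not by itself imply singularity (smooth Schubert varieties abound with non-Boolean intervals); you need, for instance, the palindromicity criterion applied to the rank generating function $(1+3t+5t^2+4t^3+t^4)(1+t)^{\ell(w)-4}$, together with $\dim\Q_w=\#\{\text{atoms of }[e,w]\}=\ell(w)-1$, to conclude that $X_w$ is singular and $c(w)=1$.
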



\section{Generic torus orbit closures in Richardson varieties}\label{sec:Richardson}

In this section, we consider generic torus orbit closures in Richardson varieties. The topology and geometry of generic torus orbit closures in Richardson varieties are related to the combinatorics of Bruhat interval polytopes. Studying the combinatorial properties of Bruhat interval polytopes, we show that every smooth toric Richardson variety is a Bott manifold. 
We also give a sufficient condition for a pair $(v,w)\in S_n\times S_n$ to give rise to a \emph{smooth} toric Richardson variety $X^v_w$. Motivated by this sufficient condition, we define a toric variety of Catalan type and then show that the number of isomorphism classes of smooth toric Richardson varieties of Catalan type is the Wedderburn--Etherington number.


\subsection{Richardson varieties and Bruhat interval polytopes}
For $v,w\in S_n$ with $v\leq w$ in the Bruhat order, the Richardson variety $X^v_w$ is defined to be the intersection of the Schubert variety $X_w$ and the opposite Schubert variety $X^v=w_0X_{w_0v}$. It is known that every Richardson variety
$X^v_w$ is a $T$-invariant irreducible subvariety of $\flag(n)$ with respect to the $T$-action on $\flag(n)$ and 
\[
\dim_\C X^v_w =\ell(w)-\ell(v).
\] 
See~\cite[Section~1.3]{Brion_Lecture} for more details. Moreover, a $T$-fixed point $uB$ is contained in $X^v_w$ if and only if $v\leq u\leq w$ in the Bruhat order. 
Accordingly,
the set of $T$-fixed points of $X^v_w$ is
identified with the Bruhat interval 
\[
[v,w] = \{u\in S_n\mid v\leq u\leq w\}.
\]

Recall that the moment map $\mu \colon G/B \to \R^n$ in \eqref{eq:moment-map} sends $uB$ to $\vertex{u}=(u^{-1}(1),\dots,u^{-1}(n))$. Hence we have 
\[
\mu(X^v_w)=\Conv\{(u^{-1}(1),\dots,u^{-1}(n))\mid u\in [v,w]\}.
\]
Motivated by this fact, we provide the following definition. 

\begin{definition}[{\cite{KodamaWilliams}}]
For elements $v$ and $w$ in ${S}_{n}$ with $v\leq w$, we define a polytope $\Q^v_w$ by 
\[
\Q^v_w\colonequals\Conv\{(u^{-1}(1),\dots,u^{-1}(n))\mid u\in [v,w]\}.
\]
This polytope is called a \emph{Bruhat interval polytope}. Note that $\Q^e_w$ is the polytope $\Q_w$ introduced in Section~\ref{sec:Schubert}.
\end{definition}

By the definition of $\Q^v_w$, we have  
\[
\mu\colon X_w^v\twoheadrightarrow \Q^{v}_{w}\subset \R^n.
\]

\begin{remark}
Bruhat interval polytopes were introduced by Kodama--Williams \cite{KodamaWilliams} and their combinatorial properties are studied in \cite{TsukermanWilliams} and \cite{LMP2021}.  For $v,w\in S_n$ with $v\le w$, the notation   
\[
\Q_{v,w}\colonequals \Conv\{(u(1),\dots,u(n))\mid u\in [v,w]\}
\]
is used in \cite{KodamaWilliams, TsukermanWilliams, LMP2021}.  Therefore we have $\Q^v_w=\Q_{v^{-1},w^{-1}}$.  
\end{remark}

Bruhat interval polytopes $\Q^v_w$ and $\Q^{v^{-1}}_{w^{-1}}$ are not necessarily combinatorially equivalent even though the intervals $[v,w]$ and $[v^{-1},w^{-1}]$ are isomorphic as posets. For example, one can check that the polytopes $\Q^{12345}_{35412}$ and $\Q^{12345}_{45132}$ have different numbers of edges using a computer program like SageMath. However, they have the same dimension.

\begin{proposition}[{\cite[Proposition 3.4]{LMP2021}}]\label{lem:dim}
Bruhat interval polytopes $\Q^v_w$ and $\Q^{v^{-1}}_{w^{-1}}$ have the same dimension. 
\end{proposition}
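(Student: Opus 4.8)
The plan is to exhibit a dimension-preserving relationship between the two polytopes by working with the affine hulls rather than the polytopes themselves, using the characterization of $\dim \Q^v_w$ in terms of the edges (equivalently, the covering-type relations) of the interval $[v,w]$. Recall that $\Q^v_w = \Delta_{[v,w]}$ is a $\Phi$-polytope (Theorem~\ref{thm_GS}, since the $T$-fixed point set of the Richardson variety $X^v_w$ is a Coxeter matroid), and the edges of $\Q^v_w$ emanating from a vertex $\vertex{u}$ are among the root directions $\mathbf{e}_{u(i)}-\mathbf{e}_{u(j)}$ with $t_{u(i),u(j)}u \in [v,w]$ (cf.\ Proposition~\ref{prop:edge_vector} and the discussion around Proposition~\ref{prop_edges_of_Qw}). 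Thus $\dim \Q^v_w$ equals the rank of the linear span of the set of root vectors $\{\mathbf{e}_a - \mathbf{e}_b : t_{a,b}$ is a transposition with $u, t_{a,b}u \in [v,w]$ joined by an edge$\}$; more robustly, by a connectivity argument on the graph of $\Q^v_w$, it equals the rank of the span of $\{\mathbf{e}_{u(i)}-\mathbf{e}_{u(j)} : u \lessdot_{?} t_{u(i),u(j)}u, \text{ both in }[v,w]\}$ over all $u \in [v,w]$, and in fact over a single vertex it suffices because $\Q^v_w$ is a $\Phi$-polytope with connected edge-graph.

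The key observation to exploit is the inversion symmetry $u \mapsto u^{-1}$, which is a poset anti-isomorphism-preserving bijection: $v \le u \le w$ if and only if $v^{-1} \le u^{-1} \le w^{-1}$ (the Bruhat order is invariant under taking inverses). So $[v,w] \to [v^{-1},w^{-1}]$, $u \mapsto u^{-1}$, is a poset isomorphism. Under this bijection, the combinatorial structure (which pairs of elements are covering or differ by a transposition) is preserved, but the geometric realization changes because $\mu(uB) = (u^{-1}(1),\dots,u^{-1}(n))$ depends on $u^{-1}$, not $u$. The first step I would take is to express $\dim \Q^v_w$ as the rank of a certain set of vectors indexed purely by combinatorial data of $[v,w]$; then show that the analogous set of vectors for $[v^{-1},w^{-1}]$, while genuinely different as a set of vectors, spans a subspace of the same dimension. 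Concretely, for a transposition $t_{a,b}$ with $u, t_{a,b}u \in [v,w]$, the corresponding edge direction of $\Q^v_w$ at $\vertex{u}$ is $\pm(\mathbf{e}_{u^{-1}(a)} - \mathbf{e}_{u^{-1}(b)})$ — wait, more carefully: if $v = t_{a,b}u = u t_{i,j}$ with $\{u(i),u(j)\}=\{a,b\}$, then $\vertex{v}-\vertex{u} = \mu(vB)-\mu(uB)$, and from \eqref{eq:sku_minus_u}-style computations this is $\pm(\mathbf{e}_i - \mathbf{e}_j)$ where $u t_{i,j}=v$. So the edge directions at $\vertex{u}$ are $\{\mathbf{e}_i - \mathbf{e}_j : u t_{i,j} \in [v,w]\text{ and this is an edge}\}$, indexed by positions $i<j$ with $ut_{i,j}\in [v,w]$.

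This reveals the right framework: $\dim \Q^v_w$ is the rank of the span of $\{\mathbf{e}_i - \mathbf{e}_j : \exists u \in [v,w], ut_{i,j}\in[v,w], \text{joined by edge}\}$, and by the $\Phi$-polytope connectivity this is the rank of the graph on $[n]$ whose edges are the pairs $\{i,j\}$ such that $ut_{i,j} \in [v,w]$ for some $u\in[v,w]$ with the edge condition — i.e.\ $\dim \Q^v_w = n - (\text{number of connected components of this graph})$. Now for $[v^{-1},w^{-1}]$: $u^{-1}t_{i,j} = (t_{i,j}u)^{-1}\cdot$(something)$ $— actually $(ut_{i,j})^{-1} = t_{i,j}u^{-1}$, so right-multiplication by $t_{i,j}$ on $[v,w]$ corresponds to left-multiplication by $t_{i,j}$ on $[v^{-1},w^{-1}]$, which is conjugate to right-multiplication by $t_{u^{-1}(i),u^{-1}(j)}$. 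The cleanest route, which I expect to be the main obstacle, is to prove that the "edge-graph rank" is insensitive to this swap — equivalently that the natural graph built from left-multiplications has the same number of connected components as the one from right-multiplications, given that the underlying sets of group elements are related by inversion. I would try to nail this by the reference approach in \cite{LMP2021}: show both dimensions equal $n$ minus the number of blocks of a common combinatorially-defined partition, perhaps the partition of $[n]$ refined by "$i \sim j$ if $i,j$ lie in a common cycle of $w v^{-1}$" or a similar $[v,w]$-invariant, using that Bruhat intervals are "connected" in the appropriate sense (Tits/exchange arguments). If that common description is available, the Proposition follows immediately since $wv^{-1}$ and $w^{-1}v^{-1}\cdots$ hmm — one checks $w^{-1}(v^{-1})^{-1} = w^{-1}v$ is conjugate to $v w^{-1}$, which has the same cycle type as $w v^{-1}$ up to inverse, hence the same partition into supports of cycles.
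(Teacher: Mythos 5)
The paper does not actually prove this proposition here --- it is quoted from \cite{LMP2021} --- so your attempt can only be judged on its own terms, and as it stands it has a genuine gap plus some index errors. On the positive side, your framework is viable: since the tangent cone at any vertex is full-dimensional in the affine hull, and since every difference of two vertices lies in the linear hull while every edge vector is such a difference, one gets
\[
\mathrm{lin}(\Q^v_w)=\mathrm{span}\{\mathbf{e}_a-\mathbf{e}_b \mid u,\,t_{a,b}u\in[v,w]\ \text{for some }u\},
\]
so you never need to decide which pairs actually give edges. But note the direction is indexed by the \emph{values} of the left transposition, not the positions of the right one: if $u'=t_{a,b}u=ut_{i,j}$ with $a=u(i)$, $b=u(j)$, then $\mu(u'B)-\mu(uB)=(j-i)(\mathbf{e}_{u(i)}-\mathbf{e}_{u(j)})=\pm|j-i|(\mathbf{e}_a-\mathbf{e}_b)$, not $\pm(\mathbf{e}_i-\mathbf{e}_j)$ (compare \eqref{eq:sku_minus_u}, where the vertex labelled $\vertex{w}$ is $(w^{-1}(1),\dots,w^{-1}(n))$); similarly $t_{i,j}u^{-1}=u^{-1}t_{u(i),u(j)}$, not $u^{-1}t_{u^{-1}(i),u^{-1}(j)}$. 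With the correct bookkeeping the proposition reduces to: the graph on $[n]$ with edges $\{a,b\}$ such that $u,t_{a,b}u\in[v,w]$ for some $u$ (left multiplication) has the same number of connected components as the graph with edges $\{i,j\}$ such that $u,ut_{i,j}\in[v,w]$ for some $u$ (right multiplication).

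That equality of component counts is the entire content of the proposition, and you do not prove it --- you only observe that the result would follow ``if that common description is available.'' The candidate you propose does not work: the formula $\dim\Q^v_w=n-\#\{\text{cycles of }wv^{-1}\}$ is already false for $v=e$, $w=w_0\in S_3$, where $wv^{-1}=(1\,3)$ has two cycles (counting the fixed point) but $\Pi_3$ has dimension $2$; and your closing assertion that $w^{-1}v$ and $wv^{-1}$ have ``the same partition into supports of cycles'' is also false, since conjugation by $v$ permutes the supports, so the two partitions of $[n]$ generally differ even though the cycle types agree. What is needed, and what the cited proof in \cite{LMP2021} supplies, is an explicit combinatorial formula for $\dim\Q^v_w$ --- essentially $n$ minus the number of indices $a$ with $\{v^{-1}(1),\dots,v^{-1}(a)\}=\{w^{-1}(1),\dots,w^{-1}(a)\}$, coming from the block decomposition of the Bruhat interval --- together with an argument that this count is unchanged under $(v,w)\mapsto(v^{-1},w^{-1})$. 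Some quantitative input of that kind about Bruhat intervals is unavoidable and is exactly what your sketch is missing.
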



\subsection{Generic torus orbit closures in Richardson varieties}
A torus orbit $T\cdot x$ for $x\in X^v_w$ is said to be \emph{generic} if $(\overline{T\cdot x})^T=(X^v_w)^T$. Every Richardson variety admits a generic torus orbit. Its proof is given in~\cite[Proposition~3.8]{LM2020} when $v = e$ and a similar argument works for any Richardson variety $X^v_w$. We will denote by $Y^v_w$ the closure of a generic ${T}$-orbit in $X^v_w$. Then $Y^v_w$ is the projective toric variety defined by the polytope $\Q^{v}_{w}$ since $\mu(Y^v_w) = \mu(X^v_w)=\Q^{v}_{w}$.

Note that the family of Bruhat interval polytopes forms a subfamily of the $\Phi$-polytopes. Hence every edge of a Bruhat interval polytope is parallel to a vector of the form $\mathbf{e}_i-\mathbf{e}_j$. This implies that $Y^v_w$ is smooth at a fixed point $uB$ if and only if $\Q_{w}^{v}$ is simple at the vertex $\vertex{u}$ (see~\cite[Section~8]{LM2020}).

While $Y_w=Y^e_w$ is smooth at $eB$, not every toric variety $Y^v_w$ is smooth at $vB$. For instance, the polytope $\Q^{1324}_{3412}$ is not simple at $\vertex{1324}$, see Figure~\ref{fig_BIP_1324-3412}. 
A natural generalization of Theorem~\ref{conj:schubert} is the following. 

\begin{conjecture}\label{conjecture_smoothness_Ywv}
The generic torus orbit closure $Y_w^v$ in the Richardson variety $X_w^v$ is smooth if it is smooth at the fixed points $vB$ and $wB$, in other words, $\Q_{w}^{v}$ is simple if it is simple at the vertices $\vertex{v}$ and $\vertex{w}$. 
\end{conjecture}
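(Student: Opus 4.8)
The plan is to induct on $d=\dim\Q^v_w$ and to reduce the conjecture to a \emph{propagation principle}: if some vertex of $\Q^v_w$ is non‑simple, then one of the two distinguished vertices $\vertex{v},\vertex{w}$ is non‑simple. The first ingredient is a linear functional that detects $\vertex{v}$ and $\vertex{w}$ as the extreme vertices. Fix reals $b_1>b_2>\dots>b_n$ and set $g(x_1,\dots,x_n)=\sum_i b_ix_i$. Applying Lemma~\ref{lemma_bruhat_order_and_prec} with $\nu=(1,2,\dots,n)$ (so that $u\cdot\nu=\vertex{u}$), we get that $u\le u'$ in the Bruhat order implies $\vertex{u'}-\vertex{u}=\sum_{i=1}^{n-1}c_i(\mathbf{e}_i-\mathbf{e}_{i+1})$ with all $c_i\ge 0$ and not all $c_i=0$ unless $u=u'$; since $g(\mathbf{e}_i-\mathbf{e}_{i+1})=b_i-b_{i+1}>0$, it follows that $g(\vertex{u})<g(\vertex{u'})$ whenever $u<u'$ in $[v,w]$. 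Hence $\vertex{v}$ is the unique $g$‑minimum and $\vertex{w}$ the unique $g$‑maximum of $\Q^v_w$, and $g$ separates the endpoints of every edge of $\Q^v_w$ (the endpoints of an edge being $u$ and $ut_{i,j}$ with $|\ell(u)-\ell(ut_{i,j})|=1$, by \cite{TsukermanWilliams}; cf.\ Proposition~\ref{prop_edges_of_Qw}). Thus $g$ is a genuine Morse functional, and orienting the edges of $\Q^v_w$ by $g$ splits, at each vertex $\vertex{u}$, the incident edges into ascending ones (corresponding to arcs $(u(i),u(j))$ with $u(i)<u(j)$) and descending ones, exactly as in Lemma~\ref{lemm:2-1}.

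Recall that $\Q^v_w$ is simple if and only if it is simple at every vertex, and simplicity at $\vertex{u}$ is equivalent to the edge graph $\Gamma^v_w(u)$ — the transitive reduction of the acyclic digraph on $[n]$ with an arc $(u(i),u(j))$ whenever $i<j$, $ut_{i,j}\in[v,w]$ and $|\ell(u)-\ell(ut_{i,j})|=1$, acyclicity coming from \cite{TsukermanWilliams} — being a forest with exactly $d$ edges. The core of the argument would be the following \emph{propagation lemma}: if $\vertex{u}$ is a non‑simple vertex of $\Q^v_w$ with $u\ne w$, then there is an ascending edge from $\vertex{u}$ to a vertex $\vertex{u^{+}}$ (with $u^{+}\gtrdot u$, $u^{+}\in[v,w]$) which is again non‑simple. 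Granting this, from any non‑simple vertex one flows upward along ascending edges, staying non‑simple at each stage; since $g$ strictly increases and $[v,w]$ is finite, one reaches the unique $g$‑maximum $\vertex{w}$, which is then non‑simple. Taking the contrapositive (and using the mirror statement obtained by flowing down with descending edges toward $\vertex{v}$), simplicity at $\vertex{v}$ and $\vertex{w}$ forces simplicity everywhere. This recovers Conjecture~\ref{conj:schubert} upon setting $v=e$, since $\vertex{e}$ is automatically simple.

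The main obstacle is the propagation lemma. A naive edge‑count does not work: simple and non‑simple vertices can be adjacent in general Bruhat interval polytopes and the number of edges at a vertex is not monotone along $g$‑increasing paths, so one cannot simply compare $|E^v_w(u)|$ with $|E^v_w(u^{+})|$. The proposed remedy runs a secondary induction using the theorem of Tsukerman--Williams that every face of a Bruhat interval polytope is itself a Bruhat interval polytope $\Q^{v'}_{w'}$ with $v\le v'\le w'\le w$: one studies the tangent cone $C_u$ of $\Q^v_w$ at $\vertex{u}$ through the faces of $\Q^v_w$ incident to $\vertex{u}$, shows that the ascending edges at $\vertex{u}$ already span a face $\Q^{u}_{w_u}$ of $\Q^v_w$ (the "upward part" of $C_u$, of dimension $<d$), and then transfers the failure of $C_u$ to be simplicial across one ascending edge into the smaller polytope lying beyond that edge. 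Making this transfer rigorous — showing that the extra edges witnessing non‑simplicity genuinely persist under an ascending move rather than being cancelled by a combinatorial coincidence — is the delicate point; here one expects to use that $g$ is monotone for the \emph{Bruhat} order (not merely generic), so that ascending moves cannot "collapse" the configuration responsible for non‑simplicity. The known cases of complexity $0$ and $1$ (Theorems~\ref{thm:toric}, \ref{thm:smooth-one}, \ref{thm:singular-one}) and the cube case give consistency checks, and in complexity one the explicit product model $\Q_{3412}\times(\text{cube})$ should allow the propagation lemma to be verified by hand, providing a template for the general statement.
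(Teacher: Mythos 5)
The statement you are addressing is Conjecture~\ref{conjecture_smoothness_Ywv}; the paper offers no proof of it (it is restated as an open problem in Section~\ref{sec_problems}), and your proposal is not a proof either. Everything hinges on the ``propagation lemma'' (every non-simple vertex $\vertex{u}$ with $u\neq w$ has a non-simple ascending neighbour), and you explicitly leave it unproved: the ``transfer across one ascending edge'' is exactly where all the difficulty of the conjecture sits, and no mechanism is offered for why the configuration witnessing non-simplicity at $\vertex{u}$ cannot vanish after one ascending step. Worse, the lemma as stated is strictly stronger than the conjecture: iterating it shows that simplicity at $\vertex{w}$ \emph{alone} forces simplicity of $\Q^v_w$. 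For $v=e$ that single-endpoint statement is precisely Conjecture~\ref{conj:schubert}, which is open; so your reduction replaces an open conjecture by an at-least-as-strong open assertion, and for general $v$ it is not even clear the single-endpoint version is true --- the authors' insistence on both endpoints suggests it may fail, in which case your upward flow would have to terminate at a simple $\vertex{w}$ and the argument collapses.

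There are also unproved ingredients upstream of the propagation lemma. The equivalence ``simple at $\vertex{u}$ if and only if the transitive reduction graph is a forest'' is established only for $v=e$ (via Proposition~\ref{prop_edges_of_Qw} and \cite{LM2020}); for a general interval $[v,w]$ you would first need the analogue of Proposition~\ref{prop_edges_of_Qw} identifying which transpositions give edges of $\Q^v_w$ at $\vertex{u}$, which is not in the paper. Likewise, your claim that the ascending edges at $\vertex{u}$ span a face of $\Q^v_w$ is exactly the hypothesis of Theorem~\ref{thm_poincare_polynomial_height_ftn}, verified in the paper only for $\Q^e_w$ (this is what makes Theorem~\ref{Thm_Poincare} work); whether such a linear function exists for general $\Q^v_w$ is itself posed as a problem in Section~\ref{sec_problems}. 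The parts of your sketch that are solid --- strict monotonicity of $g$ along Bruhat relations by Lemma~\ref{lemma_bruhat_order_and_prec}, so that $\vertex{v}$ and $\vertex{w}$ are the unique extreme vertices and every edge is oriented as in Lemma~\ref{lemm:2-1} --- are the easy part. As it stands this is a reasonable research plan, not a proof.
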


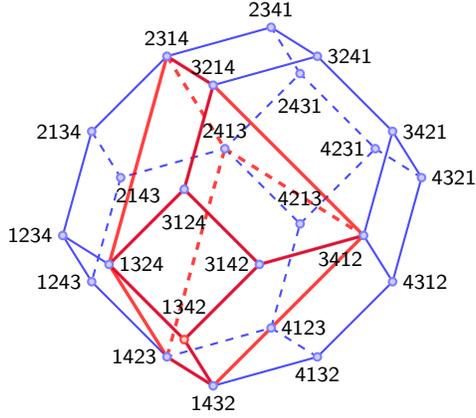
\begin{figure}[H]
\begin{tikzpicture}[scale=6]
\tikzset{every node/.style={draw=blue!50,fill=blue!20, circle, thick, inner sep=1pt,font=\footnotesize}}
\tikzset{red node/.style = {fill=red!20!white, draw=red!75!white}}
\tikzset{red line/.style = {line width=0.3ex, red, nearly opaque}}
\coordinate (3142) at (1/3, 1/2, 1/6);
\coordinate (4231) at (2/3, 1/2, 1/6);
\coordinate (4312) at (5/6, 2/3, 1/2);
\coordinate (4321) at (5/6, 1/2, 1/3);
\coordinate (3421) at (5/6, 1/3, 1/2);
\coordinate (4213) at (2/3, 5/6, 1/2);
\coordinate (1324) at (1/3, 1/2, 5/6);
\coordinate (2413) at (2/3, 1/2, 5/6);
\coordinate (3412) at (5/6, 1/2, 2/3);
\coordinate (2314) at (1/2, 2/3, 5/6);
\coordinate (4123) at (1/2, 5/6, 1/3);
\coordinate (4132) at (1/2, 2/3, 1/6);
\coordinate (3214) at (1/2, 5/6, 2/3);
\coordinate (3124) at (1/3, 5/6, 1/2);
\coordinate (2431) at (2/3, 1/6, 1/2);
\coordinate (1432) at (1/2, 1/6, 2/3);
\coordinate (1423) at (1/2, 1/3, 5/6);
\coordinate (1342) at (1/3, 1/6, 1/2);
\coordinate (2341) at (1/2, 1/6, 1/3);
\coordinate (3241) at (1/2, 1/3, 1/6);
\coordinate (1243) at (1/6, 1/3, 1/2);
\coordinate (2143) at (1/6, 1/2, 1/3);
\coordinate (1234) at (1/6, 1/2, 2/3);
\coordinate (2134) at (1/6, 2/3, 1/2);
\draw[thick, draw=blue!70] (4213)--(4312)--(3412)--(2413)--(2314)--(3214)--cycle;
\draw[thick, draw=blue!70] (4312)--(4321)--(3421)--(3412);
\draw[thick, draw=blue!70] (3421)--(2431)--(1432)--(1423)--(2413);
\draw[thick, draw=blue!70] (1423)--(1324)--(2314);
\draw[thick, draw=blue!70] (1432)--(1342)--(1243)--(1234)--(1324);
\draw[thick, draw=blue!70] (1234)--(2134)--(3124)--(3214);
\draw[thick, draw=blue!70] (3124)--(4123)--(4213);
\draw[thick, draw=blue!70, dashed] (2134)--(2143)--(3142)--(4132)--(4123);
\draw[thick, draw=blue!70, dashed] (2143)--(1243);
\draw[thick, draw=blue!70, dashed] (3142)--(3241)--(2341)--(1342);
\draw[thick, draw=blue!70, dashed] (2341)--(2431);
\draw[thick, draw=blue!70, dashed] (3241)--(4231)--(4132);
\draw[thick, draw=blue!70, dashed] (4231)--(4321);

\node[label = {[label distance = 0cm]right:$\vertex{2341}$}] at (2341) {};

\draw[red line] (3214)--(2314)--(2413)--(3412)--cycle;
\draw[red line] (2314)--(1324)--(1423)--(2413);
\draw[red line] (1324)--(1342)--(1432)
(1432)--(1423);
\draw[red line] (1432)--(3412);
\draw[red line] (1324)--(3124);
\draw[red line](3124)--(3214);
\draw[red line, dashed] (3124)--(3142)--(1342);
\draw[red line, dashed] (3142)--(3412);

\node [label = {[label distance = 0cm]left:$\vertex{1234}$}] at (1234) {};
\node[label = {[label distance = 0cm]left:$\vertex{1243}$}] at (1243) {};
\node[label = {[label distance = 0cm]right:$\vertex{1324}$}, red node] at (1324) {};
\node[label = {[label distance = 0cm]left:$\vertex{1342}$}, red node] at (1342) {};
\node [label = {[label distance = 0cm]above:$\vertex{1423}$}, red node] at (1423) {};
\node[label = {[label distance = -0.2cm]below:$\vertex{1432}$}, red node] at (1432) {};
\node [label = {[label distance = 0cm]left:$\vertex{2134}$}] at (2134) {};
\node[label = {[label distance = -0.1cm]below right:$\vertex{2143}$}] at (2143) {};
\node[label = {[label distance = 0cm]below:$\vertex{2314}$}, red node] at (2314) {};

\node[label = {[label distance = 0cm]left:$\vertex{2413}$}, red node] at (2413) {};
\node[label = {[label distance = -0.2cm]below:$\vertex{2431}$}] at (2431) {};
\node[label = {[label distance = -0.2cm]above:$\vertex{3124}$}, red node] at (3124) {};
\node[label = {[label distance = -0.2cm]above:$\vertex{3142}$}, red node] at (3142) {};
\node[label = {[label distance = -0.2cm]above:$\vertex{3214}$}, red node] at (3214) {};
\node [label = {[label distance = -0.1cm]above:$\vertex{3241}$}] at (3241) {};
\node[label = {[label distance = 0cm]below left:$\vertex{3412}$}, red node] at (3412) {};
\node[label = {[label distance = 0cm]right:$\vertex{3421}$}] at (3421) {};
\node[label = {[label distance = -0.2cm]above:$\vertex{4123}$}] at (4123) {};
\node [label = {[label distance = 0cm]below:$\vertex{4132}$}] at (4132) {};
\node[label = {[label distance = 0cm]right:$\vertex{4213}$}] at (4213) {};
\node[label = {[label distance = 0cm]left:$\vertex{4231}$}] at (4231) {};
\node[label = {[label distance = 0cm]right:$\vertex{4312}$}] at (4312) {};
\node [label = {[label distance = 0cm]right:$\vertex{4321}$}] at (4321) {};

\end{tikzpicture}
\caption{Bruhat interval polytope $\Q^{1324}_{3412}$} \label{fig_BIP_1324-3412}
\end{figure}


\subsection{Toric Bruhat interval polytopes} 

Since $\Q^{v}_{w}$ is the moment map image of a toric variety~$Y^{v}_{w}$, we get
\begin{equation*}\label{eq_dim_Qvw_and_length}
\dim \Q^v_w=\dim_\C Y^{v}_{w}\leq \dim_\C X^{v}_{w}=\ell(w)-\ell(v).
\end{equation*}
Motivated by this observation, we call a Bruhat interval polytope $\Q^v_w$ \emph{toric} if $\dim \Q^v_w=\ell(w)-\ell(v)$. 
Hence a Richardson variety $X^v_w$ is a toric variety {with respect to the ${T}$-action}, that is,  $X^v_w=Y^v_w$, if and only if the Bruhat interval polytope $\Q^{v}_{w}$ is toric.

Tsukerman and William~\cite{TsukermanWilliams} showed that every face of the Bruhat interval polytope $\Q^v_w$ is realizable as a subinterval of $[v^{-1},w^{-1}]$, but the converse is not true. For instance, the subinterval~$[123,312]$ of $[123,321]=[123^{-1},321^{-1}]$ does not form a face of the polytope $\Q^{123}_{321}$. See Figure~\ref{fig:vertex-labelling}. However, the converse holds when $\Q^v_w$ is toric. More strongly, we have the following.

\begin{theorem} [{\cite[Theorem~5.1]{LMP2021} \lee{ and \cite[Theorem~1.1]{can2023toric}}}] \label{prop:3-2}
For a Bruhat interval polytope $\Q^v_w$,
the following statements are equivalent:
\begin{enumerate}
\item $\Q^v_w$ is toric {\rm(}i.e.,  $\dim \Q^v_w=\ell(w)-\ell(v)${\rm)}. \label{toric_Richardson_1}
\item $\Q^x_y$ is a face of $\Q^v_w$ for any $[x^{-1},y^{-1}]\subset [v^{-1},w^{-1}]$. \label{toric_Richardson_2}
\item \lee{The Bruhat interval $[v,w]$ does not contain any interval that is isomorphic to symmetric group $S_3$ with respect to Bruhat order and $\ell(w)- \ell(v) \leq n-1$.} \label{toric_Richardson_3}
\item \lee{The interval $[v,w]$ is a lattice and $\ell(w)- \ell(v) \leq n-1$. } \label{toric_Richardson_4}
\end{enumerate}
\end{theorem}

\lee{
Here, a poset $P$ is a \defi{lattice} if for all $x,y \in P$, the subposet $\{ z \in P \mid z \leq x, z \leq y\}$ has a top} \park{element}\lee{, and the subposet $\{z \in P \mid z \geq x, z \geq y\}$ has a bottom element.}
	
\lee{
The above statement is obtained by combining the results~\cite[Theorem~5.1]{LMP2021} and \cite{can2023toric}. Indeed, the} \park{equivalence} \lee{between \eqref{toric_Richardson_1} and \eqref{toric_Richardson_2} is proved in~\cite[Theorem~5.1]{LMP2021}; the equivalences between \eqref{toric_Richardson_1}, \eqref{toric_Richardson_3}, and \eqref{toric_Richardson_4} are given in~\cite[Theorem~1.1]{can2023toric}. 
}
Hence, if $\Q^v_w$ is toric, then its combinatorial type is determined by the poset structure of~$[v^{-1},w^{-1}]$, so $\Q^v_w$ and $\Q^{v^{-1}}_{w^{-1}}$ are combinatorially equivalent.
Here, the assumption of ``toric'' cannot be removed. Indeed, the interval $[1324,4231]$ is a Boolean algebra of rank~4, but the corresponding Bruhat interval polytope is of dimension~$3$. See Figure~\ref{fig:1324-4231}. On the other hand, for every positive integer~$m$ there is an example of an interval $[v,w]$ of rank $m$ such that $[v,w]$ is a Boolean algebra and $\Q^v_w$ is toric. For instance, when $w=s_ms_{m-1}\cdots s_2 s_1$ and $v=e$, the Bruhat interval polytope $\Q^v_w$ is combinatorially equivalent to a cube of dimension~$m$ (simply, $m$-cube).

\begin{figure}[H]
\begin{subfigure}[b]{0.49\textwidth}
\begin{tikzpicture}
\tikzset{every node/.style={font=\footnotesize}}
\tikzset{red node/.style = {fill=red!20!white, draw=red!75!white}}
\tikzset{red line/.style = {line width=1ex, red,nearly transparent}}
\matrix [matrix of math nodes,column sep={0.52cm,between origins},
row sep={0.8cm,between origins},
nodes={circle, draw=blue!50,fill=blue!20, thick, inner sep = 0pt , minimum size=1.2mm}]
{
& & & & & \node[label = {above:{4321}}] (4321) {} ; & & & & & \\
& & &
\node[label = {above left:4312}] (4312) {} ; & &
\node[label = {above left:4231}, red node] (4231) {} ; & &
\node[label = {above right:3421}] (3421) {} ; & & & \\
& \node[label = {above left:4132}, red node] (4132) {} ; & &
\node[label = {left:4213}, red node] (4213) {} ; & &
\node[label = {above:3412}] (3412) {} ; & &
\node[label = {[label distance = 0.1cm]0:2431}, red node] (2431) {} ; & &
\node[label = {above right:3241}, red node] (3241) {} ; & \\
\node[label = {left:1432}, red node] (1432) {} ; & &
\node[label = {left:4123}, red node] (4123) {} ; & &
\node[label = {[label distance = 0.01cm]180:2413}, red node] (2413) {} ; & &
\node[label = {[label distance = 0.01cm]0:3142}, red node] (3142) {} ; & &
\node[label = {right:2341}, red node] (2341) {} ; & &
\node[label = {right:3214}, red node] (3214) {} ; \\
& \node[label = {below left:1423}, red node] (1423) {} ; & &
\node[label = {[label distance = 0.1cm]182:1342}, red node] (1342) {} ; & &
\node[label = {below:2143}] (2143) {} ; & &
\node[label = {right:3124}, red node] (3124) {} ; & &
\node[label = {below right:2314}, red node] (2314) {} ; & \\
& & & \node[label = {below left:1243}] (1243) {} ; & &
\node[label = {[label distance = 0.01cm]190:1324}, red node] (1324) {} ; & &
\node[label = {below right:2134}] (2134) {} ; & & & \\
& & & & & \node[label = {below:1234}] (1234) {} ; & & & & & \\
};
\draw (4321)--(4312)--(4132)--(1432)--(1423)--(1243)--(1234)--(2134)--(2314)--(2341)--(3241)--(3421)--(4321);
\draw (4321)--(4231)--(4132);
\draw (4231)--(3241);
\draw (4231)--(2431);
\draw (4231)--(4213);
\draw (4312)--(4213)--(2413)--(2143)--(3142)--(3241);
\draw (4312)--(3412)--(2413)--(1423)--(1324)--(1234);
\draw (3421)--(3412)--(3214)--(3124)--(1324);
\draw (3421)--(2431)--(2341)--(2143)--(2134);
\draw (4132)--(4123)--(1423);
\draw (4132)--(3142)--(3124)--(2134);
\draw (4213)--(4123)--(2143)--(1243);
\draw (4213)--(3214);
\draw (3412)--(1432)--(1342)--(1243);
\draw (2431)--(1432);
\draw (2431)--(2413)--(2314);
\draw (3142)--(1342)--(1324);
\draw (4123)--(3124);
\draw (2341)--(1342);
\draw (2314)--(1324);
\draw (3412)--(3142);
\draw (3241)--(3214)--(2314);
\draw[red line] (1324)--(1423)--(1432)--(4132)--(4231);
\draw[red line] (1324)--(2314)--(3214)--(3241)--(4231);
\draw[red line] (1324)--(1342)--(1432);
\draw[red line] (1324)--(3124)--(3214);
\draw[red line] (2314)--(2341)--(2431)--(4231);
\draw[red line] (1423)--(4123)--(4213)--(4231);
\draw[red line](4123)--(4132);
\draw[red line] (3214)--(4213);
\draw[red line] (2341)--(3241);
\draw[red line](1432)--(2431);
\draw[red line] (1342)--(2341);
\draw[red line] (3124)--(4123);
\draw[red line] (1423)--(2413)--(4213);
\draw[red line] (3124)--(3142)--(4132);
\draw[red line] (3142)--(3241);
\draw[red line] (1342)--(3142);
\draw[red line] (2314)--(2413);
\draw[red line] (2413)--(2431);
\end{tikzpicture}
\caption{Bruhat interval $[1324, 4231]$}
\end{subfigure}~
\begin{subfigure}[b]{0.49\textwidth}
\begin{tikzpicture}[scale=6]
\tikzset{every node/.style={draw=blue!50,fill=blue!20, circle, thick, inner sep=1pt,font=\footnotesize}}
\tikzset{red node/.style = {fill=red!20!white, draw=red!75!white}}
\tikzset{red line/.style = {line width=0.3ex, red, nearly opaque}}
\coordinate (3142) at (1/3, 1/2, 1/6);
\coordinate (4231) at (2/3, 1/2, 1/6);
\coordinate (4312) at (5/6, 2/3, 1/2);
\coordinate (4321) at (5/6, 1/2, 1/3);
\coordinate (3421) at (5/6, 1/3, 1/2);
\coordinate (4213) at (2/3, 5/6, 1/2);
\coordinate (1324) at (1/3, 1/2, 5/6);
\coordinate (2413) at (2/3, 1/2, 5/6);
\coordinate (3412) at (5/6, 1/2, 2/3);
\coordinate (2314) at (1/2, 2/3, 5/6);
\coordinate (4123) at (1/2, 5/6, 1/3);
\coordinate (4132) at (1/2, 2/3, 1/6);
\coordinate (3214) at (1/2, 5/6, 2/3);
\coordinate (3124) at (1/3, 5/6, 1/2);
\coordinate (2431) at (2/3, 1/6, 1/2);
\coordinate (1432) at (1/2, 1/6, 2/3);
\coordinate (1423) at (1/2, 1/3, 5/6);
\coordinate (1342) at (1/3, 1/6, 1/2);
\coordinate (2341) at (1/2, 1/6, 1/3);
\coordinate (3241) at (1/2, 1/3, 1/6);
\coordinate (1243) at (1/6, 1/3, 1/2);
\coordinate (2143) at (1/6, 1/2, 1/3);
\coordinate (1234) at (1/6, 1/2, 2/3);
\coordinate (2134) at (1/6, 2/3, 1/2);
\draw[red line] (2431)--(4231)--(4213);
\draw[thick, draw=blue!70] (4213)--(4312)--(3412)--(2413)--(2314)--(3214)--cycle;
\draw[thick, draw=blue!70] (4312)--(4321)--(3421)--(3412);
\draw[thick, draw=blue!70] (3421)--(2431)--(1432)--(1423)--(2413);
\draw[thick, draw=blue!70] (1423)--(1324)--(2314);
\draw[thick, draw=blue!70] (1432)--(1342)--(1243)--(1234)--(1324);
\draw[thick, draw=blue!70] (1234)--(2134)--(3124)--(3214);
\draw[thick, draw=blue!70] (3124)--(4123)--(4213);
\draw[thick, draw=blue!70, dashed] (2134)--(2143)--(3142)--(4132)--(4123);
\draw[thick, draw=blue!70, dashed] (2143)--(1243);
\draw[thick, draw=blue!70, dashed] (3142)--(3241)--(2341)--(1342);
\draw[thick, draw=blue!70, dashed] (2341)--(2431);
\draw[thick, draw=blue!70, dashed] (3241)--(4231)--(4132);
\draw[thick, draw=blue!70, dashed] (4231)--(4321);
\draw[red line] (2314)--(2413)--(4213)--(3214)--cycle;
\draw[red line] (3214)--(3124)--(4123)--(4213);
\draw[red line] (3124)--(1324)--(2314);
\draw[red line] (1324)--(1423)--(2413);
\draw[red line] (1324)--(1342)--(1432)--(1423);
\draw[red line] (2413)--(2431)--(1432);
\draw[red line, dashed] (3124)--(3142)--(4132)--(4123);
\draw[red line, dashed] (4132)--(4231)--(3241)--(3142);
\draw[red line, dashed] (3241)--(2341)--(1342)--(3142);
\draw[red line, dashed] (2341)--(2431);

\node [label = {[label distance = 0cm]left:$\vertex{1234}$}] at (1234) {};
\node[label = {[label distance = 0cm]left:$\vertex{1243}$}] at (1243) {};
\node[label = {[label distance = 0cm]right:$\vertex{1324}$}, red node] at (1324) {};
\node[label = {[label distance = 0cm]left:$\vertex{1342}$}, red node] at (1342) {};
\node [label = {[label distance = 0cm]above:$\vertex{1423}$}, red node] at (1423) {};
\node[label = {[label distance = -0.2cm]below:$\vertex{1432}$}, red node] at (1432) {};
\node [label = {[label distance = 0cm]left:$\vertex{2134}$}] at (2134) {};
\node[label = {[label distance = -0.1cm]below right:$\vertex{2143}$}] at (2143) {};
\node[label = {[label distance = 0cm]below:$\vertex{2314}$}, red node] at (2314) {};
\node[label = {[label distance = 0cm]right:$\vertex{2341}$}, red node] at (2341) {};
\node[label = {[label distance = 0cm]left:$\vertex{2413}$}, red node] at (2413) {};
\node[label = {[label distance = -0.2cm]below:$\vertex{2431}$}, red node] at (2431) {};
\node[label = {[label distance = -0.2cm]above:$\vertex{3124}$}, red node] at (3124) {};
\node[label = {[label distance = -0.2cm]above:$\vertex{3142}$}, red node] at (3142) {};
\node[label = {[label distance = -0.2cm]above:$\vertex{3214}$}, red node] at (3214) {};
\node [label = {[label distance = -0.1cm]above:$\vertex{3241}$}, red node] at (3241) {};
\node[label = {[label distance = 0cm]below left:$\vertex{3412}$}, red node] at (3412) {};
\node[label = {[label distance = 0cm]right:$\vertex{3421}$}] at (3421) {};
\node[label = {[label distance = -0.2cm]above:$\vertex{4123}$},red node] at (4123) {};
\node [label = {[label distance = 0cm]below:$\vertex{4132}$}, red node] at (4132) {};
\node[label = {[label distance = 0cm]right:$\vertex{4213}$}, red node] at (4213) {};
\node[label = {[label distance = 0cm]left:$\vertex{4231}$}, red node] at (4231) {};
\node[label = {[label distance = 0cm]right:$\vertex{4312}$}] at (4312) {};
\node [label = {[label distance = 0cm]right:$\vertex{4321}$}] at (4321) {};
\end{tikzpicture}
\caption{Bruhat interval polytope $\Q^{1324}_{4231}$}\label{fig:1324-4231b}
\end{subfigure}
\caption{The interval $[1324,4231]$ is a Boolean algebra, but $\Q^{1324}_{4231}$ is not a cube.}\label{fig:1324-4231}
\end{figure}
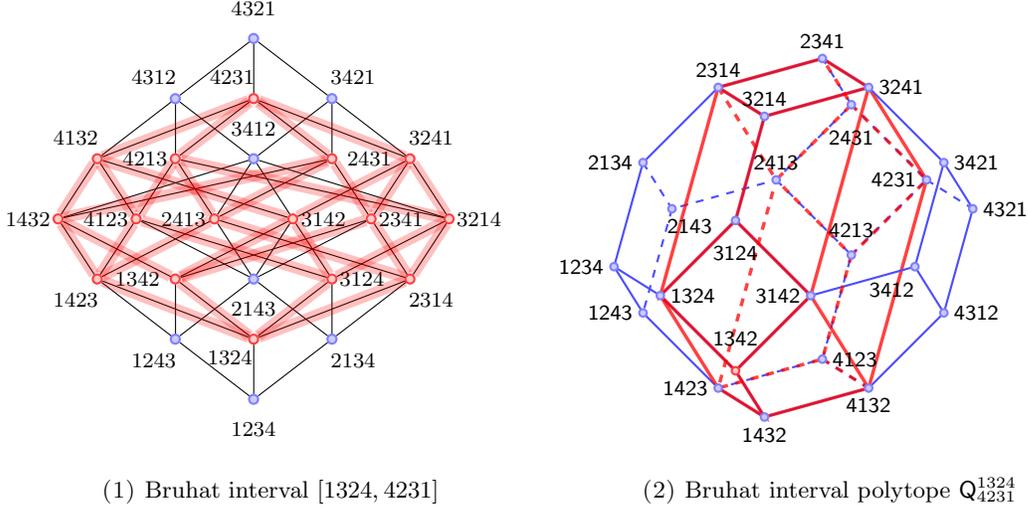

In the following, for simplicity, when a polytope~$\Q$ is combinatorially equivalent to a cube (or a $d$-cube), we say that~$\Q$ is a \emph{cube} (or a \emph{$d$-cube}). We also say that an interval $[v,w]$ is \emph{Boolean} if it is a Boolean algebra.

Using the following two facts
\begin{enumerate}
\item if $\Q$ is a simple polytope of dimension $\ge 2$ and every $2$-face of $\Q$ is a $2$-cube, then $\Q$ is a cube (\cite[Problems and Exercises 0.1 in p.23]{zieg98} and \cite[Appendix]{yu-ma16}), and
\item every $2$-interval is a diamond (\cite[Lemma 2.7.3]{BB05Combinatorics}),
\end{enumerate}
we can prove:

\begin{proposition}[{\cite[Proposition 5.6]{LMP2021}}] \label{prop:3-5}
Suppose that $\Q^v_w$ is toric. Then $\Q^v_w$ is a cube if and only if it is simple. {\rm(}This is equivalent to saying that a toric Richardson variety is a Bott manifold if and only if it is smooth.{\rm)}
\end{proposition}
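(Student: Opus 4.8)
The plan is to prove Proposition~\ref{prop:3-5} by combining the two cited facts with Theorem~\ref{prop:3-2}, which guarantees that when $\Q^v_w$ is toric, every subinterval of $[v^{-1},w^{-1}]$ contributes a face. The ``only if'' direction is immediate: a cube is a simple polytope, so nothing is required there. The substance is the ``if'' direction. So assume $\Q^v_w$ is toric and simple; we want to conclude it is a cube.

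First I would dispose of the low-dimensional cases: if $\dim\Q^v_w\le 1$ then $\Q^v_w$ is a point or a segment, both of which are cubes, so we may assume $d\colonequals\dim\Q^v_w=\ell(w)-\ell(v)\ge 2$. By fact~(1) (the characterization of cubes among simple polytopes via their $2$-faces), it then suffices to show that every $2$-face of $\Q^v_w$ is a $2$-cube, i.e.\ a quadrilateral. Here is where Theorem~\ref{prop:3-2} enters: since $\Q^v_w$ is toric, every face of $\Q^v_w$ is of the form $\Q^x_y$ for some subinterval $[x^{-1},y^{-1}]\subset[v^{-1},w^{-1}]$, and conversely every such subinterval gives a face; moreover the dimension of $\Q^x_y$ equals $\ell(y)-\ell(x)$ because $\Q^x_y$ is again toric (being a face of a toric Bruhat interval polytope, hence its own moment polytope of the corresponding Richardson variety — or more simply because Theorem~\ref{prop:3-2} forces the dimension count to propagate to all faces). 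Consequently a $2$-face of $\Q^v_w$ is $\Q^x_y$ with $\ell(y)-\ell(x)=2$, i.e.\ the interval $[x^{-1},y^{-1}]$ has rank $2$. By fact~(2), every rank-$2$ interval in the Bruhat order is a diamond: it consists of $x^{-1}<a,b<y^{-1}$ with exactly two elements $a,b$ strictly between. Then $\Q^x_y$ has exactly four vertices $\vertex{x},\vertex{a^{-1}},\vertex{b^{-1}},\vertex{y}$ and, being a $2$-dimensional polytope with four vertices, it is a quadrilateral, hence combinatorially a $2$-cube.

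Putting these together: every $2$-face of the simple polytope $\Q^v_w$ is a $2$-cube, so by fact~(1) $\Q^v_w$ itself is a cube. The parenthetical geometric reformulation then follows because, as recalled just before Conjecture~\ref{conjecture_smoothness_Ywv}, $Y^v_w$ is smooth at a fixed point $uB$ iff $\Q^v_w$ is simple at $\vertex{u}$, so $Y^v_w$ is smooth iff $\Q^v_w$ is simple; and by the discussion in Section~\ref{sec:Richardson} together with the definition of Bott manifolds (Appendix~\ref{appendix}), a smooth projective toric variety is a Bott manifold precisely when its moment polytope is combinatorially a cube.

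The one point that needs a little care — and is the main obstacle — is the claim that a $2$-face of $\Q^v_w$ has dimension exactly $\ell(y)-\ell(x)$ when it equals $\Q^x_y$; in general the inequality $\dim\Q^x_y\le\ell(y)-\ell(x)$ is all one gets, and one must invoke toricness of $\Q^v_w$ to upgrade this. The cleanest way is to note that, under the hypothesis that $\Q^v_w$ is toric, Theorem~\ref{prop:3-2} says $\Q^x_y$ is a \emph{face} of $\Q^v_w$ for \emph{every} subinterval; in particular taking $[x^{-1},y^{-1}]$ of rank $2$ produces a genuine $2$-face (it cannot collapse to lower dimension, since a rank-$2$ interval has $4$ elements mapping to $4$ distinct vertices and a face spanned by $4$ affinely positioned root-edge vertices of a $\Phi$-polytope is $2$-dimensional), so the dimension bookkeeping is automatic and no separate argument is needed. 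With that observation in hand the proof is just the two-line synthesis of facts~(1), (2), and Theorem~\ref{prop:3-2} sketched above.
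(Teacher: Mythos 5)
Your proposal is correct and follows essentially the same route the paper indicates: it explicitly lists fact (1) (a simple polytope all of whose $2$-faces are $2$-cubes is a cube), fact (2) (rank-$2$ Bruhat intervals are diamonds), and Theorem~\ref{prop:3-2} as the ingredients, exactly as you combine them. Your parenthetical care about why a $2$-face corresponds to a rank-$2$ subinterval is the right point to worry about, and your resolution (a rank-$2$ subinterval yields four vertices in convex position, hence a genuine quadrilateral $2$-face, with $\dim\le\ell(y)-\ell(x)$ giving the upper bound) is sound.
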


The following gives a characterization of when $\Q^v_w$ is a cube.

\begin{theorem}[{\cite[Theorem 5.7]{LMP2021} and \lee{\cite[Theorem~1.1]{can2023toric}}}] \label{theo:3-6}
	The following statements are equivalent. 
	\begin{enumerate}
		\item A Bruhat interval polytope $\Q^v_w$ is a cube. \label{smooth_toric_Richardson_1}
		\item $\Q^v_w$ is toric and $[v,w]$ is Boolean. \label{smooth_toric_Richardson_2}
		\item A Richardson variety $X^v_w$ is a Bott manifold. \label{smooth_toric_Richardson_3}
		\item $\ell(w)- \ell(v) \leq n-1$ and $[v,w]$ is Boolean. \label{smooth_toric_Richardson_4}
	\end{enumerate}
\end{theorem}

\lee{The equivalences between~\eqref{smooth_toric_Richardson_1}, \eqref{smooth_toric_Richardson_2}, and \eqref{smooth_toric_Richardson_3} are given in~\cite[Theorem~5.7]{LMP2021}, and the} \park{equivalence} \lee{between \eqref{smooth_toric_Richardson_2} and \eqref{smooth_toric_Richardson_4} is given by~\cite[Theorem~1.1]{can2023toric}. }
In the above theorem, we cannot drop either \emph{toric} or \emph{Boolean}.

\begin{example}
\label{exam:3-1}
\begin{enumerate}
\item For $n=4$, $v=1324$, and $w=4231$ in $S_4$, the interval $[v,w]$ is Boolean of length $4$ but $\dim \Q^v_w=3$, so $\Q^v_w$ is not toric. \lee{We notice that $\ell(w)-\ell(v) = 5-1=4 \not\leq 3 = n-1$.} Since the vertices $\vertex{v}$ and $\vertex{w}$ have degree $4$, $\Q^v_w$ is not a cube.
\item When $v=1324$ and $w=3412$, we have $\ell(w)-\ell(v)=4-1=3$ and $\dim \Q^v_w=3$. Hence $\Q^v_w$ is toric. However, $[v,w]$ is a $4$-crown (see \cite[p.52]{BB05Combinatorics}) and not Boolean. The vertices $\vertex{v}$ and $\vertex{w}$ have degree $4$ and the others are simple vertices, so $\Q^v_w$ is not a cube. See Figure~\ref{fig_BIP_1324-3412}.
\end{enumerate}
\end{example}

It is shown in~\cite[Theorem 3.5.2]{re02} that the largest rank of Boolean Bruhat intervals in ${S}_{n+1}$ is at least $n+\lfloor\frac{n-1}{2}\rfloor$ by finding a sufficient condition on $v$ and $w$ for $[v,w]$ to be Boolean. This implies that there are infinitely many Boolean Bruhat intervals which are not toric like the Boolean interval $[1324,4231]$. In fact,
for any non-negative integer $k$, there is a Boolean interval $[v,w]$ such that $\dim\Q^v_w=\ell(w)-\ell(v)-k$. See {\cite[Proposition 6.4]{LMP2021}} for details.

\begin{remark}
\lee{Can and Saha considered toric} \park{Richardson} \lee{varieties for any connected, simple, simply-connected algebraic group over an algebraically closed field in~\cite{can2023toric}. They provide characterizations of toric Richardson varieties in terms of the combinatorics of posets.}
\end{remark}

\subsection{Conditions on \texorpdfstring{$v$}{v} and \texorpdfstring{$w$}{w} for \texorpdfstring{$\Q^v_w$}{Qvw} to be a cube}\label{sec:Conditions on v and w}
Now we find a sufficient condition for a Bruhat interval polytope $\Q^v_w$ to be a cube.

It is shown in~\cite[{\S 5 and \S 6}]{HHMP19} that $\Q^v_w$ is toric (in fact, a cube) if $v^{-1}=[a_1,\dots,a_{n-1},n]$ and $w^{-1}=[n,a_1,\dots,a_{n-1}]$ or $v^{-1}=[1,b_2,\dots,b_n]$ and $w^{-1}=[b_2,\dots,b_n,1]$. In these cases,
\begin{equation*}\label{eq:minimal}
\begin{split}
&w=s_{1}s_{2}\cdots s_{n-1}v \quad\text{and}\quad \ell(w)-\ell(v)=n-1,\ \text{or}\\
&w=s_{n-1}s_{n-2}\cdots s_1v \quad\text{and}\quad \ell(w)-\ell(v)=n-1.
\end{split}
\end{equation*}
These examples and Theorem~\ref{thm:toric}(3) motivate us to study the following case:
\begin{equation} \label{eq:6-1}
\text{$w=s_{j_1}s_{j_2}\cdots s_{j_m}v$ or $vs_{j_1}s_{j_2}\cdots s_{j_m}$ where $\ell(w)-\ell(v)=m$ and $j_1,\dots,j_m$ are distinct.}
\end{equation}

\begin{proposition} [{\cite[Proposition 7.1]{LMP2021}}]\label{prop:6-1}
Suppose that $w=s_{j_1}s_{j_2}\cdots s_{j_m}v$ or $w=vs_{j_1}s_{j_2}\cdots s_{j_m}$ with $\ell(w)-\ell(v)=m$. Then $j_1,\dots,j_m$ are distinct if and only if $\Q^v_w$ is toric.
\end{proposition}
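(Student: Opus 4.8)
The plan is to reduce the statement to the complexity-zero case already understood via Theorem~\ref{thm:toric}, by exploiting the fact that the Bruhat interval polytope $\Q^v_w$ is the $\Phi$-polytope $\Delta_{[v^{-1},w^{-1}]}$ (with $W=S_n$, $\nu=(1,2,\dots,n)$), so that edges emanating from the vertex $\vertex{v}$ and the vertex $\vertex{w}$ are root vectors, and these two vertices are connected to the remaining vertices through the coatom/atom structure of $[v^{-1},w^{-1}]$. First I would recall that, since $\mu(Y^v_w)=\Q^v_w$ and $\dim_\C X^v_w=\ell(w)-\ell(v)=m$, we always have $\dim\Q^v_w\le m$, with equality precisely when $\Q^v_w$ is toric. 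So both implications amount to comparing $\dim\Q^v_w$ with $m$.

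For the direction ``$j_1,\dots,j_m$ distinct $\Rightarrow$ $\Q^v_w$ toric'': I would argue that when $w=s_{j_1}\cdots s_{j_m}v$ (the other case $w=vs_{j_1}\cdots s_{j_m}$ being symmetric after passing to inverses, using Proposition~\ref{lem:dim}) the edges of $\Q^v_w$ at the vertex $\vertex{v}$ have direction vectors $v^{-1}(\mathbf e_{j_k}-\mathbf e_{j_k+1})$ for $k=1,\dots,m$, one for each atom $s_{j_k}$ of $[v^{-1},w^{-1}]$ relative to the shifted order (here I would use that $w=s_{j_1}\cdots s_{j_m}v$ with $\ell(w)-\ell(v)=m$ forces each $s_{j_i}v$, and more generally each length-increasing partial product, to lie in the interval, so $[v^{-1},w^{-1}]$ has exactly $m$ atoms). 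The distinctness of the $j_k$ guarantees that the root vectors $\mathbf e_{j_k}-\mathbf e_{j_k+1}$ are linearly independent, hence so are the edge directions at $\vertex{v}$; therefore $\dim\Q^v_w\ge m$, forcing equality. I expect I can make this rigorous by essentially transporting the computation in the proof of Theorem~\ref{thm_char_matrix_of_toric_Schubert} from $e$ to the base vertex $v$, since left-multiplication by $v^{-1}$ identifies $[v^{-1},w^{-1}]$ with an interval $[e,w^{-1}v^{-1}\cdots]$ in the shifted order and preserves the reduced-word structure.

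For the converse ``$\Q^v_w$ toric $\Rightarrow$ $j_1,\dots,j_m$ distinct'': I would prove the contrapositive. Suppose some letter repeats; by deleting the redundant factors and relabeling one obtains a shorter word, and the key point is that $w=s_{j_1}\cdots s_{j_m}v$ with a repeated letter cannot be a reduced expression ``relative to $v$'' once $\ell(w)-\ell(v)=m$ is imposed — more precisely, the $m$ partial products starting from $v$ cannot all strictly increase in length if a letter repeats (a standard exchange-type argument). Then at the vertex $\vertex{v}$ the interval $[v^{-1},w^{-1}]$ has strictly fewer than $m$ atoms, so the number of edges of $\Q^v_w$ at $\vertex{v}$ is $<m$; since at a vertex of a polytope of dimension $d$ there must be at least $d$ edges, we get $\dim\Q^v_w<m$, i.e.\ $\Q^v_w$ is not toric. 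The main obstacle I anticipate is the bookkeeping in this exchange argument: one must be careful that ``distinct letters'' in the word $s_{j_1}\cdots s_{j_m}$ is the correct combinatorial condition controlling the atom count of $[v^{-1},w^{-1}]$ (rather than, say, distinctness of the $s_{j_i}v$ or some other variant), and that the translation between the edge set of $\Q^v_w$ at $\vertex{v}$ and the atoms of $[v^{-1},w^{-1}]$ in the shifted Bruhat order is exactly as in the $v=e$ situation; handling the $w=vs_{j_1}\cdots s_{j_m}$ case will require invoking Proposition~\ref{lem:dim} to pass between $\Q^v_w$ and $\Q^{v^{-1}}_{w^{-1}}$, which is legitimate since ``toric'' is a statement about dimension only.
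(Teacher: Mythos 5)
The paper does not reprove this statement (it is quoted from \cite[Proposition~7.1]{LMP2021}), so I judge your argument on its own terms; both directions have genuine gaps, the second one fatal as written. In the forward direction, your key claim that each $s_{j_i}v$ lies in $[v,w]$ (so that the atoms supply $m$ independent directions at $\vertex{v}$) is false: take $n=3$, $v=s_1=213$ and $w=s_1s_2v=321$, so $m=2=\ell(w)-\ell(v)$ with $j_1=1$, $j_2=2$ distinct, yet $s_{j_1}v=s_1s_1=e$ has length $0<\ell(v)$ and is not in $[v,w]$. The implicit ``standard fact'' ($x\le u$ and $\ell(uv)=\ell(u)+\ell(v)$ imply $\ell(xv)=\ell(x)+\ell(v)$) is simply not true, and for the same reason the proposed ``transport'' of the computation in Theorem~\ref{thm_char_matrix_of_toric_Schubert} from $e$ to $v$ does not go through ($x\mapsto xv$ need not map $[e,wv^{-1}]$ into $[v,w]$). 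The direction is salvageable with different witnesses: the suffix products $y_k=s_{j_{m-k+1}}\cdots s_{j_m}v$ for $k=0,\dots,m$ do all lie in $[v,w]$, because $\ell(w)-\ell(v)=m$ forces $\ell(y_k)=\ell(v)+k$, and $y_k^{-1}=y_{k-1}^{-1}s_{j_{m-k+1}}$ shows that $\mu(y_k)-\mu(y_{k-1})$ is a nonzero multiple of $\mathbf{e}_{j_{m-k+1}}-\mathbf{e}_{j_{m-k+1}+1}$; distinctness of the $j_k$ then yields $m$ linearly independent differences of vertices of $\Q^v_w$, hence $\dim\Q^v_w\ge m$, and equality follows from $\dim\Q^v_w\le\ell(w)-\ell(v)$. (Incidentally, by the inverse in the definition of $\Q^v_w$ the relevant directions are $\mathbf{e}_{j}-\mathbf{e}_{j+1}$, not $v^{-1}(\mathbf{e}_{j}-\mathbf{e}_{j+1})$.)

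The converse is where the argument breaks down irreparably. Your contrapositive rests on the assertion that a repeated letter prevents the partial products from all strictly increasing in length; in fact the hypothesis $\ell(w)-\ell(v)=m$ already forces $s_{j_1}\cdots s_{j_m}$ to be reduced and all suffix products $y_k$ above to increase strictly, whether or not letters repeat (e.g.\ $v=e$, $w=s_1s_2s_1$: the letter $1$ repeats, yet $e<s_1<s_2s_1<s_1s_2s_1$). So the ``standard exchange-type argument'' you defer to does not exist, and you never obtain the bound ``fewer than $m$ atoms'' on which the rest of the paragraph depends. A correct route is to bound the dimension directly: set $u=wv^{-1}$ and $S=\supp(u)$, so a repeated letter means $|S|<m$. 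For $d\notin S$ one has $u([d])=[d]$, hence $\{w^{-1}(1),\dots,w^{-1}(d)\}=\{v^{-1}(1),\dots,v^{-1}(d)\}$ and in particular $\sum_{i\le d}w^{-1}(i)=\sum_{i\le d}v^{-1}(i)$; since every $y\in[v,w]$ satisfies $\sum_{i\le d}v^{-1}(i)\le\sum_{i\le d}y^{-1}(i)\le\sum_{i\le d}w^{-1}(i)$ (as in the proof of Lemma~\ref{lemma_bruhat_order_and_prec}), the polytope $\Q^v_w$ lies in the hyperplane $\sum_{i\le d}x_i=\sum_{i\le d}v^{-1}(i)$ for each such $d$, whence $\dim\Q^v_w\le|S|<m$ and $\Q^v_w$ is not toric. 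Your reduction of the case $w=vs_{j_1}\cdots s_{j_m}$ to the other case via Proposition~\ref{lem:dim} is the one step that is fine as stated.
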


Note that not every Bruhat interval polytope in the above proposition is a cube. For instance, if $v = 1324$ and $w=3412$, then $v= s_2$ and $w = s_2s_3s_1s_2$, so $w=s_2s_3s_1 v$. However, the polytope~$\Q^v_w$ is not a cube as in Figure~\ref{fig_BIP_1324-3412}. On the other hand, for $v=1243$ and $w=3412$, since $v=s_3$ and $w=s_2s_3s_1s_2=s_2s_1s_3s_2$, neither $wv^{-1}$ nor $v^{-1}w$ is a product of distinct adjacent transpositions. However, the Bruhat interval polytope $\Q^v_w$ is a $3$-cube. These two examples show that it seems difficult to characterize $v$ and $w$ for which $\Q^v_w$ is a cube.

Let us find a sufficient condition on $v$ and $w$ for $\Q^v_w$ to be a cube. For that, we prepare some notations.
For $p,q\in [n-1]$, we set
\begin{equation}\label{eq_s_pq}
s(p,q)=\begin{cases} s_ps_{p+1}\cdots s_q \quad&\text{if $p\le q$},\\
s_ps_{p-1}\cdots s_q \quad &\text{if $p\ge q$}.
\end{cases}
\end{equation}
For each $s(p,q)$, we also set
\[
\bar{p}=\min\{p,q\},\quad \bar{q}=\max\{p,q\}.
\]
We note that if $j_1,\dots,j_m\in [n-1]$ are distinct, then we have a \emph{minimal} expression
\begin{equation} \label{eq:6-2}
s_{j_1}s_{j_2}\cdots s_{j_m}=s(p_1,q_1)s(p_2,q_2)\cdots s(p_r,q_r)
\end{equation}
where the intervals $[\bar{p}_1,\bar{q}_1],\dots, [\bar{p}_r,\bar{q}_r]$ are disjoint and $r$ is the minimum among such expressions.

\begin{example} \label{exam:6-1}
Here are examples of minimal expressions.
\begin{enumerate}
\item $s_1s_2\cdots s_{n-1}=s(1,n-1)$,\quad $s_{n-1}s_{n-2}\cdots s_1=s(n-1,1)$.

\item $s_1s_3s_8s_2s_4s_7s_6=s_3s_4s_1s_2s_8s_7s_6=s(3,4)s(1,2)s(8,6)$.

\item $s_2s_8s_4s_7s_1s_6=s_2s_1s_4s_8s_7s_6=s(2,1)s(4,4)s(8,6)$.
\end{enumerate}
\end{example}

We say that the product $s_{j_1}s_{j_2}\cdots s_{j_m}$ in \eqref{eq:6-2} is \emph{proper} if no two intervals among $[\bar{p}_1,\bar{q}_1],\dots, [\bar{p}_r,\bar{q}_r]$ are adjacent, in other words, the cycles defined by $s(p_1,q_1)$,$\ldots$, $s(p_r,q_r)$ are disjoint. In Example~\ref{exam:6-1}, (1) and (3) are proper, but (2) is not because the intervals $[3,4]$ and $[1,2]$ are adjacent.

The following provides a sufficient condition for $v$ and $w$ such that the Richardson variety $X^v_w$ is smooth and toric.

\begin{proposition} [{\cite[Proposition 7.3]{LMP2021}}] \label{prop:6-2}
Suppose that $s_{j_1}s_{j_2}\cdots s_{j_m}$ is a proper minimal expression.
If $w=s_{j_1}s_{j_2}\cdots s_{j_m}v$ or $w=vs_{j_1}s_{j_2}\cdots s_{j_m}$ with $\ell(w)-\ell(v)=m$, then the Bruhat interval polytope $\Q^v_w$ is a cube.
\end{proposition}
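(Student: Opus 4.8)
The plan is to deduce everything from Theorem~\ref{theo:3-6}, which reduces ``$\Q^v_w$ is a cube'' to the two assertions ``$\Q^v_w$ is toric'' and ``$[v,w]$ is Boolean.'' The toric assertion is immediate from Proposition~\ref{prop:6-1}: since $s_{j_1}s_{j_2}\cdots s_{j_m}$ is given as a (proper) \emph{minimal} expression, the $j_1,\dots,j_m$ are distinct, and then Proposition~\ref{prop:6-1} applies. (Properness is not used here; it enters only for the Boolean part.) So the whole content of the proof is to show $[v,w]\cong\mathfrak B_m$. We may assume $w=s_{j_1}s_{j_2}\cdots s_{j_m}v$: in the case $w=vs_{j_1}\cdots s_{j_m}$ one has $w^{-1}=s_{j_m}\cdots s_{j_1}v^{-1}$, the inversion map is a poset isomorphism $[v,w]\cong[v^{-1},w^{-1}]$, and the reversed word $s_{j_m}\cdots s_{j_1}=s(q_r,p_r)\cdots s(q_1,p_1)$ is again a proper minimal expression (same index-intervals and windows, reversed order).

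Next I would use properness. Writing the proper minimal expression as $s_{j_1}\cdots s_{j_m}=\sigma_1\sigma_2\cdots\sigma_r$ with $\sigma_i=s(p_i,q_i)$, properness says exactly that $\sigma_1,\dots,\sigma_r$ are pairwise disjoint cycles, i.e. that their supports $W_i\colonequals\{\bar p_i,\bar p_i+1,\dots,\bar q_i+1\}$ are pairwise disjoint intervals in $[n]$. Because $w=\sigma_1\cdots\sigma_r\,v$ and each $\sigma_i$ fixes every value outside $\bigcup_i W_i$, the permutations $v$ and $w$ agree on every position whose $v$-value lies outside $\bigcup_i W_i$, and $v^{-1}(W_i)=w^{-1}(W_i)$ for each $i$. ``Flattening'' $v$ and $w$ on the window $W_i$ (reading off the relative order of the $v$-values lying in $W_i$ at the $k_i\colonequals|W_i|$ positions $v^{-1}(W_i)$) yields $\bar v_i,\bar w_i\in S_{k_i}$ with $\bar w_i=s_1s_2\cdots s_{k_i-1}\bar v_i$. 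A short inversion count — pairs of positions whose $v$-values lie in two different windows, or one inside a window and one outside, contribute equally to $\ell(v)$ and to $\ell(w)$, since distinct windows are disjoint \emph{intervals} — gives $\ell(w)-\ell(v)=\sum_i\big(\ell(\bar w_i)-\ell(\bar v_i)\big)$; as each summand is at most $\ell(s_1\cdots s_{k_i-1})=k_i-1$ and the total is $\sum_i(k_i-1)=m$, each summand equals $k_i-1$. Thus every pair $(\bar v_i,\bar w_i)$ is exactly of the form treated in \cite{HHMP19} (recalled just before Proposition~\ref{prop:6-1}) — indeed the length condition forces $\bar v_i^{-1}(k_i)=k_i$ — so $[\bar v_i,\bar w_i]$ is Boolean of rank $k_i-1$.

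It then remains to glue: I would show that simultaneous flattening $u\mapsto(\bar u_1,\dots,\bar u_r)$ is a poset isomorphism $[v,w]\cong\prod_{i=1}^{r}[\bar v_i,\bar w_i]$. Granting this, a product of Boolean lattices is Boolean, so $[v,w]\cong\mathfrak B_{k_1-1}\times\cdots\times\mathfrak B_{k_r-1}\cong\mathfrak B_m$, and Theorem~\ref{theo:3-6} finishes the proof. The isomorphism claim is where I expect the real work to lie: one must check that every $u\in[v,w]$ still keeps its $W_i$-values among the positions $v^{-1}(W_i)$ and agrees with $v$ off $\bigcup_i W_i$ (so each $\bar u_i$ is defined), that $\bar v_i\le\bar u_i\le\bar w_i$, and conversely that an arbitrary choice of $u_i\in[\bar v_i,\bar w_i]$ reassembles to an element of $[v,w]$ — in short, that Bruhat order ``localizes'' along disjoint value-windows. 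This is precisely the step that breaks down without properness: for instance $[1324,3412]$ is a non-proper case where $[v,w]$ is the $4$-crown, not a Boolean lattice.

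If this block-localization turns out to be delicate to set up cleanly, there is an alternative route avoiding it. By Proposition~\ref{prop:3-5} a toric $\Q^v_w$ is a cube iff it is simple, and by Theorem~\ref{prop:3-2} the $2$-faces of a toric $\Q^v_w$ are the Bruhat interval polytopes of the rank-$2$ subintervals of $[v^{-1},w^{-1}]$, which (being $4$-vertex $2$-dimensional polytopes) are combinatorial squares; since a simple polytope of dimension $\ge 2$ all of whose $2$-faces are squares is a cube, it would suffice to prove that $\Q^v_w$ is simple at every vertex $\vertex u$, i.e. that $\vertex u$ lies on exactly $m$ edges of $\Q^v_w$ for all $u\in[v,w]$ — the point being, again, that properness forces this edge count to be independent of $u$.
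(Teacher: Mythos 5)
The survey does not reproduce a proof of this proposition---it defers to \cite[Proposition~7.3]{LMP2021}---so I am judging your argument on its own terms. Your architecture is the natural one given the tools of this section, and every step you actually carry out checks out: toricness from Proposition~\ref{prop:6-1}; the reduction of the right-multiplication case to the left one by inversion; the identification of properness with disjointness of the value-windows $W_i=\{\bar p_i,\dots,\bar q_i+1\}$; the inversion count giving $\ell(w)-\ell(v)=\sum_i\bigl(\ell(\bar w_i)-\ell(\bar v_i)\bigr)$, which forces $\ell(\bar w_i)-\ell(\bar v_i)=k_i-1$ and places each flattened pair $(\bar v_i,\bar w_i)$ in the situation of \cite{HHMP19}, so that each $[\bar v_i,\bar w_i]$ is Boolean of rank $k_i-1$.

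The one genuine gap is the step you yourself flag: the poset isomorphism $[v,w]\cong\prod_i[\bar v_i,\bar w_i]$ is asserted, not proved, and without it you get no Boolean-ness and hence no conclusion. The statement is true, and it follows from Ehresmann's tableau criterion ($x\le y$ iff $\{x(1),\dots,x(d)\}\!\!\uparrow\ \le \{y(1),\dots,y(d)\}\!\!\uparrow$ entrywise for every $d$) together with the fact that the $W_i$ are \emph{disjoint intervals}. Since $\{w(1),\dots,w(d)\}=\sigma(\{v(1),\dots,v(d)\})$ with $\sigma=\sigma_1\cdots\sigma_r$ permuting each $W_i$ and fixing everything else, for each $d$ the sorted sequences of $v(1),\dots,v(d)$ and of $w(1),\dots,w(d)$ agree slot by slot outside $\bigcup_i W_i$, and in every remaining slot both entries lie in the same window $W_i$. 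For $u\in[v,w]$ the entrywise squeeze then forces the sorted sequence of $u(1),\dots,u(d)$ to equal that of $v$ in slots of the first kind and to take values in $W_i$ in slots of the second (this is where intervality of $W_i$ is used). Letting $d$ run over $[n]$ yields $u^{-1}(a)=v^{-1}(a)$ for all $a\notin\bigcup_i W_i$ and $u^{-1}(W_i)=v^{-1}(W_i)$, so each $\bar u_i$ is defined, and the same slot-by-slot bookkeeping shows that all tableau inequalities decouple window by window, giving both directions of the isomorphism. This is also exactly where properness is indispensable: if two windows touch, the slots no longer separate and the squeeze fails, as in your $[1324,3412]$ example. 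With this lemma supplied your proof is complete; the alternative route via simplicity and $2$-faces that you sketch would require the same local analysis at every vertex, so I would not expect it to be shorter.
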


In fact, the converse of Proposition~\ref{prop:6-2} is also true. That is, 
a Bruhat interval polytope $\Q^v_w$ is a cube for any $v$ and $w$ in~\eqref{eq:6-1} if and only if the product $s_{j_1} s_{j_2} \cdots s_{j_m}$ is proper. See~{\cite[Corollary~7.7]{LMP2021}}.


\subsection{Toric varieties of Catalan type}\label{sec_Catalan_numbers}
Many smooth toric Richardson varieties arise from polygon triangulations. This subsection is a preparation for that. Namely we explain how to associate a compact smooth toric variety with a polygon triangulation. 

Let $\polygon{n+2}$ denote a convex polygon in the plane with $n+2$ vertices (or \textit{convex $(n+2)$-gon} for simplicity). We label the vertices from $0$ to $n+1$ in counterclockwise order. A \textit{triangulation} of~$\polygon{n+2}$ is a decomposition of $\polygon{n+2}$ into a set of $n$ triangles by adding $n-1$ diagonals of $\polygon{n+2}$ which do not intersect in their interiors. Then the \textit{Catalan number} 
\[
C_n=\frac{1}{n+1}\binom{2n}{n}
\] 
is the number of triangulations of $\polygon{n+2}$. Figure~\ref{fig_triangulation_5gon} shows the triangulations of $\polygon{5}$. 
Here, we consider all five triangulations to be different because we are triangulating an $(n+2)$-gon with \emph{labelled} vertices.

\begin{figure}[H]
\begin{tikzpicture}[x=10mm, y=5mm, label distance = 1mm]
\foreach \x in {2,...,5,6}{
\coordinate (a\x) at (-18-\x*72:1cm) ; 
} 

\draw[thick] (a2)--(a3)--(a4)--(a5)--(a6)--(a2);
\draw[thick] (a2)--(a4)
(a4)--(a6);

\node[right] at (a4) {$4$};
\node[right] at (a5) {$3$};
\node[below] at (a6) {$2$};
\node[left] at (a2) {$1$};
\node[left] at (a3) {$0$};
\end{tikzpicture}\hspace{0.1cm} %
\begin{tikzpicture}[x=10mm, y=5mm, label distance = 1mm]
\foreach \x in {2,...,5,6}{
\coordinate (a\x) at (-18-\x*72:1cm) ; 
} 
\draw[thick] (a2)--(a3)--(a4)--(a5)--(a6)--(a2);
\draw[thick] (a4)--(a2)
(a5)--(a2);
\node[right] at (a4) {$4$};
\node[right] at (a5) {$3$};
\node[below] at (a6) {$2$};
\node[left] at (a2) {$1$};
\node[left] at (a3) {$0$};
\end{tikzpicture}%
\hspace{0.1cm} %
\begin{tikzpicture}[x=10mm, y=5mm, label distance = 1mm]
\foreach \x in {2,...,5,6}{
\coordinate (a\x) at (-18-\x*72:1cm) ; 
} 
\draw[thick] (a2)--(a3)--(a4)--(a5)--(a6)--(a2);
\draw[thick] (a5)--(a3)
(a5)--(a2);
\node[right] at (a4) {$4$};
\node[right] at (a5) {$3$};
\node[below] at (a6) {$2$};
\node[left] at (a2) {$1$};
\node[left] at (a3) {$0$};
\end{tikzpicture} \hspace{0.1cm} %
\begin{tikzpicture}[x=10mm, y=5mm, label distance = 1mm]
\foreach \x in {2,...,5,6}{
\coordinate (a\x) at (-18-\x*72:1cm) ; 
} 
\draw[thick] (a2)--(a3)--(a4)--(a5)--(a6)--(a2);
\draw[thick] (a5)--(a3)
(a3)--(a6);
\node[right] at (a4) {$4$};
\node[right] at (a5) {$3$};
\node[below] at (a6) {$2$};
\node[left] at (a2) {$1$};
\node[left] at (a3) {$0$};
\end{tikzpicture}%
\hspace{0.1cm} %
\begin{tikzpicture}[x=10mm, y=5mm, label distance = 1mm]
\foreach \x in {2,...,5,6}{
\coordinate (a\x) at (-18-\x*72:1cm) ; 
} 
\draw[thick] (a2)--(a3)--(a4)--(a5)--(a6)--(a2);
\draw[thick] (a4)--(a6)
(a3)--(a6);
\node[right] at (a4) {$4$};
\node[right] at (a5) {$3$};
\node[below] at (a6) {$2$};
\node[left] at (a2) {$1$};
\node[left] at (a3) {$0$};
\end{tikzpicture}
\caption{Triangulations of $\polygon{5}$}\label{fig_triangulation_5gon}
\end{figure}
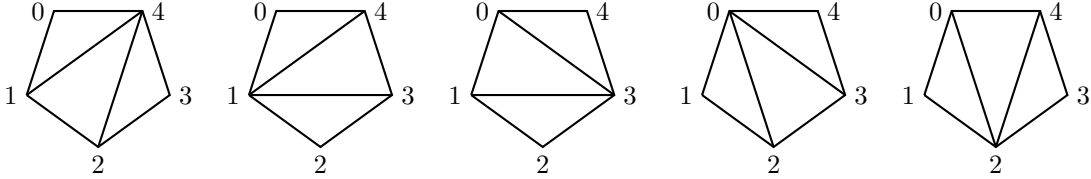

There are many interpretations of Catalan numbers such as binary trees, Dyck paths, and binary operations, see~\cite{Stanley_Catalan}. In the following, we review the correspondence between polygon triangulations and binary trees and then define a toric variety of Catalan type.

A \textit{binary tree} is a rooted plane tree with at most two children at each vertex defined recursively as follows. 
An empty set $\emptyset$ is a binary tree. Otherwise, a binary tree has a root vertex $v$, a left subtree, and a right subtree, both of which are binary trees. 
The set of triangulations~$\mathscr{T}$ of a convex polygon $\polygon{n+2}$ with $n+2$ vertices and the set of binary trees~$\B_\mathscr{T}$ with $n$ vertices have the following bijective connection defined recursively. Assume that $\mathscr{T}$ is a triangulation of $\polygon{n+2}$. For $n=0$, the associated binary tree is an empty set. For $n \geq 1$, we
put a vertex of $\B_\mathscr{T}$ in the interior of each triangle of $\mathscr{T}$. The root vertex of~$\B_\mathscr{T}$ corresponds to the vertex in the triangle having the side $\{0,n+1\}$. Now by deleting the side $\{0,n+1\}$ of the $(n+2)$-gon, we obtain a union of two triangulated polygons which define the left and right subtrees as follows: the polygon having the vertex $0$ defines the left subtree; the polygon having the vertex $n+2$ defines the right subtree.
The binary trees associated with the triangulation of $\polygon{5}$ in Figure~\ref{fig_triangulation_5gon} are shown in Figure~\ref{fig_binary_tree_v3}. The root vertex is the vertex with the additional circle.

\begin{figure}[h]
\begin{tikzpicture}[every node/.style = {circle, fill=black, inner sep = 1.5pt, },
level distance=5mm, 
level 1/.style={sibling distance=7mm}, 
level 2/.style={sibling distance=5mm},
level 3/.style={sibling distance=5mm} 
]
\node[black, draw, circle, inner sep = 1.5 pt, fill=black, double] {}
child[missing] {}
child{
node {}
child[missing] {}
child{
node {}
}
};
\end{tikzpicture}\hspace{1cm}%
\begin{tikzpicture}[every node/.style = {circle, fill=black, inner sep = 1.5pt, },
level distance=5mm, 
level 1/.style={sibling distance=7mm}, 
level 2/.style={sibling distance=5mm},
level 3/.style={sibling distance=5mm} 
]
\node[black, draw, circle, inner sep = 1.5 pt, fill=black, double] {}
child[missing] {}
child{
node {}
child{
node {}
}
child[missing] {} 
};
\end{tikzpicture}%
\hspace{1cm}%
\begin{tikzpicture}[every node/.style = {circle, fill=black, inner sep = 1.5pt, },
level distance=5mm, 
level 1/.style={sibling distance=7mm}, 
level 2/.style={sibling distance=5mm},
level 3/.style={sibling distance=5mm} 
]
\node[black, draw, circle, inner sep = 1.5 pt, fill=black, double] {}
child{
node {}
child[missing] {}
child{
node {}
}
} 
child[missing] {} ;
\end{tikzpicture}\hspace{1cm}%
\begin{tikzpicture}[every node/.style = {circle, fill=black, inner sep = 1.5pt, },
level distance=5mm, 
level 1/.style={sibling distance=7mm}, 
level 2/.style={sibling distance=5mm},
level 3/.style={sibling distance=5mm} 
]
\node[black, draw, circle, inner sep = 1.5 pt, fill=black, double] {}
child{
node {}
child{
node {}
}
child[missing] {} 
} 
child[missing] {} ;
\end{tikzpicture}\hspace{1cm}%
\raisebox{1.5em}{ \begin{tikzpicture}[every node/.style = {circle, fill=black, inner sep = 1.5pt, },
level distance=5mm, 
level 1/.style={sibling distance=7mm}, 
level 2/.style={sibling distance=5mm},
level 3/.style={sibling distance=5mm} 
]
\node[black, draw, circle, inner sep = 1.5 pt, fill=black, double] {}
child{
node {}
} 
child{
node{}
} ;
\end{tikzpicture}}
\caption{Binary trees with three vertices}\label{fig_binary_tree_v3}
\end{figure}
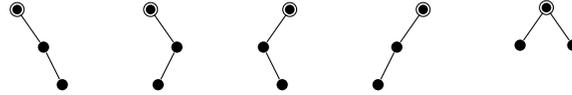

If a binary tree has zero or two children at each vertex, then it is called \emph{full}. 
Similarly to $\B_\mathscr{T}$, a full binary tree $\mathscr{C}_\mathscr{T}$, which contains $\B_\mathscr{T}$ as a subgraph, is associated with a triangulation $\mathscr{T}$ of~$\polygon{n+2}$. Indeed, in addition to the vertices of $\B_\mathscr{T}$, we place a vertex outside the side~$\{i,i+1\}$ of $\polygon{n+2}$ for each $i=0,\dots,n$, and connect it to the vertex in the interior of the triangle having the side~$\{i,i+1\}$ by an edge. This produces the full binary tree $\mathscr{C}_\mathscr{T}$. The root vertex of~ $\mathcal{C}_\mathscr{T}$ is the same as that of $\B_\mathscr{T}$. Moreover, if we delete all leaves of $\mathscr{C}_{\mathscr{T}}$, then we obtain the binary tree $\B_{\mathscr{T}}$. Here, a \emph{leaf} is a vertex with no children. Indeed, the number of vertices of $\mathscr{C}_{\mathscr{T}}$ is $2n+1$.
The full binary trees associated with the triangulation of $\polygon{5}$ in Figure~\ref{fig_triangulation_5gon} are shown in Figure~\ref{fig_full_binary_tree_v3}, where the colored vertices represent the vertices placed outside the polygon.

\begin{figure}[h]
\begin{tikzpicture}[nb/.style = {circle, fill=black, inner sep = 1.5pt, },
nc/.style = {circle, fill=purple!70, inner sep = 1.5pt, },
level distance=5mm, 
level 1/.style={sibling distance=7mm}, 
level 2/.style={sibling distance=5mm},
level 3/.style={sibling distance=5mm} 
]
\node[black, draw, circle, inner sep = 1.5 pt, fill=black, double] {}
child {
node[nc]{}
}
child{
node[nb] {}
child {
node[nc]{}
}
child{
node[nb] {}
child{
node[nc]{}
}
child{
node[nc]{}
}
}
};
\end{tikzpicture}\hspace{1cm}%
\begin{tikzpicture}[nb/.style = {circle, fill=black, inner sep = 1.5pt, },
nc/.style = {circle, fill=purple!70, inner sep = 1.5pt, },
level distance=5mm, 
level 1/.style={sibling distance=7mm}, 
level 2/.style={sibling distance=5mm},
level 3/.style={sibling distance=5mm} 
]
\node[black, draw, circle, inner sep = 1.5 pt, fill=black, double] {}
child{
node[nc] {}
}
child{
node [nb]{}
child{
node [nb]{}
child{
node[nc]{}
}
child{
node[nc]{}
}
}
child{
node[nc] {}
} 
};
\end{tikzpicture}%
\hspace{1cm}%
\begin{tikzpicture}[nb/.style = {circle, fill=black, inner sep = 1.5pt, },
nc/.style = {circle, fill=purple!70, inner sep = 1.5pt, },
level distance=5mm, 
level 1/.style={sibling distance=7mm}, 
level 2/.style={sibling distance=5mm},
level 3/.style={sibling distance=5mm} 
]
\node[black, draw, circle, inner sep = 1.5 pt, fill=black, double] {}
child{
node [nb]{}
child {
node [nc]{}
}
child{
node [nb]{}
child{
node[nc]{}
}
child{
node[nc]{}
}
}
} 
child{
node[nc] {}
} ;
\end{tikzpicture}\hspace{1cm}%
\begin{tikzpicture}[nb/.style = {circle, fill=black, inner sep = 1.5pt, },
nc/.style = {circle, fill=purple!70, inner sep = 1.5pt, },
level distance=5mm, 
level 1/.style={sibling distance=7mm}, 
level 2/.style={sibling distance=5mm},
level 3/.style={sibling distance=5mm} 
]
\node[black, draw, circle, inner sep = 1.5 pt, fill=black, double] {}
child{
node[nb] {}
child{
node [nb]{}
child{
node[nc]{}
}
child{
node[nc]{}
}
}
child{
node[nc]{}
} 
} 
child{
node[nc]{}
} ;
\end{tikzpicture}\hspace{1cm}%
\raisebox{1.5em}{ \begin{tikzpicture}[nb/.style = {circle, fill=black, inner sep = 1.5pt, },
nc/.style = {circle, fill=purple!70, inner sep = 1.5pt, },
level distance=5mm, 
level 1/.style={sibling distance=7mm}, 
level 2/.style={sibling distance=5mm},
level 3/.style={sibling distance=5mm} 
]
\node[black, draw, circle, inner sep = 1.5 pt, fill=black, double] {}
child{
node [nb]{}
child{
node[nc]{}
}
child{
node[nc]{}
}
} 
child{
node[nb]{}
child{
node[nc]{}
}
child{
node[nc]{}
}
} ;
\end{tikzpicture}}
\caption{Full binary trees with three vertices}\label{fig_full_binary_tree_v3}
\end{figure}
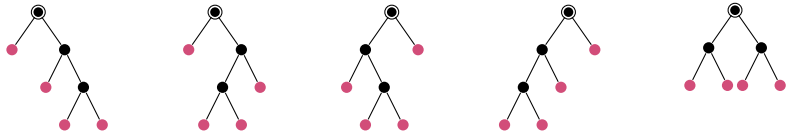

Now we introduce the left tree and the right tree of a triangulation $\mathscr{T}$ of $\polygon{n+2}$ using the corresponding full binary tree $\mathcal{C}_\mathscr{T}$. The \textit{left tree} $\lT$ (resp., \textit{right tree} $\rT$) consists of the sides in $\mathscr{T}$ intersecting with an edge of $\mathcal{C}_\mathscr{T}$ connecting a vertex and its left child (resp., right child). See Figure~\ref{fig_atom_and_coatom_together}.
\lee{The graphs $\lT$ and $\rT$ are indeed \emph{trees} as shown in~\cite[Lemma~4.4]{LMP_Catalan}.} 
\begin{figure}[H]
\begin{center}
\begin{tikzpicture}[x=10mm, y=5mm, label distance = 1mm, scale = 0.8]
\foreach \x in {1,2,...,11}{
\coordinate (\x) at (3*36-\x*36:3cm) ; 
}

\node[above] at (11) {$9$};
\foreach \x/\y in {2/8,3/7,4/6}{
\node[right] at (\x) {$\y$};
}
\foreach \x/\y in {5/5,6/4}{
\node[below] at (\x) {$\y$};
}
\foreach \x/\y in {7/3,8/2,9/1}{
\node[left] at (\x) {$\y$};
}
\node[above] at (10) {$0$};

\draw[red, dashed,ultra thick ] 
(1) edge (2) 
(1) edge (3)
(1) edge (8)
(3) edge (4)
(5) edge (6)
(8) edge (9)
(4) edge (5)
(4) edge (7);
\draw[blue, ultra thick] (10) edge (8)
(10) edge (9)
(8) edge (7)
(8) edge (4)
(8) edge (3)
(7) edge (6)
(7) edge (5)
(3) edge (2) ;
\begin{scope}[every node/.style={opacity=0.6}]
\node[black!70, draw, circle, inner sep = 1.5 pt, fill=black!70, double] (v1) at (-6.5*36:2cm) {};
\node[black!70, draw, circle, inner sep = 1.5pt, fill=black!70] (v2) at (-6*36:2.6cm) {};
\node[black!70, draw, circle, inner sep = 1.5pt, fill=black!70] (v3) at (2.5*36:1cm) {}; 
\node[black!70, draw, circle, inner sep = 1.5pt, fill=black!70] (v4) at (1*36:2.6cm) {};
\node[black!70, draw, circle, inner sep = 1.5pt, fill=black!70] (v5) at (-0.5*36:2cm) {};
\node[black!70, draw, circle, inner sep = 1.5pt, fill=black!70] (v6) at (-4*36:2cm) {};
\node[black!70, draw, circle, inner sep = 1.5pt, fill=black!70] (v7) at (-2*36:2.3cm) {};
\node[black!70, draw, circle, inner sep = 1.5pt, fill=black!70] (v8) at (-3*36:2.6cm) {}; 
\foreach \x/\y in { 2/8, 3/7, 4/6, 5/5, 6/4, 7/3, 8/2, 9/1, 10/0}{
\node[purple!70, draw, circle, inner sep = 1.5pt, fill=purple!70] (vv\x) at (3*36-\x*36+18:3.3cm) {}; 
} 
\draw[black!70, thick] (v1)--(v2)
(v1)--(v3)
(v3)--(v4)
(v3)--(v5)
(v5)--(v6)
(v6)--(v7)
(v7)--(v8); %
\draw[black!70, thick] (v2)--(vv10)
(v2)--(vv9)
(v4)--(vv2)
(v4)--(vv3)
(v5)--(vv4)
(v6)--(vv8)
(v7)--(vv5)
(v8)--(vv6)
(v8)--(vv7);
\end{scope}
\end{tikzpicture}
\end{center}
\caption{The left tree (colored in blue) and the right tree (colored in red and dashed) of a triangulation of $\polygon{10}$. Here, we color the vertices $V(\mathcal{C}_\mathscr{T}) \setminus V(\mathcal{B}_\mathscr{T})$ purple.}\label{fig_atom_and_coatom_together}
\end{figure}
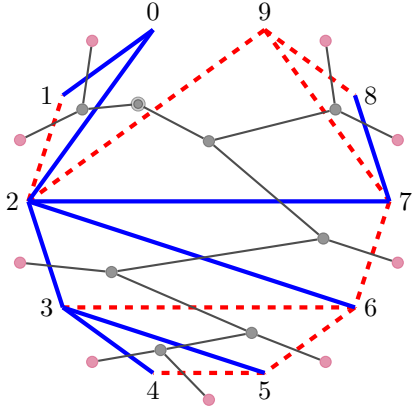

From the definition of the left and the right trees, 
we obtain the following lemma.
\begin{lemma}[{\cite[Lemma~4.5]{LMP_Catalan}}]\label{lemma_uniqueness_kl_k_kr}
Let $\mathscr{T}$ be a triangulation of $\polygon{n+2}$.
For each $1 \leq k \leq n$, there is only one edge $\{k_{\ll}, k\}$ with $k_{\ll} < k$ in the left tree $\lT$ and similarly there is only one edge $\{k, k_{\rr}\}$ with $k < k_{\rr}$ in the right tree $\rT$. Moreover, $k_{\ll}, k, k_{\rr}$ are the vertices of a triangle in $\mathscr{T}$. 
\end{lemma}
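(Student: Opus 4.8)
The plan is to prove the lemma by induction on $n$, using the recursive structure of polygon triangulations and its compatibility with the construction of $\mathscr{C}_\mathscr{T}$, $\lT$ and $\rT$. First I would reformulate the statement: since $\{1,\dots,n\}$ has exactly $n$ elements, the claim about $\lT$ is equivalent to asserting that the map $\lT\to\{1,\dots,n\}$ sending an edge $\{a,b\}$ with $a<b$ to its larger endpoint $b$ is a bijection, and the claim about $\rT$ is equivalent to asserting that $\{a,b\}\mapsto a$ is a bijection $\rT\to\{1,\dots,n\}$; here $k_\ll$ (resp.\ $k_\rr$) is then the other endpoint of the unique such edge. This is consistent with the global shape of $\mathscr{C}_\mathscr{T}$: it is a full binary tree whose $n$ internal vertices are the triangles of $\mathscr{T}$, so it has exactly $n$ left-child edges and $n$ right-child edges, each crossed by a single side of $\mathscr{T}$; hence $|\lT|=|\rT|=n$ to begin with.

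For the inductive step I would write the root triangle of $\mathscr{T}$ (the one containing the side $\{0,n+1\}$) as $\{0,j,n+1\}$ with $1\le j\le n$. The chord $\{0,j\}$ cuts off a triangulation $\mathscr{T}_L$ of the sub-polygon on the vertices $\{0,1,\dots,j\}$, and the chord $\{j,n+1\}$ cuts off a triangulation $\mathscr{T}_R$ of the sub-polygon on $\{j,j+1,\dots,n+1\}$, where $\mathscr{T}_L$ (resp.\ $\mathscr{T}_R$) degenerates to a $2$-gon when $j=1$ (resp.\ $j=n$). Because the construction of $\mathscr{C}_\mathscr{T}$ is itself recursive, the left subtree of $\mathscr{C}_\mathscr{T}$ is $\mathscr{C}_{\mathscr{T}_L}$ with $\{0,j\}$ in the role of its distinguished top side, the right subtree is $\mathscr{C}_{\mathscr{T}_R}$ with top side $\{j,n+1\}$, the root's left-child edge is crossed by $\{0,j\}$, and its right-child edge is crossed by $\{j,n+1\}$. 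Consequently
\[
\lT=\{\{0,j\}\}\ \sqcup\ (\text{left tree of }\mathscr{T}_L)\ \sqcup\ (\text{left tree of }\mathscr{T}_R),
\]
and likewise $\rT=\{\{j,n+1\}\}\sqcup(\text{right tree of }\mathscr{T}_L)\sqcup(\text{right tree of }\mathscr{T}_R)$.

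Then I would apply the inductive hypothesis to $\mathscr{T}_L$ (whose vertices $0,\dots,j$ keep their labels) and to $\mathscr{T}_R$ (after relabelling its vertices $j,\dots,n+1$ as $0,\dots,n+1-j$). This shows that the larger endpoints occurring among the edges of the left tree of $\mathscr{T}_L$ are precisely $\{1,\dots,j-1\}$, each once, that those of the left tree of $\mathscr{T}_R$ are precisely $\{j+1,\dots,n\}$, each once, and that $\{0,j\}$ contributes the single larger endpoint $j$. Since $\{1,\dots,j-1\}\sqcup\{j\}\sqcup\{j+1,\dots,n\}=\{1,\dots,n\}$, the map ``larger endpoint'' is a bijection $\lT\to\{1,\dots,n\}$, and the same argument with smaller endpoints handles $\rT$. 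For the triangle assertion, if $k\ne j$ then both $\{k_\ll,k\}$ and $\{k,k_\rr\}$ lie in $\mathscr{T}_L$, or both lie in $\mathscr{T}_R$ (one checks this from the ranges of endpoints above), so by the inductive hypothesis $\{k_\ll,k,k_\rr\}$ is a triangle of $\mathscr{T}_L\subseteq\mathscr{T}$ or of $\mathscr{T}_R\subseteq\mathscr{T}$; and for $k=j$ we get $j_\ll=0$, $j_\rr=n+1$, so $\{j_\ll,j,j_\rr\}=\{0,j,n+1\}$ is the root triangle. The base case $n=0$ (a $2$-gon, $\lT=\rT=\emptyset$, statement vacuous) starts the induction; one can also verify $n=1$ (the triangle $\{0,1,2\}$ with $\lT=\{\{0,1\}\}$ and $\rT=\{\{1,2\}\}$) by hand as a sanity check.

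The step that needs genuine care is the compatibility claim in the inductive step: that the left, resp.\ right, subtree of $\mathscr{C}_\mathscr{T}$ is \emph{exactly} $\mathscr{C}_{\mathscr{T}_L}$, resp.\ $\mathscr{C}_{\mathscr{T}_R}$, with the stated top sides, and in particular that it is the \emph{left} child of the root whose edge is crossed by $\{0,j\}$ rather than by $\{j,n+1\}$. This is where the planar-embedding convention enters --- drawing $\polygon{n+2}$ with vertex $0$ on the left and $n+1$ on the right, so that the sub-polygon on $\{0,\dots,j\}$ sits to the left --- and the degenerate cases $j=1$ and $j=n$, in which a child of the root is a leaf placed outside the polygon side $\{0,1\}$ or $\{n,n+1\}$, must be seen to obey the same formulas. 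Everything else is bookkeeping with the disjoint decomposition of $\{1,\dots,n\}$. Conceptually, the lemma records that the internal vertices of $\mathscr{C}_\mathscr{T}$, read in in-order, are the triangles of $\mathscr{T}$ listed in the order of their middle vertices $1,2,\dots,n$, with the left-child edge of the triangle having middle vertex $k$ crossing the edge between its two smaller vertices and the right-child edge crossing the edge between its two larger vertices.
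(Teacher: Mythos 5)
Your proof is correct. Note that this chapter states the lemma without proof (it is quoted from the cited paper), so there is nothing here to match line by line; your induction on the recursive structure of the triangulation is a sound, self\--contained argument. The decomposition $\lT=\{\{0,j\}\}\sqcup(\text{left tree of }\mathscr{T}_L)\sqcup(\text{left tree of }\mathscr{T}_R)$ is legitimate because every child edge of $\mathscr{C}_{\mathscr{T}}$ crosses exactly one side of $\mathscr{T}$ and distinct child edges cross distinct sides, and the compatibility claim you rightly single out does hold under the paper's conventions: for a triangle with vertices $a<b<c$ whose parent side is $\{a,c\}$, the left child lies across $\{a,b\}$ and the right child across $\{b,c\}$ (one can confirm this against Figures~\ref{fig_triangulation_5gon} and~\ref{fig_binary_tree_v3}), and the left/right designation of children is preserved on passing to a sub\--polygon because it depends only on the planar embedding and on which side faces the parent, both of which are inherited. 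What your induction really establishes along the way is the classical fact that the map sending a triangle of $\mathscr{T}$ to its middle vertex is a bijection onto $[n]$; granting the orientation convention just stated, the lemma follows at once from this bijection (the triangle with middle vertex $k$ contributes the unique edge of $\lT$ with larger endpoint $k$ and the unique edge of $\rT$ with smaller endpoint $k$, and $k_{\ll},k,k_{\rr}$ are its three vertices), so a direct, non\--inductive proof is equally available and is essentially what your closing remark about the in\--order reading of $\mathscr{C}_{\mathscr{T}}$ encodes.
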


For the triangulation $\mathscr{T}$ in Figure~\ref{fig_atom_and_coatom_together}, we display the vertices $k_{\ll}$ and $k_{\rr}$ for each $1 \leq k \leq n$ in the following table:
\begin{center}
\begin{tabular}{c|cccccccc}
\toprule 
$k$ & $1$ & $2$ & $3$ & $4$ & $5$ & $6$ & $7$ & $8$ \\
\midrule 
$k_{\ll}$ & $0$ & $0$ & $2$ & $3$ & $3$ & $2$ & $2$ & $7$\\
$k_{\rr}$ & $2$ & $9$ & $6$ & $5$ & $6$ & $7$ & $9$ & $9$\\
\bottomrule
\end{tabular}
\end{center}
For each $k=1,\dots,n$, we define
\begin{enumerate}
\item $\mathbf{p}_k=\mathbf{e}_{k_{\ll}+1}-\mathbf{e}_{k+1}$,
\item $\mathbf{q}_k=-\mathbf{e}_k+\mathbf{e}_{k_{\rr}}$,
\end{enumerate}
where $\{\mathbf{e}_1,\dots,\mathbf{e}_{n+1}\}$ is the standard basis of $\Z^{n+1}$. The vectors $\mathbf{p}_1,\dots, \mathbf{p}_n$ (similarly, $\mathbf{q}_1,\dots,\mathbf{q}_n$) form a basis of the $n$-dimensional lattice 
\[
M=\{(x_1,\dots,x_{n+1})\in \Z^{n+1}\mid x_1+\dots+x_{n+1}=0\}. 
\]

Through the dot product on $\Z^{n+1}$, the dual lattice $N$ of~$M$ can be identified with the quotient lattice of $\Z^{n+1}$ by the sublattice generated by $(1,\dots,1)$, i.e.,  
\[
N=\Z^{n+1}/\Z(1,\dots,1).
\]
Let $\varpi_i$ $(i=1,\dots,n)$ be the quotient image of $\mathbf{e}_1+\dots+\mathbf{e}_i$ in $N$. Then $\{\varpi_1,\dots,\varpi_n\}$ is a basis of~$N$. For convenience, we set $\varpi_0=\varpi_{n+1}=\mathbf{0}$. 

To each side $\{a,b\}$ in the triangulation $\mathscr{T}$ with $a <b$, we assign the vector $\varpi_a-\varpi_b$ and denote it by $\mathbf{v}_a$ when $\{a,b\} \in \rT$ and $\mathbf{w}_b$ when $\{a,b\} \in \lT$, in other words, 
\begin{equation*}\label{eq:defvkwk}
\begin{split}
\mathbf{v}_{k} &= \varpi_{k}- \varpi_{k_{\rr}},\\ 
\mathbf{w}_k &= \varpi_{k_{\ll}} - \varpi_{k}, 
\end{split}
\end{equation*}
for $k=1,\dots,n$ by Lemma~\ref{lemma_uniqueness_kl_k_kr}.
Note that the zero vector $\mathbf{0}$ is assigned to the distinguished side ${\{0,n+1\}}$ because $\varpi_0=\varpi_{n+1}=\mathbf{0}$. 
Then for the paring $\langle\ ,\ \rangle$ between $N$ and $M$ induced from the dot product on~$\Z^{n+1}$, we have 
\[
\langle\mathbf{v}_i,\mathbf{p}_j\rangle=\langle \mathbf{w}_i,\mathbf{q}_j\rangle=\delta_{ij},
\]
where $\delta_{ij}$ denotes the Kronecker delta. 
See Figure~\ref{fig_vectors_left_right_trees} for the assignment of the vectors $\mathbf{w}_k$'s and $\mathbf{v}_k$'s to the left and right trees of the triangulation in Figure~\ref{fig_atom_and_coatom_together}. One can check that $\langle\mathbf{v}_i,\mathbf{p}_j\rangle=\langle \mathbf{w}_i,\mathbf{q}_j\rangle=\delta_{ij}$ using Table~\ref{table1}.

\begin{table}[H]
\begin{varwidth}[b]{0.55\linewidth}
\centering
\begin{tabular}{c|c|c|c|cl}
\toprule
$k$ & $\mathbf{p}_k$ & $\mathbf{q}_k$ & $\mathbf{v}_k$ & $\mathbf{w}_k$ \\
\midrule 
$1$ & $\mathbf{e}_1-\mathbf{e}_2$ & $-\mathbf{e}_1+\mathbf{e}_2$ & $\varpi_1 - \varpi_2$ & $-\varpi_1$ \\
$2$ & $\mathbf{e}_1-\mathbf{e}_3$ & $-\mathbf{e}_2+\mathbf{e}_9$ & $\varpi_2$ & $ -\varpi_2$ \\
$3$ & $\mathbf{e}_3-\mathbf{e}_4$ & $-\mathbf{e}_3+\mathbf{e}_6$ & $\varpi_3 - \varpi_6$ & $\varpi_2 - \varpi_3$ \\
$4$ & $\mathbf{e}_4-\mathbf{e}_5$ & $-\mathbf{e}_4+\mathbf{e}_5$ & $\varpi_4 - \varpi_5$ & $\varpi_3 - \varpi_4$ \\
$5$ & $\mathbf{e}_4-\mathbf{e}_6$ & $-\mathbf{e}_5+\mathbf{e}_6$ & $\varpi_5 - \varpi_6 $ & $\varpi_3 - \varpi_5$ \\
$6$ & $\mathbf{e}_3-\mathbf{e}_7$ & $-\mathbf{e}_6+\mathbf{e}_7$ & $\varpi_6 - \varpi_7$ & $\varpi_2 - \varpi_6$ \\
$7$ & $\mathbf{e}_3-\mathbf{e}_8$ & $-\mathbf{e}_7+\mathbf{e}_9$ & $\varpi_7$ & $\varpi_2 - \varpi_7$\\
$8$ & $\mathbf{e}_8-\mathbf{e}_9$ & $-\mathbf{e}_8+\mathbf{e}_9$ & $\varpi_8$ & $\varpi_7 - \varpi_8$ \\
\bottomrule
\end{tabular}
\caption{Vectors $\mathbf{p}_k, \mathbf{q}_k, \mathbf{v}_k, \mathbf{w}_k$ for the triangulation in Figure~\ref{fig_atom_and_coatom_together}}
\label{table1}
\end{varwidth}%
\hfill
\begin{minipage}[b]{0.45\linewidth}
\centering
\begin{tikzpicture}[x=10mm, y=5mm, label distance = 1mm, scale = 0.68]
\foreach \x in {1,2,...,11}{
\coordinate (\x) at (3*36-\x*36:3cm) ; 
} 

\node[above] at (11) {$9$};
\foreach \x/\y in {2/8,3/7,4/6}{
\node[right] at (\x) {$\y$};
}
\foreach \x/\y in {5/5,6/4}{
\node[below] at (\x) {$\y$};
}
\foreach \x/\y in {7/3,8/2,9/1}{
\node[left] at (\x) {$\y$};
}
\node[above] at (10) {$0$};
\draw[blue, thick] (10)--(9) node[left, midway, black]{$\mathbf{w}_1$};
\draw[blue, thick] (10)--(8) node[right, midway, black]{$\mathbf{w}_2$};
\draw[blue, thick] (8)--(7) node[left, midway, black]{$\mathbf{w}_3$};
\draw[blue, thick] (8)--(4) node[left, midway, black]{$\mathbf{w}_6$};
\draw[blue, thick] (8)--(3) node[below, midway, black]{$\mathbf{w}_7$};
\draw[blue, thick] (7)--(6) node[left, midway, black]{$\mathbf{w}_4$};
\draw[blue, thick] (7)--(5) node[right, midway, black]{$\mathbf{w}_5$};
\draw[blue, thick] (3)--(2) node[right, midway, black]{$\mathbf{w}_8$};
\draw[red, dashed, thick] (1)--(2) node[right, midway, black]{$\mathbf{v}_8$};
\draw[red, dashed, thick] (1)--(3) node[left, midway, black]{$\mathbf{v}_7$};
\draw[red, dashed, thick] (1)--(8) node[right, midway, black]{$\mathbf{v}_2$};
\draw[red, dashed, thick] (3)--(4) node[right, midway, black]{$\mathbf{v}_6$};
\draw[red, dashed, thick] (5)--(6) node[below, midway, black]{$\mathbf{v}_4$};
\draw[red, dashed, thick] (8)--(9) node[left, midway, black]{$\mathbf{v}_1$};
\draw[red, dashed, thick] (4)--(5) node[right, midway, black]{$\mathbf{v}_5$};
\draw[red, dashed, thick] (4)--(7) node[above, midway, black]{$\mathbf{v}_3$};
\end{tikzpicture}

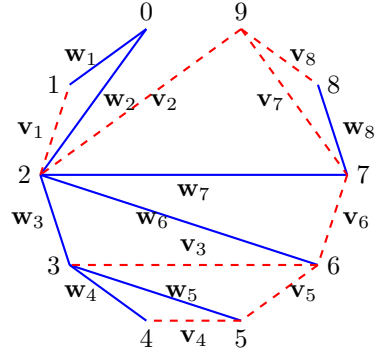
\captionof{figure}{Vectors assigned to the sides of $\mathscr{T}$ in Figure~\ref{fig_atom_and_coatom_together}}\label{fig_vectors_left_right_trees}
\end{minipage}
\end{table}

\begin{lemma}[{\cite[Lemma 6.1]{LMP_Catalan}}] \label{lemm:fan}
The collection of cones spanned by $\mathbf{u}_i$ $(i\in I)$, where $\mathbf{u}_i$ is either $\mathbf{v}_i$ or $\mathbf{w}_i$ and $I$ runs over all subsets of $[n]$, forms a complete non-singular fan $\Sigma_\mathscr{T}$ in $N\otimes \R$. 
\end{lemma}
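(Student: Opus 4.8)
The plan is to prove Lemma~\ref{lemm:fan} by induction on $n$, peeling off the leaves of the full binary tree $\mathcal{C}_\mathscr{T}$ (equivalently, ``ears'' of the triangulation $\mathscr{T}$). For $n=1$ the polygon $\polygon{3}$ is a triangle, $N\otimes\R$ is $1$-dimensional, the two rays are $\mathbf{v}_1=\varpi_1$ and $\mathbf{w}_1=-\varpi_1$, and the fan $\Sigma_\mathscr{T}=\{\mathbf{0},\R_{\ge0}\varpi_1,\R_{\le0}\varpi_1\}$ is obviously complete and nonsingular. For the inductive step I would pick a vertex $j\in\{1,\dots,n\}$ of $\polygon{n+2}$ which is the apex of an ear, i.e.\ $\{j-1,j,j+1\}$ is a triangle of $\mathscr{T}$; such a $j$ always exists because any triangulation of a polygon with $\ge 4$ vertices has at least two ears, and we can choose one not using the distinguished side $\{0,n+1\}$. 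Removing this triangle yields a triangulation $\mathscr{T}'$ of $\polygon{n+1}$ on the vertices $\{0,\dots,n+1\}\setminus\{j\}$ (relabelled $0,\dots,n$), to which the inductive hypothesis applies, producing a complete nonsingular fan $\Sigma_{\mathscr{T}'}$ in $N'\otimes\R$ where $N'=\Z^{n}/\Z(1,\dots,1)$.

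The key step is to understand how the vectors $\mathbf{v}_k,\mathbf{w}_k$ for $\mathscr{T}$ relate to those for $\mathscr{T}'$ under the lattice inclusion/projection relating $N$ and $N'$. By Lemma~\ref{lemma_uniqueness_kl_k_kr}, the ear vertex $j$ contributes exactly one left-tree edge and one right-tree edge incident in the appropriate way; in the full binary tree $\mathcal{C}_\mathscr{T}$ the vertex sitting in the ear triangle is a leaf, so its two children are the ``outside'' vertices across the sides $\{j-1,j\}$ and $\{j,j+1\}$, and one checks directly from the definitions that $\{\mathbf{v}_j,\mathbf{w}_j\}$ (or the relevant pair, depending on which of $j-1,j,j+1$ plays the role of $k_\ll,k,k_\rr$) spans the ``new'' coordinate direction $\varpi_{j-1}-\varpi_j$ or its negative, while all other $\mathbf{v}_k,\mathbf{w}_k$ are the images of the corresponding vectors for $\mathscr{T}'$ under the natural splitting $N\otimes\R\cong (N'\otimes\R)\oplus\R(\varpi_{j-1}-\varpi_j)$. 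Thus $\Sigma_\mathscr{T}$ is precisely the ``star subdivision-free'' refinement in which each maximal cone $\sigma'$ of $\Sigma_{\mathscr{T}'}$ is replaced by the two cones $\sigma'+\R_{\ge0}(\varpi_{j-1}-\varpi_j)$ and $\sigma'+\R_{\le0}(\varpi_{j-1}-\varpi_j)$; more precisely one checks that a subset $\{\mathbf{u}_i : i\in I\}$ with $\mathbf{u}_i\in\{\mathbf{v}_i,\mathbf{w}_i\}$ spans a cone of $\Sigma_\mathscr{T}$ iff its restriction to $I\setminus\{j\}$ spans a cone of $\Sigma_{\mathscr{T}'}$, with no compatibility constraint coming from $i=j$. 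Completeness of $\Sigma_\mathscr{T}$ then follows from completeness of $\Sigma_{\mathscr{T}'}$ because $N\otimes\R=(N'\otimes\R)\times\R$ is covered by products of cones of $\Sigma_{\mathscr{T}'}$ with the two half-lines $\R_{\ge0}(\varpi_{j-1}-\varpi_j)$, $\R_{\le0}(\varpi_{j-1}-\varpi_j)$; nonsingularity follows because each maximal cone of $\Sigma_\mathscr{T}$ is spanned by a basis of $N'$ together with the primitive vector $\pm(\varpi_{j-1}-\varpi_j)$, which completes a basis of $N$.

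The main obstacle I anticipate is bookkeeping: correctly tracking, across the deletion of the ear, which of $\mathbf{v}_k$ and $\mathbf{w}_k$ changes and which stays the same, and verifying that the vertices $k_\ll,k,k_\rr$ of triangles not containing $j$ are unaffected by the relabelling (so that the $\mathbf{p}_k,\mathbf{q}_k$ and hence the pairing $\langle\mathbf{v}_i,\mathbf{p}_j\rangle=\langle\mathbf{w}_i,\mathbf{q}_j\rangle=\delta_{ij}$ transport correctly). A clean way to avoid case analysis is to prove directly, without induction, that the $2^n$ cones span $N\otimes\R$ and have disjoint interiors: one can identify the maximal cone associated to a choice function $c:[n]\to\{\ll,\rr\}$ with the normal cone at a vertex of the Bruhat interval polytope of the associated toric Richardson variety of Catalan type, using the fact (from Theorem~\ref{thm_char_matrix_of_toric_Schubert} and the results of Subsection~\ref{sec:Conditions on v and w}) that a proper minimal expression gives a cube whose normal fan is exactly this collection. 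If that identification is made precise, Lemma~\ref{lemm:fan} reduces to the already-established statement that $\Q^v_w$ is a cube, hence its normal fan is complete and nonsingular; but since the lemma is meant to be the combinatorial input to that identification, I would present the self-contained inductive argument above and relegate the polytope interpretation to a remark.
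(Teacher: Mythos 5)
Since the survey quotes this lemma from \cite[Lemma~6.1]{LMP_Catalan} without reproducing a proof, I will assess your induction on its own terms. The skeleton is sound: an ear with apex $j\in[n]$ exists for every $n\ge 2$ (the two candidate ears at $0$ and at $n+1$ would both have to contain the polygon side $\{0,n+1\}$, so at most one of them occurs, while a triangulation has at least two ears); for such an ear one has $j_{\ll}=j-1$ and $j_{\rr}=j+1$, hence $\mathbf{w}_j=\varpi_{j-1}-\varpi_j$ and $\mathbf{v}_j=\varpi_j-\varpi_{j+1}$; and deleting the ear changes no other $k_{\ll},k_{\rr}$ nor any left/right designation, so under the embedding $\iota\colon N'\otimes\R\hookrightarrow N\otimes\R$ sending $\varpi'_{\phi(a)}$ to $\varpi_a$ (where $\phi$ is the relabelling of vertices) every $\mathbf{v}_k,\mathbf{w}_k$ with $k\ne j$ is the image of the corresponding vector for $\mathscr{T}'$, and $N=\iota(N')\oplus\Z\varpi_j$. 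Unimodularity of each maximal cone follows at once from the inductive hypothesis.

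The step that does not survive scrutiny as written is completeness. You assert that each maximal cone $\sigma'$ of $\Sigma_{\mathscr{T}'}$ is replaced by $\sigma'+\R_{\ge0}(\varpi_{j-1}-\varpi_j)$ and $\sigma'+\R_{\le0}(\varpi_{j-1}-\varpi_j)$, so that $N\otimes\R$ is covered by products of cones of $\Sigma_{\mathscr{T}'}$ with two opposite half-lines. But the two cones of $\Sigma_{\mathscr{T}}$ over $\iota(\sigma')$ are $\iota(\sigma')+\R_{\ge0}\mathbf{w}_j$ and $\iota(\sigma')+\R_{\ge0}\mathbf{v}_j$, and $\mathbf{v}_j\ne-\mathbf{w}_j$ in general: $\mathbf{v}_j+\mathbf{w}_j=\varpi_{j-1}-\varpi_{j+1}$, which vanishes only when $\{j-1,j+1\}=\{0,n+1\}$, i.e. $n=1$. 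So these two cones are not the two halves of a slab, and the covering of $N\otimes\R$ by the products does not directly yield covering by the cones of $\Sigma_{\mathscr{T}}$. The repair is short: since $j_{\ll}=j-1$ and $j_{\rr}=j+1$ we have $\mathbf{p}_j=-\mathbf{q}_j=\mathbf{e}_j-\mathbf{e}_{j+1}$, so the stated duality relations give $\langle\mathbf{v}_i,\mathbf{p}_j\rangle=\delta_{ij}$ and $\langle\mathbf{w}_i,\mathbf{p}_j\rangle=-\delta_{ij}$; thus $f=\langle\,\cdot\,,\mathbf{p}_j\rangle$ vanishes on $\iota(N'\otimes\R)$ and equals $+1$ on $\mathbf{v}_j$ and $-1$ on $\mathbf{w}_j$. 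Given $x$ with $f(x)=t\ge0$, the point $x-t\mathbf{v}_j$ lies in $\iota(N'\otimes\R)$, hence in some $\iota(\sigma')$ by induction, so $x\in\iota(\sigma')+\R_{\ge0}\mathbf{v}_j$; the case $t\le0$ is symmetric with $\mathbf{w}_j$. The same decomposition (the $\mathbf{v}_j$- or $\mathbf{w}_j$-coefficient of any point in a cone equals $|f(x)|$) gives disjointness of interiors and that any two cones meet in a common face --- a point you should verify explicitly, since linear independence of every maximal generating set does not by itself make the collection a fan. With this correction your induction closes and yields a self-contained proof; your closing remark that the identification with normal fans of cubical Bruhat interval polytopes would be circular here is correct.
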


The above lemma says that the underlying simplicial complex of the fan $\Sigma_\mathscr{T}$ is the boundary complex of an $n$-dimensional cross-polytope. It is known from~\cite[Corollary~3.5]{MasudaPanov08} that such a fan is indeed the normal fan of an $n$-cube and the smooth compact toric variety $X(\Sigma_\mathscr{T})$ associated with the fan $\Sigma_\mathscr{T}$ is a Bott manifold (cf. Proposition~\ref{proposition_MasudaPanov}).

\begin{definition}
Since $\Sigma_\mathscr{T}$ is associated with a polygon triangulation $\mathscr{T}$, we say that the fan $\Sigma_\mathscr{T}$ and the corresponding (smooth compact) toric variety $X(\Sigma_\mathscr{T})$ are of \emph{Catalan type}. 
\end{definition}

Since the vertices $k_{\ll},k,k_{\rr}$ form a triangle for each $k\in [n]$, there is a unique $k_0\in [n]$ such that $\mathbf{v}_{k_0} + \mathbf{w}_{k_0} = \mathbf{0}$, and for $k\in [n]\backslash\{k_0\}$ we have 
\begin{equation*} \label{eq:vkwksum}
\mathbf{v}_k + \mathbf{w}_k = \begin{cases} \mathbf{v}_{k_{\ll}} & \text{ if } \{ k_{\ll}, k_{\rr}\} \in E(\rT), \\
\mathbf{w}_{k_{\rr}} & \text{ if } \{k_{\ll}, k_{\rr}\} \in E(\lT).
\end{cases}
\end{equation*} 
Note that $k_0$ is the remaining vertex of the triangle containing the distinguished side $\{0, n+1\}$. See Figure~\ref{fig_vectors_left_right_trees} and~\cite[Lemma 5.1]{LMP_Catalan} for more details. Hence the set of primitive collections of $\Sigma_\mathscr{T}$ is
\begin{equation*}
\{ \{ \mathbf{v}_k, \mathbf{w}_k \} \mid k \in [n] \}.
\end{equation*}
\lee{(See Definition~\ref{def_primitive} for definition of primitive collections.)}
Then Batyrev's criterion (\cite[Proposition 2.3.6]{Batyrev}, see also Appendix~\ref{sec_Fano_Bott}) implies the following:

\begin{lemma} [{\cite[Lemma 6.4]{LMP_Catalan}}]\label{lemm:Fano}
The toric variety $X(\Sigma_\mathscr{T})$ is Fano. 
\end{lemma}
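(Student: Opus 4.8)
The plan is to invoke Batyrev's criterion for Fano toric varieties (see Appendix~\ref{sec_Fano_Bott}) in the form: a smooth complete toric variety is Fano if and only if, for every primitive collection $\mathcal{P} = \{\mathbf{u}_{i_1},\dots,\mathbf{u}_{i_s}\}$ of its fan, the vector $\sum_{j} \mathbf{u}_{i_j}$ lies in the relative interior of the cone spanned by the rays of the (unique) cone $\sigma(\mathcal{P})$ containing it, with all coefficients being \emph{strictly positive}; equivalently, writing $\sum_j \mathbf{u}_{i_j} = \sum_{k} c_k \mathbf{u}_k$ over the rays $\mathbf{u}_k$ of $\sigma(\mathcal{P})$, one has $c_k > 0$ for each $k$ appearing (in particular $\sigma(\mathcal{P})$ is a genuine cone of the fan, not the whole ambient space, and is attained). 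Since Lemma~\ref{lemm:fan} already gives that $\Sigma_\mathscr{T}$ is a complete non-singular fan whose underlying complex is the boundary of a cross-polytope, and since we have identified the primitive collections of $\Sigma_\mathscr{T}$ as exactly the pairs $\{\mathbf{v}_k,\mathbf{w}_k\}$ for $k\in[n]$, it suffices to analyze $\mathbf{v}_k + \mathbf{w}_k$ for each $k$.

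First I would recall the explicit description of $\mathbf{v}_k + \mathbf{w}_k$ established just above the statement: there is a distinguished index $k_0 \in [n]$ — the third vertex of the triangle of $\mathscr{T}$ containing the side $\{0,n+1\}$ — for which $\mathbf{v}_{k_0} + \mathbf{w}_{k_0} = \mathbf{0}$, and for every other $k \in [n]\setminus\{k_0\}$ one has
\[
\mathbf{v}_k + \mathbf{w}_k = \begin{cases} \mathbf{v}_{k_{\ll}} & \text{if } \{k_{\ll},k_{\rr}\}\in E(\rT),\\ \mathbf{w}_{k_{\rr}} & \text{if } \{k_{\ll},k_{\rr}\}\in E(\lT).\end{cases}
\]
For $k = k_0$ the primitive collection $\{\mathbf{v}_{k_0},\mathbf{w}_{k_0}\}$ sums to $\mathbf{0}$, which is the cone $\{0\}$ of the fan, and the (empty) positivity condition of Batyrev's criterion is vacuously satisfied — this is precisely the ``weak Fano but not Fano can fail'' borderline case, and here it is fine because the sum is exactly $\mathbf{0}$. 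For $k \neq k_0$, the sum $\mathbf{v}_k+\mathbf{w}_k$ equals a \emph{single} generator of $\Sigma_\mathscr{T}$ (either $\mathbf{v}_{k_{\ll}}$ or $\mathbf{w}_{k_{\rr}}$), hence lies on a ray of the fan with coefficient $1 > 0$; so the criterion is satisfied for that primitive collection as well. I would then need to check that the relevant one-element index set $\{k_{\ll}\}$ (resp. $\{k_{\rr}\}$) is genuinely a ray of $\Sigma_\mathscr{T}$ and that the chosen generator on it is consistent with a cone of the fan disjoint from $\{\mathbf{v}_k,\mathbf{w}_k\}$ — this follows from Lemma~\ref{lemma_uniqueness_kl_k_kr} and the cross-polytope structure, since $k_{\ll}, k_{\rr} \neq k$.

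The main obstacle — really the only non-formal point — is confirming that the relation $\mathbf{v}_k + \mathbf{w}_k = \mathbf{v}_{k_{\ll}}$ or $\mathbf{w}_{k_{\rr}}$ holds with the stated case distinction, i.e.\ that one must cite \cite[Lemma 5.1]{LMP_Catalan} for the identity of the third vertex of the triangle and the combinatorics of which of $k_{\ll}, k_{\rr}$ is the ``new'' separating index. Once that geometric input is in hand, the verification of Batyrev's inequalities is immediate because every primitive-collection sum is either $\mathbf{0}$ or a single primitive ray generator with coefficient $1$. I would therefore organize the proof as: (i) state Batyrev's criterion and the primitive collections $\{\mathbf{v}_k,\mathbf{w}_k\}$; (ii) dispatch $k=k_0$ by the vanishing sum; (iii) for $k\neq k_0$ use the recalled formula for $\mathbf{v}_k+\mathbf{w}_k$ together with Lemma~\ref{lemma_uniqueness_kl_k_kr} to conclude the sum lies on a single ray with positive coefficient; (iv) conclude $X(\Sigma_\mathscr{T})$ is Fano.
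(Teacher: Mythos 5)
Your approach is the same as the paper's: the proof is exactly ``primitive collections are the pairs $\{\mathbf{v}_k,\mathbf{w}_k\}$, their sums are $\mathbf{0}$ or a single ray generator by the displayed formula (i.e.\ \cite[Lemma 5.1]{LMP_Catalan}), hence Batyrev's criterion applies.'' The facts you verify do suffice. However, your statement of Batyrev's criterion is wrong as written: requiring that $\sum_j \mathbf{u}_{i_j}$ lie in the relative interior of the unique cone containing it ``with all coefficients strictly positive'' is automatic for \emph{every} primitive relation of a smooth projective toric fan, so as stated your criterion would make every such variety Fano. The correct condition (Proposition~\ref{prop:batyrev}) is on the \emph{degree}: writing $\mathbf{u}'_1+\cdots+\mathbf{u}'_\ell = a_1\mathbf{u}_1+\cdots+a_r\mathbf{u}_r$ (or $=\mathbf{0}$), one needs $\deg R = \ell-(a_1+\cdots+a_r)>0$. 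With that correction your two cases read: for $k=k_0$ the sum is $\mathbf{0}$ and $\deg R = 2>0$, and for $k\neq k_0$ the sum is a single primitive generator with coefficient $1$, so $\deg R = 2-1=1>0$. In particular the $k_0$ case is not a ``borderline'' case at all --- it has the largest possible degree --- and the weak-Fano/Fano borderline $\deg R=0$ never occurs here. Once the criterion is stated correctly, your argument is complete and coincides with the paper's.
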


The toric varieties of Catalan type are classified as follows. 

\begin{theorem}[{\cite[Theorem 6.5]{LMP_Catalan}}]\label{theo:fan_and_toric_of_Catalan}
Let $\mathscr{T}$ and $\mathscr{T}'$ be triangulations of $\polygon{n+2}$. Then the fans $\Sigma_\mathscr{T}$ and $\Sigma_{\mathscr{T}'}$ are isomorphic \textup{(}equivalently, the toric varieties $X(\Sigma_\mathscr{T})$ and $X(\Sigma_{\mathscr{T}'})$ are isomorphic\textup{)} if and only if the binary trees~$\B_{\mathscr{T}}$ and $\B_{\mathscr{T}'}$ are isomorphic as \emph{unordered} rooted trees.
\end{theorem}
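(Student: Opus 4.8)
The plan is to prove both implications by passing through data that is intrinsic to the fan $\Sigma_\mathscr{T}$, namely its primitive collections and primitive relations. Recall from the construction that the primitive collections of $\Sigma_\mathscr{T}$ are exactly the $n$ pairs $\{\mathbf{v}_k,\mathbf{w}_k\}$ ($k\in[n]$), and that for each $k$ one has
\[
\mathbf{v}_k+\mathbf{w}_k=
\begin{cases}
\mathbf{v}_{k_{\ll}} & \text{if } \{k_{\ll},k_{\rr}\}\in E(\rT),\\
\mathbf{w}_{k_{\rr}} & \text{if } \{k_{\ll},k_{\rr}\}\in E(\lT),\\
\mathbf{0} & \text{if } k=k_0.
\end{cases}
\]
Thus the right–hand side is either $\mathbf{0}$ or a single primitive ray vector, carried by the side $\{k_{\ll},k_{\rr}\}$, which is precisely the side of $\polygon{n+2}$ separating the triangle apexed at $k$ from its parent triangle in the full binary tree $\mathscr{C}_\mathscr{T}$. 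Since a side of $\mathscr{T}$ lies in at most two triangles, the ray $\mathbf{v}_k+\mathbf{w}_k$, when nonzero, belongs to the primitive collection indexed by the parent of $k$ in $\B_\mathscr{T}$ and to no other; and the equation $\mathbf{v}_{k_0}+\mathbf{w}_{k_0}=\mathbf{0}$ singles out the root. Hence the parent function of $\B_\mathscr{T}$ together with its root — that is, the unordered rooted tree $\B_\mathscr{T}$ — can be read off from the fan; the only loss is the left/right distinction, which lives in the ordering of each pair $\{\mathbf{v}_k,\mathbf{w}_k\}$.

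For the ``only if'' direction, suppose $\varphi\colon N\to N'$ is a lattice isomorphism carrying $\Sigma_\mathscr{T}$ onto $\Sigma_{\mathscr{T}'}$. Since primitive collections are intrinsic (they are the minimal non-faces), $\varphi$ induces a permutation $\sigma\in S_n$ with $\varphi(\{\mathbf{v}_k,\mathbf{w}_k\})=\{\mathbf{v}'_{\sigma(k)},\mathbf{w}'_{\sigma(k)}\}$, possibly swapping the two rays within each pair. Being linear, $\varphi$ preserves the primitive relations displayed above, hence carries the parent function and root of $\B_\mathscr{T}$ to those of $\B_{\mathscr{T}'}$; therefore $\sigma$ is an isomorphism $\B_\mathscr{T}\xrightarrow{\ \sim\ }\B_{\mathscr{T}'}$ of \emph{unordered} rooted trees, the ``unordered'' being forced by the within-pair swaps. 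The parenthetical equivalence (fans isomorphic $\iff$ toric varieties isomorphic) is then standard: $\Sigma_\mathscr{T}$ determines $X(\Sigma_\mathscr{T})$, and conversely $X(\Sigma_\mathscr{T})$ is a Bott manifold (Lemma~\ref{lemm:fan}), so any isomorphism of these toric varieties is covered by an isomorphism of fans (cf.\ Proposition~\ref{prop:batyrev-iso}).

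For the ``if'' direction, assume $\B_\mathscr{T}\cong\B_{\mathscr{T}'}$ as unordered rooted trees. Using the standard fact that two plane-tree representatives of the same abstract rooted tree are related by a sequence of elementary swaps — each exchanging the two subtrees hanging below a single internal vertex — it suffices to treat the case in which $\mathscr{T}'$ arises from $\mathscr{T}$ by one swap at an internal vertex $v$ of $\B_\mathscr{T}$. Geometrically this swap is the reflection of the sub-polygon cut off from the root side by the base $\{a,b\}$ of the triangle indexed by $v$, across the perpendicular bisector of $\{a,b\}$; it relabels the vertices of $\polygon{n+2}$ and carries the sides of $\mathscr{T}$, together with the vectors $\mathbf{v}_k,\mathbf{w}_k$, to those of $\mathscr{T}'$, its effect being to interchange the roles of $\lT$ and $\rT$ inside the reflected block, i.e.\ to swap $\mathbf{v}_k\leftrightarrow\mathbf{w}_k$ on the affected indices and permute the indices. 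Because each primitive collection is the \emph{unordered} pair $\{\mathbf{v}_k,\mathbf{w}_k\}$ (Lemma~\ref{lemma_uniqueness_kl_k_kr}), this is compatible with the cone structure, and one checks that the induced bijection on rays is realized by an element of $\mathrm{GL}_n(\Z)$, giving $\Sigma_\mathscr{T}\cong\Sigma_{\mathscr{T}'}$.

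The main obstacle I expect is in this last step: verifying that the elementary-swap reflection of a sub-polygon genuinely sends the tuple $(\mathbf{v}_k,\mathbf{w}_k)_k$ to $(\mathbf{v}'_k,\mathbf{w}'_k)_k$ up to the swaps and re-indexing described, and that the resulting map on rays extends to a single lattice automorphism. This is a matter of tracking how the assignment $k\mapsto(k_{\ll},k,k_{\rr})$ and the vectors $\varpi_a-\varpi_b$ transform under a reflection that is the identity outside an interval of polygon vertices; the bookkeeping is routine but delicate, and it is cleanest to carry it out one elementary swap at a time exactly as above. By contrast, the ``only if'' direction is essentially formal once the primitive relations are written down, since every ingredient there is preserved by any linear isomorphism of fans.
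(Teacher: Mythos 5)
Your ``only if'' direction is correct and complete: the primitive collections of $\Sigma_\mathscr{T}$ are the pairs $\{\mathbf{v}_k,\mathbf{w}_k\}$, the primitive relations encode the parent function and the root of $\B_\mathscr{T}$, and any linear isomorphism of fans preserves both, so it induces an isomorphism of unordered rooted trees. The gap is in the ``if'' direction, at exactly the step you flag. Reducing to a single subtree swap and realizing it by reflecting the sub-polygon on the vertex set $\{a,a+1,\dots,b\}$ (where $\{a,b\}$ is the parent side of the swapped triangle) is the right idea, but the assertion that this reflection ``carries the vectors $\mathbf{v}_k,\mathbf{w}_k$ to those of $\mathscr{T}'$'' fails for the map your wording suggests, namely $\varpi_x\mapsto\varpi_{\sigma(x)}$ with $\sigma(x)=a+b-x$: that map reverses the orientation of every side inside the reflected block, hence sends $\mathbf{v}_k=\varpi_k-\varpi_{k_{\rr}}$ to the \emph{negative} of a ray of $\Sigma_{\mathscr{T}'}$, and it is not even invertible when $0$ or $n+1$ lies in the block (since $\varpi_0=\varpi_{n+1}=\mathbf{0}$). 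The automorphism that actually works is $\varpi_x\mapsto\varpi_a+\varpi_b-\varpi_{a+b-x}$ for $a\le x\le b$ and $\varpi_x\mapsto\varpi_x$ otherwise; it is an involution of $N$, it fixes the vector of the base side, and for any side $\{p,q\}$ with $a\le p<q\le b$ it sends $\varpi_p-\varpi_q$ to $\varpi_{\sigma(q)}-\varpi_{\sigma(p)}$ with $\sigma(q)<\sigma(p)$, i.e.\ to the vector of the image side with the correct sign. With this correction your argument closes, but this verification is the entire content of the ``if'' direction, so as written the proof is incomplete precisely where the work is.

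A shorter route is available from results already quoted in this chapter. Your computation of the primitive relations shows that the signed rooted forest attached to the Fano Bott manifold $X(\Sigma_\mathscr{T})$ (Lemma~\ref{lemm:Fano} together with the construction preceding Theorem~\ref{thm_isom_FB}) is the tree $\B_\mathscr{T}$ with sign $+$ on each edge to a right child and $-$ on each edge to a left child. Every vertex therefore carries at most one child of each sign, and the moves $r_i$ act transitively on all such sign patterns on a fixed unordered rooted tree: at a vertex with two children, $r_i$ exchanges which child carries $+$, and at a vertex with one child it flips the sign freely. Hence the $\sim$-class of the signed forest is exactly the unordered rooted tree $\B_\mathscr{T}$, and Theorem~\ref{thm_isom_FB} yields both implications at once, bypassing the explicit lattice automorphism entirely.
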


The above theorem implies that the number of isomorphism classes of $n$-dimensional toric varieties of Catalan type is the same as that of unordered binary trees with $n$ vertices. It is known that the latter is the Wedderburn--Etherington number $b_{n+1}$.

\begin{corollary}[{\cite[Corollary 6.6]{LMP_Catalan}}]\label{cor_enumerate}
The number of isomorphism classes of $n$-dimensional toric varieties of Catalan type is the Wedderburn--Etherington number $b_{n+1}$. 
\end{corollary}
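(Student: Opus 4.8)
The plan is to deduce the corollary from Theorem~\ref{theo:fan_and_toric_of_Catalan} together with the classical correspondence between polygon triangulations and binary trees. By Theorem~\ref{theo:fan_and_toric_of_Catalan}, for triangulations $\mathscr{T},\mathscr{T}'$ of $\polygon{n+2}$ the toric varieties $X(\Sigma_\mathscr{T})$ and $X(\Sigma_{\mathscr{T}'})$ are isomorphic if and only if $\mathcal{B}_\mathscr{T}$ and $\mathcal{B}_{\mathscr{T}'}$ are isomorphic as \emph{unordered} rooted trees. Since, by definition, every $n$-dimensional toric variety of Catalan type is $X(\Sigma_\mathscr{T})$ for some triangulation $\mathscr{T}$ of $\polygon{n+2}$, the number of isomorphism classes we want equals the number of unordered-isomorphism classes among the trees $\mathcal{B}_\mathscr{T}$ as $\mathscr{T}$ ranges over all triangulations of $\polygon{n+2}$.

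First I would pin down that range. The assignment $\mathscr{T}\mapsto\mathcal{B}_\mathscr{T}$ described before Figure~\ref{fig_binary_tree_v3} is the standard bijection from the set of triangulations of $\polygon{n+2}$ onto the set of plane (ordered) rooted binary trees with $n$ vertices; in particular it is surjective, so \emph{every} ordered rooted binary tree with $n$ vertices, each vertex having at most two children, arises as some $\mathcal{B}_\mathscr{T}$. Forgetting the plane structure, the quantity to be computed is therefore exactly the number of \emph{unordered} rooted binary trees with $n$ vertices.

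Next I would convert this to a count of full binary trees. Attaching phantom leaves until every vertex has exactly two children --- this is precisely the passage $\mathcal{B}_\mathscr{T}\rightsquigarrow\mathcal{C}_\mathscr{T}$ used in the text, and $\mathcal{C}_\mathscr{T}$ has $n+1$ leaves --- is a bijection between unordered rooted binary trees with $n$ vertices and unordered full binary trees with $n+1$ leaves, with inverse the deletion of all leaves. Now the Wedderburn--Etherington number $b_m$ is, by definition, the number of unordered full binary trees with $m$ leaves (equivalently, the number of commutative but non-associative bracketings of a word of length $m$); this is confirmed by the split-at-the-root recursion $b_1=1$, $b_{2k+1}=\sum_{i=1}^{k}b_i b_{2k+1-i}$, $b_{2k}=\binom{b_k+1}{2}+\sum_{i=1}^{k-1}b_i b_{2k-i}$, where the binomial term accounts for the two equal-sized subtrees being unordered. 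Taking $m=n+1$ gives that the number of isomorphism classes of $n$-dimensional toric varieties of Catalan type is $b_{n+1}$.

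There is essentially no hard geometric or combinatorial content left after Theorem~\ref{theo:fan_and_toric_of_Catalan}; the proof is a chain of elementary bijections. The one point deserving attention is bookkeeping: one must make sure that ``unordered'' in Theorem~\ref{theo:fan_and_toric_of_Catalan} really is the plane-structure-forgetting equivalence that underlies the Wedderburn--Etherington numbers, and that the correspondence $\mathscr{T}\mapsto\mathcal{B}_\mathscr{T}$ hits every binary tree with $n$ vertices (so that no isomorphism type is missed). Once these identifications are secured, the count $b_{n+1}$ is immediate.
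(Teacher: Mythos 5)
Your argument is correct and is essentially the paper's own: Theorem~\ref{theo:fan_and_toric_of_Catalan} reduces the count to the number of unordered binary trees with $n$ vertices, which is the Wedderburn--Etherington number $b_{n+1}$. The extra bookkeeping you supply (surjectivity of $\mathscr{T}\mapsto\mathcal{B}_\mathscr{T}$ and the passage to full binary trees with $n+1$ leaves via $\mathcal{C}_\mathscr{T}$, matching the split-at-the-root recursion) just makes explicit what the paper cites as known.
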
 

Here, the Wedderburn--Etherington number $b_n$ $(n\ge 1)$ is the number of ways to parenthesize a string of $n$ letters subject to a commutative (but nonassociative) binary operation and it appears in counting several different objects (see \href{https://oeis.org/A001190}{Sequence A001190} in OEIS~\cite{OEIS}, \cite[A56 in p.133]{Stanley_Catalan}). The generating function $B(x)=\sum_{n\ge 1} b_nx^n$ of the Wedderburn--Etherington numbers satisfies the functional equation
\[
B(x)=x+\frac{1}{2}B(x)^2+\frac{1}{2}B(x^2), 
\] 
which was the motivation of Wedderburn in his work~\cite{Wedderburn22} {and was considered by Etherington~\cite{Etherington37}}.
This functional equation is equivalent to the recurrence relation
\[
b_{2m-1}=\sum_{i=1}^{m-1}b_ib_{2m-i-1}\quad (m\ge 2),\qquad b_{2m}=b_m(b_m+1)/2+\sum_{i=1}^{m-1}b_ib_{2m-i}
\]
with $b_1=1$. Using this recurrence relation, one can calculate the Wedderburn--Etherington numbers, see Table~\ref{table_bn}. 

\begin{table}[H]
\begin{tabular}{c|rrrrrrrrrrrrrrr}
\toprule
$n$ & $1$ & $2$ & $3$ & $4$ & $5$ & $6$ & $7$ & $8$ & $9$ & $10$ & $11$ & $12$ & $13$ & $14$ & $15$ \\
\midrule
$b_n$ & $1$ & $1$ & $1$ & $2$ & $3$ & $6$ & $11$ & $23$ & $46$ & $98$ & $207$ & $451$ & $983$ & $2179$ & $4850$ \\ 
\bottomrule
\end{tabular}
\caption{Wedderburn--Etherington numbers $b_n$ for small values of $n$} \label{table_bn}
\end{table}


\subsection{Smooth toric Richardson varieties of Catalan type}\label{sec:smooth toric Richardson varieties of Catalan type}
We say that a smooth toric Richardson variety $\Xvw$ is of Catalan type if it is of Catalan type as a toric variety, in other words, if the normal fan of $\Q^{v}_{w}$ is of Catalan type. 
In this subsection, we show that every toric variety of Catalan type appears as a smooth toric Richardson variety. However, the converse is not true, i.e.,  there are smooth toric Richardson varieties which are not of Catalan type. For instance, $X_{3142}$ is a (smooth) toric Schubert variety but not Fano, so it is not of Catalan type by Lemma~\ref{lemm:Fano}. 

For a permutation $u \in {S}_n$, we define permutations $\uh{u}$ and $\ut{u}$ in $S_{n+1}$ by
\[
\uh{u}(i) = \begin{cases}
1 & \text{ if } i = 1, \\
u(i-1)+1 & \text{ if }2 \leq i \leq n+1, 
\end{cases}
\qquad
\ut{u}(i) = \begin{cases}
u(i) & \text{ if } 1 \leq i \leq n, \\
n+1 & \text{ if } i =n+1.
\end{cases}
\]
For example, if $u = 2314$, then $\uh{u} = 13425$ and $\ut{u} = 23145$. As one may see, the permutation $\uh{u}$ is obtained from $u$ by putting the additional number $1$ at the \textit{head} (with the original numbers increased by $1$) while $\ut{u}$ is obtained from $u$ by putting the additional number $n+1$ at the \textit{tail}. In the notation, $\head$ stands for \textit{head} and $\tail$ stands for \textit{tail}. 

One notes that if a pair $(v,w)$ of elements in $\Sn$ satisfies 
\[
w = s(n,1)v\quad \text{(resp. $w=s(1,n)v$)} \quad \text{ and } \quad\ell(w) = \ell(v) + n,
\]
then $v(1)=1$ (resp. $v(n+1)=n+1$), so that 
\begin{equation} \label{eq:pairvw}
\text{$(v^{-1},w^{-1}) = (\uh{u}, \uh{u}s(1,n))$ (resp. $(v^{-1},w^{-1})=(\ut{u}, \ut{u}s(n,1))$\quad for some $u \in {S}_{n}$.}
\end{equation}
\lee{Here, $s(n,1) = s_{n} s_{n-1} \cdots s_1$ and $s(1,n) = s_1 s_2 \cdots s_{n}$ as in~\eqref{eq_s_pq}.}

By Proposition~\ref{prop:6-2}, the Bruhat interval polytope $\Q^v_w$ for the pair~$(v,w)$ in \eqref{eq:pairvw} is an $n$-cube and our concern is the pairs in \eqref{eq:pairvw}. We first consider the former case $(v^{-1},w^{-1}) = (\uh{u}, \uh{u}s(1,n))$. 

We recall a surjection from ${S}_n$ to the set of binary trees with $n$ vertices (cf.~\cite[Appendix~A]{LFCG_dialgebras}). 
{To} a permutation $u \in {S}_n$, we first associate a binary tree $\widetilde\psi(u)$ with vertex labels by finding the smallest number in the one-line notation of $u$ inductively. We start with the one-line notation $u(1) u(2) \ \cdots \ u(n)$ of $u$. 
The smallest integer, say $u(p)$, in the sequence (which is $1$ here) becomes the root of the binary tree $\widetilde\psi(u)$ with $n$ vertices. Then the subsequence $u(1) \ \cdots \ u(p-1)$ will provide the left subtree of the root vertex, and the subsequence $u(p+1) \ \cdots \ u(n)$ will provide the right subtree of the root vertex. 
More precisely, the smallest integer in the subsequence $u(1) \ \cdots \ u(p-1)$ presents the root of a binary tree with $p-1$ vertices, and it is the left child of the root vertex of~$\widetilde\psi(u)$. On the other hand, the smallest integer in the subsequence $u(p+1) \ \cdots \ u(n)$ presents the root of a binary tree with $n-p$ vertices, and it is the right child of the root vertex of $\widetilde\psi(u)$. Continuing this process, we obtain the binary tree $\widetilde\psi(u)$. Finally, erasing the vertex labels, we obtain the desired binary tree $\psi(u)$ with $n$ vertices.

For example, if $u=31687524$, the root of the binary tree $\widetilde\psi(u)$ is $u(2)$, and its left and right subtrees have the roots $u(1)$ and $u(7)$, respectively. Continuing this process, we first get the binary tree $\widetilde\psi(u)$, and then by erasing the labels we obtain the binary tree $\psi(u)$, see Figure~\ref{fig_binary_tree_and_permutation}. Note that $v=21687534$ gives the same binary tree as $u=31687524$, i.e.,  $\psi(v)=\psi(u)$.
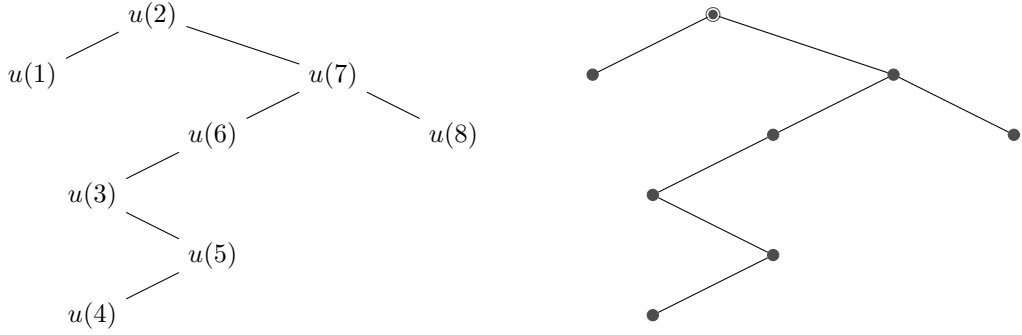
\begin{figure}[hbt]
\centering
\begin{subfigure}[b]{0.45\textwidth}
\centering
\begin{tikzpicture}[node/.style={}, baseline = -0.5ex,scale=.8]
\node[node] (1) at (0,5) {$u(2)$};
\node[node] (2) at (-2,4) {$u(1)$};
\node[node] (3) at (3,4) {$u(7)$};
\node[node] (4) at (5,3) {$u(8)$};
\node[node] (5) at (1,3) {$u(6)$};
\node[node] (6) at (-1,2) {$u(3)$};
\node[node] (7) at (1,1) {$u(5)$};
\node[node] (8) at (-1,0) {$u(4)$};
\draw (1) to (2);
\draw (1) to (3);
\draw (3) to (4);
\draw (3) to (5);
\draw (5) to (6);
\draw (6) to (7);
\draw (7) to (8);
\end{tikzpicture}
\end{subfigure}
\hspace{.5cm}
\begin{subfigure}[b]{0.45\textwidth}
\centering
\begin{tikzpicture}[scale=.8]
\begin{scope}
\node[black!70, draw, circle, inner sep = 1.5 pt, fill=black!70, double] (1) at (0,5) {};
\node[black!70, draw, circle, inner sep = 1.5pt, fill=black!70] (2) at (-2,4) {};
\node[black!70, draw, circle, inner sep = 1.5pt, fill=black!70] (3) at (3,4) {}; 
\node[black!70, draw, circle, inner sep = 1.5pt, fill=black!70] (4) at (5,3) {};
\node[black!70, draw, circle, inner sep = 1.5pt, fill=black!70] (5) at (1,3) {};
\node[black!70, draw, circle, inner sep = 1.5pt, fill=black!70] (6) at (-1,2) {};
\node[black!70, draw, circle, inner sep = 1.5pt, fill=black!70] (7) at (1,1) {};
\node[black!70, draw, circle, inner sep = 1.5pt, fill=black!70] (8) at (-1,0) {}; 
\end{scope}
\draw (1) to (2);
\draw (1) to (3);
\draw (3) to (4);
\draw (3) to (5);
\draw (5) to (6);
\draw (6) to (7);
\draw (7) to (8);
\end{tikzpicture}
\end{subfigure}
\caption{The binary trees $\widetilde\psi(u)$ and $\psi(u)$ for $u = 31687524$}\label{fig_binary_tree_and_permutation}
\end{figure}

Through the canonical bijection between the set of binary trees with $n$ vertices and that of triangulations of $\polygon{n+2}$, 
the assignment 
\[
\psi \colon {S}_n \twoheadrightarrow \{ \text{binary trees with $n$ vertices} \}=\{\text{triangulations of $\polygon{n+2}$}\} 
\]
is surjective ({cf.~\cite[Appendix~A]{LFCG_dialgebras}}).

\begin{proposition}[{\cite[Proposition 7.2]{LMP_Catalan}}]\label{prop_u_head_atoms_and_lTrT}
Let $u \in {S}_n$ and let $\mathscr{T} = \psi(u)$ be the corresponding triangulation of $\polygon{n+2}$. We denote by $\lT$ and $\rT$ the left and right trees of $\mathscr{T}$ as before. Then the edges of $\lT$ correspond to the atoms of the Bruhat interval $[\uh{u}, \uh{u}s(1,n)]$ while the edges of $\rT$ correspond to the coatoms of the Bruhat interval $[\uh{u}, \uh{u}s(1,n)]$. More precisely, 
\[
\begin{split}
&\{{(i,j)} \mid \uh{u} \lessdot \uh{u} t_{i,j} \leq \uh{u} s(1,n) \} =
\{ {(i,j)} \mid \{i-1,j-1\} \in E(\lT)\}, \\
&\{{(i,j)} \mid \uh{u} \leq \uh{u} s(1,n) t_{i,j} \lessdot \uh{u} s(1,n) \} 
= \{{(i,j)} \mid \{i,j\} \in E(\rT)\}.
\end{split}
\]
Here, $x \lessdot y$ means that $x<y$ and there is no $z$ such that 
$x < z < y$. 
\end{proposition}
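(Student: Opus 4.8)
\textbf{Proof plan for Proposition~\ref{prop_u_head_atoms_and_lTrT}.}
The plan is to translate both sides of the two claimed equalities into statements about the one-line notation of $\uh{u}$ and the recursive structure of the triangulation $\mathscr{T}=\psi(u)$, and then verify them by induction on $n$ following the recursive definitions of $\psi$, $\lT$, and $\rT$. First I would set up the combinatorial dictionary. Since $w=\uh u s(1,n)$ with $\ell(w)-\ell(\uh u)=n$ and $s(1,n)=s_1s_2\cdots s_n$ has distinct letters, Theorem~\ref{thm:toric}(3),(5) tells us $[\uh u, \uh u s(1,n)]$ is a Boolean algebra of rank $n$; in particular it has exactly $n$ atoms and $n$ coatoms, matching the $n$ edges of $\lT$ and the $n$ edges of $\rT$ guaranteed by Lemma~\ref{lemma_uniqueness_kl_k_kr}. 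So both claimed equalities are equalities of $n$-element sets, and it suffices to prove one inclusion in each (or to exhibit an explicit bijection). The atoms of a Boolean interval $[\uh u, \uh u s(1,n)]$ are the elements $\uh u t_{i,j}$ with $\uh u \lessdot \uh u t_{i,j}\le \uh u s(1,n)$; a standard fact (cf.\ the atom/coatom description in \eqref{eq:atom_coatom} and Lemma~\ref{lemma_atoms_and_coatoms_Schubert}, applied with $v=\uh u$ instead of $e$) is that the atoms are $\uh u s_{j_k}$-type elements, and I would make the indices $(i,j)$ in the statement explicit via $\uh u$ applied to the adjacent transpositions occurring in prefixes/suffixes of $s(1,n)$.

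The heart of the argument is the inductive step. Writing $u=u(1)\cdots u(n)$ with $u(p)=1$, the triangulation $\mathscr T=\psi(u)$ decomposes at the triangle containing the side $\{0,n+1\}$: its third vertex is $k_0$, and $k_0$ is determined by $p$ (precisely, $k_0$ is read off from the position of the minimum). The left subtree of the root of $\psi(u)$ is $\psi(u(1)\cdots u(p-1))$ and the right subtree is $\psi(u(p+1)\cdots u(n))$, suitably relabeled. Correspondingly $\uh u$ has $1$ in position $1$ and, reading its one-line notation, the block structure of $\uh u$ splits into the ``head part'' coming from $u(1)\cdots u(p-1)$ and the ``tail part'' from $u(p+1)\cdots u(n)$. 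I would then check that: (a) the edges of $\lT$ incident to vertices in the left-subtree region biject with the atoms $\uh u t_{i,j}\le \uh u s(1,n)$ that act within the head block, and similarly for the right-subtree region; (b) the one distinguished edge of $\lT$ through $k_0$ (the edge $\{(k_0)_\ll,k_0\}$) corresponds to the ``linking'' atom that first moves $1$ out of position, and dually for $\rT$ and the linking coatom; and (c) the inductive hypothesis applied to the two smaller permutations handles everything else. The key computational lemma to isolate is: for $\{i-1,j-1\}$ a side of $\mathscr T$ with $i-1<j-1$, one has $\uh u \lessdot \uh u t_{i,j}$ and $\uh u t_{i,j}\le \uh u s(1,n)$ if and only if $\{i-1,j-1\}\in E(\lT)$ — this I would prove by combining the cover-relation criterion ($\uh u\lessdot \uh u t_{i,j}$ iff $\uh u(i)<\uh u(j)$ and no $k$ strictly between $i,j$ has $\uh u(i)<\uh u(k)<\uh u(j)$) with the comparison $\uh u t_{i,j}\le \uh u s(1,n)$ expressed via the Plücker/subword characterization of the Bruhat order.

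The main obstacle I anticipate is bookkeeping the relabeling under the recursion: when one passes from $u$ to its left and right subsequences, the values and positions shift, and one must make sure that the correspondence between sides $\{a,b\}$ of $\mathscr T$ and transpositions $t_{i,j}$ (with the $+1$ shift coming from $\uh u$, and a further shift inside each subtree) is tracked consistently, so that "edge of $\lT$" genuinely matches "atom below $w$" after all shifts. A secondary subtlety is verifying that the distinguished side $\{0,n+1\}$ and the vertex $k_0$ play the right role at the boundary of the induction — i.e., that the linking atom/coatom across the root triangle is counted exactly once and on the correct side. Once the relabeling is pinned down, the cover-relation and Bruhat-comparison checks are routine case analyses on the relative order of $\uh u(i), \uh u(j)$, and the whole argument closes. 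Alternatively, if the direct induction proves cumbersome, I would instead prove both equalities simultaneously by showing that the $n$ sides of $\mathscr T$ lying in $\lT$ (resp.\ $\rT$) are exactly the $n$ sides $\{a,b\}$ for which $\varpi_a-\varpi_b$ equals one of the primitive edge vectors of $\Q^v_w$ at the vertex $\vertex v$ (resp.\ $\vertex w$), using Lemma~\ref{lemm:fan} and Proposition~\ref{prop_edges_of_Qw} to identify those edge vectors with atoms and coatoms of $[\uh u,\uh u s(1,n)]$; this reduces the statement to the already-established fan description $\Sigma_\mathscr T$ of the $n$-cube.
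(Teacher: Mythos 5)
This chapter states the proposition only by citation to \cite[Proposition~7.2]{LMP_Catalan} and gives no proof, so there is no in-paper argument to compare yours against; I assess your plan on its own terms. Your opening reduction is correct and genuinely useful: since $s(1,n)$ has distinct letters and $\ell(\uh{u}s(1,n))=\ell(\uh{u})+n$, the interval $[\uh{u},\uh{u}s(1,n)]$ is Boolean of rank $n$ (Proposition~\ref{prop:6-2} together with Theorem~\ref{theo:3-6}), hence has exactly $n$ atoms and $n$ coatoms, while $\lT$ and $\rT$ each have exactly $n$ edges by Lemma~\ref{lemma_uniqueness_kl_k_kr}; so one inclusion in each displayed equality suffices. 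But everything after that is a plan, not a proof. The entire content of the proposition is the verification that for each $k\in[n]$ the element $\uh{u}\,t_{k_{\ll}+1,\,k+1}$ covers $\uh{u}$ and satisfies $\uh{u}\,t_{k_{\ll}+1,\,k+1}\le \uh{u}\,s(1,n)$ (dually for $\rT$ and the cocovers of $\uh{u}s(1,n)$). You never carry this out: you do not extract the explicit description of $k_{\ll}$ and $k_{\rr}$ from the one-line notation of $u$ (namely $k_{\ll}=\max\{q<k\mid u(q)<u(k)\}$, with $k_{\ll}=0$ if this set is empty, and symmetrically for $k_{\rr}$), which is what makes the cover-relation check a one-line argument, and the Bruhat comparison with $\uh{u}s(1,n)=(u(1)+1,\dots,u(n)+1,1)$ --- the genuinely delicate step --- is dismissed as ``routine case analyses.'' The relabeling bookkeeping you flag as the ``main obstacle'' is in fact the proof.

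Two further problems. First, your parenthetical that ``the atoms are $\uh{u}s_{j_k}$-type elements,'' by analogy with \eqref{eq:atom_coatom} and Lemma~\ref{lemma_atoms_and_coatoms_Schubert} ``applied with $v=\uh{u}$ instead of $e$,'' is false: Bruhat intervals are not translates of one another, and already in Example~\ref{example_atom} the atoms of $[142798635,\,427986351]$ involve the non-adjacent transpositions $t_{1,3}$, $t_{3,7}$, $t_{3,8}$, $t_{4,6}$, so they are not of the form $\uh{u}s_k$. Second, your fallback route is circular: Lemma~\ref{lemm:fan} only asserts that $\Sigma_{\mathscr{T}}$ is a complete smooth fan, whereas the identification of $\Sigma_{\mathscr{T}}$ with the normal fan of $\Q^v_w$ is Theorem~\ref{theo:normal_fan_s(1,n)}, which is deduced \emph{from} the present proposition; moreover Proposition~\ref{prop_edges_of_Qw} concerns $\Q_w=\Q^e_w$, not general Bruhat interval polytopes. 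The inductive strategy via the recursive definition of $\psi$ is plausible and could likely be completed, but as written the proposal stops exactly where the work begins.
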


\begin{example}\label{example_atom}
For $u = 31687524$, the triangulation $\mathscr{T}=\psi(u)$ is as shown in Figure~\ref{fig_atom_and_coatom_together}. Since $v^{-1}=\uh{u}= 142798635$ and $w^{-1}=\uh{u} s(1,8) = 427986351$, 
there are eight atoms of the interval $[v^{-1},w^{-1}]$ given by $v^{-1} t_{i,j}$, where $(i,j)$ is one of the following pairs:
\[
(1,2), (1,3), (3,4), (4,5), (4,6), (3,7), (3,8), (8,9).
\] 
These pairs provide the edges of $\lT$ by subtracting $1$ from every component.
On the other hand, there are eight coatoms given by $w^{-1} t_{i,j}$, where $(i,j)$ is one of the following pairs: 
\[
(1,2), (2,9), (3,6), (4,5), (5,6), (6,7), (7,9), (8,9).
\]
These pairs are the edges of $\rT$.
\end{example}

\begin{theorem}[{\cite[Theorem 7.4]{LMP_Catalan}}] \label{theo:normal_fan_s(1,n)}
For $u\in {S}_n$, the normal fan of the Bruhat interval polytope~$\Q^v_w$ for $(v^{-1},w^{-1})=({\uh{u}},{\uh{u}s(1,n)})$ is the fan~$\Sigma_{\psi(u)}$ associated with the triangulation~$\psi(u)$ of $\polygon{n+2}$. 
\end{theorem}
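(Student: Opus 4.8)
The plan is to verify the theorem by matching up the two combinatorial descriptions of the normal fan of $\Q^v_w$: the one coming from the Richardson variety side (edge directions at the vertices $\vertex{v}$ and $\vertex{w}$ of $\Q^v_w$) and the one coming from the triangulation side (the vectors $\mathbf{v}_k,\mathbf{w}_k$ attached to the left and right trees of $\psi(u)$). Since $\Q^v_w$ is a cube by Proposition~\ref{prop:6-2} (the minimal expression $s(1,n)=s_1s_2\cdots s_n$ is proper), its normal fan is determined by the primitive ray vectors together with the primitive collections, and moreover the rays come in $n$ antipodal-type pairs indexed by the coordinate directions of the cube. So it suffices to identify, for each $k\in[n]$, the pair of rays of $\Sigma(\Q^v_w)$ indexed by the $k$th ``direction'' with the pair $\{\mathbf{v}_k,\mathbf{w}_k\}$, and to check the primitive collections agree.

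First I would pin down the two vertices and the edges at each of them. At $\vertex{v}$, the primitive edge vectors of $\Q^v_w$ are in bijection with the atoms of $[v^{-1},w^{-1}]=[\uh{u},\uh{u}s(1,n)]$ — this is the Bruhat-interval-polytope analogue of the argument in the proof of Theorem~\ref{thm_char_matrix_of_toric_Schubert}, valid because $\Q^v_w$ is toric so every subinterval is a face (Theorem~\ref{prop:3-2}); similarly the edges at $\vertex{w}$ correspond to the coatoms. Then Proposition~\ref{prop_u_head_atoms_and_lTrT} translates atoms into edges of the left tree $\lT$ and coatoms into edges of the right tree $\rT$ of the triangulation $\mathscr{T}=\psi(u)$. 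Concretely, an atom $\uh{u}\lessdot \uh{u}t_{i,j}$ corresponds to the edge $\{i-1,j-1\}$ of $\lT$, and unwinding the moment map this atom contributes the primitive edge direction at $\vertex{v}$ which, after the identification $\R^n/\R(1,\dots,1)\cong N$, equals $\varpi_{i-1}-\varpi_{j-1}=\mathbf{w}_{j-1}$ (using $i-1=(j-1)_\ll$ from Lemma~\ref{lemma_uniqueness_kl_k_kr}). Likewise a coatom $\uh{u}s(1,n)t_{i,j}\lessdot\uh{u}s(1,n)$ corresponds to the edge $\{i,j\}$ of $\rT$ and contributes the edge direction $-(\varpi_i-\varpi_j)=-\mathbf{v}_i$ at $\vertex{w}$.

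Next I would assemble the fan. Having identified the $n$ facet-normal directions at $\vertex{v}$ with $\{\mathbf{w}_1,\dots,\mathbf{w}_n\}$ and the $n$ facet-normal directions at $\vertex{w}$ with $\{\mathbf{v}_1,\dots,\mathbf{v}_n\}$ — where I must be careful to dualize correctly, i.e. the \emph{ray} vectors of the normal fan at a vertex are the primitive inward normals to the facets through that vertex, and the edge vectors I just computed are their dual basis, exactly the $\mathbf{p}_k,\mathbf{q}_k$ versus $\mathbf{v}_k,\mathbf{w}_k$ duality $\langle\mathbf{v}_i,\mathbf{p}_j\rangle=\langle\mathbf{w}_i,\mathbf{q}_j\rangle=\delta_{ij}$ recorded before Lemma~\ref{lemm:fan} — I get that the rays of $\Sigma(\Q^v_w)$ are exactly $\{\mathbf{v}_1,\dots,\mathbf{v}_n,\mathbf{w}_1,\dots,\mathbf{w}_n\}$. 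Finally, the cube structure of $\Q^v_w$ forces the primitive collections to be the $n$ antipodal-direction pairs; I would check these are precisely $\{\{\mathbf{v}_k,\mathbf{w}_k\}\mid k\in[n]\}$ by noting that the $k$th edge at $\vertex{v}$ and the $k$th edge at $\vertex{w}$ of the cube point in ``opposite'' directions relative to that coordinate, which under the dictionary above is exactly the statement that $\mathbf{v}_k$ and $\mathbf{w}_k$ form a primitive collection of $\Sigma_{\mathscr{T}}$ (the displayed description of primitive collections just before Lemma~\ref{lemm:Fano}). By Lemma~\ref{lemm:fan}, $\Sigma_{\mathscr{T}}$ has exactly these rays and primitive collections, so $\Sigma(\Q^v_w)=\Sigma_{\psi(u)}$.

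The main obstacle I anticipate is bookkeeping the index shift and the left/right (head/tail) asymmetry consistently: the head-insertion $\uh{u}$ shifts labels by $1$, so atoms of $[\uh{u},\uh{u}s(1,n)]$ live in $S_{n+1}$ while the triangulation $\psi(u)$ is of $\polygon{n+2}$ with vertices $0,\dots,n+1$, and one must check that the transposition $t_{i,j}$ maps to the tree edge $\{i-1,j-1\}$ with the orientation convention $k_\ll<k$ matching $\mathbf{w}_k=\varpi_{k_\ll}-\varpi_k$ rather than its negative. Verifying that the primitive edge direction produced by a covering relation in the Bruhat order really is $\varpi_a-\varpi_b$ (and not some nontrivial integer combination) requires the same Bruhat-order-versus-partial-sums computation as in Lemma~\ref{lemm:edge_Pi_n} and the proof of Theorem~\ref{thm_char_matrix_of_toric_Schubert}, applied now to intervals rather than to $[e,w]$; once this is in place the rest is a direct comparison of two explicit lists of rays and primitive collections.
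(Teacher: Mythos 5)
The paper itself gives no proof of this theorem (it only cites \cite{LMP_Catalan}), so I am judging your plan against the argument that the surrounding material is clearly set up to support. Your overall architecture is the right one and uses exactly the intended ingredients: Proposition~\ref{prop:6-2} to get that $\Q^v_w$ is a cube, Theorem~\ref{prop:3-2} to identify edges at $\vertex{v}$ (resp.\ $\vertex{w}$) with atoms (resp.\ coatoms), Proposition~\ref{prop_u_head_atoms_and_lTrT} to translate these into edges of $\lT$ and $\rT$, the duality $\langle\mathbf{v}_i,\mathbf{p}_j\rangle=\langle\mathbf{w}_i,\mathbf{q}_j\rangle=\delta_{ij}$ to pass from edge directions to facet normals, and Lemma~\ref{lemm:fan} to recognize the result as $\Sigma_{\psi(u)}$. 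Two points need repair, one cosmetic and one substantive.

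The cosmetic one: you conflate edge directions (which live in $M$ and are roots) with the assigned ray vectors (which live in $N$ and are differences of $\varpi$'s). The atom $\uh{u}\lessdot\uh{u}t_{i,j}$ with $\{i-1,j-1\}\in E(\lT)$ gives the edge direction $\mathbf{e}_i-\mathbf{e}_j=\mathbf{p}_{j-1}$ at $\vertex{v}$ (positive multiple, since a covering relation $x\lessdot xt_{i,j}$ with $i<j$ forces $x(i)<x(j)$), whose dual facet normal is $\mathbf{v}_{j-1}$, not $\mathbf{w}_{j-1}$; likewise the coatoms give edge directions $\mathbf{q}_k$ at $\vertex{w}$ with dual normals $\mathbf{w}_k$. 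So the maximal cone at $\vertex{v}$ is $\Cone(\mathbf{v}_1,\dots,\mathbf{v}_n)$ and at $\vertex{w}$ is $\Cone(\mathbf{w}_1,\dots,\mathbf{w}_n)$ --- the opposite of what your third paragraph asserts. The ray set comes out the same either way, but the assignment of cones to vertices matters for the next step, so get it straight.

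The substantive gap is the primitive-collection matching. Knowing the $2n$ rays and the two maximal cones at $\vertex{v}$ and $\vertex{w}$ does not yet determine the fan: you must show that the facet with normal $\mathbf{v}_k$ and the facet with normal $\mathbf{w}_l$ are the \emph{disjoint} (opposite) pair of the cube precisely when $l=k$. Unwinding via Theorem~\ref{prop:3-2}, this is equivalent to the combinatorial claim that the unique coatom of $[\uh{u},\uh{u}s(1,n)]$ \emph{not} lying above the atom $\uh{u}\,t_{k_{\ll}+1,\,k+1}$ is exactly $\uh{u}s(1,n)\,t_{k,k_{\rr}}$, i.e.\ that the atom indexing by edges of $\lT$ and the coatom indexing by edges of $\rT$ induce the \emph{same} labelling of the $n$ coordinate directions of the cube. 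Proposition~\ref{prop_u_head_atoms_and_lTrT} only identifies the two sets of covers; it says nothing about which atom is incomparable to which coatom. Your justification (``the $k$th edge at $\vertex{v}$ and the $k$th edge at $\vertex{w}$ point in opposite directions relative to that coordinate'') presupposes the matching rather than proving it --- note that $\mathbf{p}_k$ and $\mathbf{q}_k$ are in general not parallel, so there is no ``same coordinate'' to appeal to without an argument. Once this incomparability statement is verified (a direct computation with the permutations, or equivalently a check on the triangulation using Lemma~\ref{lemma_uniqueness_kl_k_kr}), every maximal normal cone has the form $\Cone(\mathbf{u}_1,\dots,\mathbf{u}_n)$ with $\mathbf{u}_k\in\{\mathbf{v}_k,\mathbf{w}_k\}$, and completeness of both fans finishes the proof as you say.
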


Hence, any $n$-dimensional fan of Catalan type is realized as the normal fan of $\Q^{v}_{w}$ with $(v^{-1},w^{-1})=(\uh{u},\uh{u}s(1,n))$ for some $u\in {S}_n$.
As for the latter case $(v^{-1},w^{-1})=(\ut{u},\ut{u}s(n,1))$, we have the following. 

\begin{theorem}[{\cite[Theorem 7.5]{LMP_Catalan}}] \label{theo:normal_fan_s(n,1)}
For $u\in {S}_n$, the normal fan of the Bruhat interval polytope $\Q^v_w$ for $(v^{-1},w^{-1})=(\ut{u},\ut{u}s(n,1))$ is isomorphic to the fan $\Sigma_{\psi(w_0uw_0)}$ associated with the triangulation~$\psi(w_0uw_0)$ of $\polygon{n+2}$, where $w_0$ denotes the longest element of ${S}_n$. 
\end{theorem}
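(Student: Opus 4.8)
The plan is to reduce the statement for the pair $(v^{-1},w^{-1})=(\ut{u},\ut{u}s(n,1))$ to the already-established statement for heads, namely Theorem~\ref{theo:normal_fan_s(1,n)}, by means of the longest-element symmetry of $\flag(n)$. First I would record the combinatorial relation between tails and heads: the map $u\mapsto w_0uw_0$ on $S_n$ conjugates $s(1,n)$ to $s(n,1)$, since $w_0 s_i w_0=s_{n-i}$ for all $i\in[n-1]$, and hence $w_0\,s(1,n)\,w_0=s(n,1)$. Next I would relate $\ut{u}$ and $\uh{u}$ through the longest element $w_0'$ of $S_{n+1}$: a direct check on one-line notations shows that $w_0'\,\ut{u}\,w_0'$ and $\uh{w_0uw_0}$ differ only by the left-right and top-bottom reversals that $w_0'$ performs, so that there is an explicit element $g\in S_{n+1}$ (built from $w_0'$ and, if necessary, the transposition realizing the reversal of the extra coordinate) with $g\cdot\ut{u}=\uh{w_0uw_0}$ and $g\cdot(\ut{u}s(n,1))=\uh{w_0uw_0}\,s(1,n)$. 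This is the bookkeeping step; it is routine but must be done carefully because the ``head'' insertion shifts the original entries up by $1$ while the ``tail'' insertion does not.

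Once this identification is in place, I would invoke the fact that left multiplication by a fixed $g\in S_{n+1}$ on $\flag(n+1)$ (equivalently, acting on $(G/B)^T=S_{n+1}$) carries the Richardson variety $X^{v}_{w}$ to $X^{g v}_{g w}$ and intertwines the torus action with itself after twisting the torus by the automorphism induced by $g$; concretely, on the moment-polytope side this is the statement that $\Q^{v}_{w}$ and $\Q^{gv}_{gw}$ are related by the affine isometry of $\R^{n+1}$ permuting coordinates by $g$. Hence the normal fan of $\Q^{v}_{w}$ for $(v^{-1},w^{-1})=(\ut{u},\ut{u}s(n,1))$ is the image under the corresponding lattice automorphism of $N=\Z^{n+1}/\Z(1,\dots,1)$ of the normal fan of $\Q^{v'}_{w'}$ for $(v'^{-1},w'^{-1})=(\uh{w_0uw_0},\uh{w_0uw_0}\,s(1,n))$, which by Theorem~\ref{theo:normal_fan_s(1,n)} is $\Sigma_{\psi(w_0uw_0)}$. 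A lattice automorphism of $N$ sends a fan to an isomorphic fan, so the normal fan in question is isomorphic to $\Sigma_{\psi(w_0uw_0)}$, which is exactly the assertion.

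Alternatively, and perhaps more cleanly, I could avoid passing through $\flag(n+1)$-symmetry at the variety level and instead argue entirely on the fan side: by Proposition~\ref{prop:6-2} the polytope $\Q^{v}_{w}$ is an $n$-cube, so its normal fan is determined by the primitive edge vectors at the two vertices $\vertex{v}$ and $\vertex{w}$, i.e.\ by the atoms and coatoms of $[v^{-1},w^{-1}]=[\ut{u},\ut{u}s(n,1)]$. I would then compute these atoms and coatoms directly, in the spirit of Proposition~\ref{prop_u_head_atoms_and_lTrT}, showing that they correspond to the edges of the left and right trees of the triangulation $\psi(w_0uw_0)$ (the appearance of $w_0uw_0$ being forced by the fact that inserting $n+1$ at the tail reverses the roles played by ``small, left'' and ``large, right'' in the construction of $\psi$). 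Matching the resulting ray vectors $\mathbf{v}_k$, $\mathbf{w}_k$ with the vectors attached to the sides of $\psi(w_0uw_0)$ then yields the isomorphism of fans.

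\textbf{Main obstacle.} The genuinely delicate point is the combinatorial identity linking tail-insertion to head-insertion after conjugation by the longest element: one must verify that $\psi$ applied to the tail pair produces precisely $\psi(w_0uw_0)$ and not, say, $\psi(w_0 u w_0)$ up to a mirror symmetry of the polygon that would change the \emph{ordered} tree even though it preserves the unordered one. Because Theorem~\ref{theo:fan_and_toric_of_Catalan} only sees unordered trees this ambiguity does not affect the final isomorphism class, but to state Theorem~\ref{theo:normal_fan_s(n,1)} with the clean representative $\psi(w_0uw_0)$ one has to pin down the orientation correctly; tracking how reversal of the one-line notation interacts with the recursive ``find the minimum'' rule defining $\psi$ is the step I expect to require the most care.
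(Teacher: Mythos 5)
The survey states this result only as a citation to \cite[Theorem~7.5]{LMP_Catalan} and gives no proof, so I can only judge your proposal on its own terms. Your first strategy --- reducing the tail case to the head case, Theorem~\ref{theo:normal_fan_s(1,n)}, via the longest-element symmetry --- is surely the intended one, and your key combinatorial identities are correct: $w_0's_iw_0'=s_{n+1-i}$ in $S_{n+1}$ gives $w_0'\,s(1,n)\,w_0'=s(n,1)$, and a direct check shows $w_0'\,\ut{u}\,w_0'=\uh{w_0uw_0}$ exactly (no correcting transposition is needed), so conjugation by $w_0'$ carries the pair $(\ut{u},\ut{u}s(n,1))$ to $(\uh{w_0uw_0},\uh{w_0uw_0}s(1,n))$.

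The gap is in the geometric step you use to transport this identity to the polytopes. Left multiplication by a fixed $g\in S_{n+1}$ does \emph{not} carry $X^v_w$ to $X^{gv}_{gw}$, does not send Bruhat intervals to Bruhat intervals, and does not relate $\Q^v_w$ to $\Q^{gv}_{gw}$ by a coordinate permutation; such statements hold only for the special symmetries of the Bruhat order (left or right multiplication by $w_0'$, inversion, and their composites). What you actually need --- and what your computation already sets up --- is that conjugation $y\mapsto w_0'yw_0'$ is an automorphism of the Bruhat order, hence maps the interval $[\ut{u},\ut{u}s(n,1)]$ isomorphically onto $[\uh{w_0uw_0},\uh{w_0uw_0}s(1,n)]$, and that on the vertices of $\Q^v_w=\Q_{v^{-1},w^{-1}}$, recorded as $(y(1),\dots,y(n+1))$ for $y\in[v^{-1},w^{-1}]$, it acts by the affine map $(x_1,\dots,x_{n+1})\mapsto(n+2-x_{n+1},\dots,n+2-x_1)$. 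Its linear part is $-J$ with $J$ the coordinate reversal, a unimodular matrix preserving $\R(1,\dots,1)$, so it descends to a lattice automorphism of $N=\Z^{n+1}/\Z(1,\dots,1)$ and induces an isomorphism of normal fans --- which is also exactly why the tail statement claims only an isomorphism of fans while the head statement asserts equality. With this replacement the argument closes: the normal fan of $\Q^v_w$ is isomorphic to that of $\Q^{v'}_{w'}$ for $(v'^{-1},w'^{-1})=(\uh{w_0uw_0},\uh{w_0uw_0}s(1,n))$, which equals $\Sigma_{\psi(w_0uw_0)}$ by Theorem~\ref{theo:normal_fan_s(1,n)}. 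Your worry about a residual mirror symmetry of the polygon then disappears, since the head theorem is applied verbatim to $w_0uw_0$; your second, direct route via atoms and coatoms would also work but amounts to re-proving Proposition~\ref{prop_u_head_atoms_and_lTrT} in mirrored form.
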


The following is a direct consequence of Corollary~\ref{cor_enumerate}. 

\begin{corollary}[{\cite[Corollary 8.2]{LMP_Catalan}}]\label{cor_enumeration_Xvw}
The number of isomorphism classes of $n$-dimensional smooth toric Richardson varieties of Catalan type is the Wedderburn--Etherington number $b_{n+1}$. 
\end{corollary}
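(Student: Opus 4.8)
The final statement is a packaging corollary: the plan is to identify the set of isomorphism classes of $n$-dimensional smooth toric Richardson varieties of Catalan type with the set of isomorphism classes of $n$-dimensional toric varieties of Catalan type, and then invoke Corollary~\ref{cor_enumerate}. So the whole argument amounts to checking two inclusions between these two sets of isomorphism classes.

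First I would note the easy inclusion. By definition, a \emph{smooth toric Richardson variety of Catalan type} is a smooth toric Richardson variety $X^v_w$ whose associated toric variety is of Catalan type, i.e.\ the normal fan of $\Q^v_w$ is some $\Sigma_\mathscr{T}$. Hence every isomorphism class of such a variety is in particular the isomorphism class of an $n$-dimensional toric variety of Catalan type $X(\Sigma_\mathscr{T})$, and so the number of the former is at most the number of the latter, which is $b_{n+1}$ by Corollary~\ref{cor_enumerate}.

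For the reverse inclusion I would use Theorem~\ref{theo:normal_fan_s(1,n)}. Given an arbitrary triangulation $\mathscr{T}$ of $\polygon{n+2}$, pick $u\in S_n$ with $\psi(u)=\mathscr{T}$, which is possible since $\psi$ is surjective, and set $(v^{-1},w^{-1})=(\uh{u},\uh{u}s(1,n))$ with $s(1,n)=s_1s_2\cdots s_n$. Then $\ell(w)-\ell(v)=n$ and $s(1,n)$ is a proper minimal expression, so by Proposition~\ref{prop:6-2} the Bruhat interval polytope $\Q^v_w$ is an $n$-cube; in particular $X^v_w=Y^v_w$ is a smooth toric variety, hence a smooth toric Richardson variety. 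By Theorem~\ref{theo:normal_fan_s(1,n)} its fan is exactly $\Sigma_{\psi(u)}=\Sigma_\mathscr{T}$, so it is of Catalan type and isomorphic to $X(\Sigma_\mathscr{T})$. Thus every $n$-dimensional toric variety of Catalan type is realised as a smooth toric Richardson variety of Catalan type. (Theorem~\ref{theo:normal_fan_s(n,1)} gives a second family of realisations via $w_0uw_0$, but it produces no new isomorphism classes, so it is not needed for the count.)

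Combining the two inclusions, the two sets of isomorphism classes coincide, and the count is $b_{n+1}$ by Corollary~\ref{cor_enumerate}. The only point that needs a line of care — the "main obstacle", though a mild one — is that the notion of isomorphism is the same on both sides, so that passing from all toric varieties of Catalan type to just those arising as Richardson varieties neither merges nor splits isomorphism classes; this is immediate because the realisation produced by Theorem~\ref{theo:normal_fan_s(1,n)} has literally the fan $\Sigma_\mathscr{T}$, and Theorem~\ref{theo:fan_and_toric_of_Catalan} already tells us precisely when two such fans are isomorphic (namely when the underlying unordered binary trees agree).
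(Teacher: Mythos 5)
Your proposal is correct and matches the paper's approach: the paper presents this corollary as a direct consequence of Corollary~\ref{cor_enumerate}, with the realization of every fan $\Sigma_{\mathscr{T}}$ as the normal fan of some $\Q^v_w$ supplied by Theorem~\ref{theo:normal_fan_s(1,n)} (together with the surjectivity of $\psi$ and Proposition~\ref{prop:6-2}), exactly as you spell out. Your explicit two-inclusion bookkeeping, and the remark that the notion of isomorphism is the same on both sides, is just a more careful writing of the same one-line argument.
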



\section{Problems}\label{sec_problems}
The study of torus orbit closures in the flag variety is related to the geometry of Schubert varieties and the combinatorics of Bruhat interval polytopes. In this section, we pose some possible avenues for further exploration related to the discussions in this chapter.

\subsection{Poincar\'e polynomial of \texorpdfstring{$Y^v_w$}{Yvw}.}
It is known that a Schubert variety $X_w$ is smooth (in type~$A$) if its Poincar\'e polynomial is palindromic (see \cite[Theorem~6.0.4 and p.208]{BL20Singular}). 
\masuda{Similarly,} \lee{the generic torus orbit closure $Y_w$ in $X_w$ is smooth if its Poincar\'e polynomial is palindromic (see \cite[Proposition~5.6]{Gaetz22_one-skeleton}\footnote{ 
We notice that in the previous version of this article, we posed a problem: \emph{Is the generic torus orbit closure $Y_w$ in $X_w$ smooth if its Poincar\'e polynomial is palindromic?} and it has been solved in~\cite[Proposition~5.6]{Gaetz22_one-skeleton}.)})}\park{.} 
\masuda{When $Y_w$ is singular, its Poincar\'e polynomial still has some restrictions. For instance, Brion's  inequalities (see \cite{Brion00})  tell us that if we express 
\[
\Poin(Y_w,t)=\sum_{i=0}^{d} b_it^{2i}
\]
where $d=\dim_\C Y_w$, then $\sum_{i=0}^k b_i\le \sum_{i=0}^k b_{d-i}$ for $0\le \forall k\le [d/2]$.  
\begin{Problem}
Is it true that $b_k\le b_{d-k}$ for $0\le \forall k\le [d/2]$?
\end{Problem}
}

%
%
%
Recall that $Y^v_w$ is the generic torus orbit closure in the Richardson variety $X^v_w$.  
The Poincar\'e polynomial of $Y_w$ is computable by Theorem~\ref{Thm_Poincare} but such a formula is not known for $Y^v_w$ in general unless $Y^v_w$ is smooth.  

\begin{Problem}
\begin{enumerate}
\item Does the virtual Poincar\'e polynomial of $Y^v_w$ agree with the Poincar\'e polynomial $\Poin(Y^v_w,t)$ of $Y^v_w$? (See Remark~\ref{rmk_existence_of_F} for virtual Poincar\'e polynomials.) 
\item Is there a polynomial $A^v_w(t)$ defined similarly to $A_w(t)$ such that $\Poin(Y^v_w,t)=A^v_w(t^2)$?
\item Similarly to \park{\cite[Proposition~5.6]{Gaetz22_one-skeleton},} is $Y^v_w$ smooth if $\Poin(Y^v_w,t)$ is palindromic?
\end{enumerate}
\end{Problem}

%

\subsection{Combinatorics of \texorpdfstring{$\Q^v_w$}{Qvw}}
We pose five problems on the combinatorics of Bruhat interval polytopes.  

There are many pairs $(v,w)$ such that the Bruhat interval polytope $\Q^v_w$ is a cube. However, those pairs are not completely understood.

\begin{Problem}
Find all pairs $(v,w)$ such that $\Q^v_w$ is combinatorially equivalent to a cube, equivalently $X^v_w$ is a smooth toric variety.  
\end{Problem}

The following is a restatement of Conjecture~\ref{conjecture_smoothness_Ywv}. 

\begin{Problem}
Is $\Q^v_w$ simple when the two vertices $\vertex{v}$ and $\vertex{w}$ are  simple in $\Q^v_w$?
\end{Problem}

Since $\Q^v_w$ and $\Q^{v^{-1}}_{w^{-1}}$ have the same dimension, 
$\Q^v_w$ is toric if and only if $\Q^{v^{-1}}_{w^{-1}}$ is toric. Moreover, when $\Q^v_w$ is toric, its combinatorial type is determined by the poset structure of $[v,w]$. Therefore, when $\Q^v_w$ is toric, $\Q^{v^{-1}}_{w^{-1}}$ is simple if $\Q^v_w$ is simple.  
We ask whether we can drop the condition \emph{toric} in this statement.

\begin{Problem}
Is $\Q^{v^{-1}}_{w^{-1}}$ simple if $\Q^v_w$ is simple?
\end{Problem}

In the proof of Theorems~\ref{thm:smooth-one} and \ref{thm:singular-one}, we use the fact that if $s_r$ does not appear in a reduced decomposition of $w$ and $v\le w$, then $\Q^v_{s_rw}$ and $\Q^v_{ws_r}$ are combinatorially equivalent to $\Q^v_w \times \Q_{s_r}$ (see \cite[Proposition~5.7]{LMP_complexity_one}). We wonder whether this fact can be generalized as follows. 

\begin{Problem}
Consider  $u,v,w\in\Sn$ satisfying $v\leq w$ and $[e,u]\cap [e,w]=\{e\}$. Are $\Qvw{v}{wu}$ and $\Qvw{v}{uw}$ combinatorially equivalent to $\Qvw{v}{w}\times \Q_{u}$?
\end{Problem}

We set
\[
c(v,w) \colonequals \ell(w)-\ell(v)-\dim \Q^v_w.
\]
It is the complexity of the $T$-action on the Richardson variety $X^v_w$. Note that $c(e,w)=c(w)$, see Definition~\ref{def:c(w)} for $c(w)$.
We note that studying the complexity $c(v,w)$ of a Richardson variety is closely related to studying the complexity of the corresponding \textit{Kazhdan--Lusztig variety} as is shown in~\cite[Corollary~4.16]{dontenbury2022complexity}.

Recall from Theorem~\ref{thm:smooth-one} that a polytope $\Q_w$ is simple and $c(w) = 1$ if and only if $\Q_w$ is combinatorially equivalent to the product of the hexagon and the cube of dimension $\ell(w)-3$. 
Moreover, a polytope $\Q^v_w$ is simple and $c(v,w) = 0$ if and only if $\Q^v_w$ is combinatorially equivalent to a cube (see Theorem~\ref{thm:toric} and Theorem~\ref{theo:3-6}). Therefore it is natural to ask the following:

\begin{Problem}
Is a polytope $\Q^v_w$ simple and $c(v,w)=1$ if and only if $\Q^v_w$ is combinatorially equivalent to the product of the hexagon and the cube of dimension~$\ell(w)-\ell(v)-3$? 
\end{Problem}


\appendix 
\section{Toric varieties}
\label{appendix}

\subsection{Toric varieties}\label{sec_toric_varieties}
We recall the background of the theory of toric varieties from~\cite{CLS11Toric}. We first recall the definition of toric varieties.

\begin{definition}[{\cite[Definition~3.1.1]{CLS11Toric}}]
A \emph{toric variety} of complex dimension $n$ is a normal algebraic variety containing an algebraic torus $T \colonequals (\C^\ast)^n$ as a Zariski open dense subset such that the action of the torus on itself extends to the whole variety. 
\end{definition}

\begin{definition}[{\cite[Definition~3.1.2]{CLS11Toric}}]
Let $N$ be a lattice. A \emph{fan} $\Sigma$ in $N_{\R} \colonequals N \otimes_{\Z}\R$ is a finite collection of cones $\sigma \subseteq N_{\R}$ such that:
\begin{enumerate}
\item Every $\sigma \in \Sigma$ is a strongly convex rational polyhedral cone, i.e.,  for each $\sigma$ there exists a finite set $S \subset N$ such that
\[
\sigma = \text{Conv}(S) \colonequals \left\{ \sum_{u \in S} c_u u \,\middle|\, c_u \geq 0 \right\} \subset N_{\R}
\]
and $\sigma \cap (-\sigma) = \{\boldsymbol{0}\}$.
\item For all $\sigma \in \Sigma$, each face of $\sigma$ is also in $\Sigma$.
\item For all $\sigma_1, \sigma_2 \in \Sigma$, the intersection $\sigma_1 \cap \sigma_2$ is a face of each. 
\end{enumerate}
\end{definition}

Let $\Sigma$ be a fan in $N_{\R}$. Let $M$ be a dual lattice of $N$ and we set $M_{\R} \colonequals M \otimes_{\Z} \R$. We denote by~$\langle~,~\rangle$ the pairing between $M_{\R} \colonequals M \otimes_{\Z} \R$ and $N_{\R}$. 
Each cone $\sigma \in \Sigma$ gives an affine toric variety
\[
U_{\sigma} = \text{Spec}(\C [\sigma^{\vee} \cap M]),
\]
where
\[
\sigma^{\vee} \colonequals \{ m \in M_{\R} \mid \langle m, u \rangle \geq 0 \text{ for all } u \in \sigma \}.
\]
By gluing these affine toric varieties, we get a variety $X_{\Sigma}$. 
It turns out that this variety is a toric variety and there is a correspondence between normal separated toric varieties and fans.
\begin{theorem}[{\cite[Theorem~3.1.5 and Corollary~3.1.8]{CLS11Toric}}]
For a fan $\Sigma$, the variety $X_{\Sigma}$ is a normal separated toric variety. Conversely, for a normal separated toric variety $X$, there exists a fan $\Sigma$ such that $X$ is isomorphic to $X_{\Sigma}$.
\end{theorem}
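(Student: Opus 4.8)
The plan is to prove the two assertions separately: the first—that $X_{\Sigma}$ is a normal separated toric variety—by the explicit gluing construction, and the second—that every normal separated toric variety arises this way—by reducing to the affine situation via an invariant affine cover.

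\textbf{First direction.} I would begin by isolating the gluing lemma that makes the construction work: if $\tau$ is a face of a cone $\sigma$, then $\tau = \sigma \cap m^{\perp}$ for some $m \in \sigma^{\vee} \cap M$, so that $\tau^{\vee} \cap M = (\sigma^{\vee} \cap M) + \Z_{\ge 0}(-m)$ and hence the natural map $U_{\tau} \to U_{\sigma}$ is the localization at the character $\chi^{m}$, in particular an open immersion. Applying this to $\tau = \sigma_1 \cap \sigma_2$, which is a face of both $\sigma_1$ and $\sigma_2$ by the fan axioms, realizes $U_{\tau}$ as an open subvariety of each $U_{\sigma_i}$, and one glues the affine charts $U_{\sigma}$ along these opens; the cocycle condition on triple overlaps is automatic because all the identifications take place inside the common open torus $\mathrm{Spec}(\C[M])$. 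The chart $U_{\{\boldsymbol{0}\}} = \mathrm{Spec}(\C[M]) \cong (\C^{\ast})^{n}$ sits inside $X_{\Sigma}$ as a dense open subset, and the multiplication action of this torus on each $U_{\sigma}$ is compatible with the gluing, so it extends to all of $X_{\Sigma}$. Normality is checked chart by chart and follows from the saturation of the semigroup $\sigma^{\vee} \cap M$ in $M$. For separatedness I would verify that the diagonal $U_{\sigma_1 \cap \sigma_2} \hookrightarrow U_{\sigma_1} \times U_{\sigma_2}$ is closed, which reduces to surjectivity of $\C[\sigma_1^{\vee} \cap M] \otimes \C[\sigma_2^{\vee} \cap M] \to \C[(\sigma_1 \cap \sigma_2)^{\vee} \cap M]$; this follows from the convex-geometry identity $(\sigma_1 \cap \sigma_2)^{\vee} = \sigma_1^{\vee} + \sigma_2^{\vee}$, which is precisely where the third fan axiom is used.

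\textbf{Second direction.} Let $X$ be a normal separated toric variety with dense torus $T = (\C^{\ast})^{n}$, and set $M = \Hom(T,\C^{\ast})$, $N = \Hom(\C^{\ast},T)$. The first—and only deep—step is to invoke Sumihiro's theorem, which provides a cover of $X$ by $T$-invariant affine open subsets. For such an invariant affine open $U = \mathrm{Spec}(A)$, the $T$-action gives an $M$-grading $A = \bigoplus_{m \in S}\C\,\chi^{m}$ with $S \subseteq M$; finite generation of $A$ makes $S$ a finitely generated semigroup, normality of $A$ makes $S$ saturated, and therefore $S = \sigma_{U}^{\vee} \cap M$ for a strongly convex rational polyhedral cone $\sigma_{U}$, so $U = U_{\sigma_{U}}$. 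It then remains to check that the collection of these cones, enlarged by all their faces, is a fan. Closure under faces uses the gluing lemma again: a face $\tau$ of $\sigma_{U}$ gives the $T$-invariant affine open $U_{\tau} \subseteq X$. Given two members $U_{\sigma_1}$ and $U_{\sigma_2}$, their intersection is again a $T$-invariant affine open of $X$—here separatedness of $X$ is used to know the intersection is affine—hence equals $U_{\tau}$ for some cone $\tau$, and a comparison of characters forces $\tau = \sigma_1 \cap \sigma_2$ and shows $\tau$ is a common face. Thus $\Sigma$, defined as the set of these cones together with all their faces, is a fan and $X \cong X_{\Sigma}$.

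\textbf{Main obstacle.} The genuinely hard input is Sumihiro's theorem on the existence of a $T$-invariant affine open cover of a normal torus variety; without it the converse direction cannot even be started. The rest is careful bookkeeping with semigroups and dual cones, the subtlest point being the dictionary between separatedness of $X_{\Sigma}$ and the ``intersection is a face of each'' axiom, which rests on the identity $(\sigma_1 \cap \sigma_2)^{\vee} = \sigma_1^{\vee} + \sigma_2^{\vee}$.
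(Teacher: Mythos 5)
Your outline is correct and follows the standard argument of the cited reference (Cox--Little--Schenck, Theorem 3.1.5 and Corollary 3.1.8), which is all the paper itself offers, since it states this theorem as background with a citation and no proof. Both directions are assembled from the right ingredients --- the face/localization gluing lemma, the separation identity $(\sigma_1\cap\sigma_2)^{\vee}\cap M=(\sigma_1^{\vee}\cap M)+(\sigma_2^{\vee}\cap M)$ for the diagonal, and Sumihiro's theorem plus the saturated-semigroup dictionary for the converse --- so there is nothing to add beyond noting that strong convexity of each $\sigma_U$ (needed so that $U_{\sigma_U}$ contains the full torus) comes from the fact that the characters appearing in $A$ generate $M$ as a group.
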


A \emph{convex polytope} is the convex hull  of a finite set of points in the Euclidean space $\R^n$. It is well known that every convex polytope is a bounded intersection of finitely many half-spaces. Two polytopes are \emph{combinatorially equivalent} if their face posets are isomorphic.
A \emph{lattice polytope} is a convex polytope whose vertices are in the lattice $\Z^n\subset \R^n$.

For a full dimensional lattice polytope $\mathsf{P}\subset {\R}^n$, one can associate a fan, called the \emph{normal fan of $\mathsf{P}$}. 
Consider the presentation of $\mathsf{P}$ given by the intersection of half-spaces:
\[
\mathsf{P} = \{ m \in \R^n \mid \langle m, u_{\mathsf F} \rangle \geq - a_{\mathsf F} \text{ for every facet }\mathsf F \}.
\]
For a face $\mathsf{Q}$ of $\mathsf{P}$, we set 
\[
\sigma_{\mathsf Q} \colonequals \text{Cone}(u_{\mathsf F} \mid \mathsf F \text{ contains } \mathsf Q).
\]
Thus the cone $\sigma_{\mathsf F}$ is the ray generated by $u_{\mathsf F}$ for a facet $\mathsf F$ and $\sigma_{\mathsf{P}} = \{0\}$. It is known that 
\[
\Sigma_{\mathsf P} \colonequals \{ \sigma_{\mathsf Q} \mid \mathsf Q \text{ is a face of }\mathsf P\}
\] 
becomes a fan (see~\cite[Theorem~2.3.2]{CLS11Toric}) and we call it the \emph{normal fan of $\mathsf P$}. 
Moreover, we have the following correspondence.
\begin{theorem}[{\cite[Theorem~6.2.1]{CLS11Toric}}]\label{thm_bijective_between_polytopes_and_toric_var}
\[
\begin{tikzcd}
\{ \mathsf{P} \subset \R^n \mid \text{ $\mathsf{P}$ is  a full-dimensional lattice polytope}\} \arrow[<->,d,"1-1"]\\
\{ (X_{\mathsf{P}}, D) \mid  \text{$X_{\mathsf{P}} = X_{\Sigma_{\mathsf{P}}}$ is a projective toric variety, $D$ is a  $(\C^\ast)^n$-invariant ample divisor on $X_{\mathsf{P}}$}\}.
\end{tikzcd}	
\]
\end{theorem}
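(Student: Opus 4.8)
The plan is to exhibit the two maps in the claimed bijection explicitly and to verify that they are mutually inverse, relying on the toric dictionary between torus-invariant Cartier divisors and their support functions (see \cite[Chapter~6]{CLS11Toric}). For the forward direction, let $\mathsf{P}\subset\R^n=M_\R$ be a full-dimensional lattice polytope and write its facet presentation $\mathsf{P}=\{m\in M_\R\mid\langle m,u_{\mathsf F}\rangle\ge -a_{\mathsf F}\text{ for every facet }\mathsf F\}$, where $u_{\mathsf F}\in N$ is the primitive inward normal of $\mathsf F$ and $a_{\mathsf F}\in\Z$. As recalled in the appendix, the normal fan $\Sigma_{\mathsf P}$ is then a complete fan whose rays are the $\R_{\ge 0}u_{\mathsf F}$, so $X_{\mathsf P}\colonequals X_{\Sigma_{\mathsf P}}$ is a complete toric variety; I attach to $\mathsf{P}$ the torus-invariant divisor $D_{\mathsf P}\colonequals\sum_{\mathsf F}a_{\mathsf F}D_{\mathsf F}$, where $D_{\mathsf F}$ is the prime divisor corresponding to the ray $\R_{\ge 0}u_{\mathsf F}$.

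The heart of the argument is to show that $D_{\mathsf P}$ is Cartier and ample. I would use the support function $\varphi_{\mathsf P}\colon N_\R\to\R$, $u\mapsto\min_{m\in\mathsf{P}}\langle m,u\rangle$: it is integral and linear on each cone of $\Sigma_{\mathsf P}$, which makes $D_{\mathsf P}$ Cartier with support function $\varphi_{\mathsf P}$; and it is \emph{strictly} convex with respect to $\Sigma_{\mathsf P}$, meaning the maximal cones of $\Sigma_{\mathsf P}$ are exactly the maximal domains on which $\varphi_{\mathsf P}$ is linear, which holds precisely because the $\mathsf F$'s are genuinely the facets of the full-dimensional polytope $\mathsf{P}$. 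By the toric ampleness criterion, strict convexity of the support function is equivalent to ampleness, so $(X_{\mathsf P},D_{\mathsf P})$ is a pair of the required type; one also records the complementary fact $\Gamma(X_{\mathsf P},\mathcal O(D_{\mathsf P}))=\bigoplus_{m\in\mathsf P\cap M}\C\chi^m$, which is what makes the construction geometrically meaningful.

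For the reverse direction, let $X=X_\Sigma$ be a projective toric variety and $D=\sum_\rho a_\rho D_\rho$ an ample torus-invariant divisor. Projectivity forces $\Sigma$ to be complete, and ampleness forces $D$ to be Cartier with strictly convex support function $\varphi_D$. I then form the polyhedron $P_D\colonequals\{m\in M_\R\mid\langle m,u_\rho\rangle\ge -a_\rho\text{ for all rays }\rho\}$. Completeness of $\Sigma$ makes $P_D$ bounded; strict convexity of $\varphi_D$ makes $P_D$ full-dimensional with normal fan exactly $\Sigma$ and with facet normals exactly the $u_\rho$ and offsets exactly the $a_\rho$. Hence $P_D$ is a full-dimensional lattice polytope and $(X_{P_D},D_{P_D})=(X_\Sigma,D)=(X,D)$.

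Finally I would check that the two assignments are inverse to each other. The composite $\mathsf{P}\mapsto(X_{\mathsf P},D_{\mathsf P})\mapsto P_{D_{\mathsf P}}$ returns $\mathsf{P}$ immediately, since $P_{D_{\mathsf P}}$ is by construction the intersection of the very half-spaces that presented $\mathsf{P}$; the composite $(X,D)\mapsto P_D\mapsto(X_{P_D},D_{P_D})$ returns $(X,D)$ by the previous paragraph. I would also note that translating $\mathsf{P}$ by a lattice vector $m_0$ replaces each $a_{\mathsf F}$ by $a_{\mathsf F}+\langle m_0,u_{\mathsf F}\rangle$, hence changes $D_{\mathsf P}$ by the principal divisor $\mathrm{div}(\chi^{-m_0})$, so the bijection is with honest divisors rather than divisor classes and no clash arises. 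The main obstacle is exactly the toric ampleness criterion together with the translation between the facet description of a polytope and the support function of its divisor; once that dictionary is in place everything else is bookkeeping, and the full-dimensionality hypothesis enters precisely there, as it is what guarantees strict convexity of $\varphi_{\mathsf P}$ (equivalently, that $\Sigma_{\mathsf P}$ is a complete fan of full dimension).
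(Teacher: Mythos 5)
The paper does not prove this statement; it is quoted verbatim from \cite[Theorem~6.2.1]{CLS11Toric}, so there is no in-paper argument to compare against. Your proposal is a correct reconstruction of the standard proof from Chapter~6 of that reference: associate to $\mathsf{P}$ the divisor $D_{\mathsf P}=\sum_{\mathsf F}a_{\mathsf F}D_{\mathsf F}$, use the support function $\varphi_{\mathsf P}$ to see that $D_{\mathsf P}$ is Cartier and (by strict convexity over the normal fan, which is where full-dimensionality enters) ample, and invert the construction by forming $P_D$ from the coefficients of an ample invariant divisor on a complete fan. All the essential ingredients are in place, including the check that the two assignments are mutually inverse and the observation that lattice translations of $\mathsf{P}$ correspond to adding principal divisors, so the bijection is genuinely at the level of divisors rather than divisor classes.

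One small slip in the side remark on translations: with the facet presentation $\langle m,u_{\mathsf F}\rangle\ge -a_{\mathsf F}$, translating $\mathsf{P}$ by $m_0$ replaces $a_{\mathsf F}$ by $a_{\mathsf F}-\langle m_0,u_{\mathsf F}\rangle$, not $a_{\mathsf F}+\langle m_0,u_{\mathsf F}\rangle$; with that correction the divisor indeed changes by $\mathrm{div}(\chi^{-m_0})$ as you state. This does not affect the argument. You could also make explicit that $P_D$ is a \emph{lattice} polytope because Cartierness gives, for each maximal cone $\sigma$, an $m_\sigma\in M$ with $\varphi_D=\langle m_\sigma,\cdot\rangle$ on $\sigma$, and strict convexity identifies these $m_\sigma$ with the vertices of $P_D$; you assert the conclusion but the mechanism is worth a sentence.
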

Furthermore, the torus invariant subvarieties in $X_{\mathsf{P}}$ correspond to faces of the polytope $\mathsf{P}$. For example,
the vertices of~$\mathsf{P}$ correspond to the $T$-fixed points of $X_{\mathsf{P}}$.

For a vertex~$\mathsf v$ of a polytope $\mathsf{P}$, the \emph{degree} $d(\mathsf v)$ of $\mathsf v$ is the number of edges meeting at~$\mathsf v$.
For an $n$-dimensional polytope $\mathsf{P}$, a vertex~$\mathsf{v}$ of~$\mathsf{P}$ is said to be \emph{simple} if $d(\mathsf{v}) = n$. When all the vertices of~$\mathsf{P}$ are simple, we call $\mathsf{P}$ a \emph{simple polytope}.

A vertex $\mathsf{v}$ of a lattice polytope $\mathsf{P}$ is said to be \emph{smooth} if it is simple and the primitive direction vectors of the edges emanating from $\mathsf{v}$ form a basis for $\Z^n$. We call a vertex of~$\mathsf{P}$ \emph{singular} if it is not smooth.  A lattice polytope $\mathsf{P}$ is said to be \emph{smooth} if all the vertices of~$\mathsf{P}$ are smooth. We call a lattice polytope $\mathsf{P}$ is \emph{singular} if some vertex of $\mathsf{P}$ is singular.
See Figure~\ref{fig:def-poly}.
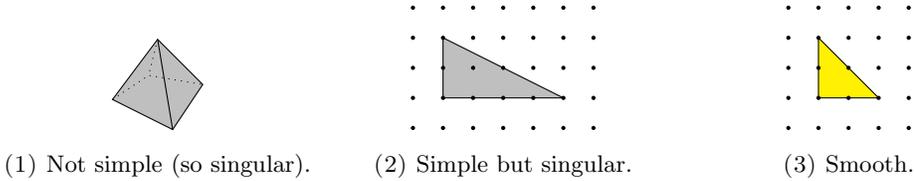
\begin{figure}[h]
    \begin{subfigure}[b]{.3\textwidth}
        \begin{center}
            \begin{tikzpicture}[scale=0.4]
                \filldraw[draw=black,fill=lightgray] (0,0)--(2,-1)--(3,0.5)--(1.5,2)--cycle;
                \draw[dotted](0,0)--(1.2,0.8)--(3,0.5);
                \draw[dotted] (1.5,2)--(1.2,0.8);
                \draw (2,-1)--(1.5,2);
            \end{tikzpicture}
        \end{center}
    \caption{Not simple (so singular).}
        \end{subfigure}
        \begin{subfigure}[b]{.3\textwidth}
        \begin{center}
            \begin{tikzpicture}[scale=0.4]
                \filldraw[draw=black,fill=lightgray] (0,0)--(4,0)--(0,2)--cycle;
                \foreach \x in {-1,0,...,5}
                    \foreach \y in {-1,0,...,3}
                    {
                    \fill (\x,\y) circle (2pt);
                    }
            \end{tikzpicture}
        \end{center}
    \caption{Simple but singular.}
        \end{subfigure}
        \begin{subfigure}[b]{.3\textwidth}
        \begin{center}
            \begin{tikzpicture}[scale=0.4]
                \filldraw[draw=black,fill=yellow] (0,0)--(2,0)--(0,2)--cycle;
                \foreach \x in {-1,0,...,3}
                    \foreach \y in {-1,0,...,3}
                    {
                    \fill (\x,\y) circle (2pt);
                    }
            \end{tikzpicture}
        \end{center}
    \caption{Smooth.}
    \end{subfigure}
    \caption{Examples of singular or smooth lattice polytopes.}
    \label{fig:def-poly}
    \end{figure}

As one may expect, there are geometric interpretations of these terminologies.
\begin{proposition}[{\cite[Theorem~2.4.3]{CLS11Toric}}]
Let $\mathsf{P}$ be a lattice polytope and let $X_{\mathsf{P}}$ be the corresponding toric variety.
A vertex~$\mathsf{v}$ of~$\mathsf{P}$ is smooth if and only if $X_{\mathsf{P}}$ is smooth at the corresponding fixed point. Moreover, $X_{\mathsf{P}}$ is smooth if and only if the polytope $\mathsf{P}$ is smooth. 
\end{proposition}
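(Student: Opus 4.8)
The plan is to reduce the statement to the classical smoothness criterion for \emph{affine} toric varieties and then to translate that criterion into the polytope language through the normal fan construction recalled above. Recall that the $T$-fixed points of $X_{\mathsf P}=X_{\Sigma_{\mathsf P}}$ correspond to the maximal cones of $\Sigma_{\mathsf P}$, and that a maximal cone is exactly one of the form $\sigma_{\mathsf v}=\Cone(u_{\mathsf F}\mid \mathsf F\ni \mathsf v)$ for a vertex $\mathsf v$ of $\mathsf P$; the corresponding fixed point $x_{\mathsf v}$ lies in the affine chart $U_{\sigma_{\mathsf v}}=\operatorname{Spec}(\C[\sigma_{\mathsf v}^{\vee}\cap M])$. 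Since smoothness is an open condition and the charts $U_{\sigma_{\mathsf v}}$ cover $X_{\mathsf P}$, and since $U_{\sigma_{\mathsf v}}$ is smooth iff it is smooth at its unique fixed point $x_{\mathsf v}$ (every other orbit has $x_{\mathsf v}$ in its closure, so smoothness propagates), the ``Moreover'' clause follows immediately once the first equivalence is established for a single vertex. So the whole proof comes down to: $X_{\mathsf P}$ is smooth at $x_{\mathsf v}$ if and only if $\mathsf v$ is a smooth vertex of $\mathsf P$.

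First I would invoke the standard criterion (\cite[Theorem~1.3.12]{CLS11Toric}): for a strongly convex rational polyhedral cone $\sigma\subset N_{\R}$ of full dimension $n$, the affine toric variety $U_{\sigma}$ is smooth if and only if $\sigma$ is \emph{smooth}, i.e. $\sigma$ is generated by a $\Z$-basis of $N$. Thus it remains to show that $\sigma_{\mathsf v}$ is a smooth cone exactly when $\mathsf v$ is a smooth vertex of $\mathsf P$. The key geometric input is the identification of the dual cone: I would check that $\sigma_{\mathsf v}^{\vee}$ is the translated tangent cone of $\mathsf P$ at $\mathsf v$, namely $\sigma_{\mathsf v}^{\vee}=\Cone(\mathsf u-\mathsf v\mid \mathsf u\in \mathsf P)$, whose extreme rays are spanned by the primitive direction vectors $w_1,\dots,w_{d(\mathsf v)}$ of the edges of $\mathsf P$ emanating from $\mathsf v$. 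Consequently $\mathsf v$ is simple, i.e. $d(\mathsf v)=n$, precisely when $\sigma_{\mathsf v}^{\vee}$—and hence $\sigma_{\mathsf v}$—is simplicial of dimension $n$.

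In that simplicial case, biduality of cones gives $\sigma_{\mathsf v}=\Cone(w_1,\dots,w_n)^{\vee}$, and a full-dimensional simplicial cone $\Cone(w_1,\dots,w_n)$ with primitive generators $w_i$ has smooth dual if and only if $\{w_1,\dots,w_n\}$ is a $\Z$-basis of $M$: if it is, the dual basis is a $\Z$-basis of $N$ generating $\sigma_{\mathsf v}$, so $\sigma_{\mathsf v}$ is smooth; conversely a $\Z$-basis of $N$ generating $\sigma_{\mathsf v}$ dualizes to a $\Z$-basis of $M$ generating $\sigma_{\mathsf v}^{\vee}$, which by primitivity must coincide with $\{w_1,\dots,w_n\}$ up to reordering. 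This last condition is exactly the definition of $\mathsf v$ being a smooth vertex, so combining with the preceding steps finishes the argument. I expect the main obstacle to be the careful verification that $\sigma_{\mathsf v}^{\vee}$ is precisely the cone generated by the primitive edge directions at $\mathsf v$; concretely, one must check that the facets of $\mathsf P$ through $\mathsf v$ are in inclusion-reversing bijection with the faces of the tangent cone at $\mathsf v$, so that the rays of $\sigma_{\mathsf v}^{\vee}$ really are the edge directions. This is routine polytope combinatorics, but it is the only place where genuine work is needed—everything else is bookkeeping with dual cones and lattice bases.
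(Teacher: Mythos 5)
The paper does not prove this proposition; it simply quotes it from \cite[Theorem~2.4.3]{CLS11Toric}. Your argument is correct and is essentially the standard proof of that cited result: reduce to the affine charts $U_{\sigma_{\mathsf v}}$, apply the smoothness criterion for cones, and identify $\sigma_{\mathsf v}^{\vee}$ with the tangent cone of $\mathsf P$ at $\mathsf v$ so that smoothness of $\sigma_{\mathsf v}$ translates into the primitive edge directions at $\mathsf v$ forming a $\Z$-basis.
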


There is a combinatorial way to determine whether a smooth compact toric variety is Fano.
\begin{definition}\label{def_primitive}
For a fan $\Sigma$, a subset $R$ of the primitive ray vectors is called a \defi{primitive collection} of $\Sigma$ if 
\[
\Cone(R) \notin \Sigma
\quad \text{ but }\quad \Cone(R \setminus \{\mathbf{u}\}) \in \Sigma \quad \text{ for every }\mathbf{u} \in R.
\] 
\end{definition}
Note that primitive collections of $\Sigma_\mathsf{P}$ correspond to the minimal non-faces of $\mathsf{P}$.
For a primitive collection $R = \{\mathbf{u}'_1, \dots,\mathbf{u}'_{\ell}\}$, we get $\mathbf{u}'_1 + \cdots+\mathbf{u}'_{\ell}=\boldsymbol{0}$ or there exists a unique cone $\sigma$ such that $\mathbf{u}'_1 + \cdots+\mathbf{u}'_{\ell}$ is in the interior of $\sigma$. That is, 
\begin{equation}\label{eq:primitive}
	\mathbf{u}'_1 + \cdots+\mathbf{u}'_{\ell}=\begin{cases}
	\boldsymbol{0}, &\text{ or }\\
	a_1 \mathbf{u}_1 + \cdots+ a_{r} \mathbf{u}_{r},&{}
	\end{cases}
\end{equation}
where $\mathbf{u}_1,\dots,\mathbf{u}_{r}$ are the primitive generators of $\sigma$ and $a_1,\dots,a_{r}$ are positive integers.
We call \eqref{eq:primitive} a primitive relation, and the \emph{degree} $\deg R$ of a primitive collection $R$ is defined to be $\ell - (a_1+\cdots+a_r)$.
Batyrev~\cite{Batyrev} gave a criterion for a projective toric variety to be Fano or weak Fano.
\begin{proposition}[{\cite[Proposition~2.3.6]{Batyrev}}]\label{prop:batyrev}
A smooth compact toric  variety $X_\Sigma$ is Fano \textup{(}respectively, weak Fano\textup{)} if and only if $\mathrm{deg}(R)>0$ \textup{(}respectively, $\mathrm{deg}(R)\geq 0$\textup{)} for every primitive collection $R$ of $\Sigma$. 
\end{proposition}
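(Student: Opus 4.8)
The plan is to reduce the statement to an intersection-theoretic criterion on the cone of curves and then to carry out an explicit computation for the $1$-cycles attached to primitive collections. The two inputs I would start from are standard facts about a smooth complete toric variety $X_\Sigma$: one has $-K_{X_\Sigma}=\sum_{\rho\in\Sigma(1)}D_\rho$, where $D_\rho$ is the torus-invariant prime divisor of the ray $\rho$; and (by Kleiman's criterion, resp.\ Reid's description of toric cones of curves) a divisor class on $X_\Sigma$ is ample, resp.\ nef, exactly when it pairs positively, resp.\ non-negatively, with every nonzero class in the closed cone of effective curves $\overline{NE}(X_\Sigma)$, which is generated by the classes of the torus-invariant curves $C_\tau$ attached to the walls $\tau$ of $\Sigma$.

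Next I would attach to each primitive collection $R=\{\mathbf{u}'_1,\dots,\mathbf{u}'_\ell\}$ an effective $1$-cycle $C_R$. Using the primitive relation \eqref{eq:primitive}, $\mathbf{u}'_1+\cdots+\mathbf{u}'_\ell=a_1\mathbf{u}_1+\cdots+a_r\mathbf{u}_r$ with $\Cone(\mathbf{u}_1,\dots,\mathbf{u}_r)\in\Sigma$, the cycle $C_R$ is produced by a wall-crossing (equivalently star-subdivision) argument along that cone, and its numerical class is pinned down by its intersection numbers with the divisors $D_\rho$. A direct toric computation (the local picture of $X_\Sigma$ near the relevant cones together with the projection formula) then yields
\[
(-K_{X_\Sigma})\cdot C_R=\ell-(a_1+\cdots+a_r)=\deg(R).
\]
Since $C_R$ is effective, this already gives the "only if" directions: if $X_\Sigma$ is Fano then $\deg(R)=(-K_{X_\Sigma})\cdot C_R>0$ for every primitive collection $R$, and if $X_\Sigma$ is only weak Fano then $\deg(R)\ge 0$.

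For the converse I would prove that $\overline{NE}(X_\Sigma)$ is generated, as a cone, by the classes $[C_R]$ with $R$ a primitive collection. Granting this, $\deg(R)>0$ for all $R$ forces $-K_{X_\Sigma}$ to pair positively with every nonzero class of $\overline{NE}(X_\Sigma)$, hence $-K_{X_\Sigma}$ is ample and $X_\Sigma$ is Fano; replacing $>$ by $\ge$ gives nefness, i.e.\ weak Fano. The generation statement is the crux of the argument: beginning from Reid's wall-curve generators, one rewrites each class $[C_\tau]$ as a non-negative combination of the $[C_R]$'s by an inductive procedure that repeatedly contracts a primitive collection contained in $\tau$ and uses the wall relation to control the coefficients. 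Keeping those coefficients non-negative and guaranteeing termination of the induction is the step I expect to be the main obstacle; by comparison, the intersection-number computation of step two and the Kleiman/Reid reduction of step one are routine. (Alternatively one can avoid the cone of curves entirely and argue with the support function $\varphi_{-K_{X_\Sigma}}$, which equals $-1$ on every primitive ray generator: $-K_{X_\Sigma}$ is ample iff $\varphi_{-K_{X_\Sigma}}$ is strictly convex, and the real content becomes the claim that strict convexity --- a priori a condition imposed at every wall --- is already forced by the single inequality $\deg(R)>0$ extracted from each primitive relation.)
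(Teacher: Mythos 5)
The paper does not prove this proposition at all: it is quoted verbatim from Batyrev \cite{Batyrev} as a black box, so there is no internal argument to compare yours against. Judged on its own terms, your outline follows the standard (essentially Batyrev's) route, and the easy half is sound: the class of the relation \eqref{eq:primitive} pairs with $-K_{X_\Sigma}=\sum_\rho D_\rho$ to give exactly $\deg(R)$, so Fano (resp.\ weak Fano) forces $\deg(R)>0$ (resp.\ $\geq 0$). In fact for this direction you do not even need effectivity of $C_R$: since the support function $\varphi$ of $-K_{X_\Sigma}$ equals $-1$ on every primitive ray generator and is linear on the cone containing $\mathbf{u}'_1+\cdots+\mathbf{u}'_\ell$, (strict) convexity applied to the sum over $R$ (whose members span no cone of $\Sigma$) gives $-\sum a_j\geq -\ell$ directly, with strict inequality in the ample case.

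The converse is where your proposal is genuinely incomplete, and you have correctly located the obstacle without removing it. The statement that $\overline{NE}(X_\Sigma)$ is generated by the primitive classes $[C_R]$ (equivalently, that the finitely many inequalities $\deg(R)\geq 0$, resp.\ $>0$, already force convexity, resp.\ strict convexity, of $\varphi$ across \emph{every} wall) is a theorem in its own right, not a routine induction: a wall $\tau=\sigma\cap\sigma'$ need not itself produce a primitive collection, since the two rays of $\sigma\cup\sigma'$ not in $\tau$ may well span a cone of $\Sigma$, so the wall inequality must be assembled from several primitive relations. Batyrev's original argument for this generation statement had a gap that was only closed later (Cox--von Renesse, \emph{Primitive collections and toric varieties}, 2009; see also Casagrande's work on contractible classes). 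So as written your proof establishes only the ``only if'' half; to make it complete you must either import the Mori-cone generation theorem as a further citation or supply the wall-by-wall convexity argument in full, and the latter is the entire content of the proposition rather than a finishing touch.
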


We can also distinguish two smooth Fano toric varieties using the primitive relations.
\begin{proposition}[{\cite[Proposition 2.1.8 and Theorem 2.2.4]{Batyrev}}]\label{prop:batyrev-iso}
Two smooth Fano toric varieties $X_\Sigma$ and $X_{\Sigma'}$ are isomorphic as toric varieties if and only if there is a bijection between the sets of rays of $\Sigma$ and $\Sigma'$ inducing a bijection between maximal cones and preserving the primitive relations.
\end{proposition}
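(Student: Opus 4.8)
The plan is to show that the combinatorial data appearing in the statement---the set of rays, the collection of maximal cones, and the primitive relations---already determines the fan up to a lattice automorphism, and then to invoke the correspondence between toric varieties and their fans (\cite[Theorem~3.1.5 and Corollary~3.1.8]{CLS11Toric}).

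First I would dispose of the easy implication. If $X_\Sigma$ and $X_{\Sigma'}$ are isomorphic as toric varieties, the isomorphism restricts to an isomorphism of the open dense tori and is therefore induced by a lattice isomorphism $\phi\colon N\xrightarrow{\ \sim\ }N'$ with $\phi(\Sigma)=\Sigma'$. Such a $\phi$ takes rays to rays and maximal cones to maximal cones; and since a primitive collection and its primitive relation~\eqref{eq:primitive} are defined purely in terms of which subsets of rays span cones together with the ambient linear structure, $\phi$ carries primitive collections to primitive collections with the \emph{same} primitive relation. This yields the desired combinatorial bijection.

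For the converse, let $\psi$ be a bijection between the rays of $\Sigma$ and the rays of $\Sigma'$ inducing a bijection of maximal cones (hence of all cones, as every cone of a complete fan is a face of a maximal one) and preserving primitive relations. Fix a maximal cone $\sigma_0\in\Sigma$. Since $X_\Sigma$ is smooth, the primitive generators $\mathbf{u}_1,\dots,\mathbf{u}_n$ of $\sigma_0$ form a $\Z$-basis of $N$, and likewise the primitive generators $\mathbf{u}'_1,\dots,\mathbf{u}'_n$ of $\psi(\sigma_0)$ form a $\Z$-basis of $N'$; let $\phi\colon N\to N'$ be the lattice isomorphism determined by $\phi(\mathbf{u}_i)=\mathbf{u}'_i$. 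The crux is the claim that $\phi(\mathbf{v})=\psi(\mathbf{v})$ for \emph{every} primitive generator $\mathbf{v}$ of $\Sigma$. Granting this, $\phi$ sends the generators of each cone of $\Sigma$ to the generators of the matching cone of $\Sigma'$, so $\phi(\Sigma)=\Sigma'$, and the fan--toric variety dictionary gives $X_\Sigma\cong X_{\Sigma'}$ as toric varieties.

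To prove the claim I would propagate outward from $\sigma_0$. A complete fan is connected in codimension one, so any maximal cone is linked to $\sigma_0$ by a chain of maximal cones in which consecutive terms share a wall; crossing a wall exchanges one ray generator for a new one $\mathbf{v}$, which is expressed through the already-controlled generators by an integral wall relation. The essential point, and the one I expect to be the main obstacle, is that for a smooth \emph{Fano} toric variety these wall relations are forced by the primitive relations: the combinatorial incidences together with the integers $a_j$ occurring in~\eqref{eq:primitive} determine the coordinates of every ray with respect to the fixed basis $\mathbf{u}_1,\dots,\mathbf{u}_n$. Since $\psi$ transports this data over verbatim, the same coordinate formulas hold on the primed side, whence $\phi(\mathbf{v})=\psi(\mathbf{v})$; an induction on the length of the chain from $\sigma_0$ then covers all rays. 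This reduction of wall relations to primitive relations is precisely where the Fano hypothesis enters---for general smooth complete fans it may fail---and it is the technical heart of the argument; I would either cite \cite{Batyrev} for it or carry out the wall-crossing bookkeeping directly, being careful throughout that all coefficients remain integral so that $\phi$ is a genuine lattice isomorphism rather than merely a rational one.
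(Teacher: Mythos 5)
The paper itself offers no proof of this proposition---it is quoted verbatim from Batyrev---so the only thing to measure your argument against is the mathematics. Your ``only if'' direction is fine. In the converse, the normalization step (use the smooth maximal cone $\sigma_0$ to define a candidate lattice isomorphism $\phi$, then show $\phi$ agrees with $\psi$ on every ray) is the right frame, but the propagation mechanism you propose does not work as described. Crossing a wall $\omega=\sigma\cap\sigma'$ replaces a generator $\mathbf{u}_1$ of $\sigma$ by the opposite generator $\mathbf{v}$ of $\sigma'$ via the wall relation $\mathbf{u}_1+\mathbf{v}=\sum_i b_i\mathbf{u}_i$ over the generators of $\omega$; but this relation is in general \emph{not} a primitive relation, and $\{\mathbf{u}_1,\mathbf{v}\}$ need not even be a primitive collection---even in the Fano case. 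Concretely, take the blow-up of $\mathbb{P}^3$ along a torus-invariant line, a smooth toric Fano $3$-fold with rays $e_1,e_2,e_3$, $e_0=-e_1-e_2-e_3$, $f=e_1+e_2$. The wall $\langle e_1,e_3\rangle$ separates the maximal cones $\langle f,e_1,e_3\rangle$ and $\langle e_1,e_3,e_0\rangle$, with opposite generators $f$ and $e_0$; the wall relation is $f+e_0=-e_3$ (a negative coefficient), and $\{f,e_0\}$ spans the cone $\langle f,e_0\rangle\in\Sigma$, so it is not a primitive collection at all. The coordinates of $e_0$ are instead recovered from the size-three primitive collection $\{e_0,e_3,f\}$ with relation $e_0+e_3+f=\boldsymbol{0}$. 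Hence an induction along chains of adjacent maximal cones that reads off one primitive relation per wall cannot be set up as stated, and the sentence ``these wall relations are forced by the primitive relations'' is exactly the unproved content of the theorem.

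The standard repair (and essentially Batyrev's route) is global rather than local: the primitive collections determine the underlying simplicial complex of the fan, since they are precisely the minimal non-faces; and for a smooth projective (in particular Fano) toric variety the primitive relations span over $\mathbb{Q}$ the entire lattice of linear relations among the ray generators, because their classes generate the Mori cone, which is full-dimensional in $N_1(X)_{\mathbb{Q}}\cong\ker(\mathbb{Z}^{\#\mathrm{rays}}\to N)\otimes\mathbb{Q}$. Together with the $\mathbb{Z}$-basis of $N$ supplied by one smooth maximal cone, this determines the coordinates of every ray in one shot, after which $\phi(\Sigma)=\Sigma'$ and the fan--variety dictionary finishes the argument. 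Without this spanning statement (or an equivalent) your proof has a genuine hole, and citing Batyrev for it amounts to citing the proposition itself---which is precisely what the paper does.
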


\subsection{Fano Bott manifolds}\label{sec_Fano_Bott}
One of the interesting families of smooth toric varieties is the family of Bott manifolds. In this subsection, we first recall the definition of a Bott manifold, and then we characterize Fano Bott manifolds and classify them.

\begin{definition}[{\cite{GK94Bott}}]
A \emph{Bott tower} is an iterated $\C P^1$-bundle starting with a point:
\[
\begin{tikzcd}[row sep = 0.6em]
B_n \rar & B_{n-1} \rar & \cdots \rar & B_1 \rar & B_0, \\
P(\underline{\C} \oplus \xi_n) \arrow[u, equal]& P(\underline{\C} \oplus \xi_{n-1}) \arrow[u, equal]& & \C P^1 \arrow[u, equal] & 
\{\text{a point}\} \arrow[u, equal]
\end{tikzcd}
\]
where each $B_i$ is the complex projectivization $P(\underline{\C} \oplus \xi_i)$ of the Whitney sum of a 
holomorphic line bundle~$\xi_i$ and the trivial line bundle $\underline{\C}$ 
over $B_{i-1}$. The total space $B_n$ is called a \textit{Bott manifold}. 
\end{definition}

Each Bott manifold is a smooth projective toric variety associated with a smooth lattice polytope combinatorially equivalent to a cube, and the converse also holds.

\begin{proposition}[{\cite[Corollary~3.5]{MasudaPanov08}}]\label{proposition_MasudaPanov}
If a smooth lattice polytope $\mathsf{P}$ is combinatorially equivalent to a cube, then the toric variety $X_{\mathsf{P}}$ is a Bott manifold. Indeed, the family of Bott manifolds is $$\{ X_{\mathsf P} \mid \mathsf P \text{ is a smooth lattice polytope that is combinatorially equivalent to a cube} \}.$$
\end{proposition}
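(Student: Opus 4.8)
The plan is to establish the displayed set equality by proving the two inclusions separately, each by induction on $n=\dim_{\C}X_{\mathsf{P}}=\dim\mathsf{P}$; the first sentence of the proposition is the harder inclusion. For $\{\text{Bott manifolds}\}\subseteq\{X_{\mathsf{P}}\}$, I would induct along a Bott tower $B_n\to B_{n-1}\to\cdots\to B_0=\{\text{pt}\}$. If $B_{n-1}=X_{\Sigma}$ for a smooth fan $\Sigma$ in $\R^{n-1}$ whose normal polytope is combinatorially an $(n-1)$-cube, then, since $B_{n-1}$ is a smooth projective toric variety, the line bundle $\xi_n$ is linearly equivalent to a toric one, so $B_n=P(\underline{\C}\oplus\xi_n)$ inherits a toric structure; writing $c_1(\xi_n)$ in terms of the toric boundary divisors of $B_{n-1}$ via integers $a_i$, the fan of $B_n$ in $\R^{n-1}\oplus\R$ has rays $(v_i,a_i)$ (one for each ray $v_i$ of $\Sigma$) together with $(\mathbf 0,1)$ and $(\mathbf 0,-1)$, its maximal cones being the lifts of the maximal cones of $\Sigma$ each enlarged by one of $(\mathbf 0,\pm1)$. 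This fan is smooth and its underlying simplicial complex is the boundary complex of the $n$-dimensional cross-polytope, so by the dictionary of \cite{CLS11Toric} its polytope is combinatorially an $n$-cube. The base cases $n=0,1$ are $\{\text{pt}\}$ and $\C P^1$.

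For the converse inclusion, suppose $\mathsf{P}$ is a smooth lattice polytope combinatorially equivalent to an $n$-cube. Then $\Sigma_{\mathsf{P}}$ has exactly $2n$ rays, which split into $n$ pairs $\{u_i,u_i'\}$ indexing the opposite-facet pairs of $\mathsf{P}$; these are precisely the primitive collections, a set of rays spans a cone if and only if it meets each pair at most once, and $\Sigma_{\mathsf{P}}$ has $2^n$ maximal cones. Using smoothness of $X_{\mathsf{P}}$, fix coordinates in which the maximal cone spanned by $u_1,\dots,u_n$ has $u_i=\bep_i$, the standard basis. For each $i$ the cones $\Cone(\bep_1,\dots,\widehat{\bep_i},\dots,\bep_n)$ and $\Cone(\bep_1,\dots,\widehat{\bep_i},\dots,\bep_n,u_i')$ are adjacent maximal cones sharing the wall spanned by the $\bep_j$, $j\neq i$, and the wall-crossing relation forces $u_i'=-\bep_i+\sum_{j\neq i}a_{ij}\bep_j$ with $a_{ij}\in\Z$; thus the characteristic matrix of $\Sigma_{\mathsf{P}}$ has the shape $-I+A$ with $A$ of zero diagonal.

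The crux is a normal-form lemma: after permuting the pairs and applying a unipotent change of basis, one may assume $a_{ij}=0$ whenever $i>j$, i.e. $A$ is strictly upper triangular. Granting this, $u_n=\bep_n$ and $u_n'=-\bep_n$, so the projection $N_{\R}\to N_{\R}/\R\bep_n$ annihilates both and sends the remaining $2(n-1)$ rays to a smooth complete fan in $\R^{n-1}$ whose characteristic matrix is again of the form $-I+(\text{strictly upper triangular})$; its polytope is a combinatorial $(n-1)$-cube, so by the inductive hypothesis the associated toric variety $B_{n-1}$ is a Bott manifold, and a direct comparison of fans identifies $X_{\mathsf{P}}$ with $P(\underline{\C}\oplus\xi_n)$ over $B_{n-1}$, where $\xi_n$ is the toric line bundle whose first Chern class is read off from the last column $(a_{1n},\dots,a_{n-1,n})$ of $A$. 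Hence $X_{\mathsf{P}}$ is a Bott manifold, completing the induction, and combining the two inclusions yields the asserted equality.

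I expect the normal-form lemma to be the main obstacle. It amounts to showing that a pair $\{u_k,u_k'\}$ can always be singled out to play the role of the `last' coordinate, equivalently that an integer matrix $-I+A$ (with $A$ of zero diagonal) subject to the constraint that all $2^n$ `vertex matrices' — obtained by replacing each column $\bep_i$ by $-\bep_i+\sum_{j\neq i}a_{ij}\bep_j$ over an arbitrary subset of indices — are unimodular can be simultaneously permuted and unipotently normalized to upper-triangular form. This unimodularity of all vertex matrices is exactly where the combinatorial hypothesis ``$\mathsf{P}$ is a cube'' (no missing faces other than the pairs $\{u_i,u_i'\}$) is used, and no analogue holds for general smooth complete toric surfaces. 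I would either carry out this linear-algebra argument directly by induction on $n$, locating $k$ by an extremality argument on $A$, or simply quote it: it is the content of \cite[Corollary~3.5]{MasudaPanov08}, on which the paper already relies in Section~\ref{sec_Catalan_numbers}. The remaining ingredients — that projectivizations of rank-two toric bundles are toric with the displayed fan, and that the fan assembled from a triangular characteristic matrix is precisely the Bott-tower fan — are routine verifications.
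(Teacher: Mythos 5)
The paper offers no proof of this proposition: it is quoted directly from \cite[Corollary~3.5]{MasudaPanov08}, and your outline is a correct reconstruction of how that result is proved rather than a different route. In particular, the crux you isolate --- that the characteristic matrix $-I+A$ of a smooth complete polytopal fan whose underlying simplicial complex is the boundary of a cross-polytope can be permuted and unipotently normalized to triangular form, whence the fan is an iterated $\C P^1$-bundle fan --- is exactly the content of the cited corollary, so deferring to it there (as you propose) is precisely what the paper does, and the remaining steps of your sketch (the fan of $P(\underline{\C}\oplus\xi)$, the wall-crossing relation forcing $u_i'=-\mathbf{e}_i+\sum_{j\neq i}a_{ij}\mathbf{e}_j$, and the projection along $\mathbf{e}_n$ realizing the bundle structure) are all sound.
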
 

Let $B_{n}$ be a Bott manifold. Then there is a smooth lattice polytope $\mathsf{P}$ combinatorially equivalent to a cube of dimension $n$ such that $B_n=X_{\mathsf{P}}$.  The polytope $\mathsf{P}$ has $2n$ facets and there are $n$ pairs of facets not intersecting with each other. Let $\{\mathbf{v}_1,\dots,\mathbf{v}_n,\mathbf{w}_1,\dots,\mathbf{w}_n\}$ be the ray generators of the normal fan $\Sigma_P$ of $\mathsf{P}$ such that $\mathbf{v}_i$ and $\mathbf{w}_i$ do not form a cone in $\Sigma_P$ for each $i\in [n]$. That is, each primitive collection of $\Sigma_\mathsf{P}$ corresponds to the set $\{\mathbf{v}_i,\mathbf{w}_i\}$. By Proposition~\ref{prop:batyrev}, we obtain
\begin{equation}\label{eq:Bott-Fano}
\text{$B_{n}$ is Fano if and only if $\mathbf{v}_i+\mathbf{w}_i$ is either $\mathbf{v}_j$ or $\mathbf{w}_j$ for some $j$ unless $\mathbf{v}_i+\mathbf{w}_i=\boldsymbol{0}$. }
\end{equation}
Using \eqref{eq:Bott-Fano}, to a Fano Bott manifold $B_{n}$ we associate a signed rooted forest with the vertex set~$[n]$ as follows:
\begin{itemize}
\item vertex $i$ is a root if $\mathbf{v}_i+\mathbf{w}_i=\boldsymbol{0}$,
\item we draw an edge with $+$ sign between $i$ and $j$ if $\mathbf{v}_i+\mathbf{w}_i=\mathbf{v}_j$, and
\item we draw an edge with $-$ sign  between $i$ and $j$  if $\mathbf{v}_i+\mathbf{w}_i=\mathbf{w}_j$.
\end{itemize}
We can also construct a Fano Bott manifold from a signed rooted forest up to isomorphism. 

Let $F$ be a signed rooted forest with vertex set $[n]$. For each $i$, by changing the signs of all edges connecting $i$ and its children simultaneously, we get a new signed rooted forest $r_i(F)$. Then the Bott manifold $B_{r_i(F)}$ corresponding to $r_i(F)$ is isomorphic to the Bott manifold $B_F$ corresponding to $F$ by Proposition~\ref{prop:batyrev-iso}.
Let $\mathcal{SF}_n$ be the set of all signed rooted forests on the vertex set $[n]$. Denote 
by $\sim$ the equivalence relation on $\mathcal{SF}_n$ generated by $r_i$'s 
for all $i$'s.

\begin{theorem}[{\cite[Theorem 3.2]{CLMP}}]\label{thm_isom_FB}
The isomorphism classes in Fano Bott manifolds of complex dimension $n$ 
bijectively correspond to $\mathcal{SF}_n/\!\!\sim$.
\end{theorem}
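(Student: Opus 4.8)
The plan is to upgrade the two constructions recalled before Theorem~\ref{thm_isom_FB} — the passage $F\mapsto B_F$ from a signed rooted forest to a Fano Bott manifold, and the move $F\mapsto r_i(F)$ — into a bijection and then identify its fibers. To a signed rooted forest $F$ on $[n]$ the text attaches the normal fan of a smooth polytope combinatorially equivalent to the $n$-cube, hence a Fano Bott manifold $B_F$; write $\Sigma_F$ for that fan. By \eqref{eq:Bott-Fano}, \emph{every} Fano Bott manifold of dimension $n$ arises this way, since one reads the forest off the primitive relations of its fan. As the text already notes (via Proposition~\ref{prop:batyrev-iso}) that $B_{r_i(F)}\cong B_F$ for every $i$, the assignment $F\mapsto B_F$ descends to a well-defined surjection
\[
\Phi\colon\ \mathcal{SF}_n/\!\!\sim\ \longrightarrow\ \{\text{Fano Bott manifolds of dimension }n\}/\!\cong.
\]
Thus the theorem reduces to proving that $\Phi$ is injective, i.e.\ that $B_F\cong B_{F'}$ forces $F\sim F'$.

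For injectivity I would first record the combinatorial skeleton of $\Sigma_F$: its rays are $\mathbf v_1,\mathbf w_1,\dots,\mathbf v_n,\mathbf w_n$, its maximal cones are exactly the $n$-element subsets meeting each pair $\{\mathbf v_i,\mathbf w_i\}$ in one element, and — because $\Sigma_F$ is the normal fan of a cube — its minimal non-faces, equivalently its primitive collections, are precisely the $n$ pairs $\{\mathbf v_i,\mathbf w_i\}$, each carrying the uniquely determined primitive relation $\mathbf v_i+\mathbf w_i=\boldsymbol 0$, $\mathbf v_i+\mathbf w_i=\mathbf v_j$, or $\mathbf v_i+\mathbf w_i=\mathbf w_j$ dictated by $F$. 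Given an isomorphism $B_{F'}\cong B_F$, Proposition~\ref{prop:batyrev-iso} supplies a bijection $\phi$ between the rays of $\Sigma_{F'}$ and those of $\Sigma_F$ sending maximal cones to maximal cones and primitive relations to primitive relations; in particular it carries minimal non-faces to minimal non-faces, hence permutes the primitive pairs. This yields a permutation $\sigma\in\Sn$ with $\phi(\{\mathbf v'_i,\mathbf w'_i\})=\{\mathbf v_{\sigma(i)},\mathbf w_{\sigma(i)}\}$ together with, for each $i$, a bit $\varepsilon_i$ recording whether $\phi$ matches $\mathbf v'_i$ with $\mathbf v_{\sigma(i)}$ or swaps it with $\mathbf w_{\sigma(i)}$. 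I would then decode ``$\phi$ preserves primitive relations'': applying $\phi$ to the relation of $\{\mathbf v'_i,\mathbf w'_i\}$ and using $\phi(\mathbf v'_i)+\phi(\mathbf w'_i)=\mathbf v_{\sigma(i)}+\mathbf w_{\sigma(i)}$ independently of $\varepsilon_i$, one gets that $\sigma$ is an isomorphism from the underlying rooted forest of $F'$ to that of $F$, and that an edge at a child of $j$ keeps its sign when $\varepsilon_j=0$ and flips it when $\varepsilon_j=1$. Hence $F$ is obtained from the relabelled forest $\sigma(F')$ by applying $r_{\sigma(j)}$ at every $j$ with $\varepsilon_j=1$; since $\sigma(F')$ is $F'$ up to vertex relabelling and $B_F$ depends on $F$ only up to such relabelling, this gives $F\sim F'$, as required.

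The main obstacle — and where I expect to spend the most care — is the sign bookkeeping in the last step: one must verify that the ``swap/no-swap'' data $(\varepsilon_j)$ of an \emph{arbitrary} toric isomorphism is realized \emph{exactly} by a composition of the moves $r_j$, bearing in mind that $r_j$ flips only the edges from $j$ to its children and leaves the edge from $j$ to its parent alone, so that the relation at vertex $i$ is influenced by $r_j$ only through $\varepsilon_{\,\mathrm{parent}(i)}$; this has to be reconciled precisely with the primitive-relation computation. Two supporting points also need nailing down: the uniqueness of the primitive relation for each antipodal pair (so $\phi$ transports the forest structure without ambiguity), and the status of the permutation $\sigma$, which should be absorbed either by regarding $\mathcal{SF}_n$ up to vertex relabelling or by observing $B_{\sigma(F')}\cong B_{F'}$, so that no generator beyond the $r_i$'s is needed on the forest side. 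Once these are in place, $\Phi$ is a bijection and Theorem~\ref{thm_isom_FB} follows.
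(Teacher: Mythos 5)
The paper does not actually prove this theorem: it is quoted from \cite{CLMP} (Theorem 3.2 there), so there is no in-paper proof to compare against. That said, your argument follows exactly the route the surrounding text sets up — encode a Fano Bott manifold by the primitive relations of its fan via \eqref{eq:Bott-Fano}, and use Proposition~\ref{prop:batyrev-iso} to reduce isomorphism of the manifolds to a bijection of rays preserving primitive relations — and it is correct. The sign bookkeeping you single out as the main obstacle does close up: since $\phi(\mathbf v'_i)+\phi(\mathbf w'_i)=\mathbf v_{\sigma(i)}+\mathbf w_{\sigma(i)}$ is independent of $\varepsilon_i$, the relation at vertex $i$ is affected only through $\varepsilon_{\mathrm{parent}(i)}$, which is precisely the footprint of $r_{\mathrm{parent}(i)}$; equivalently, swapping the labels $\mathbf v_j\leftrightarrow\mathbf w_j$ within the $j$th antipodal pair realizes $r_j$ on the forest, so the data $(\sigma,(\varepsilon_j))$ of an arbitrary toric isomorphism is exactly a relabelling composed with $r_j$'s. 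Your caveat about $\sigma$ is also warranted: despite the phrase ``on the vertex set $[n]$,'' the count of ten forests in Figure~\ref{fig_SF_3} shows that $\mathcal{SF}_n$ is meant up to vertex relabelling (or that relabelling is absorbed into $\sim$), and your observation that $B_{\sigma(F')}\cong B_{F'}$ disposes of this either way. The only ingredients you take on faith — acyclicity of the relation read off from \eqref{eq:Bott-Fano}, and the existence of $B_F$ for a given $F$ — are part of the construction recalled before the theorem, so relying on them is legitimate.
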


We provide all signed rooted forests having three vertices in 
Figure~\ref{fig_SF_3}. Among these ten signed rooted forests, there are four equivalence classes in $\mathcal{SF}_3/\!\!\sim$ as follows:
\[
(1)\sim (2)\sim (3)\sim (4),\quad (5)\sim (7),\quad (6),\quad (8)\sim (9),\quad (10) 
\]
In the above, the first and the third equivalence classes arise as toric Richardson varieties of Catalan type, and the last two arise as products of Catalan type. However, the second class does not arise as (products of) Catalan type.   
\begin{figure}[h]
\begin{subfigure}[b]{0.12\textwidth}
\centering
\begin{tikzpicture}[ node distance = 1.5em]

\node[root, vertex] (1) {};
\node[vertex, below = of 1] (2) {};
\node[vertex, below = of 2] (3) {};
\draw (1)--(2) node[midway, left] {$+$};
\draw (2)--(3) node[midway, left] {$+$};
\end{tikzpicture}
\caption{}
\label{fig_signed_rooted_tree_n3_1}
\end{subfigure}
\begin{subfigure}[b]{0.12\textwidth}
\centering
\begin{tikzpicture}[ node distance = 1.5em]
\node[root, vertex] (1) {};
\node[vertex, below = of 1] (2) {};
\node[vertex, below = of 2] (3) {};
\draw (1)--(2) node[midway, left] {$+$};
\draw (2)--(3) node[midway, left] {$-$};
\end{tikzpicture}
\caption{}
\label{fig_signed_rooted_tree_n3_2}
\end{subfigure}
\begin{subfigure}[b]{0.12\textwidth}
\centering
\begin{tikzpicture}[ node distance = 1.5em]
\node[root, vertex] (1) {};
\node[vertex, below = of 1] (2) {};
\node[vertex, below = of 2] (3) {};
\draw (1)--(2) node[midway, left] {$-$};
\draw (2)--(3) node[midway, left] {$+$};
\end{tikzpicture}
\caption{}
\label{fig_signed_rooted_tree_n3_3}
\end{subfigure}
\begin{subfigure}[b]{0.12\textwidth}
\centering
\begin{tikzpicture}[ node distance = 1.5em]
\node[root, vertex] (1) {};
\node[vertex, below = of 1] (2) {};
\node[vertex, below = of 2] (3) {};
\draw (1)--(2) node[midway, left] {$-$};
\draw (2)--(3) node[midway, left] {$-$};
\end{tikzpicture}
\caption{}
\label{fig_signed_rooted_tree_n3_4}
\end{subfigure}
\begin{subfigure}[b]{0.14\textwidth}
\centering
\raisebox{2em}{ \begin{tikzpicture}[node distance = 1.5em and 1 em]
\node[root, vertex] (1) {};
\node[vertex, below left = of 1] (2) {};
\node[vertex, below right = of 1] (3) {};
\draw (1)--(2) node[midway, left] {$+$};
\draw (1)--(3) node[midway, right] {$+$};
\end{tikzpicture}}
\caption{}
\label{fig_signed_rooted_tree_n3_5}
\end{subfigure}
\begin{subfigure}[b]{0.14\textwidth}
\centering
\raisebox{2em}{ \begin{tikzpicture}[node distance = 1.5em and 1 em]
\node[root, vertex] (1) {};
\node[vertex, below left = of 1] (2) {};
\node[vertex, below right = of 1] (3) {};
\draw (1)--(2) node[midway, left] {$+$};
\draw (1)--(3) node[midway, right] {$-$};
\end{tikzpicture}}
\caption{}
\label{fig_signed_rooted_tree_n3_6}
\end{subfigure} 
\begin{subfigure}[b]{0.14\textwidth}
\centering
\raisebox{2em}{ \begin{tikzpicture}[node distance = 1.5em and 1 em]
\node[root, vertex] (1) {};
\node[vertex, below left = of 1] (2) {};
\node[vertex, below right = of 1] (3) {};
\draw (1)--(2) node[midway, left] {$-$};
\draw (1)--(3) node[midway, right] {$-$};
\end{tikzpicture}}
\caption{}
\label{fig_signed_rooted_tree_n3_7}
\end{subfigure} 

\vspace{1em}

\begin{subfigure}[b]{0.16\textwidth}
\centering
\begin{tikzpicture}[ node distance = 1.5em]
\node[root, vertex] (1) {};
\node[vertex, below = of 1] (2) {};
\node[root,vertex, right = of 1] (3) {};
\draw (1)--(2) node[midway, left] {$+$};
\end{tikzpicture}
\caption{}
\label{fig_signed_rooted_tree_n3_8}
\end{subfigure} 
\begin{subfigure}[b]{0.16\textwidth}
\centering
\begin{tikzpicture}[ node distance = 1.5em]
\node[root, vertex] (1) {};
\node[vertex, below = of 1] (2) {};
\node[root, vertex, right = of 1] (3) {};

\draw (1)--(2) node[midway, left] {$-$};
\end{tikzpicture}
\caption{}
\label{fig_signed_rooted_tree_n3_9}
\end{subfigure}
\begin{subfigure}[b]{0.2\textwidth}
\centering
\raisebox{2em}{ \begin{tikzpicture}[ node distance = 1.5em]
\node[root, vertex] (1) {};
\node[root, vertex, right = of 1] (2) {};
\node[root, vertex, right = of 2] (3) {};
\end{tikzpicture}}
\caption{}
\label{fig_signed_rooted_tree_n3_10}
\end{subfigure}
\caption{Signed rooted forests with $3$ 
vertices}\label{fig_SF_3}
\end{figure}
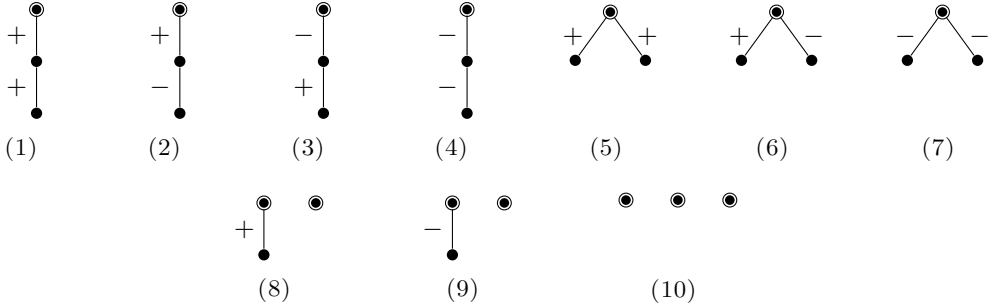

Similarly, there are thirteen equivalence classes in $\mathcal{SF}_4/\!\!\sim$ as shown in Figure~\ref{fig_SF4}, where $+$ signs on edges are omitted. In the figure, (7), (9), (11) arise as toric Richardson varieties of Catalan type, and (1), (2), (3), (4), (6)  arise as products of Catalan type.  However, the remaining ones do not arise as (products of) Catalan type.   

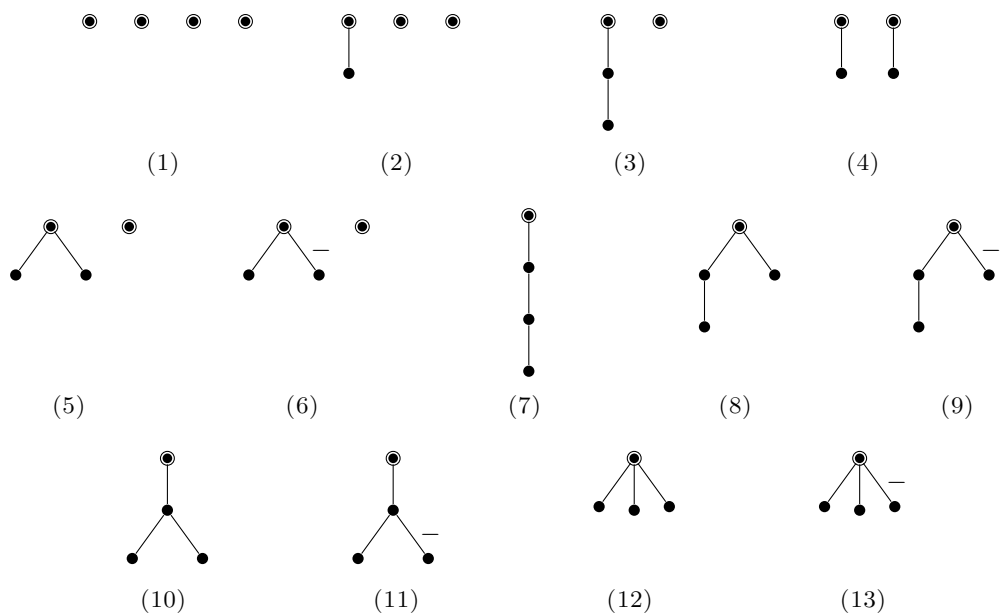
\begin{figure}[H]
\begin{subfigure}[b]{0.2\textwidth}
\centering
\raisebox{1.5cm}{
\begin{tikzpicture}[node distance = 1.5em, 
baseline={(1.base)}]
\node[root, vertex] (1) {};
\node[root, vertex, right = of 1] (2) {};
\node[root, vertex, right = of 2] (3) {};
\node[root, vertex, right = of 3] (4) {};
\end{tikzpicture}}
\caption{}
\end{subfigure}
\begin{subfigure}[b]{0.2\textwidth}
\centering
\raisebox{1.5cm}{
\begin{tikzpicture}[node distance = 1.5em, baseline={(1.base)}]
\node[root, vertex] (1) {};
\node[vertex, below = of 1] (2) {};
\node[root, vertex, right = of 1] (3) {};
\node[root, vertex, right = of 3] (4) {};
\draw (1)--(2);
\end{tikzpicture}}
\caption{}
\end{subfigure}
\begin{subfigure}[b]{0.2\textwidth}
\centering
\raisebox{1.5cm}{
\begin{tikzpicture}[node distance = 1.5em, baseline={(1.base)}]
\node[root, vertex] (1) {};
\node[vertex, below = of 1] (2) {};
\node[vertex, below = of 2] (3) {};
\node[root, vertex, right = of 1] (4) {};

\draw (1)--(2)--(3);
\end{tikzpicture}}
\caption{}
\end{subfigure}
\begin{subfigure}[b]{0.2\textwidth}
\centering
\raisebox{1.5cm}{
\begin{tikzpicture}[node distance = 1.5em, baseline={(1.base)}]
\node[root, vertex] (1) {};
\node[vertex, below = of 1] (2) {};
\node[root, vertex, right = of 1] (3) {};
\node[vertex, below = of 3] (4) {};

\draw (1)--(2)
(3)--(4);
\end{tikzpicture}}
\caption{}
\end{subfigure} \\ \vspace{1em}
\begin{subfigure}[b]{0.2\textwidth}
\centering
\raisebox{2cm}{
\begin{tikzpicture}[node distance = 1.5em and 1 em, baseline={(1.base)}]
\node[root, vertex] (1) {};
\node[vertex, below left = of 1] (2) {};
\node[vertex, below right = of 1] (3) {};
\node[root, vertex, right = 2.5em of 1] (4) {};

\draw (1)--(2)
(1)--(3);
\end{tikzpicture}}
\caption{}
\end{subfigure}
\begin{subfigure}[b]{0.2\textwidth}
\centering
\raisebox{2cm}{
\begin{tikzpicture}[node distance = 1.5em and 1 em, baseline={(1.base)}]
\node[root, vertex] (1) {};
\node[vertex, below left = of 1] (2) {};
\node[vertex, below right = of 1] (3) {};
\node[root, vertex, right = 2.5em of 1] (4) {};

\draw (1)--(2);
\draw (1)--(3) node[midway, right] {$-$};
\end{tikzpicture}}
\caption{}
\end{subfigure}
\begin{subfigure}[b]{0.18\textwidth}
\centering
\raisebox{2cm}{
\begin{tikzpicture}[node distance = 1.5em, baseline={(1.base)}]
\node[root, vertex] (1) {};
\node[vertex, below = of 1] (2) {};
\node[vertex, below = of 2] (3) {};
\node[vertex, below = of 3] (4) {};

\draw (1)--(2)--(3)--(4);
\end{tikzpicture}}
\caption{}
\end{subfigure}
\begin{subfigure}[b]{0.18\textwidth}
\centering
\raisebox{2cm}{
\begin{tikzpicture}[node distance = 1.5em and 1 em, baseline={(1.base)}]
\node[root, vertex] (1) {};
\node[vertex, below left = of 1] (2) {};
\node[vertex, below right = of 1] (3) {};
\node[vertex, below = of 2] (4) {};

\draw (1)--(2)--(4)
(1)--(3);
\end{tikzpicture}}
\caption{}
\end{subfigure}
\begin{subfigure}[b]{0.2\textwidth}
\centering
\raisebox{2cm}{
\begin{tikzpicture}[node distance = 1.5em and 1 em, baseline={(1.base)}]
\node[root, vertex] (1) {};
\node[vertex, below left = of 1] (2) {};
\node[vertex, below right = of 1] (3) {};
\node[vertex, below = of 2] (4) {};

\draw (1)--(2)--(4);
\draw (1)--(3) node[midway, right] {$-$};
\end{tikzpicture}}
\caption{}
\end{subfigure} \\ \vspace{1em}
\begin{subfigure}[b]{0.2\textwidth}
\centering
\raisebox{1.5cm}{
\begin{tikzpicture}[node distance = 1.5em and 1 em, baseline={(1.base)}]
\node[root, vertex] (1) {};
\node[vertex, below = of 1] (2) {};
\node[vertex, below left = of 2] (3) {};
\node[vertex, below right = of 2] (4) {};

\draw (1)--(2)--(3);
\draw (2)--(4);
\end{tikzpicture}}
\caption{}
\end{subfigure}
\begin{subfigure}[b]{0.2\textwidth}
\centering
\raisebox{1.5cm}{
\begin{tikzpicture}[node distance = 1.5em and 1 em, baseline={(1.base)}]
\node[root, vertex] (1) {};
\node[vertex, below = of 1] (2) {};
\node[vertex, below left = of 2] (3) {};
\node[vertex, below right = of 2] (4) {};

\draw (1)--(2)--(3);
\draw (2)--(4) node[midway, right] {$-$};
\end{tikzpicture}}
\caption{}
\end{subfigure}
\begin{subfigure}[b]{0.2\textwidth}
\centering
\raisebox{1.5cm}{
\begin{tikzpicture}[node distance = 1.5em and 1 em, baseline={(1.base)}]
\node[root, vertex] (1) {};
\node[vertex] (2) [below = of 1] {};
\node[vertex, below left = of 1] (3) {};
\node[vertex] (4) [below right = of 1] {};
\draw (1)--(2)
(1)--(3)
(1)--(4);
\end{tikzpicture}}
\caption{}
\end{subfigure}
\begin{subfigure}[b]{0.2\textwidth}
\centering
\raisebox{1.5cm}{
\begin{tikzpicture}[node distance = 1.5em and 1 em, baseline={(1.base)}]
\node[root, vertex] (1) {};
\node[vertex] (2) [below = of 1] {};
\node[vertex, below left = of 1] (3) {};
\node[vertex] (4) [below right = of 1] {};
\draw (1)--(2)
(1)--(3)
(1)--(4) node[midway, right] {$-$};
\end{tikzpicture}}
\caption{}
\end{subfigure}
\caption{Representatives of $\mathcal{SF}_4/\!\!\sim$.}
\label{fig_SF4}
\end{figure}

\end{document}